\documentclass[reqno,10pt]{amsart}
\usepackage[left=1in,right=1in,top=.95in,bottom=.95in]{geometry}
\usepackage{amsmath,amssymb,amsthm,mathrsfs,mathtools}
\usepackage{esint}
\usepackage{microtype}
\usepackage{xcolor}
\usepackage[colorlinks=true,linkcolor=blue,citecolor=red,urlcolor=blue]{hyperref}
\usepackage{enumitem}
\numberwithin{equation}{section}
\allowdisplaybreaks[2]

\theoremstyle{plain}
\newtheorem{theorem}{Theorem}[section]
\newtheorem{lemma}[theorem]{Lemma}
\newtheorem{proposition}[theorem]{Proposition}
\newcommand{\tr}{\operatorname{tr}}
\newcommand{\eps}{\epsilon}
\newcommand{\Hess}{\nabla^2}
\newcommand{\Ric}{\operatorname{Ric}}
\newcommand{\osc}{\operatorname*{osc}}
\newcommand{\avg}{\fint}
\newcommand{\dd}{\mathop{}\!\mathrm d}

\begin{document}
\title{Green Function Asymptotics for the $\sigma_2$-Yamabe Problem}

\author{Bin Deng}
\address{Department of Mathematics and Statistics, Wuhan University,
Wuhan 430072, Hubei, P.R. China}
\email{dbmath@whu.edu.cn}
\author{Han Lu}
\address{School of Mathematics and Statistics, Henan University,
Kaifeng 475004, China}
\email{hlu@henu.edu.cn}

\date{August 18, 2026}

\begin{abstract}
We establish, at every pole, the asymptotic expansion of the normalized
$\Gamma_2$-Green function with a finite pole set in every dimension $n\ge5$.
In dimensions $5\le n\le7$ the first correction is a unique nonnegative
constant, whereas dimension $8$ exhibits a Weyl-driven $\sqrt{\log}$ term.
For $n>8$ we construct the finite local curvature parametrix through the first
positive indicial resonance and identify the first genuinely global
coefficient by a relative Newton flux.  
\end{abstract}

\maketitle

\enlargethispage{3pt}
\tableofcontents

\section{Introduction}

The asymptotic expansion of a Green function near one of its poles is one of the basic
local-to-global bridges in the Yamabe problem.  For the conformal Laplacian,
this expansion not only shows the local geometry near that pole, but also
produces the constant term which, in the appropriate range of dimensions and
gauges, is identified with the mass of the associated asymptotically flat
metric.  The purpose of this paper is to develop an analogous expansion theory
at every pole of the $\Gamma_2$-Green function.
A central feature of the nonlinear problem is that the form of the expansion
changes sharply with the dimension.  In particular, dimensions $5\le n\le 7$,
the borderline dimension $n=8$, and dimensions $n>8$ exhibit three genuinely
different asymptotic regimes.

\subsection{The classical Yamabe Green function}

Let $(M^n,g)$ be a smooth compact Riemannian manifold with positive Yamabe
constant, and let $G_p$ denote the Green function of the conformal Laplacian
with pole at $p\in M$.  We normalize $G_p$ so that its leading singular
coefficient is one; then
\[
  G_p(x)\sim r^{2-n},\qquad r=d_g(x,p)\to0.
\]
A particularly useful formulation of the refined expansion is given by
Lee and Parker \cite{LeeParker1987}.  In conformal normal coordinates at the
pole, they construct an asymptotic parametrix for $G_p$ by solving successively
for homogeneous correction terms.  The resulting expansion already displays
a dimension-dependent structure.  In dimensions $n=3,4,5$ (and, in every
dimension, when the metric is locally conformally flat near the pole), the
first two terms take the familiar form
\begin{equation}\label{eq:yamabe-low}
  G_p(x)=r^{2-n}+A+O(r).
\end{equation}
In dimension $n=6$, a Weyl-curvature-driven logarithmic term occurs at the
bounded scale, while in higher dimensions local curvature terms enter before
the constant term.  More schematically, write
\begin{equation}\label{eq:yamabe-LP-schematic}
  G_p(x)=
  \underbrace{r^{2-n}}_{\text{principal singularity}}
  +\underbrace{\mathcal P^{\mathrm{loc}}_p(x)}_{\text{finite local curvature parametrix}}
  +\underbrace{\mathcal G^{\mathrm{glob}}_p(x)}_{\text{first global/free Green term}}
  +o(s_p),
\end{equation}
Here $\mathcal P^{\mathrm{loc}}_p$ is determined by a finite curvature jet,
whereas $\mathcal G^{\mathrm{glob}}_p$ is the first coefficient or profile
left free by the local recursion; $s_p$ is its scale.  In the finite-charge
range, $\mathcal G^{\mathrm{glob}}_p=A_p$.  We use this local--global
distinction for the $\Gamma_2$-Green function below.

\subsection{The \texorpdfstring{$\Gamma_2$}{Gamma-2}-Green function}

Let
\begin{align}
    A_g=\frac1{n-2}
  \left(\operatorname{Ric}_g-\frac{R_g}{2(n-1)}g\right)
\end{align}
be the Schouten tensor, and write $\lambda(A_g)$ for its eigenvalues with
respect to $g$.  We assume throughout that
\begin{equation}\label{eq:admissible-background}
  \lambda(A_g)\in\Gamma_2 := \{\lambda\in\mathbb R^n\ : \ \sigma_1(\lambda)>0, \sigma_2(\lambda)>0\}
\end{equation}
where $\Gamma_2$ is the G\aa rding cone.
For a positive function $v$, set
\begin{align}
    g_v=v^{\frac4{n-2}}g.
\end{align}
Let $S=\{p_1,\ldots,p_L\}$ be a nonempty finite subset of $M$.
A normalized $\Gamma_2$-Green function with pole set $S$ is a positive
function $v$ on $M\setminus S$ satisfying, in the viscosity sense,
\begin{equation}\label{eq:green-equation}
  \lambda(A_{g_v})\in\partial\Gamma_2
  \quad\text{on }M\setminus S,
  \qquad
  \lim_{x\to p_i}d_g(x,p_i)^{n-2}v(x)=1
  \quad(1\le i\le L).
\end{equation}

Earlier Green-function results were established in the spherical model.
When $(M,g)$ is conformally equivalent to the standard sphere and there is a
single pole, Chang--Gursky--Yang \cite{ChangGurskyYang2002Annals} treated the
case $n=4$ and $\Gamma=\Gamma_2$ under $C^{1,1}$ regularity.  For general
conformally invariant cones, the corresponding one-pole theory was proved in
the Lipschitz class by Li \cite{LiDegenerate2007,LiGradient2009} and in the
continuous viscosity class by Li--Nguyen--Wang \cite{LiNguyenWang2018}.

The broader motivation comes from the $\sigma_k$-Yamabe problem, initiated by
Viaclovsky \cite{Viaclovsky2000}.  Compactness is known in several important
settings, including $k=2$ in dimension four
\cite{ChangGurskyYang2002Apriori}, the locally conformally flat case
\cite{LiLi2003}, the range $k>n/2$ \cite{GurskyViaclovsky2007}, and $k=n/2$
under a lower Ricci curvature bound \cite{LiNguyenCompactness2014}.  By
contrast, for $k=2$ and $n\ge5$ the general compactness theory remains
incomplete.  Since rescaled blow-up sequences are expected to converge to
$\Gamma_2$-Green functions, refined asymptotics at their poles are directly
relevant to the blow-up analysis of the $\sigma_2$-Yamabe equation.

Li and Nguyen \cite{LiNguyen2023} established existence and uniqueness for
every finite pole set and every prescribed collection of positive strengths,
for conformally invariant cones in the relevant range.  For $\Gamma_2$ one has
\begin{align}
    \mu^+_{\Gamma_2}=\frac{n-2}{2},
\end{align}
so that $\mu^+_{\Gamma_2}>1$ precisely when $n\ge 5$.  Consequently, under
\eqref{eq:admissible-background}, the normalized $\Gamma_2$-Green function in
\eqref{eq:green-equation} exists and is unique for every $n\ge5$; moreover it
belongs to $C^{1,1}_{\mathrm{loc}}(M\setminus S)$.  Their construction also
shows that the conformal metric $g_v$ has an asymptotically flat end at each
pole and,
importantly for the analysis below, provides smooth strict sub- and
super-solutions arising from a natural elliptic regularization of the
degenerate equation.

This theory gives the leading singularity and global well-posedness, but not
the next asymptotic terms.  A linear parametrix does not apply: the equation
lies on $\partial\Gamma_2$ and the background curvature interacts with the
singular conformal factor at the scales to be detected.

There is also a substantial literature on refined asymptotics for isolated
singular solutions of the nondegenerate $\sigma_k$-Yamabe equation; see, for
example, Han--Li--Teixeira \cite{HanLiTeixeira2010} and the higher-order
expansions of Han--Li--Li \cite{HanLiLi2021}.  Those results provide an
important conceptual comparison, but the Green equation
\eqref{eq:green-equation} is of a different type: its curvature vector lies on
the boundary of the admissible cone, and the corresponding linearized theory
is intrinsically degenerate.

\subsection{The natural Green factor and the dimensional threshold}

It is convenient to replace $v$ by the conformal factor adapted to the
$\sigma_2$ scaling,
\begin{equation}\label{eq:U-def}
  U:=v^{\frac{n-4}{2(n-2)}},
  \qquad
  \alpha:=\frac{n-4}{2}.
\end{equation}
Then
\begin{equation}\label{eq:metric-U}
  g_v=U^{\frac8{n-4}}g,
  \qquad
  U(x)\sim d_g(x,p_i)^{-\alpha}
  \quad\text{as }x\to p_i,
  \qquad 1\le i\le L.
\end{equation}
The exponent $\alpha$ makes the dimensional splitting transparent.  In
conformal normal gauge the first curvature contribution to the inverted end
occurs at relative degree $2$.  Hence
\begin{align}
    \alpha<2 \quad (5\le n\le7),\qquad
  \alpha=2 \quad (n=8),\qquad
  \alpha>2 \quad (n>8).
\end{align}
When $\alpha<2$, the curvature correction is of lower order than the bounded scale and a scalar constant can be isolated directly.  At $n=8$ the Weyl term lands exactly at the critical scale and produces a nonlinear resonance.  For $n>8$, local curvature corrections necessarily appear before the bounded scale, so the correct analogue of the Lee--Parker expansion is a nonlinear local parametrix rather than a two-term formula.

The following theorem gives the expansion in the three dimensional
regimes.  In high dimensions, the nonflat branch is carried through the first
positive indicial root and separates the local resonant term from the first
genuinely global coefficient.

\begin{theorem}\label{thm:main-working}
Let $(M^n,g)$ be a smooth compact manifold, $n\ge5$, satisfying
\eqref{eq:admissible-background}.  Let $S\subset M$ be a nonempty finite set,
let $v$ be the normalized $\Gamma_2$-Green function with pole set $S$ defined
by \eqref{eq:green-equation}, and let $U$ and $\alpha$ be defined by
\eqref{eq:U-def}.  Fix $p\in S$ and set $r=d_g(x,p)$.  Then the following
conclusions hold as $x\to p$.

\begin{enumerate}[label=\textup{(\roman*)},leftmargin=2.2em]
\item \textbf{Dimensions $5\le n\le7$.}
There exists a unique constant $A_p\ge0$ such that
\begin{equation}\label{eq:main-low}
  U(x)=r^{-\alpha}+A_p+o(1),
  \qquad x\to p.
\end{equation}

\item \textbf{The borderline dimension $n=8$.}
Choose a conformal normal representative near $p$, write its coordinate radius
as $\rho$, and denote the corresponding Green factor by $U_{cn}$.
If $|W_g(p)|\ne0$, then
\begin{equation}\label{eq:main-n8-generic}
  U_{cn}(x)
  =\rho^{-2}
   +\frac{|W_g(p)|}{4\sqrt{210}}
      \sqrt{\log\frac1\rho}
   +o(1).
\end{equation}
If $W_g(p)=0$, there exists a unique constant $A_p\ge0$ such that
\begin{equation}\label{eq:main-n8-flat}
  U_{cn}(x)=\rho^{-2}+A_p+o(1).
\end{equation}

\item \textbf{Dimensions $n>8$: the nonflat branch.}
Fix a conformal normal representative and suppose that the leading
Kelvin--Schouten cell $C_0$ in \eqref{eq:C0-definition} is nonzero.  Set $m=n-6$.
Then there exist a positive function $\psi_*$ with
$\int_{\mathbb S^{n-1}}\psi_*=1$, locally determined smooth functions
$\Phi_2,\ldots,\Phi_m$, a locally determined number $\ell_p$, and a number
$A_p$ determined by the global $\Gamma_2$-Green function such that
\begin{equation}\label{eq:main-high}
\begin{split}
  U_{cn}(\rho,\theta)
  =\rho^{-\alpha}\Bigg[&1+\sum_{j=2}^{m-1}\rho^j\Phi_j(\theta)\\
   &+\rho^m\left(
       \ell_p\psi_*(\theta)\log\frac1\rho
       +\Phi_m(\theta)+A_p\psi_*(\theta)
     \right)+o(\rho^m)\Bigg].
\end{split}
\end{equation}
Here $m=n-6$ is the first positive indicial root, and $A_p$ is the first global coefficient.
\end{enumerate}
All local cells and their coefficients are determined by the conformal-normal
jet at the chosen pole $p$, whereas the free coefficient $A_p$ may depend on
the full Green function and hence on the entire pole set $S$.
\end{theorem}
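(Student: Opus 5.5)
The plan is to work in inverted coordinates at the pole. Fix $p\in S$ and a conformal normal representative near $p$. The pole lies at distance bounded below from $S\setminus\{p\}$, so near $p$ the function $v$ is governed only by the local equation and the normalization. Apply the Kelvin inversion $y=x/|x|^2$ in conformal normal coordinates. Then $g_v$ becomes an asymptotically flat end, and $U$ corresponds to a function $W(y)=|y|^{-\alpha}U(x)\to1$ as $|y|\to\infty$.

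In these coordinates the equation $\lambda(A_{g_v})\in\partial\Gamma_2$ becomes a degenerate fully nonlinear equation $\sigma_2(A[W])=0$, $\sigma_1>0$, on the flat background perturbed by curvature terms. The background metric in inverted coordinates is $\delta+O(|y|^{-2})$, with the first term given by the Weyl tensor. The model solution $W\equiv1$ has $A=0$, so we linearize instead about the flat metric. The natural linearization is the operator obtained by differentiating $\sigma_2^{1/2}$ in the direction of the nonzero $\sigma_1$. Concretely, write $W=1+w$ with $w$ small. Then $\sigma_2$ is quadratic in $\nabla^2 w$ to leading order, so the equation reads $\sigma_2(\nabla^2 w+\text{lower order})=\text{curvature source}$.

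I would use the Li--Nguyen smooth strict sub- and super-solutions together with comparison (uniqueness) to obtain two-sided barriers. The plan is to produce barriers of the form $r^{-\alpha}+c\pm\varepsilon$ (respectively with a $\sqrt{\log}$ correction, or with a parametrix correction) and to squeeze.

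\textbf{Low dimensions, $5\le n\le7$.} The curvature error is $O(r^{2-\alpha})=o(1)$ relative to the bounded scale. So I would show that $U-r^{-\alpha}$ is bounded and that its oscillation on spheres $\partial B_r$ tends to zero. For boundedness, compare $U$ with $r^{-\alpha}\pm Cr^{2-\alpha}\pm C$, after checking that these are strict super- and subsolutions via a direct $\sigma_2$ computation for radial profiles. For the oscillation, use a Harnack/rescaling argument on annuli: blow-ups of $U-r^{-\alpha}$ converge to bounded entire solutions of the flat degenerate equation, which by a Liouville theorem are constant. A monotonicity argument then gives the existence of the limit $A_p=\lim(U-r^{-\alpha})$. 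One candidate is a Newton-flux quantity $\int_{\partial B_r}T_1(\nabla U)\cdot\nu$, which is monotone by the divergence structure of $\sigma_2$. Uniqueness of $A_p$ is then trivial. For $A_p\ge0$, compare with the single-pole flat Green function; since the background is admissible, the Green function on $M$ dominates the local flat one, by the maximum principle for the degenerate operator.

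\textbf{Dimension $n=8$.} Here $\alpha=2$ and the Weyl term at relative degree $2$ is resonant with constants. The radial ODE reduction of $\sigma_2=0$ with source $|W|^2$ gives, for the correction $h(\rho)$, an equation of the form $(h')^2\sim c|W|^2/\rho^2$ after averaging over spheres. I expect this to produce $h\sim c\sqrt{\log(1/\rho)}$. The constant $1/(4\sqrt{210})$ comes from computing the spherical average of the quadratic Weyl term in conformal normal coordinates, using $\int_{\mathbb S^7}W_{ikjl}W_{ipjq}\theta^k\theta^l\theta^p\theta^q$. The key step is to build sub- and supersolutions $\rho^{-2}+c\sqrt{\log(1/\rho)}\pm\varepsilon\sqrt{\log(1/\rho)}$ and use comparison, so that $U-\rho^{-2}-c\sqrt{\log}$ is $o(\sqrt{\log})$. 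Upgrading to $o(1)$ then requires a second-order barrier correction, and I expect that step to be delicate. If $W(p)=0$, the argument for the low-dimensional case applies verbatim.

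\textbf{Dimensions $n>8$.} Solve formally term by term. Insert the ansatz $\rho^{-\alpha}(1+\sum\rho^j\Phi_j)$, expand the operator, and at each order obtain a linear equation $L_0\Phi_j=F_j(\Phi_2,\dots,\Phi_{j-1},\text{jet})$. Here $L_0$ is the linearization at the first nonflat cell. Because $C_0\ne0$, this linearization is nondegenerate elliptic on the sphere, with indicial roots computable. Solvability holds for $j<m$. At $j=m=n-6$ the kernel, spanned by $\psi_*$, produces a resonance: the Fredholm obstruction $\int F_m\psi_*$ is absorbed by the $\ell_p\log$ term, and the kernel coefficient $A_p$ is free. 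Finally, I would control the remainder by weighted barriers $\rho^{-\alpha}[\text{parametrix}\pm\varepsilon\rho^m]$. Combined with a rescaled-Liouville argument in the weighted class, this shows that the projection of the remainder onto $\psi_*$ has a limit, defined via the relative Newton flux, which is $A_p$.

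\textbf{Main obstacle.} I expect the main obstacle to be the remainder control in the degenerate setting, namely showing convergence of the free coefficient rather than mere boundedness. This requires a Liouville/flux monotonicity for the degenerate $\sigma_2$ operator linearized at a degenerate-cone background.
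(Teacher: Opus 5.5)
Your overall architecture is the paper's: Kelvin inversion, comparison barriers, blow-down classification, a Newton flux to lock the free constant, and a formal parametrix with a resonance at $m=n-6$. However, several of the concrete steps you propose would fail as stated.

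\textbf{The borderline dimension and the sign in (i).} At $n=8$ your heuristic gives the wrong law, since $(h')^2\sim c/\rho^2$ integrates to $h\sim\sqrt c\,\log(1/\rho)$. The correct balance comes from $q=-r^{-2}b(\log r)$, for which $r^8\sigma_2(D^2q)=7(2b-b')(2b'-b'')\approx 14\,(b^2)'$. The critical radial profile lies on $\partial\Gamma_2$ with nonzero Newton tensor, so $\sigma_2$ is linear in $b'$ with a coefficient proportional to $b$; matching $-\sigma_2(C_0)=\frac12|C_0|^2$ then gives $b^2\sim\Lambda\log r$.

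More seriously, your barriers $\rho^{-2}+(c\pm\varepsilon)\sqrt{\log(1/\rho)}$ are radial, and $T_1(H):C_0=0$ for radial $H$. Their normalized $\sigma_2$ is therefore $\frac12\bigl(28(c\pm\varepsilon)^2-|C_0(\theta)|^2\bigr)$ up to lower order, which changes sign on $\mathbb S^7$ unless $|C_0|^2$ is constant. A radial profile has a definite cone orientation only when $28\Lambda$ exceeds $\max|C_0|^2$ or lies below $\min|C_0|^2$, never at the average $28c_*^2=\fint_{\mathbb S^7}|C_0|^2$. The paper therefore proceeds in two steps:
\begin{itemize}
\item it first locks $c_*$ by blow-down classification and the growth $Q_8(R)/\log R\to\kappa_p$ of the averaged Newton charge, using $\operatorname{div}C_\infty=0$, $C_\infty x=0$ and $\sigma_2(C_0)=-\frac12|C_0|^2$;
\item only then does it trap $u$ between $a_\pm+\psi/a_\pm+\chi/a_\pm^2$, where $\psi,\chi$ are zero-mean angular correctors with $-6\Delta_{\mathbb S^7}\psi=\frac12\bigl(|C_0|^2-\fint|C_0|^2\bigr)$. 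The known limit $u/\sqrt{\log R}\to c_*$ is what lets these barriers be ordered on one large sphere.
\end{itemize}

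In (i), your justification of $A_p\ge0$ runs the wrong way. Admissible profiles are upper barriers for $U$: in the paper's notation $W=F^{-2/\alpha}$, with $F$ your $W$, a strictly admissible $W$-profile is a lower barrier for $W$. So positivity of the background can only help upper bounds, and in the conformal-normal gauge $A_g(p)=0$ anyway. The sign is local: $F=1-Cr^{-\delta}$ with $\alpha<\delta<2$ is strictly exterior, because its trace $\sim -r^{-\delta-2}$ dominates the $O(r^{-4})$ Kelvin background. Hence every blow-down is $\le0$ and $c\ge0$. Also, the degenerate equation has no Harnack inequality. Compactness needs the uniform Bernstein and $L^1$-Hessian bounds, which are also what let the Hessian-containing flux pass to the limit through its null-Lagrangian form.

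\textbf{The high-dimensional branch.} The following ingredients are missing or misstated.
\begin{itemize}
\item Nothing can be linearized until you solve the nonlinear degenerate cell $\mathcal K_2[\phi_2]+C_0\in\partial\Gamma_2$ and prove $N=T_1(\mathcal K_2[\phi_2]+C_0)\ge c_0I$. The negative radial entry $-6\phi_2$ is what excludes the rank-one boundary case.
\item The indicial roots are not computable, since $N$ depends on $\phi_2$. The gap $(0,n-6)$ and the simple root at $n-6$ come from the reflection $\mathcal L_\mu^*=\mathcal L_{n-6-\mu}$, a consequence of $\operatorname{div}N_\infty=0$. The root $n-6$ is the mirror of the constants in $\ker\mathcal L_0$.
\item Consequently the Fredholm obstruction at $m$ is the pairing with $\ker\mathcal L_m^*=\operatorname{span}\{1\}$, i.e.\ $\int\mathcal G_m\,d\theta$, not $\int F_m\psi_*$. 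The log coefficient also requires the transversality $\tau_*=\langle1,\mathcal L_m'\psi_*\rangle\ne0$.
\end{itemize}

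Most importantly, barriers $\rho^{-\alpha}[\mathrm{parametrix}\pm\varepsilon\rho^m]$ can never be strict. A perturbation $\pm\varepsilon R^{-m}\phi$ of $W$ changes the $R^4$-normalized $\sigma_2$ at leading order by $\pm\varepsilon R^{2-m}\mathcal L_m\phi$. Since $\int\mathcal L_m\phi\,d\theta=\int\phi\,\mathcal L_0 1\,d\theta=0$, this change has zero mean and no sign. The paper gets $O(R^{-m})$ trapping only with the log-modulated profile $R^{-m}(b\psi_*+b'\chi)$ with $b'>0$, whose leading contribution is $\bar\tau_*b'$ with $\tau_*<0$. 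Below the resonance it uses positive weights $\zeta_\mu$ solving $\mathcal L_\mu\zeta_\mu=1$, whose positivity for $\mu\ge\alpha$ again uses the reflection.

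Finally, the flux limit and the cylinder Liouville theorem give only $L^2_{\rm loc}$ convergence of the normalized remainder. Uniform ellipticity holds only for contact jets of size up to order $R^{m-2}$, so upgrading to uniform convergence, which is what identifies $A_p$, requires the finite-jet ABP estimate.
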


\begin{theorem}
\label{thm:main-high-flatness}
Under the hypotheses and notation of
Theorem~\ref{thm:main-working}, assume $n>8$ and, at the fixed pole $p$,
$C_0=0$.
Let $q\ge1$ be the least index such that $C_q\not\equiv0$, and set
$j=q+2$.  Then:
\begin{enumerate}[label=\textup{(\alph*)},leftmargin=2.2em]
\item If $j<\alpha$, there is a unique admissible
$\phi_j\in C^{1,1}(\mathbb S^{n-1})$ such that
\begin{equation}\label{eq:main-high-flat-subcritical}
 U_{cn}(\rho,\theta)
 =\rho^{-\alpha}
  +\frac\alpha2\phi_j(\theta)\rho^{j-\alpha}
  +o(\rho^{j-\alpha}).
\end{equation}
No expansion beyond this first nonzero cell is asserted without
nondegeneracy of its limiting Newton tensor.

\item If $j=\alpha$, let $\kappa_q$ be the quadratic obstruction defined in
\eqref{hd:eq:kappaq-def}.  If $\kappa_q>0$, then for the canonical angular
function $\psi_0$,
\begin{equation}\label{eq:main-high-flat-critical-positive}
 U_{cn}(\rho,\theta)
 =\rho^{-\alpha}
 +\sqrt{\frac{\kappa_q}{2(n-1)}}\sqrt{\log\frac1\rho}
 +\frac\alpha2\psi_0(\theta)+o(1).
\end{equation}
If $\kappa_q=0$, there are a locally determined angular function
$\phi_{\rm rem}$ and a unique constant $A_p\ge0$ such that
\begin{equation}\label{eq:main-high-flat-critical-zero}
 U_{cn}(\rho,\theta)
 =\rho^{-\alpha}+\frac\alpha2\phi_{\rm rem}(\theta)+A_p+o(1).
\end{equation}

\item If $j>\alpha$, there is a unique $A_p\ge0$ such that
\begin{equation}\label{eq:main-high-flat-supercritical}
 U_{cn}(\rho,\theta)=\rho^{-\alpha}+A_p+o(1).
\end{equation}
The same conclusion holds if all higher Kelvin--Schouten cells vanish.
\end{enumerate}
\end{theorem}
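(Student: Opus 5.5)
We work in the inverted end given by Kelvin inversion $y=x/|x|^2$ in the fixed conformal normal chart. Put $t=\log(1/\rho)$ and write the Green factor in cylindrical form as
\[
U_{cn}=\rho^{-\alpha}\bigl(1+w\bigr).
\]
The equation $\lambda(A_{g_v})\in\partial\Gamma_2$ then becomes a degenerate fully nonlinear equation for $w$ on the half-cylinder $[t_0,\infty)\times\mathbb S^{n-1}$. Its source is the expansion of the background Schouten tensor into Kelvin--Schouten cells, where $C_q$ enters with weight $e^{-(q+2)t}$.

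The hypothesis $C_0=\cdots=C_{q-1}=0$ means the forcing starts at relative degree $j=q+2$. We first invoke the a priori bound $w=o(1)$ from the Li--Nguyen asymptotic flatness. We also use the smooth strict sub- and super-solutions from the regularization as barriers. A standard weighted comparison argument then upgrades this to
\[
w=O\bigl(e^{-\min(j,\alpha)t}\,t^{\delta}\bigr).
\]
Here $\alpha$ is the degree at which the bounded scale $\rho^0=\rho^{-\alpha}\rho^{\alpha}$ enters. The three cases of the theorem are determined by which of $j$ and $\alpha$ is smaller.

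\textbf{Case (a), $j<\alpha$.} Rescale by setting $w=e^{-jt}\tfrac\alpha2\phi(t,\theta)$. Uniform $C^{1,1}$ bounds for the rescaled functions on translated cylinders let us pass to a limit as $t\to\infty$. The limit is a homogeneous cell equation on $\mathbb S^{n-1}$: the linearized Newton operator about the flat profile at weight $j$, forced by $C_q$, together with the admissibility constraint. The main points are existence and uniqueness of an admissible $C^{1,1}$ solution $\phi_j$. Since $j$ is not an indicial root in $(0,\alpha)$, the indicial operator at weight $j$ is invertible. Admissibility then singles out the correct branch, and uniqueness follows from the comparison principle applied to two translated limits. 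This yields a unique limit and hence \eqref{eq:main-high-flat-subcritical}. We assert nothing beyond this order, since higher terms would require linearizing at a possibly degenerate limiting Newton tensor.

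\textbf{Case (b), $j=\alpha$.} This is resonant with the constant mode, mirroring the $n=8$ argument of Theorem~\ref{thm:main-working}(ii). Project the equation onto $\psi=1$; on the flat profile the relevant adjoint kernel element is constant. We expect the $\sigma_2$ structure to give an ODE for the spherical mean $a(t)$ of the bounded-scale coefficient of the form
\[
2(n-1)\,\tfrac{d}{dt}\bigl(a^2\bigr)=\kappa_q+o(1).
\]
Here $\kappa_q$ is defined in \eqref{hd:eq:kappaq-def} as the mean of the quadratic interaction of the cell $C_q$ with itself, which is why a square root appears. If $\kappa_q>0$, integrating gives $a\sim\sqrt{\kappa_q t/(2(n-1))}$, which is the $\sqrt{\log(1/\rho)}$ term. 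The non-mean part is solved by a Fredholm inversion on the orthogonal complement of the constants, which produces $\psi_0$. If $\kappa_q=0$, the integrated flux is bounded. We then show that $a$ converges, using a monotone relative Newton flux and the barriers, to a limit $A_p$. Its nonnegativity follows from comparing with the sub-solution $\rho^{-\alpha}$ plus the local correction, as in case (i).

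\textbf{Case (c), $j>\alpha$.} The curvature forcing is $o(e^{-\alpha t})$, and the argument reduces verbatim to the low-dimensional proof of Theorem~\ref{thm:main-working}(i). The constant mode is the first free mode. We obtain a monotone flux quantity whose limit defines $A_p\ge0$, and the oscillation of $w-A_p$ decays by a Harnack/Krylov--Safonov argument on cylinders. If all cells vanish, the forcing is $O(e^{-Nt})$ for every $N$, which is in particular $o(e^{-\alpha t})$, so the same argument applies.

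The main obstacle is the critical case (b): deriving the exact quadratic ODE for the mean coefficient, with the constant $2(n-1)$, from a merely $C^{1,1}$ viscosity solution. We will first carry out the derivation for the smooth regularized approximants of Li--Nguyen, with error estimates uniform in the regularization parameter, and then pass to the limit.
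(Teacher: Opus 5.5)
\textbf{Case (a).} This case rests on a step that fails. At the flat profile $W\equiv1$ (whose inverted Green metric is Euclidean) the Schouten endomorphism vanishes, and the linearization of $\sigma_2$ there is $T_1(0)=0$. The $\Gamma_2$-Green equation therefore has no nondegenerate linearization at the model, and there is no ``indicial operator at weight $j$'' whose invertibility could produce the first correction.

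Substituting $W=1-R^{-j}\phi+o(R^{-j})$ into $\mathcal A_{g_\infty}[W]$ gives $R^{j+2}\mathcal A_{g_\infty}[W]=\mathcal K_j[\phi]+C_q+o(1)$. So the cell is the degenerate fully nonlinear sphere problem $\lambda(\mathcal K_j[\phi_j]+C_q)\in\partial\Gamma_2$, i.e.\ $\sigma_2(\mathcal K_j[\phi_j]+C_q)=0$ on the admissible branch. This is quadratic in $\phi_j$, which is exactly why only $C^{1,1}$ regularity and no further terms are asserted.

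Existence and uniqueness therefore need a nonlinear theory, not Fredholm inversion. The paper proceeds as follows:
\begin{itemize}
\item regularize to $\sigma_2(\mathcal K_j[\phi]+tC_q)=\varepsilon$;
\item use the exact shift $B=D+\frac{a}{n-2}g_S$ of the tangential block $D$ by the radial entry $a$, which turns this into a $\Gamma_2$ equation on $\mathbb S^{n-1}$ with positive right-hand side;
\item prove $C^0$ and $C^1$ bounds by comparison and rotations, then a $C^2$ bound, and run the continuity method;
\item let $\varepsilon\downarrow0$.
\end{itemize}
Uniqueness comes from cone additivity together with $H_j=\mathcal K_j[1]\in\Gamma_2^\circ$ for $j<\alpha$.

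Likewise, $R^j(1-W)\to\phi_j$ is obtained from exterior barriers built on the strict cells, $\phi_\varepsilon\pm\delta\pm M(R_0/R)^\eta$. Your route of extracting limits on translated cylinders would also require uniqueness of bounded entire solutions of a degenerate equation on $\mathbb R\times\mathbb S^{n-1}$. Comparison alone does not give this without control at both ends.

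\textbf{Case (b).} Here there is a precision gap. A relation $2(n-1)\frac{d}{dt}(a^2)=\kappa_q+o(1)$ only yields $a=\sqrt{\kappa_q t/(2(n-1))}\,(1+o(1))$, an error of size $o(\sqrt t)$. The statement, however, requires the remainder under the $\sqrt{\log}$ term to be $\frac\alpha2\psi_0(\theta)+o(1)$. In the paper, the modified charge $\mathcal Q_B$ (built using $\operatorname{div}T_1(B_\infty)=0$) fixes the amplitude only after the Trudinger--Wang classification shows every normalized blow-down is $-c|x|^{-\alpha}$; you omit this step. The bounded layer is then pinned by a second trapping with the refined barriers $u_\pm=a_\pm+\psi_0+\psi_1/a_\pm+\psi_2/a_\pm^2$, $a_\pm^2=\Lambda_qs+\lambda_\pm\pm K\log s$. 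Their $O(\log s)$ spread in $a_\pm^2$ is what forces $a_\pm-\sqrt{\Lambda_qs}\to0$.

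For $\kappa_q=0$ you are missing the structural fact that $\kappa_q=0$ forces $C_q=\mathcal K_\alpha[c-\psi_0^\circ]$. The critical cell is then removable by the conformal change generated by $q_{\rm loc}=-R^{-\alpha}\phi_{\rm rem}$, which reduces the problem to the $j>\alpha$ branch; this is where $\phi_{\rm rem}$ comes from. Without it, the critically normalized blow-downs satisfy $\lambda(D^2P+\widehat C_q)\in\partial\Gamma_2$ rather than the flat homogeneous $2$-Hessian equation, so no classification is available.

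\textbf{Case (c).} Reducing to the low-dimensional argument is right. That argument, however, works through tangent classification and a flux with summable (not monotone) drift, now with the extra term $O(R^{\alpha-j})$. A Krylov--Safonov oscillation estimate is unavailable, since the equation is degenerate at the flat profile and the tangent may vanish.
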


\subsection{Organization}

Section~\ref{sec:green-end-preliminaries} sets up the Green end.
Sections~\ref{sec:subcritical}--\ref{sec:endpoint8} prove the low-dimensional
and borderline cases.  Sections~\ref{sec:high-leading-correction}--\ref{sec:high-moving-scale}
develop the high-dimensional parametrix, treat higher-order flatness, and
complete the proof of the main theorems.

\section{Green-end formulation and analytic preliminaries}
\label{sec:green-end-preliminaries}
\label{sec:normalization-kelvin}

\subsection{Tensor conventions and the natural Green factor}

For a symmetric matrix $B\in \mathbb R^{n\times n}$, we
write
\begin{equation}\label{eq:sigma2-T1-conventions}
  \sigma_1(B)=\tr B,
  \qquad
  \sigma_2(B)=\frac12\bigl((\tr B)^2-\tr(B^2)\bigr),
  \qquad
  T_1(B)=\sigma_1(B)I-B.
\end{equation}
The contraction of two symmetric matrices is denoted by
$B:C=\tr(BC)$.  Throughout the paper, $g_{\mathrm E}$ denotes the
Euclidean metric on $\mathbb R^n$, and $\delta_{ij}$ denotes its coordinate
components; the unindexed symbol $\delta$ is reserved for scalar parameters.
If $h$ is a Riemannian metric and $S$ is a symmetric
$(0,2)$-tensor, then $S^\sharp=h^{-1}S$ denotes the associated
$h$-self-adjoint endomorphism.  In particular,
$(\nabla_h^2f)^\sharp$ is the Hessian of $f$ viewed as an
$h$-self-adjoint endomorphism.

Temporarily write the original representative and Green function as
$(g_0,v_0)$.  Fix an arbitrary pole $p\in S$ and choose $r_p>0$ so that
$B_{2r_p}^{g_0}(p)\cap S=\{p\}$.  All constructions below are local to the
Green end associated with this pole; the other poles enter only through the
global coefficients.  We therefore suppress the pole index throughout the
proof.  Set
$U_0=v_0^{(n-4)/(2(n-2))}$ and $\alpha=(n-4)/2$.  Then
$g_{v_0}=U_0^{8/(n-4)}g_0$ and
$U_0(x)\sim d_{g_0}(x,p)^{-\alpha}$.

\subsection{Conformal-normal coordinates}

Fix an integer $N$ larger than all orders needed in the finite asymptotic
construction.  By the conformal-normal-coordinate construction of Lee--Parker
\cite{LeeParker1987}, there is a positive smooth function $\psi$, defined near
$p$ and, after shrinking the coordinate neighborhood, extended smoothly to
$M$ so that $\psi\equiv1$ near $S\setminus\{p\}$, such that
\begin{equation}\label{eq:psi-normalization}
  g:=\psi^{\frac4{n-2}}g_0,
  \qquad
  \psi(p)=1,
  \qquad
  d\psi(p)=0,
\end{equation}
and, in $g$-normal coordinates $x=(x^1,\ldots,x^n)$ centered at $p$, writing
$\rho:=|x|$,
\begin{equation}\label{eq:conformal-normal-determinant}
  \det(g_{ij}(x))=1+O(|x|^N).
\end{equation}
The same Green metric is represented in this gauge by
\begin{equation}\label{eq:green-factor-gauge-change}
  v:=\psi^{-1}v_0,
  \qquad
  U:=v^{\frac{n-4}{2(n-2)}}
    =\psi^{-\frac{\alpha}{n-2}}U_0,
\end{equation}
for then
\begin{equation}\label{eq:same-green-metric}
  v^{\frac4{n-2}}g=v_0^{\frac4{n-2}}g_0.
\end{equation}
Because of \eqref{eq:psi-normalization} and the choice of the extension, all
normalizations in \eqref{eq:green-equation} are preserved.  Throughout the local analysis we
work with the normalized triple $(g,v,U)$ and suppress any further decoration.
We return to the original representative only after the asymptotic expansion
has been established.

The determinant condition \eqref{eq:conformal-normal-determinant}, with
$N\ge3$, implies $\operatorname{Ric}_g(p)=0$ and hence
$\operatorname{Rm}_g(p)=W_g(p)$.  With a sign
$\varsigma\in\{+1,-1\}$ depending only on the convention for the Riemann
tensor, the normal-coordinate expansion is therefore
\begin{equation}\label{eq:cn-metric-quadratic-expansion}
  g_{ij}(x)
  =\delta_{ij}
   +\varsigma\frac13 W_{ikjl}(p)x^kx^l
   +Q_{ij}(x),
\end{equation}
where
\begin{equation}\label{eq:cn-metric-cubic-remainder}
  |\partial_x^\beta Q(x)|\le C_\beta\rho^{3-|\beta|},
  \qquad |\beta|\le3.
\end{equation}
Only the quadratic term in \eqref{eq:cn-metric-quadratic-expansion} is needed
to identify the leading curvature contribution at the inverted end.  Higher
finite-order expansions will be invoked later when the high-dimensional
parametrix is constructed.

\subsection{Kelvin inversion and normalized end variables}

Let
\begin{equation}\label{eq:kelvin-map}
  \iota(y):=\frac{y}{|y|^2},
  \qquad
  R:=|y|,
  \qquad
  \theta:=\frac{y}{R}.
\end{equation}
Thus $x=\iota(y)$, $\rho=R^{-1}$, and the punctured neighborhood of $p$
becomes an exterior region $\{R>R_0\}\subset\mathbb R^n$.  Define
\begin{equation}\label{eq:F-and-ginfty}
  F(y):=R^{-\alpha}U(\iota(y)),
  \qquad
  g_\infty:=R^4\iota^*g.
\end{equation}
The normalization in \eqref{eq:green-equation} gives
\begin{equation}\label{eq:F-to-one}
  F(y)\longrightarrow1
  \qquad\text{as }R\longrightarrow\infty.
\end{equation}
Moreover,
\begin{equation}\label{eq:inverted-green-metric-F}
  \iota^*g_v
  =F^{\frac8{n-4}}g_\infty.
\end{equation}
It is convenient to make one further change of unknown:
\begin{equation}\label{eq:W-definition}
  W:=F^{-\frac4{n-4}}=F^{-\frac2\alpha}.
\end{equation}
Then
\begin{equation}\label{eq:inverted-green-metric-W}
  \iota^*g_v=W^{-2}g_\infty,
  \qquad
  W\longrightarrow1,
  \qquad
  F=W^{-\alpha/2}.
\end{equation}
For a function or tensor field \(T\) on the exterior region
\(\{R=|y|\geq R_0\}\), we write
\[
  T=O_k(R^{-q})
\]
if, in the coordinates \(y\),
\[
  |\partial_y^\beta T(y)|
  \leq C_\beta R^{-q-|\beta|},
  \qquad |\beta|\leq k.
\]
For tensor fields, the estimate is understood componentwise; unless another
metric is indicated, the norms and contractions are taken with respect to
$g_{\mathrm E}$.

\begin{lemma}
\label{lem:inverted-background}
In the coordinates above,
\begin{equation}\label{eq:ginfty-leading-expansion}
  (g_\infty)_{ij}(y)
  =\delta_{ij}
   +\varsigma\frac13W_{ikjl}(p)\frac{y^ky^l}{R^4}
   +O_3(R^{-3}).
\end{equation}
In particular,
\begin{equation}\label{eq:ginfty-basic-AF}
  g_\infty-g_{\mathrm E}=O_3(R^{-2}),
  \qquad
  (A_{g_\infty})^\sharp=O_1(R^{-4}).
\end{equation}
More precisely,
\begin{equation}\label{eq:Aginfty-leading-Weyl}
  (A_{g_\infty})^\sharp
  =R^{-4}C_0(\theta)+O_1(R^{-5}),
\end{equation}
where
\begin{equation}\label{eq:C0-definition}
  (C_0)_{ij}(\theta)
  :=\varsigma\frac23W_{ikjl}(p)\theta^k\theta^l.
\end{equation}
The leading tensor satisfies
\begin{equation}\label{eq:C0-algebra-basic}
  \tr C_0=0,
  \qquad
  C_0(\theta)\theta=0.
\end{equation}
Its homogeneous extension
\begin{equation}\label{eq:Cinfty-definition}
  C_\infty(y):=R^{-4}C_0(\theta)
  =\varsigma\frac23W_{ikjl}(p)\frac{y^ky^l}{R^6}
\end{equation}
satisfies
\begin{equation}\label{eq:Cinfty-structure}
  \tr C_\infty=0,
  \qquad
  C_\infty(y)y=0,
  \qquad
  \operatorname{div}_{g_{\mathrm E}} C_\infty=0
  \quad\text{in }\mathbb R^n\setminus\{0\}.
\end{equation}
Consequently, if $W_g(p)=0$, then
\begin{equation}\label{eq:Aginfty-Weyl-flat-improvement}
  g_\infty-g_{\mathrm E}=O_3(R^{-3}),
  \qquad
  (A_{g_\infty})^\sharp=O_1(R^{-5}).
\end{equation}
\end{lemma}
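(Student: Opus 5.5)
The plan is to compute $g_\infty$ explicitly from the conformal-normal expansion \eqref{eq:cn-metric-quadratic-expansion}. I will then obtain $A_{g_\infty}$ through the conformal transformation law rather than by expanding the Ricci tensor of $g_\infty$ directly.

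\emph{Step 1: the metric.} Write $x=\iota(y)=y/R^2$. Then
$$\partial x^i/\partial y^a=R^{-2}J_{ia},\qquad J:=I-2\theta\theta^T .$$
The matrix $J$ is a symmetric orthogonal reflection, with $J\theta=-\theta$ and $Jv=v$ for $v\perp\theta$. Hence
$$(g_\infty)_{ab}=J_{ia}\,g_{ij}(y/R^2)\,J_{jb}.$$
The quadratic term becomes $\varsigma\frac13W_{ikjl}(p)y^ky^lR^{-4}$. By the antisymmetry of $W$, the matrix $W_{i\cdot j\cdot}\theta\theta$ annihilates $\theta$, so conjugation by $J$ leaves it unchanged. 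This yields the leading term of \eqref{eq:ginfty-leading-expansion}.

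For the remainder, \eqref{eq:cn-metric-cubic-remainder} gives $\partial_x^\beta Q=O(\rho^{3-|\beta|})$. By the chain rule, each $y$-derivative of $x$ costs $R^{-2}=\rho^2$, and $\partial^k J=O(R^{-k})$. Together these give $JQJ=O_3(R^{-3})$. The bounds \eqref{eq:ginfty-basic-AF} for $g_\infty$ then follow at once.

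\emph{Step 2: the Schouten tensor.} Set $u=-2\log R$, so that $\iota^*g=e^{2u}g_\infty$. Since $\iota$ is an isometry from $\iota^*g$ to $g$, we have $A_{\iota^*g}=\iota^*A_g$. The conformal transformation law then gives
$$A_{g_\infty}=\iota^*A_g+\nabla^2_{g_\infty}u-du\otimes du+\tfrac12|du|^2_{g_\infty}g_\infty .$$
Since $\operatorname{Ric}_g(p)=0$, we have $A_g=O(\rho)$ with bounded derivatives, and the pullback by $\iota$ contributes $R^{-4}$. Therefore $\iota^*A_g=O_1(R^{-5})$.

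For $g_\infty=g_{\mathrm E}$ the remaining three terms sum to zero, because $R^4\iota^*g_{\mathrm E}=g_{\mathrm E}$ is flat. It therefore suffices to take their first variation in $H:=g_\infty-g_{\mathrm E}$, whose leading part is $H_2=\varsigma\frac13W_{ikjl}y^ky^lR^{-4}$. This variation has three pieces:
\begin{itemize}
\item the Christoffel correction $-\Gamma(H)^k_{ab}\partial_ku=2R^{-2}y^k\Gamma(H)_{kab}$;
\item the inverse-metric change in $|du|^2$;
\item the term $\tfrac12|du|^2H=2R^{-2}H$.
\end{itemize}
The key observation is that $H_2y=0$, and that $H_2$ is homogeneous of degree $-2$. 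Differentiating $H_{kb}y^k=0$ gives $y^k\partial_aH_{kb}=-H_{ab}$, and Euler's identity gives $y^k\partial_kH_{ab}=-2H_{ab}$. Hence
$$y^k\Gamma_{kab}=\tfrac12(-H-H+2H)_{ab}=0,$$
so the Christoffel piece vanishes. The inverse-metric piece also vanishes at leading order, since $H_2(du,du)=0$. Only $2R^{-2}H_2=R^{-4}C_0(\theta)$ survives, which is \eqref{eq:Aginfty-leading-Weyl}.

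The remaining contributions are bounded as follows:
\begin{itemize}
\item the cubic metric remainder contributes $O_1(R^{-5})$, since $H\in O_3$ puts $\Gamma$ in $O_2$ and $\nabla^2u$ in $O_1$;
\item terms quadratic in $H$ are $O_1(R^{-6})$;
\item raising the index with $g_\infty^{-1}$ alters the result only by $O(R^{-6})$.
\end{itemize}
This bookkeeping of orders is the main, though routine, obstacle. The cancellation $y^k\Gamma_{kab}=0$ is the one structural point, and I would verify it first.

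\emph{Step 3: algebra.}
\begin{itemize}
\item $\operatorname{tr}C_0=0$ because $W$ is totally trace-free.
\item $C_0\theta=0$ follows from $W_{ikjl}\theta^k\theta^j\theta^l=0$, by antisymmetry in $(j,l)$.
\item For the divergence of $C_\infty$, differentiating $W_{ikjl}y^ky^lR^{-6}$ in $y^i$ produces trace terms $W_{iijl}y^l$ and $W_{ikji}y^k$, both zero, together with $-6R^{-8}y^iW_{ikjl}y^ky^l=0$. Hence $\operatorname{div}C_\infty=0$.
\item If $W_g(p)=0$, the quadratic term is absent. Step 1 then gives $g_\infty-g_{\mathrm E}=O_3(R^{-3})$, and Step 2 leaves only the $O_1(R^{-5})$ contributions, which is \eqref{eq:Aginfty-Weyl-flat-improvement}.
\end{itemize}
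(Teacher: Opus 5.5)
Your proof is correct. For the metric expansion and for the algebraic identities for $C_0$ and $C_\infty$ it follows the paper's argument: pull back with $D\iota=R^{-2}(I-2\theta\otimes\theta)$, and use that $W_{ikjl}(p)\theta^k\theta^l$ annihilates $\theta$, so the reflection does not change the Weyl term.

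One small slip in wording: it is not true that each $y$-derivative of $x$ costs $R^{-2}$, since in fact $\partial_y^k x=O(R^{-1-k})$. The correct bookkeeping is that every $y$-derivative of $Q\circ\iota$ gains a factor $R^{-1}$. This gives the same conclusion $O_3(R^{-3})$.

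For the Schouten tensor your route differs from the one the paper's sketch invokes.

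\emph{The paper's route.} The ``standard Ricci--Schouten expansion'' computes the curvature of $g_\infty=g_{\mathrm E}+H_2+O_3(R^{-3})$ directly. The identities $\operatorname{tr}H_2=0$ and $\operatorname{div}H_2=0$, together with harmonicity of the numerator $W_{akbl}y^ky^l$, give $\operatorname{Ric}^{(1)}=-\frac12\Delta H_2=2(n-2)R^{-2}H_2$. Its scalar part vanishes, so $A^{(1)}=2R^{-2}H_2=R^{-4}C_0$. The $O_1(R^{-5})$ remainder comes from second derivatives of the cubic term.

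\emph{Your route.} You use naturality of $A$ under $\iota$ together with the conformal law for $\iota^*g=R^{-4}g_\infty$. The curvature of $g$ then enters only through $\iota^*A_g=O_1(R^{-5})$, which is exactly where the conformal-normal condition $\operatorname{Ric}_g(p)=0$ is used. The leading term is produced entirely by the conformal-factor terms acting on $H_2$. Your computation needs only first derivatives of $H$ and the single cancellation $y^k\Gamma(H_2)_{kab}=0$.

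Both routes give the same coefficient $2R^{-2}H_2$. Yours makes the role of the gauge and the origin of the $R^{-5}$ remainder more transparent. It also costs one fewer derivative on the inverted metric, since $O_2$ control of $g_\infty$ already gives $O_1$ control of $\nabla^2_{g_\infty}u$. The direct route is self-contained and uses the same Weyl trace and divergence identities that the paper needs later for $\operatorname{div}C_\infty=0$ anyway.
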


\begin{proof}
The conformal-normal expansion \eqref{eq:cn-metric-quadratic-expansion}--\eqref{eq:cn-metric-cubic-remainder} is standard; see \cite{LeeParker1987}.  Using $D\iota=R^{-2}(I-2\theta\otimes\theta)$ and the Weyl symmetries gives \eqref{eq:ginfty-leading-expansion}, while the standard Ricci--Schouten expansion gives \eqref{eq:Aginfty-leading-Weyl}.  The remaining identities follow directly from the algebraic symmetries and trace-freeness of the Weyl tensor; if $W_g(p)=0$, the leading terms vanish.
\end{proof}

\subsection{The equation in the \texorpdfstring{$W$}{W}-variable}

If $h$ is a smooth background metric and $W>0$, the conformal change
$\widehat h=W^{-2}h$ satisfies
\begin{equation}\label{eq:Schouten-W-covariant}
  A_{\widehat h}
  =A_h+W^{-1}\nabla_h^2W
       -\frac12W^{-2}|\nabla W|_h^2h.
\end{equation}
Raising $A_{\widehat h}$ with $\widehat h$ (while all $\sharp$
operations on the right below are taken with $h$), define
\begin{equation}\label{eq:calA-h-W}
  \mathcal A_h[W]
  :=W^2(A_h)^\sharp
    +W(\nabla_h^2W)^\sharp
    -\frac12|\nabla W|_h^2I.
\end{equation}
The Green equation on the inverted end becomes
\begin{equation}\label{eq:W-end-equation}
  \lambda\bigl(\mathcal A_{g_\infty}[W]\bigr)
  \in\partial\Gamma_2
  \qquad\text{in }\{R>R_0\},
\end{equation}
with $W\to1$ at infinity.  This is the basic equation used throughout the
paper.

A perturbation $W=1+O(R^{-\mu})$ contributes at order $R^{-\mu-2}$,
so the leading curvature term in \eqref{eq:Aginfty-leading-Weyl} is balanced
at $\mu=2$.  Since
$U=\rho^{-\alpha}W^{-\alpha/2}$, the absolute bounded scale is
$\mu=\alpha$.  Their comparison gives the three regimes in
Theorem~\ref{thm:main-working}.

\subsection{\texorpdfstring{$\Gamma_2$}{Gamma-2}-Green-function theory at the Green end}
\label{subsec:gamma2-green-theory}

We conclude the common setup by isolating the $\Gamma_2$-Green-function properties used in the sequel.  We retain the notation of
Section~\ref{sec:normalization-kelvin}; in particular,
$g_{\mathrm{cn}}$, $v_{\mathrm{cn}}$, $U_{\mathrm{cn}}$, $F$, $W$,
and $g_\infty$ denote the conformal-normal and Kelvin-end quantities
introduced there.

\begin{proposition}
\label{prop:gamma2-green-theory}
Under the standing assumptions, the following statements hold.

\begin{enumerate}[label=\textup{(\roman*)},leftmargin=2.2em]
\item The normalized $\Gamma_2$-Green function with pole set $S$ is unique and
satisfies
\begin{equation}\label{eq:LN-C11}
  v_{\mathrm{cn}}
  \in C^{1,1}_{\mathrm{loc}}(M\setminus S).
\end{equation}
Moreover, after decreasing the decay exponent if necessary, there are
constants
\[
  0<\beta_0<\min\{2,\alpha\},
  \qquad C>0,
  \qquad R_0>0,
\]
such that
\begin{equation}\label{eq:LN-subcritical-decay}
  |F(y)-1|+|W(y)-1|
  \leq C|y|^{-\beta_0}
  \qquad\text{for }|y|\geq R_0.
\end{equation}

\item There exist sequences $r_j\downarrow0$, $\delta_j\downarrow0$,
and positive functions
\[
  v_{\mathrm{cn},j}
  \in C^\infty
  \left(M\setminus
    \bigcup_{q\in S}\overline{B_{r_j}(q)}\right)
\]
such that the metrics
\[
  g_j:=v_{\mathrm{cn},j}^{\frac4{n-2}}g_{\mathrm{cn}}
\]
satisfy
\begin{equation}\label{eq:LN-unscaled-regularization}
  \lambda(A_{g_j})\in\Gamma_2,
  \qquad
  \sigma_2\bigl(\lambda(A_{g_j})\bigr)^{1/2}=\delta_j
\end{equation}
on their domains, and
\begin{equation}\label{eq:LN-local-convergence}
  v_{\mathrm{cn},j}\longrightarrow v_{\mathrm{cn}}
  \quad\text{in }C^{1,\gamma}_{\mathrm{loc}}(M\setminus S)
  \quad\text{for every }\gamma\in(0,1).
\end{equation}

\item Define
\[
  U_{\mathrm{cn},j}
  :=v_{\mathrm{cn},j}^{\frac{n-4}{2(n-2)}},
  \qquad
  F_j(y):=|y|^{-\alpha}U_{\mathrm{cn},j}(\iota(y)),
  \qquad
  W_j:=F_j^{-2/\alpha}.
\]
For $R>0$, let
\[
  D_R(x):=Rx,
  \qquad
  g_R:=R^{-2}D_R^*g_\infty.
\]
Fix $K\Subset\mathbb R^n\setminus\{0\}$ and choose a normalization
$\varepsilon_R>0$.  Write
\[
  W(Rx)=1+\varepsilon_RP_R(x),
  \qquad
  W_j(Rx)=1+\varepsilon_RP_{R,j}(x).
\]
For all sufficiently large $j$, the functions $P_{R,j}$ are smooth on
$K$, $1+\varepsilon_RP_{R,j}>0$ there, and
\begin{equation}\label{eq:LN-scaled-convergence}
  P_{R,j}\longrightarrow P_R
  \quad\text{in }C^{1,\gamma}(K)
  \quad\text{for every }\gamma\in(0,1).
\end{equation}
Set $W_{R,j}:=1+\varepsilon_RP_{R,j}$.  The normalized augmented Hessian
\begin{equation}\label{eq:LN-scaled-M}
  \mathcal M_{R,j}
  :=\bigl(\nabla^2_{g_R}P_{R,j}\bigr)^\sharp
   -\frac{\varepsilon_R}{2W_{R,j}}
      |\nabla P_{R,j}|_{g_R}^2I
   +\frac{W_{R,j}}{\varepsilon_R}(A_{g_R})^\sharp
\end{equation}
satisfies
\begin{equation}\label{eq:LN-normalized-strict-equation}
  \lambda(\mathcal M_{R,j})\in\Gamma_2,
  \qquad
  \sigma_2(\mathcal M_{R,j})
  =\tau_{R,j}W_{R,j}^{-2},
  \qquad
  \tau_{R,j}:=\left(\frac{R^2\delta_j}{\varepsilon_R}\right)^2.
\end{equation}
In particular, for each fixed $R$,
\begin{equation}\label{eq:LN-tau-to-zero}
  \tau_{R,j}\longrightarrow0
  \qquad\text{as }j\to\infty.
\end{equation}
\end{enumerate}
\end{proposition}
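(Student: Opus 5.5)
This proposition largely translates the Li--Nguyen theory \cite{LiNguyen2023} into the conformal-normal and Kelvin-end variables, so the plan is to take each statement from there and check that it survives the changes of variables.

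\textbf{Part (i).} Existence, uniqueness and $C^{1,1}_{\mathrm{loc}}$ regularity are proved for the original pair $(g_0,v_0)$. The gauge change \eqref{eq:green-factor-gauge-change} multiplies $v_0$ by the smooth positive factor $\psi^{-1}$, and \eqref{eq:same-green-metric} shows the Green metric is unchanged. Hence the equation and the normalization are preserved, and so are regularity and uniqueness: two normalized solutions in the $g$-gauge would pull back to two normalized solutions for $g_0$.

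For the decay \eqref{eq:LN-subcritical-decay}, I would use the fact from \cite{LiNguyen2023} that $g_v$ has an asymptotically flat end at $p$. Concretely, this gives a rate $d^{n-2}v_0=1+O(d^{\beta})$ for some $\beta>0$; if it is not stated in that form, I would obtain it from their barrier construction. Next I would convert this to the gauge $g$. Since $\psi(p)=1$ and $d\psi(p)=0$, we have $\psi=1+O(\rho^2)$, and $d_{g_0}$ and $\rho$ agree to relative order $O(\rho)$. The outcome is $F-1=O(R^{-\beta'})$. Because $W=F^{-2/\alpha}$ and $F\to1$, the same rate holds for $W-1$. Finally, shrinking the exponent gives $\beta_0<\min\{2,\alpha\}$.

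\textbf{Part (ii).} The Li--Nguyen existence proof solves regularized Dirichlet problems $\sigma_2^{1/2}(\lambda(A_{g_j}))=\delta_j$ on $M$ with small balls around the poles removed. Their boundary data encode the strengths, and the solutions converge locally uniformly to $v$. They are smooth by Evans--Krylov, since the right-hand side is a positive constant. I would re-express these solutions in the $g_{\mathrm{cn}}$ gauge through the same factor $\psi$. The $\sigma_2$ of the Schouten eigenvalues is unaffected, because the conformal metric itself is unchanged.

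To upgrade to $C^{1,\gamma}_{\mathrm{loc}}$ convergence, I would use the uniform local gradient and Hessian bounds of Guan--Wang/Li type for $\sigma_2$ metrics in $\Gamma_2$. Uniform $C^{1,1}$ bounds together with Arzel\`a--Ascoli give the stated convergence for every $\gamma<1$. Uniqueness of the limit, from (i), upgrades subsequential convergence to convergence of the whole sequence.

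\textbf{Part (iii).} This is a computation. Pulling back by $\iota$ and setting $W_j=F_j^{-2/\alpha}$, formula \eqref{eq:Schouten-W-covariant} together with the raising convention gives
$\lambda(\mathcal A_{g_\infty}[W_j])\in\Gamma_2$ and $\sigma_2(\mathcal A_{g_\infty}[W_j])=\delta_j^2\,(\text{conformal factor})$.
To fix this factor, note that the $\Gamma_2$ condition is conformally invariant, while $\sigma_2$ scales by the inverse square of the metric factor. Relative to $W_j^{-2}g_\infty$, this produces the weight $W_j^{-2}$ and a power $|y|^{4}$ coming from $\iota^*g=|y|^{-4}g_\infty$. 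Then I would rescale by $D_R$. Since $g_R=R^{-2}D_R^*g_\infty$, Hessians acquire a factor $R^2$ and $A_{g_R}=D_R^*A_{g_\infty}$. Dividing by $\varepsilon_R$ yields $\mathcal M_{R,j}$ together with the constant $\tau_{R,j}=(R^2\delta_j/\varepsilon_R)^2$, possibly up to a harmless smooth bounded factor that I would absorb into the normalization of $\delta_j$. The convergence \eqref{eq:LN-scaled-convergence} follows from \eqref{eq:LN-local-convergence}, because $\iota\circ D_R$ maps $K$ diffeomorphically onto a compact subset of $M\setminus S$. Positivity of $W_{R,j}$ for large $j$ follows from uniform convergence to $W>0$.

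\textbf{Main obstacle.} I expect the main obstacle to be the exact constant in the $\sigma_2$ scaling. The Kelvin factor $|y|^4$ must cancel precisely against the weight in the regularized equation, so that $\tau_{R,j}$ is a constant and not a function. If it does not cancel, I would state (iii) with $\tau_{R,j}$ replaced by a uniformly bounded positive function times $(R^2\delta_j/\varepsilon_R)^2$. This weaker form still gives $\tau_{R,j}\to0$ on $K$, which is all that \eqref{eq:LN-tau-to-zero} requires.
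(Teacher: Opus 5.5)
Your handling of (i) and (ii) matches the paper, which likewise cites Li--Nguyen and transports the results through the gauge factor $\psi$.

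The gap is in (iii). Its content is the exact identity $\sigma_2(\mathcal M_{R,j})=\tau_{R,j}W_{R,j}^{-2}$ with $\tau_{R,j}$ a constant. You leave open whether a factor $|y|^4$ survives, and you fall back on a version with an unspecified bounded function of $x$. That does not prove the statement, because a nonconstant factor cannot be absorbed into the constant $\delta_j$. The exact form is also used later. The Bernstein argument treats $t=\tau_{R,j}W^{-2}$ as depending on $x$ only through $u$, which is what gives $t_u=-4bt$ when the equation is differentiated.

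The bookkeeping behind your worry is wrong in two places, and correcting it closes the gap.

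\textbf{No conformal weight appears after inversion.} By \eqref{eq:calA-h-W}, $\mathcal A_{g_\infty}[W_j]$ is the Schouten endomorphism of $\widehat h=W_j^{-2}g_\infty$, raised with $\widehat h$ itself. Since $W_j=|y|^2(v_{\mathrm{cn},j}\circ\iota)^{-2/(n-2)}$, one has $W_j^{-2}g_\infty=\iota^*g_j$. So $\iota$ is an isometry and $\sigma_2(\mathcal A_{g_\infty}[W_j])=\delta_j^2$ exactly; the $|y|^4$ is already inside $W_j$.

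For the dilation, $W_{R,j}^{-2}g_R=R^{-2}D_R^*(W_j^{-2}g_\infty)$, and the Schouten $(0,2)$-tensor is invariant under constant rescaling. Hence $\mathcal A_{g_R}[W_{R,j}]=R^2D_R^*\mathcal A_{g_\infty}[W_j]$ and $\sigma_2=R^4\delta_j^2$.

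\textbf{Dividing by $\varepsilon_R$ alone does not give $\mathcal M_{R,j}$.} The correct factorization is $\mathcal A_{g_R}[W_{R,j}]=\varepsilon_RW_{R,j}\mathcal M_{R,j}$. The weight $W_{R,j}^{-2}$ comes from this factor via degree-two homogeneity of $\sigma_2$, not from a conformal factor. Admissibility of $\mathcal M_{R,j}$ follows since $\varepsilon_RW_{R,j}>0$.

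With these corrections your argument becomes exactly the paper's.
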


\begin{proof}
Statements \textup{(i)}--\textup{(ii)} are precisely the finite-pole results
of \cite[Theorems~1.2 and~1.4, and Section~4]{LiNguyen2023}, rewritten at the
fixed pole in the conformal-normal representative; decreasing the decay
exponent gives \eqref{eq:LN-subcritical-decay}.  For \textup{(iii)}, Kelvin
inversion, dilation, and \eqref{eq:calA-h-W}, together with
$g_R=R^{-2}D_R^*g_\infty$, give
\[
 \mathcal A_{g_R}[W_{R,j}]
 =R^2D_R^*\mathcal A_{g_\infty}[W_j]
 =\varepsilon_RW_{R,j}\mathcal M_{R,j}.
\]
Here $D_R^*$ denotes the natural pullback of the $(1,1)$-endomorphism,
and the factor $R^2$ comes from the constant rescaling of the metric.
The transformed strict equation is
\[
 \lambda\bigl(\mathcal A_{g_R}[W_{R,j}]\bigr)\in\Gamma_2,
 \qquad
 \sigma_2\bigl(\mathcal A_{g_R}[W_{R,j}]\bigr)=R^4\delta_j^2.
\]
The homogeneity of $\sigma_2$ gives \eqref{eq:LN-normalized-strict-equation}, and \eqref{eq:LN-local-convergence} with $\delta_j\to0$ gives the remaining assertions.
\end{proof}

\subsection{Logical architecture of the proof}

We now explain how Sections~\ref{sec:subcritical}--\ref{sec:high-moving-scale}
fit together to prove Theorems~\ref{thm:main-working} and
\ref{thm:main-high-flatness}.  Two relative homogeneities organize the
argument.  A mode $R^{-\mu}\phi(\theta)$ in $W-1$ enters the Hessian term
$W(\nabla_{g_\infty}^2W)^\sharp$ in \eqref{eq:calA-h-W} at order
$R^{-\mu-2}$, whereas the background term
$W^2(A_{g_\infty})^\sharp$ begins at order $R^{-4}$; hence the first local
curvature balance is $\mu=2$.  On the other hand,
\[
 U=\rho^{-\alpha}W^{-\alpha/2},
 \qquad \alpha=\frac{n-4}{2},
\]
so \(\mu=\alpha\) is the scale of an absolute bounded correction to \(U\).
Thus \(\alpha<2\), \(\alpha=2\), and \(\alpha>2\) lead respectively to a
finite-charge term, a borderline resonance, and a local curvature expansion
before the first bounded term.

\smallskip
\noindent
\emph{Finite charge and the borderline dimension.}
Let \(j\ge2\) be the first relative curvature degree, so its contribution to
\(W\) has order \(R^{-j}\).  If \(j>\alpha\), in particular when
\(5\le n\le7\), comparison barriers give
\[
 |W-1|\le CR^{-\alpha},
 \qquad
 W(Rx)=1+R^{-\alpha}P_R(x).
\]
Compactness and the isolated-singularity classification imply that every
blow-down has a subsequence converging to \(-c|x|^{-\alpha}\).  To remove the
subsequence, average the radial flux of the Newton field
\(T_1((\nabla_{g_R}^2P_R)^\sharp)\nabla^{g_R}P_R\) and call the result
\(Q(R)\).  The null-Lagrangian identity gives
\[
 |Q(tR)-Q(R)|
 \le C\bigl(R^{-\alpha}+R^{-2}+R^{\alpha-j}\bigr),
 \qquad
 Q[-c|x|^{-\alpha}]
 =(n-1)|\mathbb S^{n-1}|\alpha^2c^2.
\]
for \(1\le t\le2\).  Since the drift is dyadically summable, \(Q(R)\)
converges and its model value forces all tangents to have the same \(c\).
Thus \(P_R\to-c|x|^{-\alpha}\) and \(A=\alpha c/2\).

When \(n=8\), \(\alpha=2\) and the leading Weyl curvature is resonant.  If
\(W_g(p)\ne0\), the identities
\(\sigma_2(C_0)=-|C_0|^2/2\) and
\(\operatorname{div}C_\infty=C_\infty x=0\) cancel the mixed
Hessian--curvature term and yield
\[
 Q_8(tR)-Q_8(R)=\kappa_p\log t+o(1),
 \qquad
 \kappa_p=\frac{|\mathbb S^7|}{120}|W_g(p)|^2.
\]
The charge is quadratic in the tangent amplitude, so
\(Q_8(R)\sim\kappa_p\log R\) produces the \(\sqrt{\log R}\) law, with
\(c=|W_g(p)|/(4\sqrt{210})\).  If \(W_g(p)=0\), then \(j>\alpha\) and the
finite-charge argument applies.

\smallskip
\noindent
\emph{The nonflat high-dimensional branch.}
Assume \(n>8\) and \(C_0\not\equiv0\).  The weighted \(C^2\)
degree-two ansatz for \(W\) gives the nonlinear spherical cell
\[
 \mathcal S_4:=\mathcal K_2[\phi_2]+C_0,
 \qquad
 \lambda(\mathcal S_4)\in\partial\Gamma_2,
 \qquad
 N:=T_1(\mathcal S_4)\ge c_0I.
\]
The positive Newton tensor makes the indicial pencil
\(\mathcal L_\mu\phi:=N:\mathcal K_\mu[\phi]\) elliptic, and its divergence
form gives
\[
 \mathcal L_\mu^*=\mathcal L_{n-6-\mu},
 \qquad
 \ker\mathcal L_\mu=\{0\}\quad(0<\mu<n-6).
\]
Hence \(m=n-6\) is the first positive root.  The equations
\[
 \mathcal L_k\phi_k
 =F_k(C_0,\ldots,C_{k-2};\phi_2,\ldots,\phi_{k-1}),
 \quad 3\le k<m,
\]
form a triangular recursion, and comparison barriers identify the resulting
parametrix with the true expansion.  Thus all coefficients below \(m\) are
local, determined by a finite curvature jet at \(p\).  At \(m\), let
\(\psi_*>0\) span \(\ker\mathcal L_m\), with
\(\int_{\mathbb S^{n-1}}\psi_*=1\).  Since
\(\ker\mathcal L_m^*=\operatorname{span}\{1\}\), the spherical mean fixes
the logarithmic coefficient:
\[
 a_{\log}
 =-\frac{\int_{\mathbb S^{n-1}}\mathcal G_m\,d\theta}{\tau_*},
 \qquad
 \tau_*:=\langle1,\mathcal L_m'\psi_*\rangle<0.
\]
The coefficient of \(R^{-m}\psi_*\), however, is global.

\smallskip
\noindent
\emph{Flux selection and the refined ABP estimate.}
Put \(q=\log W\), \(q_0=\log P_{\rm res}\), \(h=q-q_0\), and, along
\(q_t=q_0+th\), set
\(\mathscr A_t=e^{-2q_t}\mathcal A_{g_\infty}[e^{q_t}]\) and
\(J_h=(\int_0^1T_1(\mathscr A_t)\,dt)\nabla h\).  This path-averaged Newton
field satisfies
\[
 \operatorname{div}_{g_\infty}J_h
 =-\sigma_2(\mathscr A_0)
 +(n-4)\int_0^1T_1(\mathscr A_t)(\nabla q_t,\nabla h)\,dt.
\]
After averaging \(\langle J_h,\nabla r\rangle\) on \(R<r<2R\), this identity
and the critical Caccioppoli estimate give a limiting flux
\(\mathfrak F_\infty\), with
\[
 R^mh\longrightarrow B\psi_*
 \quad\Longrightarrow\quad
 \mathfrak F_\infty=\tau_*B,
 \qquad
 B_*=\frac{\mathfrak F_\infty}{\tau_*}.
\]
This selects a candidate but does not yet give uniform convergence.  If a
critical remainder survives after subtracting \(B_*R^{-m}\psi_*\), normalize
it on scales \(R_k\to\infty\).  Although uniform ellipticity is then available
only for contact jets of size \(O(\delta_k^{-1})\), the finite-jet
Pucci--ABP estimate gives
\[
 \|v_k\|_{L^\infty(K)}
 \le C\|v_k\|_{L^2(K')}^\gamma+C\delta_k^{\beta_*/2}.
\]
The weak limit solves
\(\operatorname{div}(N_\infty\nabla v_\infty)=0\) with zero conormal flux, so
the critical-cylinder Liouville theorem gives \(v_\infty=0\).  The displayed
estimate upgrades this \(L^2_{\rm loc}\)-convergence to local uniform
convergence, contradicting the normalization.  Hence \(B_*\) is the actual
and unique global coefficient.

\smallskip
\noindent
\emph{Higher-order flatness.}
If \(C_q\) is the first nonzero Kelvin--Schouten coefficient, set \(j=q+2\).
For \(j<\alpha\), the cell
\(\mathcal K_j[\phi_j]+C_q\in\partial\Gamma_2\) gives the first local
correction; without nondegeneracy of its Newton tensor no further expansion
is asserted.  At \(j=\alpha\), the zero-mean corrector defines
\[
 B^\circ=C_q+\mathcal K_\alpha[\psi_0^\circ],
 \qquad
 \kappa_q=-\fint_{\mathbb S^{n-1}}\sigma_2(B^\circ)\,d\theta\ge0.
\]
If \(\kappa_q>0\), the modified charge produces the
\(\sqrt{\log R}\) law.  If \(\kappa_q=0\), the critical cell is removable
and the remaining term is finite charge.  Finally, \(j>\alpha\) is already
covered by the finite-charge mechanism.  These alternatives complete the
proofs of Theorems~\ref{thm:main-working} and
\ref{thm:main-high-flatness}.


\section{The subcritical dimensions \texorpdfstring{$5\le n\le7$}{5 <= n <= 7}}
\label{sec:subcritical}

\subsection{Barrier package and critical decay}
\label{sec:low-barriers}

Throughout this section $5\le n\le7$ and
$\alpha=(n-4)/2\in(0,2)$.  We use the standard admissible comparison
principle on bounded annuli together with multiplicative compensation at
infinity.

For $g[h]=h^{8/(n-4)}\bar g=e^{2\omega}\bar g$, set
\[
 \mathcal B_{\bar g}[h]
 :=\bar g^{-1}\!\left(A_{\bar g}-\nabla^2\omega
 +d\omega\otimes d\omega-\frac12|\nabla\omega|^2\bar g\right).
\]
For a Euclidean radial profile, with $s=-r\omega'$, one has
\begin{equation}\label{eq:low-radial-eigenvalues}
 \lambda_r=\frac{s'}r-\frac{s(2-s)}{2r^2},\qquad
 \lambda_\theta=\frac{s(2-s)}{2r^2},\qquad
 \sigma_2=(n-1)\lambda_\theta\left(\frac{s'}r+\alpha\lambda_\theta\right).
\end{equation}
In particular,
\begin{equation}\label{eq:low-qgamma-sigma2}
 \sigma_2(D^2(-r^{-\gamma}))
 =(n-1)\gamma^2(\alpha-\gamma)r^{-2\gamma-4},
\end{equation}
so $\gamma=\alpha$ is the critical homogeneous exponent.

\begin{proposition}\label{prop:low-barrier-package}
Assume $|F-1|\le C_\beta r^{-\beta}$ for some $0<\beta<\alpha$.  Then
\begin{enumerate}[label=\textup{(\roman*)},leftmargin=2.2em]
\item $|F-1|\le C_\gamma r^{-\gamma}$ for every $\gamma<\alpha$;
\item for every $\delta\in(\alpha,2)$,
$F\ge1-C_\delta r^{-\delta}$, hence
$R^\alpha(F-1)_-\to0$ on $|x|=R$;
\item $|F-1|\le Cr^{-\alpha}$.  More precisely, if
\[
 M(R):=\max_{|x|=R}R^\alpha(F-1),\qquad M_+(R):=\max\{M(R),0\},
\]
then
\begin{equation}\label{eq:low-dyadic-recurrence}
 M_+(2R)\le M_+(R)+CR^{\alpha-2}.
\end{equation}
\end{enumerate}
\end{proposition}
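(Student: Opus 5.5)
All three parts will come from comparison on bounded annuli $\{R_1<|y|<R_2\}$, with barriers built from the homogeneous profiles $h=1\pm a r^{-\gamma}$ in the $F$-variable. Note that $g[h]=h^{8/(n-4)}g_\infty$, so $\omega=\frac4{n-4}\log h$. By \eqref{eq:low-radial-eigenvalues}, a Euclidean radial profile $1-ar^{-\gamma}$ has $s=-r\omega'\approx-\frac{4a\gamma}{n-4}r^{-\gamma}$ to leading order. Its Schouten eigenvalues are therefore of size $ar^{-\gamma-2}$, and by \eqref{eq:low-qgamma-sigma2} $\sigma_2$ has the sign of $\gamma^2(\alpha-\gamma)$ times the sign coming from $a$.

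On $g_\infty$ I will treat the background Schouten tensor $O_1(r^{-4})$ from \eqref{eq:ginfty-basic-AF}, together with the metric error $O_3(r^{-2})$ acting on the Hessian, as perturbations. They contribute $O(r^{-4})$ and $O(ar^{-\gamma-4})$ to the eigenvalues, and $O(a r^{-\gamma-2}\cdot r^{-4})$ to $\sigma_2$. Since $\gamma<2$, the principal part $\asymp a^2 r^{-2\gamma-4}$ dominates these errors once $a$ is large or $r$ is large.

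\textbf{Part (i)} will follow by bootstrapping the decay exponent.
\begin{itemize}
\item For $\gamma\in(\beta,\alpha)$, the profile $1-ar^{-\gamma}$ (respectively $1+ar^{-\gamma}$) is a strict subsolution (respectively supersolution) outside a large ball: it lies in $\Gamma_2$ with $\sigma_2>0$ (respectively outside $\overline{\Gamma_2}$).
\item Choose $a$ large so that the barrier dominates at $r=R_0$.
\item At the outer radius, compensate multiplicatively: compare $F$ with $(1+\epsilon)(1+ar^{-\gamma})$ and let $\epsilon\to0$. This is legitimate because $F\to1$ and the conformal structure scales constants harmlessly, since a constant multiple of a barrier is still a barrier for the degenerate equation.
\item Apply the comparison principle for the viscosity Green equation against smooth strict barriers, using the Li--Nguyen regularizations of Proposition~\ref{prop:gamma2-green-theory}(ii) if needed.
\end{itemize}
The hypothesis $\beta>0$ is used only to guarantee $F\to1$ and to start the outer compensation.

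\textbf{Part (ii)} uses a lower barrier $1-ar^{-\delta}$ with $\delta\in(\alpha,2)$. For such $\delta$, \eqref{eq:low-qgamma-sigma2} has the opposite sign ($\alpha-\delta<0$), so the naive profile fails. Instead I will use the combination $1-ar^{-\alpha}+\dots$, or more simply exploit that the curvature errors enter at order $r^{-4}$. I expect this to be the main obstacle, and the point needing the most care is which side of $\partial\Gamma_2$ each profile lies on. The plan is to take as subsolution the profile $h=1+b r^{-\gamma}-ar^{-\delta}$ with $\gamma<\alpha<\delta<2$ and check the sign of $\sigma_2$ directly from \eqref{eq:low-radial-eigenvalues}:
\begin{itemize}
\item the $r^{-\gamma}$ part provides $\lambda_\theta>0$ of size $r^{-\gamma-2}$;
\item the mixed term is positive for the correct sign;
\item the curvature error $r^{-4}$ is absorbed since $\gamma+2<4$.
\end{itemize}
Comparing as in (i) yields $F\ge1-C_\delta r^{-\delta}$ after letting $b\downarrow0$ suitably, with $a$ fixed by the inner boundary. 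Since $\delta>\alpha$, this gives $R^\alpha(F-1)_-\to0$.

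\textbf{Part (iii)} bounds the positive part on dyadic annuli. On $\{R\le|y|\le2R\}$ I compare $F$ with the critical supersolution $1+M_+(R)R^{\alpha}r^{-\alpha}+E(r)$, where the correction $E$ absorbs the curvature error. At $\gamma=\alpha$ the principal $\sigma_2$ vanishes by \eqref{eq:low-qgamma-sigma2}, so the error $O(r^{-4})$ must be beaten by the correction $E=CR^{\alpha-2}r^{-\alpha}\cdot(\text{bounded radial factor})$; this is where $\alpha<2$ enters.

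The outer boundary is handled by comparing with the analogous profile from the next annulus outward and iterating, or more cleanly by using (i) and (ii) to run the comparison on $\{R\le r\le \Lambda\}$ with $\Lambda\to\infty$. Evaluating at $r=2R$ then gives \eqref{eq:low-dyadic-recurrence}. Summing the geometric series $\sum_k (2^kR)^{\alpha-2}<\infty$ shows $M_+$ is bounded, and combining with (ii) gives $|F-1|\le Cr^{-\alpha}$.
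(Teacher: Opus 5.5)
Your plan has a genuine orientation error, and it is fatal for (ii). Passing to $W=F^{-2/\alpha}$, the leading part of $\mathcal A_{g_\infty}[W]$ for the $F$-profile $1\pm ar^{-\gamma}$ is $\pm\frac{2a}{\alpha}D^2(-r^{-\gamma})$. By \eqref{eq:low-qgamma-sigma2}, both signs therefore have the same $\sigma_2$: it is quadratic and does not see the sign of $a$. What separates them is the trace, $\pm\frac{2a}{\alpha}\gamma(n-2-\gamma)r^{-\gamma-2}$. Hence for $\gamma<\alpha$ the profile $1+ar^{-\gamma}$ lies in $\Gamma_2$, while $1-ar^{-\gamma}$ lies in $-\Gamma_2$, outside $\overline{\Gamma_2}$ --- the reverse of what you assert. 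The comparison direction is also the reverse of your convention. Touching $F$ by a homothetic multiple of a barrier shows that strictly admissible profiles are upper barriers for $F$ and strictly exterior profiles are lower barriers. This is Lemma~\ref{hd:lem:W-multiplicative-compensation} read through the order-reversing map $W=F^{-2/\alpha}$. In (i) your two reversals cancel, so the inequalities come out right, but not for the reasons you give.

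In (ii) they do not cancel. The naive profile $1-ar^{-\delta}$, $\delta\in(\alpha,2)$, does not fail. Its trace $\approx-\frac{2a}{\alpha}\delta(n-2-\delta)r^{-\delta-2}$ is negative and dominates the $O(r^{-4})$ Kelvin background because $\delta<2$. So it is strictly exterior, hence a lower barrier regardless of the sign of $\sigma_2$; with compensation at infinity this is the paper's entire proof of (ii). Your replacement $h=1+br^{-\gamma}-ar^{-\delta}$ has $\frac{s'}{r}+\alpha\lambda_\theta=\frac{2}{\alpha hr^2}\bigl(b\gamma(\alpha-\gamma)r^{-\gamma}+a\delta(\delta-\alpha)r^{-\delta}\bigr)>0$, and $\lambda_\theta>0$ once $b\gamma r^{-\gamma}>a\delta r^{-\delta}$. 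So for every $b>0$ it is strictly admissible --- an upper barrier --- on the far end, and it changes orientation at a finite radius. No comparison can give $F\ge h$; indeed $F\ge h$ contradicts (i) at large $r$ (use an exponent in $(\gamma,\alpha)$). Letting $b\downarrow0$ is therefore a limit of invalid comparisons.

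The same point governs (iii). The upper barrier must be strictly admissible, and since $1+Ar^{-\alpha}$ is an exact flat solution, adding $CR^{\alpha-2}r^{-\alpha}$ produces no margin at all. The correction must be the negative term in $1+\bigl(M_+(R)+KR^{\alpha-2}\bigr)r^{-\alpha}-\eta KR^{\delta-2}r^{-\delta}$, with $\delta\in(\alpha,2)$ and $\eta$ small. Note also that the leading coefficient is $M_+(R)r^{-\alpha}$, not $M_+(R)R^{\alpha}r^{-\alpha}$. For this profile, $\frac{s'}{r}+\alpha\lambda_\theta=\frac{2\delta(\delta-\alpha)}{\alpha}\,\frac{\eta KR^{\delta-2}r^{-\delta}}{hr^2}$ is positive and beats the background on $r\ge R$ because $\delta<2$. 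Under your stated convention (supersolution $=$ outside $\overline{\Gamma_2}$) you would have chosen this sign the wrong way.
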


\begin{proof}
Direct substitution in \eqref{eq:low-radial-eigenvalues} gives the required
opposite cone signs for the power profiles $1\pm ar^{-\gamma}$ and for the
mixed critical profiles
\[
 1+ar^{-\alpha}\mp br^{-\delta},\qquad \alpha<\delta<2,
\]
whose decisive factor is
\[
 \frac{s'}r+\alpha\lambda_\theta
 =\pm\frac{2\delta(\delta-\alpha)}{\alpha}
   \frac{br^{-\delta}}{(1+ar^{-\alpha}\mp br^{-\delta})r^2}.
\]
The Kelvin background perturbs these matrices by $O(r^{-4})$, since
$g_\infty-g_{\mathrm E}=O_3(r^{-2})$ and
$(A_{g_\infty})^\sharp=O_1(r^{-4})$.
Thus the flat signs persist whenever the profile Hessian dominates the
background error.  Iterating the power barriers gives (i), and a lower power
with $\delta>\alpha$ gives (ii).  For (iii), compare from $r=R$ with
\[
 1+\bigl(M_+(R)+KR^{\alpha-2}\bigr)r^{-\alpha}
   -\eta KR^{\delta-2}r^{-\delta}.
\]
For fixed large $K$ and small $\eta$ this is a strict upper barrier;
comparison at $2R$ gives \eqref{eq:low-dyadic-recurrence}.  Dyadic summation
converges because $\alpha<2$, while (ii) gives the lower bound.
\end{proof}

Since $W=F^{-2/\alpha}$ and $F\to1$,
\begin{equation}\label{eq:low-critical-W-bound}
 |W-1|\le Cr^{-\alpha}.
\end{equation}

\subsection{Blow-down compactness}
\label{sec:low-compactness}

\subsubsection{The critically normalized equation}

For $R\to\infty$, define
\begin{equation}\label{eq:low-blowdown-def}
  \varepsilon_R:=R^{-\alpha},
  \qquad
  W_R(x):=W(Rx)=1+\varepsilon_RP_R(x),
  \qquad
  g_R:=R^{-2}D_R^*g_\infty,
\end{equation}
where $D_R(x)=Rx$.  On every compact
$K\Subset\mathbb R^n\setminus\{0\}$,
\begin{equation}\label{eq:low-gR-estimates}
  \|g_R-g_{\mathrm E}\|_{C^3(K)}
  +\|\operatorname{Rm}(g_R)\|_{C^1(K)}
  \le C_KR^{-2},
  \qquad
  \|(A_{g_R})^\sharp\|_{C^1(K)}
  \le C_KR^{-2}.
\end{equation}
The critical estimate \eqref{eq:low-critical-W-bound} gives
\begin{equation}\label{eq:low-PR-Linfty}
  \|P_R\|_{L^\infty(K)}\le C_K.
\end{equation}

Using \eqref{eq:calA-h-W}, set
\begin{equation}\label{eq:low-MR-def}
  \mathcal M_R[P]
  :=
  (\nabla_{g_R}^2P)^\sharp
  -\frac{\varepsilon_R}{2(1+\varepsilon_RP)}
     |\nabla P|_{g_R}^2I
  +\mathfrak B_R(x,P),
\end{equation}
where
\begin{equation}\label{eq:low-background-def}
  \mathfrak B_R(x,P)
  :=
  \frac{1+\varepsilon_RP}{\varepsilon_R}(A_{g_R})^\sharp.
\end{equation}
Then
\[
 \mathcal A_{g_R}[W_R]
 =\varepsilon_RW_R\mathcal M_R[P_R].
\]
Pulling back \eqref{eq:W-end-equation} by $D_R$, passing from
$D_R^*g_\infty$ to $g_R$ (which multiplies the Schouten endomorphism by
the positive factor $R^2$), and using $\varepsilon_RW_R>0$ therefore gives
\begin{equation}\label{eq:low-normalized-equation}
  \lambda\bigl(\mathcal M_R[P_R]\bigr)\in\partial\Gamma_2.
\end{equation}
For bounded $P$ and fixed $K\Subset\mathbb R^n\setminus\{0\}$,
\begin{align}
  |\mathfrak B_R(x,P)|+|\nabla_x\mathfrak B_R(x,P)|
  &\le C_KR^{\alpha-2},
  \label{eq:low-background-small}\\
  |\partial_P\mathfrak B_R(x,P)|
  &\le C_KR^{-2}.
  \label{eq:low-background-P-small}
\end{align}
In particular, every coefficient in
\eqref{eq:low-MR-def} tends to its Euclidean limiting value because
$\alpha<2$.

The strict approximations from Proposition
\ref{prop:gamma2-green-theory}, used with
$\varepsilon_R=R^{-\alpha}$, satisfy
\begin{equation}\label{eq:low-strict-normalized}
  \lambda(\mathcal M_{R,j})\in\Gamma_2,
  \qquad
  \sigma_2(\mathcal M_{R,j})
  =\tau_{R,j}W_{R,j}^{-2},
  \qquad
  \tau_{R,j}\to0
\end{equation}
for each fixed $R$.  The factor $W_{R,j}^{-2}$ in the right-hand side is
essential in the gradient calculation below.

\subsubsection{Uniform local Lipschitz compactness}

We prove the local gradient estimate directly for the strict normalized
blow-down equation \eqref{eq:low-strict-normalized}, namely the equation for
$\mathcal M_{R,j}=\mathcal M_R[P_{R,j}]$ with the rescaled metric $g_R$ and
$\mathcal M_R$ defined by \eqref{eq:low-MR-def}; the constants must remain
uniform as $\tau_{R,j}\to0$.

\begin{proposition}
\label{prop:low-Bernstein}
\label{prop:low-Lipschitz-compactness}
For every
$\mathcal K\Subset\mathcal K'\Subset\mathbb R^n\setminus\{0\}$,
\begin{equation}\label{eq:low-PR-gradient}
  \|P_R\|_{L^\infty(\mathcal K')}
  +\|\nabla P_R\|_{L^\infty(\mathcal K)}
  \le C_{\mathcal K,\mathcal K'}
\end{equation}
for all sufficiently large $R$.  Consequently, $\{P_R\}$ is locally
uniformly precompact on $\mathbb R^n\setminus\{0\}$.
\end{proposition}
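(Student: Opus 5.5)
The $L^\infty$ bound on $\mathcal K'$ is already \eqref{eq:low-PR-Linfty}, so the content is the interior gradient bound. We prove it for the smooth strict approximations $P_{R,j}$ of \eqref{eq:low-strict-normalized}, with constants independent of $R$ large and of $j$, and then pass to the limit $j\to\infty$ using \eqref{eq:LN-scaled-convergence}. Precompactness then follows from Arzel\`a--Ascoli.

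\textbf{Step 1: a one-sided Hessian bound from the cone condition.} Since $\lambda(\mathcal M_{R,j})\in\Gamma_2$, we have $\operatorname{tr}\mathcal M_{R,j}>0$. By \eqref{eq:low-MR-def} and \eqref{eq:low-background-small}, this gives
\[
\Delta_{g_R}P_{R,j}\ \ge\ \frac{n\varepsilon_R}{2W_{R,j}}|\nabla P_{R,j}|^2_{g_R}-C_KR^{\alpha-2}\ \ge\ -C_K .
\]
So $P_{R,j}$ is uniformly subharmonic up to a bounded error, and $\Gamma_2$-admissibility gives control of $\mathcal M$ in the usual $\sigma_2$ way. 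This alone does not bound the gradient. We therefore run a Bernstein argument.

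\textbf{Step 2: Bernstein test function.} Fix a cutoff $\eta\in C_c^\infty(\mathcal K')$ with $\eta=1$ on $\mathcal K$. Since $\|P\|_{L^\infty(\mathcal K')}\le C$, choose $\varphi(P)$ convex and increasing on $[-C,C]$, for instance $\varphi(P)=e^{A(P+C+1)}$. Consider
\[
G=\eta^2\,|\nabla P|^2_{g_R}\,e^{\varphi(P)}
\]
at an interior maximum point. Apply the linearized operator $L=T_1(\mathcal M)^{ab}\nabla_a\nabla_b$, which is positive definite because $\lambda(\mathcal M)\in\Gamma_2$. Write the equation as $\sigma_2(\mathcal M)^{1/2}=\tau^{1/2}W^{-1}$ and differentiate it in the direction $\nabla P$. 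This produces third-derivative terms, which are absorbed using $\nabla G=0$. It also produces:
\begin{itemize}[leftmargin=1.5em]
\item a commutator/curvature term, of size $O(R^{-2})$ by \eqref{eq:low-gR-estimates};
\item background terms $\nabla_x\mathfrak B_R$ and $\partial_P\mathfrak B_R\nabla P$, bounded by \eqref{eq:low-background-small}--\eqref{eq:low-background-P-small};
\item the derivative of the gradient term $-\tfrac{\varepsilon_R}{2W}|\nabla P|^2 I$;
\item the derivative of the right side $\tau W^{-2}$.
\end{itemize}
The factor $W^{-2}$ is what makes the last two pieces combine well, as the paper remarks. Differentiating $\tau^{1/2}W^{-1}$ produces $-\tau^{1/2}\varepsilon_R W^{-2}\nabla P$. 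Paired with $\nabla P$, this has a favorable sign, so it can be dropped. Hence $\tau$ never appears in the constants.

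\textbf{Step 3: the good term and closing the estimate.} The good term is $\varphi''\,T_1(\mathcal M)(\nabla P,\nabla P)|\nabla P|^2$, together with $\varphi'\,T_1(\mathcal M):\nabla^2P$. Using $\nabla^2 P=\mathcal M+\tfrac{\varepsilon}{2W}|\nabla P|^2I-\mathfrak B$, the second piece equals
\[
\varphi'\Bigl(2\sigma_2(\mathcal M)+\tfrac{(n-1)\varepsilon}{2W}|\nabla P|^2\operatorname{tr}\mathcal M-T_1(\mathcal M):\mathfrak B\Bigr).
\]
Its first two parts are nonnegative, and the last is $O(R^{\alpha-2})\operatorname{tr}T_1$. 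Then apply the standard $\sigma_2$ inequality $T_1(\mathcal M)(e,e)\ge c_n\operatorname{tr}T_1(\mathcal M)$ for a suitable direction. This follows after rotating so that $\nabla P$ is not the largest-eigenvalue direction; otherwise, use the diagonal case split as in Guan--Wang or Li's gradient estimate for conformally invariant equations. Every error is then bounded by $C\operatorname{tr}T_1\,(1+|\nabla P|^3)$ with small coefficients, or absorbed, which yields $G\le C$.

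\textbf{Main obstacle.} The main obstacle is Step 3. The good quadratic term must dominate $\operatorname{tr}T_1(\mathcal M)$ uniformly even though the equation degenerates as $\tau\to0$, so no lower bound on $\sigma_2$ is available. If the direct choice of $\varphi$ fails, I would instead use the conformal-invariance route of Li \cite{LiGradient2009}: a local gradient estimate for $\partial\Gamma_2$-type equations obtained by the method of moving spheres. This needs only the uniform $L^\infty$ bound and the $C^1$-closeness of $g_R$ to Euclidean, and it is insensitive to degeneracy. Taking the limit in $j$ then gives \eqref{eq:low-PR-gradient}.
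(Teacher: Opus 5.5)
Your architecture is the paper's: a Bernstein maximum-principle argument on the smooth strict approximants $P_{R,j}$, with constants uniform in $\tau_{R,j}$, followed by $j\to\infty$ via \eqref{eq:LN-scaled-convergence} and Arzel\`a--Ascoli. The gap is in Step 3, where you yourself place the main obstacle.

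The good term is proportional to $T_1(\mathcal M)(e_1,e_1)$, where $e_1=\nabla P/|\nabla P|$ is a fixed direction. The errors you must absorb carry the full trace. These are the term $2b\,\mathcal M_{11}\tr T_1(\mathcal M)$, with $b=\varepsilon_R/(2W)$, coming from differentiating $-\frac{\varepsilon_R}{2W}|\nabla P|^2I$, and the background errors proportional to $\omega_R\tr T_1(\mathcal M)$. So you need $T_1(\mathcal M)(e_1,e_1)\ge c\,\tr T_1(\mathcal M)$ in that specific direction.

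\begin{itemize}
\item No rotation can make $\nabla P$ avoid the top eigendirection of $\mathcal M$.
\item This is not a general $\Gamma_2$ fact. As $\tau\to0$, $\lambda(\mathcal M)$ may approach a positive rank-one point of $\partial\Gamma_2$, for instance $(1,\delta,\dots,\delta)$. There $T_1(\mathcal M)$ degenerates exactly along the top eigenvector.
\item An unspecified ``case split as in Guan--Wang'' does not replace an argument.
\end{itemize}

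The missing idea is that the first-order maximality condition supplies the sign. For your $G$, $\nabla\log G=0$ gives $P_{11}=-\tfrac12\varphi'|\nabla P|^2-|\nabla P|\,\eta_1/\eta$. So on the large-gradient branch $P_{11}\le-c|\nabla P|^2$, hence $\mathcal M_{11}=P_{11}-\frac{\varepsilon_R}{2W}|\nabla P|^2+(\mathfrak B_R)_{11}<0$. Therefore $T_1(\mathcal M)_{11}=\sigma_1(\mathcal M)-\mathcal M_{11}\ge\sigma_1(\mathcal M)=\tr T_1(\mathcal M)/(n-1)$, which is the paper's \eqref{eq:low-Bernstein-F11}. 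This bound is uniform in $\tau$, and it is what lets the good term dominate.

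Three further points need correcting.

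First, the good-term coefficient is not $\varphi''$. With $v=|\nabla P|$, the term $-2T_1(\mathcal M)(\nabla v,\nabla v)/v^2$ coming from $\log v^2$ contributes $-\tfrac12\varphi'^2$ once $\nabla G=0$ is inserted. So with the weight $e^{\varphi}$ you need $\varphi''-\tfrac12\varphi'^2\ge c>0$ on $[-C,C]$, and convexity does not suffice. For $\varphi=e^{A(P+C+1)}$ this quantity equals $A^2\varphi(1-\varphi/2)$, which is negative everywhere once $A>\log2$. The paper's choice $\varphi=(4K_0-u)^{-1/2}$, in its $\rho\varphi(u)v$ normalization, is made so that $\varphi''/\varphi-2(\varphi'/\varphi)^2=\tfrac14(4K_0-u)^{-2}>0$.

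Second, the derivative of the right-hand side has an unfavorable sign, not a favorable one. In the paper's notation it contributes $-4bt$. It is harmless only because the Euler term $\varphi'T_1(\mathcal M):\nabla^2P$ contributes $+2At$, which combines into $2(A-2b)t\ge0$ since $A\ge c/K_0$ and $b\to0$.

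Third, the moving-spheres fallback would not give the statement. Conformally invariant local gradient estimates control $|\nabla\log W_R|$ at unit scale, which yields only $\varepsilon_R|\nabla P_R|\le C$. They are also proved for the flat conformally invariant operator, not for $g_R$ with the background term $\mathfrak B_R$.
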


\begin{proof}
The $L^\infty$ bound is \eqref{eq:low-PR-Linfty}.  Fix
$x_\ast\in\mathcal K$ and choose a Euclidean coordinate ball
$B_r(x_\ast)\Subset\mathcal K'$, with $0<r\leq1$ independent of $x_\ast$.
We prove a uniform gradient estimate at $x_\ast$.

Work first with a smooth strict approximant and write
\[
 u:=P_{R,j},\qquad W:=1+\varepsilon_Ru,\qquad
 b:=\frac{\varepsilon_R}{2W},\qquad
 t:=\tau_{R,j}W^{-2},\qquad v:=|\nabla u|_{g_R}.
\]
For all $j\geq j_0(R)$, \eqref{eq:low-PR-Linfty} and
\eqref{eq:LN-scaled-convergence} give
\[
 |u|\leq K_0,\qquad \frac12\leq W\leq2
 \quad\text{on }B_r(x_\ast),
\]
where $K_0\geq1$ is independent of $R$, $j$, and $x_\ast$.  On this ball
set
\[
 \mathcal M
 :=(\nabla_{g_R}^2u)^\sharp-bv^2I+\mathfrak B_R(x,u),
 \qquad
 F^{ij}:=\frac{\partial\sigma_2}{\partial\mathcal M_{ij}}
 =\sigma_1(\mathcal M)g_R^{ij}-\mathcal M^{ij},
\]
and let $\mathcal F:=\tr_{g_R}F=(n-1)\sigma_1(\mathcal M)$.  Since
$\lambda(\mathcal M)\in\Gamma_2$, the matrix $(F^{ij})$ is positive
definite.  Moreover, by \eqref{eq:low-gR-estimates}--
\eqref{eq:low-background-P-small},
\[
 \omega_R:=
 \|g_R-g_{\mathrm E}\|_{C^3}
 +\|\operatorname{Rm}(g_R)\|_{C^1}
 +\sup_{|s|\leq K_0}
  \left(
   \|\mathfrak B_R(\cdot,s)\|_{C_x^1}
   +\|\partial_s\mathfrak B_R(\cdot,s)\|_{C^0}
  \right)
 \leq C\bigl(R^{-2}+R^{\alpha-2}\bigr)\longrightarrow0.
\]
Here and below the displayed norms are taken on $B_r(x_\ast)$.
All indexed identities are written in a $g_R$-normal orthonormal frame at
the point under consideration.

The identities $b_u=-2b^2$, $t_u=-4bt$, and the degree-two Euler identity
$F^{ij}\mathcal M_{ij}=2t$ give
\begin{equation}\label{eq:low-Bernstein-Euler}
 F^{ij}u_{ij}
 =2t+bv^2\mathcal F-F^{ij}(\mathfrak B_R)_{ij}.
\end{equation}
At a point where $v>0$, choose a $g_R$-orthonormal frame with
$e_1=\nabla u/v$.  Differentiating
$\sigma_2(\mathcal M)=t$ in the $e_1$ direction gives
\begin{equation}\label{eq:low-Bernstein-differentiated}
 F^{ij}u_{ij;1}
 =2bv(u_{11}-bv^2)\mathcal F-4btv
  -F^{ij}(\mathfrak B_1)_{ij},
\end{equation}
where
\[
 \mathfrak B_1
 =\bigl(\nabla_x\mathfrak B_R\bigr)(e_1)
  +(\partial_u\mathfrak B_R)v.
\]
Commuting the third derivatives produces an error $\mathcal R$ satisfying
\begin{equation}\label{eq:low-Bernstein-commutator}
 F^{ij}u_{1ij}=F^{ij}u_{ij;1}+\mathcal R,
 \qquad |\mathcal R|\leq C\omega_Rv\mathcal F.
\end{equation}

Translate $x_\ast$ to the origin and set
\[
 \rho(x):=1-\frac{|x|^2}{r^2},\qquad
 \varphi(u):=(4K_0-u)^{-1/2},\qquad
 A:=\frac{\varphi'}{\varphi}.
\]
Then
\begin{equation}\label{eq:low-Bernstein-weight}
 \frac c{K_0}\leq A\leq\frac C{K_0},
 \qquad
 \frac{\varphi''}{\varphi}-2A^2
 =\frac1{4(4K_0-u)^2}\geq\frac c{K_0^2}.
\end{equation}
Apply the maximum principle to $G:=\rho\varphi(u)v$.  At a positive
maximum point, the first derivative condition gives
\begin{equation}\label{eq:low-Bernstein-first-maximum}
 \frac{v_i}{v}+Au_i+\frac{\rho_i}{\rho}=0,
 \qquad
 u_{11}=-Av^2-v\frac{\rho_1}{\rho}.
\end{equation}
If $\rho v/K_0\leq C_0/r$ there, the desired estimate follows from the
maximum property of $G$.  On the complementary branch, taking $C_0$ large
and then $R$ large gives
\[
 -C\frac{v^2}{K_0}\leq u_{11}\leq-c\frac{v^2}{K_0},
 \qquad
 \mathcal M_{11}<0,
 \qquad
 |\mathcal M_{11}|\leq C\frac{v^2}{K_0},
\]
because $\varepsilon_RK_0+\omega_R(1+K_0^2)\to0$.  Consequently,
\begin{equation}\label{eq:low-Bernstein-F11}
 F^{11}=\sigma_1(\mathcal M)-\mathcal M_{11}
 \geq\sigma_1(\mathcal M)=\frac{\mathcal F}{n-1}.
\end{equation}

At the maximum point,
$v_{ij}=u_{1ij}+v^{-1}\sum_{a=2}^nu_{ai}u_{aj}$.  Contracting with
$F^{ij}$ and discarding the nonnegative second term, the second derivative
condition for $\log G$, together with
\eqref{eq:low-Bernstein-Euler}--\eqref{eq:low-Bernstein-commutator}, yields
\begin{align}
 0\geq{}&
 2bv\mathcal M_{11}\mathcal F
 +2(A-2b)tv+Abv^3\mathcal F
 +\left(\frac{\varphi''}{\varphi}-2A^2\right)v^3F^{11}
 \notag\\
 &-C\omega_R(1+v)\mathcal F
 -C\left(
      \frac{v}{\rho^2r^2}+\frac{v^2}{K_0\rho r}
    \right)\mathcal F.
 \label{eq:low-Bernstein-master}
\end{align}
Indeed, the background and commutator contribution before estimating it is
\[
 E_{\mathfrak B}
 =-2bv(\mathfrak B_R)_{11}\mathcal F
  -F^{ij}(\mathfrak B_1)_{ij}
  -AvF^{ij}(\mathfrak B_R)_{ij}+\mathcal R,
 \qquad
 |E_{\mathfrak B}|\leq C\omega_R(1+v)\mathcal F,
\]
while positivity of $F^{ij}$ and
$|\nabla\rho|_{g_R}\leq C/r$, $|\nabla^2\rho|_{g_R}\leq C/r^2$
give the two cutoff errors in \eqref{eq:low-Bernstein-master}.

For large $R$, $A-2b\geq c/K_0$.  Thus the terms
$2(A-2b)tv$ and $Abv^3\mathcal F$ are nonnegative.  By
\eqref{eq:low-Bernstein-weight} and \eqref{eq:low-Bernstein-F11}, the
fourth term in \eqref{eq:low-Bernstein-master} is at least
$cv^3K_0^{-2}\mathcal F$.  Indeed, the first term is bounded below by
$-CbK_0(v^3K_0^{-2})\mathcal F$ and $bK_0\to0$; the $\omega_R$ error is
absorbed using $v\geq C_0K_0/(\rho r)$ and $\omega_R\to0$ on the
large-gradient branch.  Dividing by
$\mathcal F>0$ gives
\[
 c\frac{v^3}{K_0^2}
 \leq C\left(
       \frac{v}{\rho^2r^2}+\frac{v^2}{K_0\rho r}
      \right).
\]
Hence $Y:=\rho v/K_0$ satisfies
\[
 cY^3\leq C\left(\frac Y{r^2}+\frac{Y^2}{r}\right),
\]
so $Y\leq C/r$.  Since $\rho(x_\ast)=1$ and
$\varphi(u)\asymp K_0^{-1/2}$, the maximum property of $G$ gives
\[
 |\nabla P_{R,j}(x_\ast)|_{g_R}\leq C\frac{K_0}{r}.
\]
The constant is independent of $j$ and $\tau_{R,j}$; in particular, no
positive lower bound for $\tau_{R,j}$ was used.  Since $x_\ast$ was
arbitrary, letting $j\to\infty$ in \eqref{eq:LN-scaled-convergence} proves
\eqref{eq:low-PR-gradient}.  Arzel\`a--Ascoli gives the asserted local
uniform precompactness.
\end{proof}

\subsubsection{A uniform Hessian \texorpdfstring{$L^1$}{L1} estimate}

\begin{lemma}
\label{lem:low-Hessian-L1}
For every
$K\Subset K'\Subset\mathbb R^n\setminus\{0\}$,
\begin{equation}\label{eq:low-Hessian-L1}
  \int_K|(\nabla_{g_R}^2P_R)^\sharp|\,dV_{g_R}
  \le C_{K,K'}
\end{equation}
for all sufficiently large $R$.
\end{lemma}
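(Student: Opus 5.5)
The plan is to exploit the fact that $\lambda(\mathcal M)\in\overline{\Gamma_2}\subset\overline{\Gamma_1}$ gives a one-sided trace bound. A Hessian whose trace is bounded below, and which is controlled in a weak sense, is then $L^1$-bounded by the standard "semi-convexity in the $\Gamma_2$ cone" argument. I would work with the smooth strict approximants $P_{R,j}$ from \eqref{eq:low-strict-normalized}, prove the bound uniformly in $j$ (and in large $R$), and pass to the limit $j\to\infty$ using \eqref{eq:LN-scaled-convergence}. Since $v_{\mathrm{cn}}\in C^{1,1}_{\rm loc}$, the Hessian of $P_R$ exists a.e., and lower semicontinuity of the total variation of $\nabla P_R$ transfers the bound.

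\textbf{Step 1: pointwise inequality.} For $\lambda\in\Gamma_2$ we have $\sigma_1>0$ and $|\lambda|^2=\sigma_1^2-2\sigma_2<\sigma_1^2$. Hence $|\mathcal M_{R,j}|\le\sigma_1(\mathcal M_{R,j})=\tr\mathcal M_{R,j}$. Writing
$(\nabla^2_{g_R}u)^\sharp=\mathcal M+bv^2I-\mathfrak B_R$ as in the proof of Proposition~\ref{prop:low-Bernstein}, this gives
\[
 |(\nabla^2_{g_R}u)^\sharp|\le \tr\mathcal M+C\bigl(\varepsilon_R v^2+|\mathfrak B_R|\bigr)
 \le \Delta_{g_R}u+C\bigl(\varepsilon_Rv^2+R^{\alpha-2}\bigr),
\]
with $u=P_{R,j}$ and $v=|\nabla u|$, using \eqref{eq:low-background-small}. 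By Proposition~\ref{prop:low-Lipschitz-compactness}, whose proof gives gradient bounds for $P_{R,j}$ uniformly in $j\ge j_0(R)$ on intermediate compacts, the error term is bounded on $K'$.

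\textbf{Step 2: integrate against a cutoff.} Choose $\eta\in C_c^\infty(K')$ with $\eta\equiv1$ on $K$ and $0\le\eta\le1$. Then
\[
 \int_K|\nabla^2u|\,dV_{g_R}\le\int\eta\,\Delta_{g_R}u\,dV_{g_R}+C
 =-\int\langle\nabla\eta,\nabla u\rangle_{g_R}\,dV_{g_R}+C\le C_{K,K'},
\]
again by the uniform Lipschitz bound. The metric $g_R$ is uniformly close to Euclidean by \eqref{eq:low-gR-estimates}, so the constants do not depend on $R$. The only delicate point is a bookkeeping one: the gradient estimate must be applied on a compact slightly larger than $\operatorname{supp}\eta$, which is handled by nesting $K\Subset K''\Subset K'$.

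\textbf{Step 3: limit $j\to\infty$.} For fixed $R$, the $C^{1,\gamma}$ convergence $P_{R,j}\to P_R$ shows that the distributional Hessians converge weakly. The uniform $L^1$ bound then passes to the limit measure, and since $P_R\in C^{1,1}$ this measure is absolutely continuous, which gives \eqref{eq:low-Hessian-L1}.

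I expect the argument to be essentially routine. The one thing to check is that the normalization $b=\varepsilon_R/(2W)$ and the background term keep $\mathcal M_{R,j}$ a genuine perturbation of the Hessian with bounded error, which holds because $\alpha<2$.
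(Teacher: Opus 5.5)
Your proposal is correct and follows essentially the same route as the paper. Admissibility of the shifted matrix $\mathcal M_{R,j}$ gives the trace control $|(\nabla^2_{g_R}P_{R,j})^\sharp|\le\Delta_{g_R}P_{R,j}+C\bigl(\varepsilon_R|\nabla P_{R,j}|^2+R^{\alpha-2}\bigr)$; this is integrated against a cutoff and bounded by the uniform $C^0$ and $C^1$ estimates, and the bound is then passed from the strict approximants to $P_R$. Your write-up spells out the details the paper compresses into ``standard local $W^{2,1}$ estimate'', namely $|\lambda|\le\sigma_1(\lambda)$ on $\Gamma_2$, the nesting of compacts, and lower semicontinuity of the Hessian mass.
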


\begin{proof}
This is the standard local $W^{2,1}$ estimate for admissible $2$-Hessian
solutions.  On a slightly larger annulus, test the Newton divergence identity
with a cut-off and use the preceding uniform $C^0$ and $C^1$ bounds; the
shifted admissible matrix gives the usual trace control of $|D^2P_R|$.
The curved coefficients are uniformly bounded and converge to the Euclidean
ones, so their contribution is absorbed into the same estimate.  Passing from
the smooth strict approximants to $P_R$ yields the stated bound.
\end{proof}

\subsection{Classification of blow-down limits}
\label{sec:low-tangents}

We use the Hessian-measure theory of Trudinger--Wang
\cite{TrudingerWang1997,TrudingerWang1999}.  Write $\Phi_2(\Omega)$ for the
proper upper-semicontinuous $2$-convex functions on $\Omega$.  If
$u\in C^2(\Omega)\cap\Phi_2(\Omega)$, then
\begin{equation}\label{eq:low-Hessian-measure-formula}
  d\mu_2[u]
  =\sigma_2(D^2u)\,dx
  =\frac12\bigl((\Delta u)^2-|D^2u|^2\bigr)\,dx
  =\frac12\operatorname{div}\!\bigl(T_1(D^2u)\nabla u\bigr)\,dx,
  \qquad
  T_1(D^2u)=(\Delta u)I-D^2u.
\end{equation}
For general $u\in\Phi_2(\Omega)$, $\mu_2[u]$ is the unique nonnegative
Borel measure obtained by weak continuity: for any local smooth decreasing
$2$-convex approximation $u_\ell\downarrow u$,
\[
  \sigma_2(D^2u_\ell)\,dx\rightharpoonup\mu_2[u];
\]
see \cite[Theorem~1.1 and Lemma~2.4]{TrudingerWang1999}.

\begin{lemma}
\label{lem:low-viscosity-measure-zero}
If $u\in C(\Omega)\cap\Phi_2(\Omega)$ and
$\lambda(D^2u)\in\partial\Gamma_2$ in the admissible viscosity sense, then
$\mu_2[u]=0$ on $\Omega$.
\end{lemma}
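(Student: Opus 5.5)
The plan is to show that the Trudinger--Wang Hessian measure does not charge any point of $\Omega$. Since the claim is local, it suffices to prove $\mu_2[u](B)=0$ for every small ball $B\Subset\Omega$.

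\textbf{Step 1: build a smooth approximating sequence.} Take the standard mollifications $u_\ell=u*\eta_{1/\ell}$ on a slightly smaller domain. Because $u$ is $2$-convex, each $u_\ell$ is smooth and $2$-convex. By the weak continuity in \cite[Theorem~1.1]{TrudingerWang1999}, we have $\sigma_2(D^2u_\ell)\,dx\rightharpoonup\mu_2[u]$. Mollifiers do not give monotone convergence in general, so I would use the version of weak continuity that only needs local uniform convergence of $2$-convex functions. That version holds because $u$ is continuous.

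\textbf{Step 2: get the upper bound from the viscosity inequality.} The admissible viscosity statement $\lambda(D^2u)\in\partial\Gamma_2$ has two halves: $u$ is a viscosity subsolution of $\lambda\in\overline{\Gamma_2}$, and a viscosity supersolution of $\sigma_2\le0$ in the admissible sense. I would use the supersolution half in the following way. Suppose $\mu_2[u](B)>0$. Trudinger--Wang also give the comparison principle for Hessian measures. Hence the Dirichlet problem $\mu_2[w]=\mu_2[u]\llcorner B$ with $w=u$ on $\partial B$ can be compared with $u$ itself, which gives $w=u$ in $B$.

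\textbf{Step 3: produce a strict test paraboloid.} The alternative, more direct route is as follows. Because $\mu_2[u](B)>0$, $u$ is not a viscosity solution of $\sigma_2=0$ there. Concretely, we may solve $\mu_2[w]=0$ with boundary data $u$ via Perron, which is possible by \cite{TrudingerWang1999}. Then $w\ge u$, and $w\ne u$ because the measure comparison principle is strict when the measures differ. Now perturb $w$ to $w-\epsilon|x-x_0|^2+c$ so that it touches $u$ from above at an interior point. At that contact point the admissible supersolution test for $u$ would require $\sigma_2\le0$ for the test function. However, $w$ minus a small paraboloid is strictly outside $\overline{\Gamma_2}$ only in the wrong direction, so instead I would touch from \emph{below}: use $u$ as an upper function and $w$ minus a constant as a lower function. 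I would take care to arrange that the contact is strict.

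\textbf{Main obstacle.} The hard part is Step 3. One must turn the positivity of the measure into a pointwise contradiction with the viscosity equation, where the test functions are only smooth paraboloid perturbations of a merely $2$-convex function. I would handle this with sup-convolution. Let $u^\epsilon$ be the sup-convolution of $u$. It is semiconvex, and it is still a viscosity subsolution of $\lambda\in\overline{\Gamma_2}$. By Alexandrov, it is twice differentiable almost everywhere. At those points the supersolution property transfers, so we get $\sigma_2(D^2u^\epsilon)\le0$, and combined with admissibility this gives $\sigma_2(D^2u^\epsilon)=0$ a.e. For semiconvex $2$-convex functions, the singular part of $\mu_2$ vanishes, because the Hessian measure of a semiconvex function is absolutely continuous up to the singular part of $D^2u$, which must be controlled. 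This yields $\mu_2[u^\epsilon]=0$. Finally, $u^\epsilon\to u$ locally uniformly, so weak continuity gives $\mu_2[u]=0$. The delicate point is showing that the singular part of the Hessian measure of $u^\epsilon$ is zero. I would try first to bound it via the density of $\sigma_2(D^2u^\epsilon)$ along smooth inf-convolution regularizations.
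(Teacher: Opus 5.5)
The paper gives no argument beyond citing the Trudinger--Wang equivalence of viscosity and Hessian-measure solutions. A self-contained proof would therefore add something, but yours does not close.

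\textbf{Steps 2 and 3.} Step~2 is empty. Comparing $u$ with the solution of $\mu_2[w]=\mu_2[u]\llcorner B$, $w=u$ on $\partial B$, just returns $w=u$ and gives no contradiction.

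\textbf{The sup-convolution route.} The route you finally commit to rests on a false claim. The sup-convolution $u^\epsilon(x)=\sup_z\bigl(u(x-z)-|z|^2/(2\epsilon)\bigr)$ is a supremum of translated subsolutions. It therefore inherits only the \emph{subsolution} half, $\lambda(D^2u^\epsilon)\in\overline{\Gamma_2}$. The supersolution half passes to the inf-convolution, which need not be $2$-convex. In fact the sup-convolution of a solution is typically a strict subsolution. Take $u=\tfrac12x^{T}Ax$ with $\lambda(A)\in\partial\Gamma_2$ of rank at least two. Then $u^\epsilon=\tfrac12x^{T}A(I-\epsilon A)^{-1}x$, and $\sigma_2(D^2u^\epsilon)=\epsilon\sum_i(\sigma_1(\lambda)-\lambda_i)\lambda_i^2+O(\epsilon^2)>0$. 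So $\sigma_2(D^2u^\epsilon)\le0$ a.e.\ is false, and $\mu_2[u^\epsilon]\neq0$ in general.

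An a.e.\ identity would not have sufficed anyway. The singular part of $D^2u^\epsilon$ can charge $\mu_2$: for the convex function $|x_1|+\tfrac12|x'|^2$ one gets $\mu_2\ge2(n-1)\mathcal H^{n-1}\llcorner\{x_1=0\}$. This is exactly the point you left open.

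\textbf{The repair.} The idea you dropped in Step~3 works once the direction is sorted out. Adding $\epsilon|x-x_*|^2$ pushes a Hessian \emph{into} $\Gamma_2$, which is what the supersolution test needs.

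Let $w$ be the $2$-convex solution of $\mu_2[w]=0$ in $B$ with $w=u$ on $\partial B$. Since $\mu_2[u]\ge0=\mu_2[w]$, the Trudinger--Wang measure comparison gives $u\le w$.

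For $w\le u$, testing with $w$ itself is useless: $w$ is merely continuous, and $\sigma_2=0$ carries no strict sign. What you need is the missing ingredient: $w$ must be a uniform limit on $\overline B$ of smooth $w_k$ with $\lambda(D^2w_k)$ in the \emph{open} cone $\Gamma_2$. For instance, take solutions of $\sigma_2(D^2w_k)=1/k$ with smoothed boundary data. These converge uniformly by classical comparison, and the limit has $\mu_2=0$ by weak continuity, so it can serve as $w$.

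Now suppose $\max_{\overline B}(w-u)=2\gamma>0$ and $\|w_k-w\|_{L^\infty(\overline B)}<\gamma/2$. Then $w_k-u$ attains its maximum at an interior point. There a constant shift of $w_k$ touches $u$ from below with $\lambda(D^2w_k)\in\Gamma_2$, contradicting the admissible supersolution property. Hence $u=w$ in $B$, and $\mu_2[u]\llcorner B=\mu_2[w]=0$.
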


\begin{proof}
This is the standard equivalence of the homogeneous $2$-Hessian equation
in the viscosity and Hessian-measure senses; see
\cite[the remark following Theorem~3.5]{TrudingerWang1999}.
\end{proof}

\begin{lemma}
\label{lem:low-isolated-extension}
Let $P\in\Phi_2(\mathbb R^n\setminus\{0\})$ satisfy
$\mu_2[P]=0$ there,
\[
  P\le0,
  \qquad |P(x)|\le C|x|^{-\alpha},
  \qquad P(x)\longrightarrow0\quad (|x|\to\infty),
  \qquad \alpha<n.
\]
Then its upper-semicontinuous extension across the origin belongs to
$\Phi_2(\mathbb R^n)$ and
\begin{equation}\label{eq:low-Dirac-measure}
  \mu_2[P]=m\delta_0
\end{equation}
for some $m\ge0$.
\end{lemma}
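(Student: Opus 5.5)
We prove Lemma~\ref{lem:low-isolated-extension} in two steps: first that the extension lies in $\Phi_2(\mathbb R^n)$, then that the Hessian measure is a nonnegative multiple of $\delta_0$.

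\emph{Step 1: the extension is $2$-convex.} Define $\tilde P(0):=\limsup_{x\to0}P(x)$, the upper-semicontinuous envelope. Since $P\le0$ and $|P|\le C|x|^{-\alpha}$, this value lies in $[-\infty,0]$. To show $\tilde P\in\Phi_2(\mathbb R^n)$, I would use the removability of points for $2$-convex functions. A point has zero $2$-capacity in the sense of Trudinger--Wang whenever $n>4$; more generally the polar sets of $\Phi_2$ are those of capacity zero with respect to $|x|^{4-n}$-type kernels. Since $n\ge5$, the origin is polar, so a $2$-convex function on the punctured space that is bounded above near $0$ extends as a $2$-convex function \cite{TrudingerWang1999}.

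Concretely, I would argue by comparison. Fix a $2$-convex quadratic $q$ touching from above in a small ball. For $\eta>0$, set
\[
P_\eta:=\max\{P,\,-\eta^{-1}\}.
\]
This function is $2$-convex away from $0$ and is constant near $0$, so it is $2$-convex across the origin. The sub-mean/comparison characterization then passes to the decreasing limit $\eta\to0$, since $\Phi_2$ is closed under decreasing limits. The hypothesis $\alpha<n$ ensures $P\in L^1_{\rm loc}$; this is needed for the weak continuity of $\mu_2$ under $P_\eta\downarrow\tilde P$.

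\emph{Step 2: the Hessian measure is a Dirac mass.} The measure $\mu_2[\tilde P]$ is nonnegative and vanishes on $\mathbb R^n\setminus\{0\}$ by hypothesis. Its support is therefore contained in $\{0\}$, so $\mu_2[\tilde P]=m\delta_0$ with $m=\mu_2[\tilde P](\{0\})\ge0$. Finiteness of $m$ comes from local finiteness of Hessian measures of functions in $\Phi_2$ (Trudinger--Wang, Theorem~1.1). One should also check that $\mu_2[\tilde P]$ restricted to the punctured space agrees with the given $\mu_2[P]$. This follows because the measure is defined locally via approximations, and on any compact set avoiding $0$ the approximations of $\tilde P$ are approximations of $P$. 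Note that the decay hypothesis $P\to0$ at infinity is not needed here; it will matter later for classifying $P$ as $-c|x|^{-\alpha}$ via a Liouville argument.

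\emph{Main obstacle.} The delicate step is the removability in Step~1, in case $\tilde P(0)=-\infty$. There one needs the truncation argument together with weak continuity of $\mu_2$ along decreasing sequences. I would first try to cite the Trudinger--Wang result that sets of zero $2$-capacity are removable for $\Phi_2$ functions bounded above. Failing that, I would use the truncations $P_\eta$ explicitly: their $2$-convexity near $0$ is immediate because they are constant there, and the maximum of two $2$-convex functions is $2$-convex.
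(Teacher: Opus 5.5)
Your Step~2 is fine and matches the paper's ending: once the extension lies in $\Phi_2(\mathbb R^n)$, $\mu_2$ is a nonnegative Radon measure, locality makes it vanish off the origin, and so it equals $m\delta_0$ with $0\le m<\infty$.

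The gap is in Step~1: the only argument you actually give for removability does not work. The truncation $P_\eta=\max\{P,-\eta^{-1}\}$ is constant near $0$ only if $P(x)\to-\infty$ as $x\to0$, i.e.\ only when $\tilde P(0)=-\infty$. That case needs no argument at all. A function equal to $-\infty$ at the origin is never touched from above there by a finite quadratic, so the viscosity condition at $0$ is vacuous. In the nontrivial case $\limsup_{x\to0}P>-\infty$ (for instance $P$ bounded near $0$), $P_\eta$ coincides with $P$ near the origin, or is just as singular. You are then back to proving removability for a $2$-convex function bounded above. So you have placed the difficulty in the wrong case, and as written Step~1 rests entirely on the black-box capacity theorem.

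The missing idea is a perturbation that forces the value $-\infty$ at the origin.
\begin{itemize}
\item Let $\Gamma(x):=-|x|^{2-n/2}$. Since $n\ge5$, $\Gamma\to-\infty$ at $0$.
\item On $\mathbb R^n\setminus\{0\}$, $\Gamma$ is smooth with $\sigma_2(D^2\Gamma)=0$ (take $\gamma=\alpha$ in \eqref{eq:low-qgamma-sigma2}) and $\Delta\Gamma>0$, so $\Gamma$ is $2$-convex.
\item Because $\Phi_2$ is a convex cone, $P+\varepsilon\Gamma\in\Phi_2(\mathbb R^n\setminus\{0\})$.
\item Since $P\le0$, $P+\varepsilon\Gamma\to-\infty$ at $0$, so assigning it the value $-\infty$ there gives an element of $\Phi_2(\mathbb R^n)$.
\item As $\varepsilon\downarrow0$ these functions increase to $P$ off the origin.
\item The upper-semicontinuous envelope of a locally bounded-above supremum of viscosity subsolutions is again a subsolution. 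Here that envelope is exactly $\tilde P$, with $\tilde P(0)=\limsup_{x\to0}P(x)$.
\end{itemize}

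The paper reaches the same conclusion through the linear characterization of $\Phi_2$: subharmonicity for each constant-coefficient operator $a^{ij}\partial_{ij}$ with $a$ in the dual cone. For these operators, removability of an isolated point for subharmonic functions bounded above is classical, so that route avoids capacity theory altogether. With either fix, or with the capacity removability theorem accepted as a citation, your proof is complete and has the same structure as the paper's.
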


\begin{proof}
The growth assumption gives $P\in L^1_{\mathrm{loc}}$, while $P\le0$ makes
the isolated point removable for the constant-coefficient subharmonic
operators in the linear characterization of $\Phi_2$.  The local Hessian-measure
estimate and weak continuity give finite mass near the origin; locality then
forces \eqref{eq:low-Dirac-measure}.  See
\cite[Lemma~2.2 and Theorems~1.1, 3.1]{TrudingerWang1999}.
\end{proof}

\begin{proposition}
\label{prop:low-tangent-classification}
For every sequence $R_j\to\infty$, there are a subsequence and a constant
$c\ge0$ such that
\begin{equation}
\label{eq:low-tangent-convergence}
  P_{R_j}(x)\longrightarrow -c|x|^{-\alpha}
  \qquad\text{locally uniformly on }\mathbb R^n\setminus\{0\}.
\end{equation}
\end{proposition}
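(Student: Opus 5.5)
By Proposition~\ref{prop:low-Lipschitz-compactness}, the family $\{P_R\}$ is locally bounded and locally Lipschitz on $\mathbb R^n\setminus\{0\}$. Arzel\`a--Ascoli and a diagonal argument over an exhaustion by annuli therefore give a subsequence with $P_{R_j}\to P$ locally uniformly. The plan is to show in turn that (1) $P$ is $2$-convex and solves the flat homogeneous equation; (2) $P$ satisfies the hypotheses of Lemma~\ref{lem:low-isolated-extension}; (3) the resulting function with Dirac Hessian measure is exactly $-c|x|^{-\alpha}$.

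\emph{Step 1 (limit equation).} By \eqref{eq:low-gR-estimates}, \eqref{eq:low-background-small} and $\alpha<2$, every coefficient of $\mathcal M_R$ converges to its Euclidean value: $g_R\to g_{\mathrm E}$ in $C^3$, the gradient term carries the factor $\varepsilon_R\to0$ (and $\nabla P_R$ is bounded), and $\mathfrak B_R\to0$. Passing to the limit in the viscosity formulation \eqref{eq:low-normalized-equation} is stable under uniform convergence (alternatively, use the strict approximants \eqref{eq:low-strict-normalized} with $\tau_{R,j}\to0$, choosing $j=j(R)$ diagonally). This gives $\lambda(D^2P)\in\partial\Gamma_2$ in the admissible viscosity sense, and in particular $P\in\Phi_2(\mathbb R^n\setminus\{0\})$. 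Lemma~\ref{lem:low-viscosity-measure-zero} then yields $\mu_2[P]=0$ away from the origin.

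\emph{Step 2 (sign and decay).} From \eqref{eq:low-critical-W-bound} we have $|P(x)|\le C|x|^{-\alpha}$, so $P\to0$ at infinity. Proposition~\ref{prop:low-barrier-package}(ii) gives $R^\alpha(F-1)_-\to0$, hence $R^\alpha(W-1)_+\to0$ uniformly on spheres of radius comparable to $R$; on each fixed annulus this means $P_R^+\to0$, so $P\le0$. Lemma~\ref{lem:low-isolated-extension} now applies, since $\alpha<n$, and gives $\mu_2[P]=m\delta_0$.

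\emph{Step 3 (classification, the main obstacle).} We must show that a nonpositive $2$-convex function on $\mathbb R^n$ with $\mu_2=m\delta_0$ and bounds $0\ge P\ge -C|x|^{-\alpha}$ is radial. I would compare with the fundamental solution $\Psi_m=-c_m|x|^{-\alpha}$, for which $\mu_2[\Psi_m]=m\delta_0$ with $c_m\propto m^{1/2}$ by \eqref{eq:low-qgamma-sigma2}, and appeal to uniqueness for the Dirichlet problem in Trudinger--Wang's measure theory. Concretely, on $B_\rho$ use the comparison principle for Hessian measures: if $\mu_2[u]\ge\mu_2[w]$ and $u\le w$ on $\partial B_\rho$, then $u\le w$. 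For small $\eta>0$, compare $P$ with $(1\pm\eta)\Psi_m+\sup_{\partial B_\rho}|P-\Psi_m|$. Since $P\to0$ and $\Psi_m\to0$, letting $\rho\to\infty$ and then $\eta\to0$ gives $P=\Psi_m$. The delicate point is that both functions have the same Dirac mass, so the comparison must go through the scaling $(1\pm\eta)$ (whose mass is $(1\pm\eta)^2m$) to obtain a strict inequality near the singular point. The growth bound $|P|\le C|x|^{-\alpha}$ together with the measure comparison controls the behaviour near $0$; if needed, I would instead invoke the isolated-singularity classification of Li--Nguyen--Wang \cite{LiNguyenWang2018} directly. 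The case $m=0$ gives $P$ $2$-convex on $\mathbb R^n$, vanishing at infinity and nonpositive, so $c=0$. Thus $P=-c|x|^{-\alpha}$ with $c\ge0$, which is \eqref{eq:low-tangent-convergence}.
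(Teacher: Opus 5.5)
Your Steps 1 and 2 are exactly the paper's argument. That is: a subsequential limit from Proposition~\ref{prop:low-Lipschitz-compactness}, viscosity stability, $P\le0$ from Proposition~\ref{prop:low-barrier-package}(ii), and then Lemmas~\ref{lem:low-viscosity-measure-zero} and~\ref{lem:low-isolated-extension} to get $\mu_2[P]=m\delta_0$. Your stated plan for Step 3, appealing to Trudinger--Wang uniqueness, is also what the paper does. For $m>0$ it applies the Trudinger--Wang uniqueness theorem for the Dirac fundamental solution to $m^{-1/2}P$, which has $\mu_2=\delta_0$ and vanishes at infinity. For $m=0$ it compares $P$ with the constants $\min_{\partial B_\rho}P\to0$ on large balls.

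The explicit comparison argument you sketch in place of that citation does not close the step on its own. Suppose you have a measure comparison principle on the full ball $B_\rho$ that applies to two functions sharing the point singularity. Then $\eta=0$ already gives $|P-\Psi_m|\le\sup_{\partial B_\rho}|P-\Psi_m|\le C\rho^{-\alpha}$, so the $(1\pm\eta)$ scaling is superfluous and you have simply used the Trudinger--Wang theorem.

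Suppose instead you only have comparison away from the pole, e.g.\ viscosity comparison on $B_\rho\setminus\overline{B_r}$. Then you need the ordering on the inner sphere $\partial B_r$, namely $-(1+\eta)c_mr^{-\alpha}\le P\le-(1-\eta)c_mr^{-\alpha}$ for small $r$. The bound $|P|\le C|x|^{-\alpha}$ gives neither inequality with the sharp constant $c_m$. So the scaling does not produce the strictness you want near $0$; sharp asymptotics at the pole are precisely the content of the uniqueness theorem you must cite.

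Two smaller points. First, the Li--Nguyen--Wang classification concerns the conformally invariant equation with gradient terms, not the flat equation $\lambda(D^2P)\in\partial\Gamma_2$, so it is not a substitute here. Second, for $m=0$, nonpositivity and decay do not give $P\equiv0$: $\max\{\Psi_m,-1\}$ is $2$-convex, nonpositive and vanishes at infinity. You must use $\mu_2[P]=0$ on all of $\mathbb R^n$ together with comparison against constants.
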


\begin{proof}
Proposition~\ref{prop:low-Lipschitz-compactness} gives a locally uniform
subsequential limit $P$.  Since $g_{R_j}\to g_{\mathrm E}$ in
$C^2_{\mathrm{loc}}$, $\varepsilon_{R_j}\to0$, and the background term in
\eqref{eq:low-MR-def} tends to zero, viscosity stability yields
\[
  P\in\Phi_2(\mathbb R^n\setminus\{0\}),
  \qquad \lambda(D^2P)\in\partial\Gamma_2.
\]
The critical bound gives $|P(x)|\le C|x|^{-\alpha}$ and $P(x)\to0$ at
infinity, while Proposition~\ref{prop:low-barrier-package} gives $P\le0$.
Lemmas~\ref{lem:low-viscosity-measure-zero} and
\ref{lem:low-isolated-extension} therefore imply
$\mu_2[P]=m\delta_0$.

If $m=0$, comparison with constants on large balls gives $P\equiv0$; see
\cite[Theorem~4.1]{TrudingerWang2002}.  If $m>0$, the degree-two
homogeneity of $\mu_2$ and the Trudinger--Wang uniqueness and explicit
formula for the Dirac fundamental solution yield
\[
  m^{-1/2}P(x)=-c_n|x|^{2-n/2};
\]
see \cite[Theorem~4.5 and (4.14)--(4.15)]{TrudingerWang2002}.  Since
$2-n/2=-\alpha$, this proves \eqref{eq:low-tangent-convergence}.
\end{proof}

\subsection{The asymptotic Hessian charge}
\label{sec:low-charge}

The tangent classification leaves one possible parameter, the coefficient
$c$ in \eqref{eq:low-tangent-convergence}.  We use the standard
$2$-Hessian flux, radially mollified on the blow-down metrics, to determine
this coefficient and rule out different subsequential tangents.

\subsubsection{The current and its critical scaling}

By \eqref{eq:low-Hessian-measure-formula},
$T_1(D^2u)\nabla u$ is the standard divergence current for the
$2$-Hessian measure; for smooth $u$,
\[
  \int_{\partial B_s}
  \left\langle T_1(D^2u)\nabla u,\nu\right\rangle\,dS
  =2\int_{B_s}\sigma_2(D^2u)\,dx.
\]
We use its curved analogue
\begin{equation}\label{eq:low-current-def}
  H_R:=(\nabla_{g_R}^2P_R)^\sharp,
  \qquad
  J_R:=T_1(H_R)\nabla^{g_R}P_R.
\end{equation}
By Proposition~\ref{prop:low-Lipschitz-compactness} and
Lemma~\ref{lem:low-Hessian-L1}, $J_R\in L^1_{\mathrm{loc}}$ uniformly
on fixed annuli.  Fix
\begin{equation}\label{eq:low-eta-def}
  \eta\in C_c^\infty((1,2)),
  \qquad
  \eta\ge0,
  \qquad
  \int_1^2\eta(s)\,ds=1,
\end{equation}
and put $\varrho(x)=|x|$.  Define
\begin{equation}\label{eq:low-charge-def}
  Q(R)
  :=
  \int_{A_{1,2}}
  \eta(\varrho)
  \left\langle J_R,\nabla^{g_R}\varrho\right\rangle_{g_R}
  \,dV_{g_R}.
\end{equation}
By the coarea formula, this is the $\eta$-average of the corresponding
sphere fluxes; the averaging avoids second-order traces on a fixed sphere.

\begin{lemma}
\label{lem:low-charge-scaling}
For $t>0$,
\begin{equation}\label{eq:low-PR-scaling}
  P_{tR}(x)=t^\alpha P_R(tx),
  \qquad
  g_{tR}=t^{-2}D_t^*g_R.
\end{equation}
Because $2\alpha=n-4$, one has
\begin{equation}\label{eq:low-charge-scaling}
  Q(tR)
  =
  \int_{A_{t,2t}}
  \frac1t\eta\!\left(\frac{\varrho}{t}\right)
  \left\langle J_R,\nabla^{g_R}\varrho\right\rangle_{g_R}
  \,dV_{g_R}.
\end{equation}
\end{lemma}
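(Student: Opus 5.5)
The plan is to verify the two identities in \eqref{eq:low-PR-scaling} directly from the definitions \eqref{eq:low-blowdown-def}, and then to obtain \eqref{eq:low-charge-scaling} by a change of variables under the dilation $D_t$. The argument uses the natural weights of each object under scaling, together with $2\alpha=n-4$.

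\emph{Step 1: the scaling relations.} For the metrics, $g_{tR}=(tR)^{-2}D_{tR}^*g_\infty$. Since $D_{tR}=D_R\circ D_t$, we have $D_{tR}^*=D_t^*D_R^*$, and hence $g_{tR}=t^{-2}D_t^*(R^{-2}D_R^*g_\infty)=t^{-2}D_t^*g_R$. For the profiles, $W(tRx)=1+(tR)^{-\alpha}P_{tR}(x)$ and also $W(R(tx))=1+R^{-\alpha}P_R(tx)$. Comparing the two gives $P_{tR}(x)=t^{\alpha}P_R(tx)$.

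\emph{Step 2: transformation of the current.} Set $\Psi:=D_t$ and $\tilde g:=t^{-2}\Psi^*g_R$, so that $g_{tR}=\tilde g$. For the function $P_{tR}=t^\alpha P_R\circ\Psi$ we track each factor.
\begin{itemize}
\item Since $\tilde g$ is a constant multiple of an isometric pullback, its Levi-Civita connection is the pullback of that of $g_R$. The $(0,2)$ Hessian therefore pulls back, and raising an index with $\tilde g=t^{-2}\Psi^*g_R$ contributes a factor $t^2$. This gives $H_{tR}=t^{\alpha+2}\,\Psi^*H_R$ as endomorphisms, and the same scaling holds for $T_1(H_{tR})$.
\item The gradient satisfies $\nabla^{\tilde g}P_{tR}=t^{\alpha+2}\,\Psi^*\nabla^{g_R}P_R$. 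Here $\Psi^*$ of a vector field means $(D\Psi)^{-1}$ applied at the image point, which is division by $t$ in coordinates.
\end{itemize}
Combining these, $J_{tR}=t^{2\alpha+4}\,\Psi^*J_R$.

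\emph{Step 3: the integrand and the volume form.} Note $\varrho\circ\Psi=t\varrho$, so $\varrho=t^{-1}\varrho\circ\Psi$. Then
\begin{itemize}
\item $\nabla^{\tilde g}\varrho=t^{2}\cdot t^{-1}\Psi^*\nabla^{g_R}\varrho=t\,\Psi^*\nabla^{g_R}\varrho$;
\item $\langle X,Y\rangle_{\tilde g}=t^{-2}\langle\Psi_*X,\Psi_*Y\rangle_{g_R}$;
\item $dV_{\tilde g}=t^{-n}\Psi^*dV_{g_R}$.
\end{itemize}
Multiplying the powers gives
$\langle J_{tR},\nabla^{\tilde g}\varrho\rangle_{\tilde g}\,dV_{\tilde g}=t^{2\alpha+4}\cdot t\cdot t^{-2}\cdot t^{-n}\,\Psi^*\bigl(\langle J_R,\nabla^{g_R}\varrho\rangle_{g_R}dV_{g_R}\bigr)$.
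The total power is $2\alpha+3-n=-1$, because $2\alpha=n-4$.

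\emph{Step 4: change of variables.} The cutoff $\eta(\varrho(x))$ equals $\eta(\varrho(\Psi x)/t)$. Changing variables $z=tx$ maps $A_{1,2}$ onto $A_{t,2t}$, and \eqref{eq:low-charge-scaling} follows.

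The only delicate point is regularity. $J_R$ is defined only in $L^1_{\rm loc}$, since $P_R\in C^{1,1}$ and the Hessian exists almost everywhere; this is guaranteed by Lemma~\ref{lem:low-Hessian-L1}. Pullback by a linear diffeomorphism preserves almost-everywhere identities and $L^1$ integrability, so the pointwise computation above holds almost everywhere and the change of variables is legitimate. The main thing to get right is the bookkeeping of the powers of $t$, in particular the $t^2$ coming from raising an index with $t^{-2}$ times the metric. That bookkeeping is what makes the exponent collapse to $-1$ exactly in the critical case $2\alpha=n-4$.
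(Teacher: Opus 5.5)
Your proof is correct and follows the same route as the paper. Like the paper, you verify the two scaling identities directly from the definitions, then substitute them into the definition of $Q$ and change variables $y=tx$. Your power count $t^{2\alpha+3-n}=t^{-1}$ agrees with the paper's $t^{2\alpha+4-n}=1$ once the factor $\frac1t$ in front of $\eta$ is separated out; you simply write out in full the index-raising bookkeeping that the paper leaves implicit.
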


\begin{proof}
The identities in \eqref{eq:low-PR-scaling} are immediate from the
definitions.  Substituting them into \eqref{eq:low-charge-def} and changing
variables $y=tx$ gives \eqref{eq:low-charge-scaling}, since
$t^{2\alpha+4-n}=1$.
\end{proof}

\subsubsection{Covariant null-Lagrangian identity}

\begin{lemma}
\label{lem:low-null-Lagrangian}
For a smooth function $P$ on a Riemannian manifold $(\Omega,h)$,
\begin{equation}\label{eq:low-null-Lagrangian}
  \operatorname{div}_h
  \bigl(T_1((\nabla_h^2P)^\sharp)\nabla^hP\bigr)
  =
  2\sigma_2((\nabla_h^2P)^\sharp)
  -\operatorname{Ric}_h(\nabla P,\nabla P).
\end{equation}
If $P\in C^{1,1}_{\mathrm{loc}}$, the identity remains true in the
distribution sense.
\end{lemma}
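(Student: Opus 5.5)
We compute in a local $h$-orthonormal frame, writing $P_{ij}=\nabla^2_{ij}P$ and $P_{ijk}=\nabla_k\nabla_j\nabla_iP$. The plan has three steps: expand the divergence, recognize the quadratic part as $2\sigma_2$, and identify the leftover third-derivative term as a Ricci contraction.

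\emph{Expanding the divergence.} By \eqref{eq:sigma2-T1-conventions}, the current has components
\[
J_i=T_1(\nabla^2P)_{ij}P_j=\Delta P\,P_i-P_{ij}P_j .
\]
Differentiating and contracting gives
\[
\operatorname{div}_hJ=(\Delta P)^2-|\nabla^2P|^2+(\Delta P)_iP_i-P_{iji}P_j .
\]
The first two terms are exactly $2\sigma_2((\nabla^2_hP)^\sharp)$. Everything therefore reduces to showing
\[
\nabla_i\Delta P\,P_i-P_{iji}P_j=-\operatorname{Ric}_h(\nabla P,\nabla P).
\]

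\emph{The commutation step.} This is where the argument could go wrong, and the only real issue is the sign. I would use the Ricci identity for the divergence of the Hessian. Since $P_{ij}=P_{ji}$, we have
\[
\nabla_iP_{ji}=\nabla_i\nabla_j\nabla_iP=\nabla_j\nabla_i\nabla_iP+R_{jl}P_l=\nabla_j\Delta P+\operatorname{Ric}(\nabla P)_j .
\]
This is the Bochner-type commutation. Its sign is convention-independent, because it is the identity $\operatorname{div}\nabla^2P=\nabla\Delta P+\operatorname{Ric}(\nabla P)$, which is consistent with the Bochner formula. Substituting, the third-order terms cancel and what remains is $-\operatorname{Ric}_h(\nabla P,\nabla P)$. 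This gives \eqref{eq:low-null-Lagrangian} for smooth $P$.

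\emph{The $C^{1,1}_{\mathrm{loc}}$ case.} Here $\nabla^2P\in L^\infty_{\mathrm{loc}}$, so the right-hand side is in $L^\infty_{\mathrm{loc}}$ and $J\in L^\infty_{\mathrm{loc}}$. Take mollifications $P_\varepsilon\to P$ in $C^1_{\mathrm{loc}}$, with $\nabla^2P_\varepsilon\to\nabla^2P$ in $L^p_{\mathrm{loc}}$ for every $p<\infty$ and bounded in $L^\infty_{\mathrm{loc}}$ (mollify in coordinates; the Christoffel terms converge uniformly). Test the smooth identity against $\varphi\in C_c^\infty$. On the left, $\int\langle J_\varepsilon,\nabla\varphi\rangle$ converges because $J_\varepsilon$ is a product of a factor converging in $L^p$ and a factor converging uniformly. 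On the right, the quadratic Hessian terms converge in $L^1_{\mathrm{loc}}$ by the $L^2$ convergence. Passing to the limit yields the distributional identity.
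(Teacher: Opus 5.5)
Your proof is correct and is essentially the paper's argument. The paper packages the same Bochner commutation as $\operatorname{div}_hT_1((\nabla_h^2P)^\sharp)=-\operatorname{Ric}_h(\nabla P,\cdot)$ and combines it with $T_1(H):H=2\sigma_2(H)$, whereas you expand the product rule componentwise before commuting. Both pass to the $C^{1,1}_{\mathrm{loc}}$ case by smooth approximation, which you spell out in more detail than the paper does.
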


\begin{proof}
This is the pointwise identity underlying Reilly's formula
\cite{Reilly1977}: combine
$\operatorname{div}_hT_1((\nabla_h^2P)^\sharp)
=-\operatorname{Ric}_h(\nabla P,\cdot)$ with
$T_1(H):H=2\sigma_2(H)$.  The $C^{1,1}_{\mathrm{loc}}$ case follows by
smooth approximation.
\end{proof}

We next rewrite the right-hand side using the Green equation.  Let
\begin{equation}\label{eq:low-Cq-def}
  C_R:=\frac{W_R^2}{\varepsilon_R}(A_{g_R})^\sharp,
  \qquad
  q_R:=\frac{\varepsilon_R}{2}|\nabla P_R|_{g_R}^2.
\end{equation}
Then
\[
  \sigma_2(W_RH_R+C_R-q_RI)=0
\]
at almost every twice differentiability point, by the standard consistency
of viscosity solutions with their second-order jets
\cite{CrandallIshiiLions1992}.  Therefore
\begin{equation}\label{eq:low-sigma2-H-algebra}
  \sigma_2(H_R)
  =
  -\frac1{W_R}T_1(H_R):(C_R-q_RI)
  -\frac1{W_R^2}\sigma_2(C_R-q_RI).
\end{equation}
Combining its elementary polarizations with
Lemma~\ref{lem:low-null-Lagrangian} gives
\begin{equation}\label{eq:low-current-divergence}
  \operatorname{div}_{g_R}J_R=E_R
\end{equation}
in distributions.  On any fixed annulus where $W_R^{\pm1}$ and
$|\nabla P_R|$ are bounded, the same calculation gives
\begin{equation}\label{eq:low-ER-bound}
 |E_R|
 \le C\left[
  (\varepsilon_R+|C_R|)|H_R|+|C_R|^2
  +\varepsilon_R|C_R|+\varepsilon_R^2
  +|\operatorname{Ric}(g_R)|
 \right].
\end{equation}

\subsubsection{Summable scale drift and existence of the limiting charge}

\begin{proposition}
\label{prop:low-charge-drift}
There is $C>0$ such that, for all sufficiently large $R$ and all
$t\in[1,2]$,
\begin{equation}\label{eq:low-charge-drift}
  |Q(tR)-Q(R)|
  \le
  C\bigl(R^{-\alpha}+R^{\alpha-2}\bigr).
\end{equation}
Consequently, the full limit
\begin{equation}\label{eq:low-Q-infty}
  Q_\infty:=\lim_{R\to\infty}Q(R)
\end{equation}
exists and is finite.
\end{proposition}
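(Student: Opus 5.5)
By the scaling formula \eqref{eq:low-charge-scaling}, both $Q(tR)$ and $Q(R)$ are radial averages of the flux of the single current $J_R$ on the fixed metric $g_R$. They differ only in the weight: $\eta(\varrho)$ versus $t^{-1}\eta(\varrho/t)$. The plan is to turn this difference into a volume integral of $\operatorname{div}_{g_R}J_R=E_R$ and to bound $E_R$ using \eqref{eq:low-ER-bound}.

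\textbf{Step 1: reduce the drift to an integral of $E_R$.} Set
\[
\Theta_t(s):=\int_0^s\bigl(\eta(\sigma)-t^{-1}\eta(\sigma/t)\bigr)\,d\sigma .
\]
Both weights have unit mass, so $\Theta_t$ is smooth and compactly supported in $(1,4)$. It is bounded uniformly for $t\in[1,2]$. The integrand of the charge is $\langle J_R,\nabla^{g_R}\varrho\rangle=\langle J_R,\nabla^{g_R}(\Theta_t\circ\varrho)\rangle$ up to the weight. Integrating by parts in the distribution sense therefore gives
\[
Q(R)-Q(tR)=-\int_{A_{1,4}}\Theta_t(\varrho)\,E_R\,dV_{g_R}.
\]
This step uses \eqref{eq:low-current-divergence}, Lemma~\ref{lem:low-null-Lagrangian} in its $C^{1,1}$ form, and the fact that $J_R$ lies in $L^1$ on the fixed annulus $A_{1,4}$. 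The last fact comes from Proposition~\ref{prop:low-Lipschitz-compactness} and Lemma~\ref{lem:low-Hessian-L1}. Only $\varrho$ and the cutoff are differentiated, so no trace of the Hessian on a sphere is ever needed. The formula then yields
\[
|Q(tR)-Q(R)|\le C\int_{A_{1,4}}|E_R|\,dV_{g_R}.
\]

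\textbf{Step 2: estimate the terms of \eqref{eq:low-ER-bound} on $A_{1,4}$.} By \eqref{eq:low-gR-estimates}, $|C_R|\le CR^{\alpha}\cdot R^{-2}=CR^{\alpha-2}$ and $|\operatorname{Ric}(g_R)|\le CR^{-2}$; here $W_R$ is bounded and $\varepsilon_R=R^{-\alpha}$. Lemma~\ref{lem:low-Hessian-L1} gives $\int|H_R|\le C$. The terms are then bounded as follows:
\begin{itemize}
\item $(\varepsilon_R+|C_R|)|H_R|$ contributes $O(R^{-\alpha}+R^{\alpha-2})$;
\item $|C_R|^2$ contributes $O(R^{2\alpha-4})$;
\item $\varepsilon_R|C_R|$ and $\varepsilon_R^2$ contribute $O(R^{-2})$ and $O(R^{-2\alpha})$;
\item the Ricci term contributes $O(R^{-2})$.
\end{itemize}
Since $0<\alpha<2$, every one of these is $\le C(R^{-\alpha}+R^{\alpha-2})$, which is \eqref{eq:low-charge-drift}.

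The main care goes into justifying \eqref{eq:low-ER-bound} with an honest $L^1$ Hessian dependence, so that no $|H_R|^2$ term appears. This comes from the algebraic identity \eqref{eq:low-sigma2-H-algebra}. Writing $2\sigma_2(H_R)$ through it makes $T_1(H_R)$ enter only linearly, and $|T_1(H_R)|\le C|H_R|$. The identity holds almost everywhere for the $C^{1,1}$ function $P_R$. If needed, one can run the argument on the smooth strict approximants $P_{R,j}$ and pass to the limit. Their extra term $\tau_{R,j}W_{R,j}^{-2}$ tends to zero by \eqref{eq:LN-tau-to-zero}, and Lemma~\ref{lem:low-Hessian-L1} holds uniformly for the approximants.

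\textbf{Step 3: existence of the limit.} Apply the drift bound with $t=2$ along $R_k=2^kR_1$. The resulting sums $\sum_k R_k^{-\alpha}$ and $\sum_k R_k^{\alpha-2}$ are geometric and converge because $\alpha>0$ and $\alpha<2$. Hence $Q(R_k)$ is Cauchy. For a general $R\in[R_k,R_{k+1}]$, write $R=tR_k$ with $t\in[1,2]$ and apply \eqref{eq:low-charge-drift} once more. This gives $|Q(R)-Q(R_k)|\to0$, so the full limit $Q_\infty$ exists. It is finite because $Q(R)$ is uniformly bounded by the $L^1$ bound on $J_R$.
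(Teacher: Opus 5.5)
Your proposal is correct and is essentially the paper's own proof. Your $\Theta_t$ is the negative of the paper's primitive $\phi_t$ of the weight difference, which turns the drift into $-\int_{A_{1,4}}\phi_t(\varrho)E_R\,dV_{g_R}$. From there, the $L^1$ bound on $E_R$ via \eqref{eq:low-ER-bound} (using the gradient, Hessian-$L^1$, $C_R$ and Ricci bounds) and the dyadic summation are exactly the paper's argument; the paper simply takes the linear-in-$H_R$ structure of \eqref{eq:low-ER-bound} as already established rather than re-deriving it as you do.
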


\begin{proof}
For $t\in[1,2]$, define
\begin{equation}\label{eq:low-omega-phi}
  \omega_t(s)
  :=
  \frac1t\eta\!\left(\frac st\right)-\eta(s),
  \qquad
  \phi_t(r):=\int_0^r\omega_t(s)\,ds.
\end{equation}
Since $\int_0^\infty\omega_t(s)\,ds=0$, the function
$\phi_t(\varrho)$ is smooth and compactly supported in a fixed annulus
$A_{1,4}$, and
\begin{equation}\label{eq:low-phi-uniform}
  \sup_{t\in[1,2]}\|\phi_t\|_{C^1}\le C_\eta.
\end{equation}
The scaling identity \eqref{eq:low-charge-scaling} and the distributional
identity \eqref{eq:low-current-divergence} give
\begin{align}
  Q(tR)-Q(R)
  &=
  \int_{A_{1,4}}
  \omega_t(\varrho)
  \left\langle J_R,\nabla^{g_R}\varrho\right\rangle\,dV_{g_R}
  \notag\\
  &=
  \int_{A_{1,4}}
  \left\langle J_R,\nabla^{g_R}\phi_t(\varrho)\right\rangle\,dV_{g_R}
  =
  -\int_{A_{1,4}}\phi_t(\varrho)E_R\,dV_{g_R}.
  \label{eq:low-charge-test}
\end{align}

On a slightly larger fixed annulus, Proposition~\ref{prop:low-Lipschitz-compactness} and
Lemma~\ref{lem:low-Hessian-L1} give
\[
  \|W_R^{\pm1}\|_{L^\infty}
  +\|\nabla P_R\|_{L^\infty}+\int|H_R|\,dV_{g_R}\le C,
  \qquad
  \varepsilon_R=R^{-\alpha},\quad
  \|C_R\|_{L^\infty}\le CR^{\alpha-2},\quad
  \|\operatorname{Ric}(g_R)\|_{L^\infty}\le CR^{-2}.
\]
Since $0<\alpha<2$, \eqref{eq:low-ER-bound} gives directly
\[
  \|E_R\|_{L^1(A_{1,4})}
  \le C\bigl(R^{-\alpha}+R^{\alpha-2}\bigr).
\]
Together with
\eqref{eq:low-charge-test}--\eqref{eq:low-phi-uniform}, this proves
\eqref{eq:low-charge-drift}.

For $R_j=2^jR_0$, the error in \eqref{eq:low-charge-drift} is summable,
so $Q(R_j)$ is Cauchy.  If $R\in[R_j,2R_j]$, apply the same estimate
with $t=R/R_j$ to see that
$|Q(R)-Q(R_j)|\to0$.  Hence the limit exists along all real scales.
\end{proof}

\subsubsection{Passage of the charge to blow-down limits}

For the model tangent
\begin{equation}\label{eq:low-Pc-model}
  P_c(x)=-c|x|^{-\alpha},
\end{equation}
the radial derivative is
$c\alpha r^{-\alpha-1}$.  The radial eigenvalue of
$T_1(D^2P_c)$ is the sum of the $n-1$ tangential Hessian eigenvalues,
namely $(n-1)c\alpha r^{-\alpha-2}$.  Since
$2\alpha=n-4$, the flux through every sphere is therefore
\begin{equation}\label{eq:low-model-charge}
  Q[P_c]
  =
  (n-1)|\mathbb S^{n-1}|\alpha^2c^2.
\end{equation}

\begin{proposition}
\label{prop:low-coefficient-locking}
Every blow-down tangent has the same coefficient
\begin{equation}\label{eq:low-c-formula}
  c
  =
  \frac{\sqrt{Q_\infty}}
       {\alpha\sqrt{(n-1)|\mathbb S^{n-1}|}}
  \ge0.
\end{equation}
Consequently,
\begin{equation}\label{eq:low-full-PR-convergence}
  P_R(x)\longrightarrow-c|x|^{-\alpha}
\end{equation}
locally uniformly on $\mathbb R^n\setminus\{0\}$ as $R\to\infty$,
without passing to a subsequence.
\end{proposition}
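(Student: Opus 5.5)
The plan is to show that the charge functional $Q$ is continuous under blow-down convergence, so that along any subsequence $R_j\to\infty$ with $P_{R_j}\to-c|x|^{-\alpha}$ one has $Q(R_j)\to Q[P_c]$. Combined with Proposition~\ref{prop:low-charge-drift} and \eqref{eq:low-model-charge}, this gives $Q_\infty=(n-1)|\mathbb S^{n-1}|\alpha^2c^2$. Since $c\ge0$, this determines $c$ as in \eqref{eq:low-c-formula} independently of the subsequence. Full convergence \eqref{eq:low-full-PR-convergence} then follows by the usual argument. Suppose $P_R$ did not converge locally uniformly to $-c|x|^{-\alpha}$. Then there would be a compact set, an $\epsilon>0$ and scales $R_k\to\infty$ along which $P_{R_k}$ stays $\epsilon$-far from it. By Proposition~\ref{prop:low-tangent-classification}, a further subsequence converges to some $-c'|x|^{-\alpha}$, and by the locking $c'=c$, which is a contradiction.

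The heart of the argument is the continuity of $Q$. The difficulty is that $J_R=T_1(H_R)\nabla P_R$ involves second derivatives, which converge only weakly; we only have the $L^1$ bound of Lemma~\ref{lem:low-Hessian-L1}. To avoid needing strong Hessian convergence, I would integrate by parts once to put $Q(R)$ in a first-order form. Writing $T_1(H)\nabla P$ in divergence form, up to curvature terms, one has componentwise
\[
 \bigl(T_1(D^2P)\nabla P\bigr)^i=\partial_k\bigl(P_k P_i\bigr)\text{-type terms}-\tfrac12\partial_i|\nabla P|^2+\cdots .
\]
More cleanly, I would use $T_1(D^2P)^{ik}P_k=\Delta P\,P_i-P_{ik}P_k=\partial_k(P_kP_i)-\partial_i(\tfrac12|\nabla P|^2)$. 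Testing against the smooth compactly supported field $\eta(\varrho)\nabla\varrho$ then gives
\[
 Q(R)=-\int\Bigl(P_kP_i\,\partial_k\bigl(\eta(\varrho)\varrho_i\bigr)-\tfrac12|\nabla P|^2\operatorname{div}\bigl(\eta(\varrho)\nabla\varrho\bigr)\Bigr)dx+\mathrm{err}_R .
\]
Here $\mathrm{err}_R$ collects the metric corrections from $g_R-g_{\mathrm E}$ and the Christoffel symbols. These are bounded by $CR^{-2}\bigl(\int|H_R|+\|\nabla P_R\|_\infty^2\bigr)\to0$, using \eqref{eq:low-gR-estimates}, Proposition~\ref{prop:low-Lipschitz-compactness} and Lemma~\ref{lem:low-Hessian-L1}. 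I would justify the identity on the smooth strict approximants $P_{R,j}$ and pass to the limit $j\to\infty$ via \eqref{eq:LN-scaled-convergence}.

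It then remains to pass to the limit in a quadratic expression in $\nabla P_R$. This requires \emph{strong} $L^2_{\mathrm{loc}}$, or a.e., convergence of gradients on the annulus $A_{1,2}$, and I expect this to be the main obstacle. Uniform convergence and uniform Lipschitz bounds give only weak-$*$ convergence of $\nabla P_R$. I would upgrade it in either of two ways.

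The first route uses the uniform $W^{2,1}$ bound. By Lemma~\ref{lem:low-Hessian-L1}, $\nabla P_R$ is bounded in $W^{1,1}$ on the annulus, so Rellich compactness gives $\nabla P_R\to\nabla P$ in $L^1_{\mathrm{loc}}$. Interpolating with the $L^\infty$ bound gives convergence in every $L^p_{\mathrm{loc}}$, $p<\infty$. Hence the quadratic integrand converges.

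The alternative route uses semiconcavity-type information from $2$-convexity, but the Rellich argument seems simplest and uses only results already established.

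Finally, I would evaluate the first-order expression on $P_c$, which is smooth away from the origin. There the integration by parts reverses exactly, and the result equals the $\eta$-average of the sphere fluxes, namely \eqref{eq:low-model-charge}. This completes the identification of $Q_\infty$ with the model charge.
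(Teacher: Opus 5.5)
Your argument is essentially the paper's own proof. You rewrite the Newton current in divergence (null-Lagrangian) form so that $Q(R)$ becomes quadratic in $\nabla P_R$ up to $O(R^{-2})$ metric errors. You get $\nabla P_{R_j}\to\nabla P_c$ strongly in $L^2_{\rm loc}$ from the uniform $W^{2,1}$ (BV) bound of Lemma~\ref{lem:low-Hessian-L1} plus interpolation with the Lipschitz bound. You then compare with \eqref{eq:low-model-charge} and remove the subsequence using uniqueness of $c\ge0$.

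One correction is needed. The identity must read $\bigl(T_1(D^2P)\nabla P\bigr)_i=\partial_k(P_kP_i)-\partial_i|\nabla P|^2$, with coefficient $1$ as in the paper, because $\partial_k(P_kP_i)-\tfrac12\partial_i|\nabla P|^2=\Delta P\,P_i$. With your coefficient $\tfrac12$, the first-order formula computes the averaged flux of $\Delta P\,\nabla P$ rather than the Newton current. For $P_c$ this gives the factor $n-2-\alpha$ instead of $n-1$ in the model charge. The $|\nabla P|^2\operatorname{div}\bigl(\eta(\varrho)\nabla\varrho\bigr)$ term therefore has to carry coefficient $1$; with that fix the argument goes through.
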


\begin{proof}
Take any sequence $R_j\to\infty$ and a tangent subsequence.  Proposition
\ref{prop:low-tangent-classification} gives
$P_{R_j}\to P_c$.  On every fixed compact annulus,
Lemma~\ref{lem:low-Hessian-L1} and the uniform gradient bound imply
$\int |D^2P_{R_j}|\le C$; hence $\nabla P_{R_j}$ is bounded in $BV$.
Local uniform convergence identifies its only possible limit as
$\nabla P_c$, so, after interpolation with the uniform $L^\infty$ bound,
\[
  \nabla P_{R_j}\longrightarrow\nabla P_c
  \qquad\text{strongly in }L^2_{\rm loc}.
\]
Using
\[
  \bigl(T_1(D^2P)\nabla P\bigr)_i
  =\partial_j(P_iP_j)-\partial_i|\nabla P|^2,
\]
together with $g_{R_j}\to g_{\mathrm E}$ in $C^1$, the associated
currents converge distributionally.  Therefore Proposition~\ref{prop:low-charge-drift}
and \eqref{eq:low-model-charge} give
\[
  Q_\infty
  =\lim_{j\to\infty}Q(R_j)
  =(n-1)|\mathbb S^{n-1}|\alpha^2c^2.
\]
Thus \eqref{eq:low-c-formula} holds for every tangent.  Since the
classification already gives $c\ge0$, the coefficient is uniquely
determined.  Every sequence has a subsequence with the same unique limit,
which implies the full convergence \eqref{eq:low-full-PR-convergence}.
\end{proof}

\subsection{Proof of the low-dimensional expansion}
\label{sec:low-completion}

\begin{proof}[Proof of Theorem~\ref{thm:main-working}\textup{(i)}]
By Proposition~\ref{prop:low-coefficient-locking}, uniformly for
$\theta\in\mathbb S^{n-1}$, $P_R(\theta)=-c+o(1)$.  Hence
\[
  W(R\theta)=1-cR^{-\alpha}+o(R^{-\alpha}),
  \qquad
  F(R\theta)=1+\frac{\alpha}{2}cR^{-\alpha}+o(R^{-\alpha}).
\]
Put
\[
  A:=\frac{\alpha}{2}c
    =\frac{\sqrt{Q_\infty}}
           {2\sqrt{(n-1)|\mathbb S^{n-1}|}}
    \ge0,
\]
where the second equality follows from \eqref{eq:low-c-formula}.
In the conformal-normal coordinates of
Section~\ref{sec:normalization-kelvin}, $R=\rho^{-1}$ and
$U(\rho\theta)=\rho^{-\alpha}F(R\theta)$, so
\[
  U(\rho\theta)
  =
  \rho^{-\alpha}+A+o(1).
\]

Finally, return to the original representative.  By
\eqref{eq:psi-normalization}, \eqref{eq:green-factor-gauge-change}, and the
normal-coordinate expansion,
\[
  \psi=1+O(\rho^2),\qquad r_0=\rho+O(\rho^3),\qquad
  U_0=\psi^{\alpha/(n-2)}U.
\]
Since $\alpha<2$, these changes contribute only $o(1)$, and hence
\[
  U_0(x)=r_0^{-\alpha}+A+o(1).
\]
This is \eqref{eq:main-low}; uniqueness and $A\ge0$ follow from
\eqref{eq:low-c-formula}.
\end{proof}


\section{The borderline dimension \texorpdfstring{$n=8$}{n=8}}
\label{sec:endpoint8}

Throughout this section $n=8$, so $\alpha=2$ and $F=W^{-1}$.
The relative curvature scale and the bounded Green scale now coincide.  The
proof therefore uses the explicit Weyl term in
\eqref{eq:Aginfty-leading-Weyl}, rather than estimating it by its absolute
size.

\subsection{The Weyl-scale normalization}
\label{subsec:n8-Weyl-scale}

The borderline coefficient follows from the two algebraic identities below.

\begin{lemma}\label{lem:endpoint-Weyl-algebra}
Let $C_0$ and $C_\infty$ be defined by
\eqref{eq:C0-definition} and \eqref{eq:Cinfty-definition}.  Then
\begin{align}
  \frac1{|\mathbb S^7|}\int_{\mathbb S^7}|C_0|^2\,d\theta
  &=\frac1{120}|W_g(p)|^2,
  \label{eq:endpoint-C0-average}\\
  \sigma_2(C_0)&=-\frac12|C_0|^2.
  \label{eq:endpoint-C0-sigma2}
\end{align}
Moreover, if
$H=\lambda_r\theta\otimes\theta+
\lambda_\theta(I-\theta\otimes\theta)$ is radial, then
\begin{equation}\label{eq:endpoint-radial-cross-zero-pointwise}
  T_1(H):C_0(\theta)=0.
\end{equation}
\end{lemma}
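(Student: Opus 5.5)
The plan is to derive all three identities directly from the explicit formula \eqref{eq:C0-definition}, $(C_0)_{ij}=\varsigma\frac23W_{ikjl}(p)\theta^k\theta^l$, together with the algebraic facts \eqref{eq:C0-algebra-basic} from Lemma~\ref{lem:inverted-background}. Two ingredients are needed: the spherical moment formula for degree-four monomials, and the standard contraction identity for the Weyl tensor.

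\emph{The two pointwise identities.} These are immediate, so I dispose of them first.

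For \eqref{eq:endpoint-C0-sigma2}, $C_0$ is symmetric and $\tr C_0=0$. Hence $\tr(C_0^2)=|C_0|^2$, and the convention \eqref{eq:sigma2-T1-conventions} gives
\[
\sigma_2(C_0)=\tfrac12\bigl((\tr C_0)^2-|C_0|^2\bigr)=-\tfrac12|C_0|^2.
\]

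For \eqref{eq:endpoint-radial-cross-zero-pointwise}, write $T_1(H)=\sigma_1(H)I-H$, so that
\[
T_1(H):C_0=\sigma_1(H)\tr C_0-H:C_0 .
\]
The first term vanishes because $\tr C_0=0$. For the second, insert the radial form of $H$:
\[
H:C_0=\lambda_r\,\langle C_0\theta,\theta\rangle+\lambda_\theta\bigl(\tr C_0-\langle C_0\theta,\theta\rangle\bigr).
\]
Both pieces vanish by $C_0\theta=0$ and $\tr C_0=0$. Here $\lambda_r,\lambda_\theta$ may be arbitrary functions of the radius; only the structure of $H$ as a combination of $\theta\otimes\theta$ and $I$ matters.

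\emph{The spherical average \eqref{eq:endpoint-C0-average}.} This is the only step with real content.

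First, $\varsigma^2=1$, so
\[
|C_0|^2=\tfrac49\,W_{ikjl}W_{imjs}\,\theta^k\theta^l\theta^m\theta^s .
\]
Next I use the standard moment identity on $\mathbb S^{n-1}$:
\[
\fint\theta^k\theta^l\theta^m\theta^s=\frac{\delta_{kl}\delta_{ms}+\delta_{km}\delta_{ls}+\delta_{ks}\delta_{lm}}{n(n+2)} .
\]
It follows, for instance, by computing $\int_{\mathbb R^n}x^kx^lx^mx^se^{-|x|^2}\,dx$ in two ways, or by taking traces. For $n=8$ the denominator is $80$.

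The three pairings then contract as follows.
\begin{itemize}
\item The pairing $\delta_{kl}\delta_{ms}$ gives $W_{ikjk}W_{imjm}=0$, by trace-freeness of the Weyl tensor.
\item The pairing $\delta_{km}\delta_{ls}$ gives $W_{ikjl}W_{ikjl}=|W|^2$.
\item The pairing $\delta_{ks}\delta_{lm}$ gives $W_{ikjl}W_{iljk}$.
\end{itemize}

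The main point to check carefully is the identity $W_{ikjl}W_{iljk}=\frac12|W|^2$. To prove it, I apply the first Bianchi identity to $W_{iljk}$ and use the antisymmetries, exactly as in the classical identity $R_{ikjl}R_{iljk}=\frac12|Rm|^2$, keeping track of index positions for the chosen sign convention. This identity is unaffected by $\varsigma$, since $\varsigma$ enters squared. The sum of the pairings is then $\frac32|W|^2$, and therefore
\[
\fint_{\mathbb S^7}|C_0|^2=\frac49\cdot\frac{3}{2\cdot80}|W|^2=\frac1{120}|W_g(p)|^2 .
\]
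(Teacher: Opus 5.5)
Your proposal is correct and follows the same route as the paper, which simply cites these as standard: $\tr C_0=0$ and $C_0\theta=0$ give the two pointwise identities, and the fourth spherical moment $\fint\theta^k\theta^l\theta^m\theta^s=(\delta_{kl}\delta_{ms}+\delta_{km}\delta_{ls}+\delta_{ks}\delta_{lm})/80$ gives the average. You fill in the contractions the paper leaves implicit. In particular, $W_{ikjl}W_{iljk}=\tfrac12|W|^2$ does hold by the first Bianchi identity as you indicate, so the total is $\tfrac49\cdot\tfrac{3}{160}=\tfrac1{120}$.
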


\begin{proof}
These are the standard spherical fourth-moment contraction and Weyl trace
identities: $\tr C_0=C_0(\theta,\theta)=0$ gives
\eqref{eq:endpoint-C0-sigma2} and
\eqref{eq:endpoint-radial-cross-zero-pointwise}, while the fourth spherical
moment gives \eqref{eq:endpoint-C0-average}.
\end{proof}

\subsection{Critical radial modulation}
\label{subsec:n8-radial-modulation}

The critical radial mode is governed by the following elementary identity.

\begin{lemma}\label{lem:endpoint-radial-identity}
Let $s=\log r$, let $b=b(s)>0$, and set
\[
  q(r)=-r^{-2}b(\log r).
\]
Then the radial and tangential eigenvalues of $D^2q$ are
\[
  r^{-4}(-6b+5b'-b''),
  \qquad
  r^{-4}(2b-b')
\]
(the latter with multiplicity seven), and consequently
\begin{align}
  r^4\sigma_1(D^2q)&=8b-2b'-b'',
  \label{eq:endpoint-radial-sigma1}\\
  r^8\sigma_2(D^2q)&=7(2b-b')(2b'-b'').
  \label{eq:endpoint-radial-sigma2}
\end{align}
If $b^2=B^2+\Lambda(s-s_0)$, then
\begin{equation}\label{eq:endpoint-radial-constant-source}
  r^8\sigma_2(D^2q)
  =14\Lambda-\frac{7\Lambda^3}{8b^4}.
\end{equation}
\end{lemma}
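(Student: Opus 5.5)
The plan is a direct computation with the chain rule in $s=\log r$, using the standard description of the Hessian of a radial function. For a radial function $q=q(r)$ on $\mathbb R^8\setminus\{0\}$,
\[
D^2q=q''\,\theta\otimes\theta+\frac{q'}{r}(I-\theta\otimes\theta).
\]
So the radial eigenvalue is $q''$ and the tangential eigenvalue is $q'/r$, with multiplicity $n-1=7$. Since $\frac{d}{dr}b(\log r)=r^{-1}b'$, I would first differentiate $q=-r^{-2}b$ once:
\[
q'=2r^{-3}b-r^{-3}b'=r^{-3}(2b-b').
\]
This gives the tangential eigenvalue $q'/r=r^{-4}(2b-b')$. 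Differentiating again,
\[
q''=-3r^{-4}(2b-b')+r^{-4}(2b'-b'')=r^{-4}(-6b+5b'-b''),
\]
which is the stated radial eigenvalue.

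Next I would form the symmetric functions of the eigenvalues $\lambda_r$ (once) and $\lambda_\theta$ (seven times). The trace is $\sigma_1=\lambda_r+7\lambda_\theta$, which gives $r^4\sigma_1=8b-2b'-b''$, i.e. \eqref{eq:endpoint-radial-sigma1}. For $\sigma_2$ I would group the terms as
\[
\sigma_2=7\lambda_r\lambda_\theta+\binom72\lambda_\theta^2=7\lambda_\theta(\lambda_r+3\lambda_\theta).
\]
The second factor simplifies to $r^{-4}\bigl(-6b+5b'-b''+6b-3b'\bigr)=r^{-4}(2b'-b'')$. This yields \eqref{eq:endpoint-radial-sigma2}. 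It is consistent with \eqref{eq:low-radial-eigenvalues}: the factor $\lambda_r+\alpha\lambda_\theta$ with $\alpha=2$ appears up to the normalization there.

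For \eqref{eq:endpoint-radial-constant-source}, I would differentiate the constraint $b^2=B^2+\Lambda(s-s_0)$ implicitly. This gives $2bb'=\Lambda$, so $b'=\Lambda/(2b)$, and then
\[
b''=-\frac{\Lambda b'}{2b^2}=-\frac{\Lambda^2}{4b^3}.
\]
Substituting,
\[
2b-b'=2b-\frac{\Lambda}{2b},\qquad 2b'-b''=\frac{\Lambda}{b}+\frac{\Lambda^2}{4b^3}.
\]
Expanding the product, the two cross terms $\pm\Lambda^2/(2b^2)$ cancel. What remains is $2\Lambda-\Lambda^3/(8b^4)$, and multiplying by $7$ gives the claim.

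There is no genuine obstacle. The only step needing care is the bookkeeping of the factors $r^{-1}$ produced when differentiating $b(\log r)$, together with the sign convention $q=-r^{-2}b$. I would double-check it against the case $b\equiv$ const. There $\sigma_2(D^2q)=0$, consistent with $-r^{-\alpha}$ being the critical homogeneous profile in \eqref{eq:low-qgamma-sigma2} for $\alpha=2$. Also $\Lambda=0$ makes the right side of \eqref{eq:endpoint-radial-constant-source} vanish, as it should.
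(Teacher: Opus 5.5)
Your proposal is correct and is the same computation the paper gives: differentiate $q=-r^{-2}b(\log r)$ twice via the chain rule in $s=\log r$, read off the radial eigenvalue $q''$ and the tangential eigenvalue $q'/r$, form $\sigma_1,\sigma_2$, and substitute $b'=\Lambda/(2b)$, $b''=-\Lambda^2/(4b^3)$. One small point in your side remark: the factor you actually obtain is $\lambda_r+3\lambda_\theta=\lambda_r+(\alpha+1)\lambda_\theta$, not $\lambda_r+\alpha\lambda_\theta$. It agrees with the factor $s'/r+\alpha\lambda_\theta$ in \eqref{eq:low-radial-eigenvalues} because there $s'/r=\lambda_r+\lambda_\theta$.
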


\begin{proof}
Differentiate $q=-r^{-2}b(\log r)$ twice and insert its radial and tangential
Hessian eigenvalues into $\sigma_1,\sigma_2$; the last formula follows from
$b'=\Lambda/(2b)$ and $b''=-\Lambda^2/(4b^3)$.
\end{proof}

The proof of Proposition~\ref{prop:low-barrier-package} uses only
$0<\beta<\gamma<\alpha$ and the fact that $\gamma<2$ makes the profile
Hessian dominate the $O(R^{-4})$ background.  Hence, with $\alpha=2$, the
same proof gives
\begin{equation}\label{eq:endpoint-subcritical-all}
  |F-1|+|W-1|\le C_\beta R^{-\beta}
  \qquad\text{for every }\beta<2.
\end{equation}

\begin{proposition}\label{prop:endpoint-coarse}
There are $R_0,C>0$ such that
\begin{equation}\label{eq:endpoint-coarse-bound}
  -C\le R^2\bigl(1-W(R\theta)\bigr)
  \le C\sqrt{\log R}
\end{equation}
uniformly for $R\ge R_0$ and $\theta\in\mathbb S^7$.
More precisely, for every
\begin{equation}\label{eq:endpoint-Lambda-choice}
  \Lambda>\frac1{28}\max_{\mathbb S^7}|C_0|^2
\end{equation}
one can choose $B_0,B_1,R_0$ so that
\begin{equation}\label{eq:endpoint-coarse-barriers}
  1-R^{-2}\sqrt{B_0^2+\Lambda\log(R/R_0)}
  \le W(R\theta)\le 1+B_1R^{-2}.
\end{equation}
\end{proposition}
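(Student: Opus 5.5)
We will prove Proposition~\ref{prop:endpoint-coarse} by comparison on exterior annuli $\{R_0<|y|<R_1\}$, using explicit radial barriers for the equation \eqref{eq:W-end-equation} with $n=8$, $\alpha=2$. The decay \eqref{eq:endpoint-subcritical-all} supplies the boundary behaviour at $R_1\to\infty$: any barrier of the form $1\pm O(R^{-2}\sqrt{\log R})$ eventually dominates or is dominated at the outer sphere up to an arbitrarily small error. The comparison principle for admissible solutions on bounded annuli is the one already used in Proposition~\ref{prop:low-barrier-package}, applied to the smooth strict approximants of Proposition~\ref{prop:gamma2-green-theory}(ii) and passed to the limit.

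\emph{Upper bound $W\le 1+B_1R^{-2}$.} This is the same mixed critical-profile barrier as in Proposition~\ref{prop:low-barrier-package}(ii). We take $W^+=1+B_1R^{-2}-\eta R^{-\delta}$ with $\delta\in(2,4)$, or equivalently use the $F$-profile $1+ar^{-\alpha}\mp br^{-\delta}$. The decisive factor $s'/r+\alpha\lambda_\theta$ has a definite sign of size $r^{-\delta-2}$. Since $\delta<4$, this dominates the $O(R^{-4})$ background $W^2(A_{g_\infty})^\sharp$ for large $R$, so the sign conclusion persists. Matching the inner boundary $R=R_0$ by taking $B_1$ large gives the bound. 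Here the curvature is harmless because the barrier lies on the side where the cone condition is strict.

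\emph{Lower bound (the main point).} We set $W^-=1+q$ with $q=-R^{-2}b(\log R)$ and $b^2=B_0^2+\Lambda(s-s_0)$. We must show that $\mathcal A_{g_\infty}[W^-]$ lies strictly inside $\Gamma_2$, i.e.\ that $W^-$ is a strict subsolution relative to $\partial\Gamma_2$ in the appropriate orientation. To leading order, $\mathcal A\approx D^2q+C_\infty+\text{(errors)}$, and the errors are of three kinds: $O(R^{-5})$ curvature remainders, $O(R^{-6}b)$ metric corrections to the Hessian, and quadratic terms $|\nabla q|^2,\ qD^2q=O(R^{-8}b^2)$. We expand
\[
 \sigma_2(D^2q+C_\infty)=\sigma_2(D^2q)+T_1(D^2q):C_\infty+\sigma_2(C_\infty).
\]
The cross term vanishes pointwise by \eqref{eq:endpoint-radial-cross-zero-pointwise}, since $D^2q$ is radial, and $\sigma_2(C_\infty)=-\tfrac12R^{-8}|C_0|^2$ by \eqref{eq:endpoint-C0-sigma2}. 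Lemma~\ref{lem:endpoint-radial-identity} then gives
\[
 R^8\sigma_2 = 14\Lambda-\tfrac{7\Lambda^3}{8b^4}-\tfrac12|C_0|^2+o(1).
\]
The hypothesis $\Lambda>\frac1{28}\max|C_0|^2$ makes $14\Lambda-\tfrac12|C_0|^2$ positive, and the term $\Lambda^3/b^4$ is absorbed by taking $B_0$ large. We also check that $\sigma_1>0$: by \eqref{eq:endpoint-radial-sigma1}, $R^4\sigma_1(D^2q)=8b+O(\Lambda/b)>0$, and $\tr C_\infty=0$.

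The delicate step is verifying that the error terms are genuinely $o(R^{-8})$ in $\sigma_2$. Each error multiplies $T_1(D^2q+C_\infty)=O(R^{-4}b)$, so we need the error in $\mathcal A$ to be $o(R^{-4}/b)$. The $O(R^{-5})$ curvature term and the $R^{-6}b$ Hessian metric corrections satisfy this, because $b\sim\sqrt{\log R}$ grows only logarithmically. The quadratic terms of size $R^{-8}b^2$ are also fine. There are also terms of the form $(W-1)C_\infty$ of size $R^{-6}b$; these are harmless.

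Comparison on $\{R_0<|y|<R_1\}$ then finishes the argument. At $R_0$ we choose $B_0$ so large that $W^-\le W$; this uses the continuity and positivity of $W$ on the compact sphere. At $R_1$, \eqref{eq:endpoint-subcritical-all} with $\beta$ close to $2$ still falls short of $R^{-2}\sqrt{\log R}$. We therefore perturb to $W^- - \epsilon$ with $\epsilon\downarrow0$, or use the fact that $W\to1$ while $W^-<1$. Letting $R_1\to\infty$ gives \eqref{eq:endpoint-coarse-barriers}, and \eqref{eq:endpoint-coarse-bound} follows immediately.
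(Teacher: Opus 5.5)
Your barriers and key algebra match the paper's: the vanishing radial cross term, $\sigma_2(C_\infty)=-\tfrac12R^{-8}|C_0|^2$, and Lemma~\ref{lem:endpoint-radial-identity}. But the lower-barrier bookkeeping does not close as written. You choose $B_0$ at the inner sphere using only ``continuity and positivity of $W$'', and then absorb the errors because ``$b\sim\sqrt{\log R}$''. The ordering $W^-\le W$ on $|y|=R_0$ forces $B_0\ge R_0^2\max_{|y|=R_0}(1-W)$. So $B_0$ must grow with $R_0$, and $b\ge B_0$ is large near the inner sphere. Also, $|\nabla q|^2=R^{-6}(2b-b')^2$, not $O(R^{-8}b^2)$. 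Its contribution to $\sigma_2$ is
\[
 -\tfrac12|\nabla q|^2\,(n-1)\sigma_1(D^2q)\approx-112\,R^{-10}b^3,
\]
and this must be beaten by the margin $R^{-8}\bigl(14\Lambda-\tfrac12|C_0|^2-\tfrac{7\Lambda^3}{8b^4}\bigr)$. You therefore need $\sup_{R\ge R_0}R^{-2}b^3\to0$, i.e.\ $B_0=o(R_0^{2/3})$; the $O(R^{-5})$ background term likewise needs $b=o(R)$. Continuity and positivity of $W$ give no bound on $B_0$ in terms of $R_0$; they do not even keep $W^->0$. The missing step, which the paper takes, is to use the quantitative decay \eqref{eq:endpoint-subcritical-all} at the inner sphere with some $\beta\in(4/3,2)$ and set $B_0=C_\beta R_0^{2-\beta}$. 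Then $W^-\le W$ on $|y|=R_0$, $B_0\to\infty$ absorbs $7\Lambda^3/(8b^4)$, and $R^{-1}b+R^{-2}b^3\to0$ uniformly on $R\ge R_0$. With that choice all your error terms are absorbed.

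Two smaller points. First, your reason for the upper barrier is wrong. For $\delta>2$ one has $r^{-\delta-2}\ll r^{-4}$, so the decisive factor does not dominate the background. At the level of $\sigma_2$, the $\eta$-contribution $\sim r^{-\delta-6}$ is swamped by $-\tfrac12|C_0|^2r^{-8}$. Also, the range $(\alpha,2)$ in Proposition~\ref{prop:low-barrier-package}(ii) is empty when $\alpha=2$. The profile is still strictly exterior, for the paper's reason: $\sigma_1(\mathcal A_{g_\infty}[1+B_1R^{-2}])=-8B_1R^{-4}+o(R^{-4})$ because $\tr C_0=0$. The $-\eta R^{-\delta}$ term is superfluous. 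Second, at the outer sphere, ``$W\to1$ while $W^-<1$'' gives no ordering: both tend to $1$, and $W$ is only known to within $CR^{-\beta}$. The shift $W^--\epsilon$ does work, but it needs a check:
\[
 \mathcal A_{g_\infty}[W-\epsilon]=(1-\epsilon/W)\bigl(W(\nabla^2W)^\sharp+W(W-\epsilon)(A_{g_\infty})^\sharp\bigr)-\tfrac12|\nabla W|^2I.
\]
Relative to the Hessian term, the shift only lowers the curvature weight, which raises $\sigma_2$ at leading order. Cleaner is the paper's multiplicative compensation: $\mathcal A[cW]=c^2\mathcal A[W]$ exactly, applied to both $c_L^-W^-$ and $c_L^+\overline W$ as in Lemma~\ref{hd:lem:W-multiplicative-compensation}. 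You never treat the outer sphere for the upper barrier; it needs the same device.
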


\begin{proof}
For the lower barrier take
$\underline W=1-r^{-2}b(\log r)$ with
$b^2=B_0^2+\Lambda\log(r/R_0)$.  Lemmas
\ref{lem:endpoint-Weyl-algebra} and \ref{lem:endpoint-radial-identity} give
\[
 \sigma_2(H_b+C_0)=14\Lambda-\frac{7\Lambda^3}{8b^4}-\frac12|C_0|^2,
 \qquad \sigma_1(H_b+C_0)=8b-2b'-b'',
\]
so \eqref{eq:endpoint-Lambda-choice}, followed by large $B_0,R_0$, gives a
uniform interior cone margin; the curved error is
$O(r^{-1}+r^{-2}(1+b)^2)$ and is absorbed.  For the upper barrier use
$\overline W=1+B_1r^{-2}$: its leading trace is $-8B_1$, hence it lies
strictly outside $\overline{\Gamma_2}$ for large $R_0$.  Choose $B_0,B_1$
to order the barriers on $r=R_0$ using \eqref{eq:endpoint-subcritical-all},
and apply the same exterior comparison as in
the multiplicative exterior comparison above.  This gives
\eqref{eq:endpoint-coarse-barriers} and \eqref{eq:endpoint-coarse-bound}.
\end{proof}

\subsection{The \texorpdfstring{$\sqrt{\log}$}{sqrt(log)} asymptotic law}
\label{subsec:n8-sqrt-log-law}

Define the unrenormalized and normalized borderline profiles by
\begin{equation}\label{eq:endpoint-profile-definitions}
  W(Rx)=1+R^{-2}P_R^{0}(x),
  \qquad
  \widehat P_R:=\frac{P_R^{0}}{\sqrt{L_R}},
  \qquad
  L_R:=\log R.
\end{equation}
Thus
$W(Rx)=1+\widehat\varepsilon_R\widehat P_R$ with
$\widehat\varepsilon_R=R^{-2}\sqrt{L_R}$.

\begin{lemma}
\label{lem:endpoint-compactness}
For every $K\Subset K'\Subset\mathbb R^8\setminus\{0\}$,
\begin{equation}\label{eq:endpoint-compactness-bounds}
  \|\widehat P_R\|_{L^\infty(K')}
  +\|\nabla\widehat P_R\|_{L^\infty(K)}
  +\int_K|(\nabla_{g_R}^2\widehat P_R)^\sharp|\,dV_{g_R}
  \le C_{K,K'}.
\end{equation}
Every sequence $R_j\to\infty$ has a subsequence for which
\begin{equation}\label{eq:endpoint-radial-tangent}
  \widehat P_{R_j}\longrightarrow-c|x|^{-2}
  \quad\text{locally uniformly},
  \qquad c\ge0.
\end{equation}
\end{lemma}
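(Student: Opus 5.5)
The plan is to rerun the compactness and classification argument of Section~\ref{sec:subcritical}, now with the normalization $\widehat\varepsilon_R=R^{-2}\sqrt{L_R}$, and to check at each step that only the smallness of the rescaled background was used.

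\emph{Step 1: $L^\infty$ and sign information.} We derive these from Proposition~\ref{prop:endpoint-coarse}. For $x=s\theta$ with $s$ in a compact subset of $(0,\infty)$, \eqref{eq:endpoint-coarse-barriers} at radius $Rs$ gives
\[
 -\frac{\sqrt{B_0^2+\Lambda\log(Rs/R_0)}}{s^2}\le P^0_R(x)\le \frac{B_1}{s^2}.
\]
Dividing by $\sqrt{L_R}$, we obtain
\[
 -Cs^{-2}\sqrt{1+\log^+ s}\;\le\;\widehat P_R(x)\;\le\; B_1s^{-2}L_R^{-1/2}.
\]
This gives the $L^\infty(K')$ bound. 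It also shows that every subsequential limit $P$ satisfies $P\le0$ and $|P(x)|\le C|x|^{-2}$ on $\mathbb R^8\setminus\{0\}$, because $\sqrt{\log(Rs)/\log R}\to1$ for each fixed $s$.

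\emph{Step 2: the gradient and Hessian $L^1$ bounds.} We write the rescaled equation in the form \eqref{eq:low-MR-def}, with $\varepsilon_R$ replaced by $\widehat\varepsilon_R$ and background
\[
 \mathfrak B_R(x,P)=\frac{1+\widehat\varepsilon_RP}{\widehat\varepsilon_R}(A_{g_R})^\sharp .
\]
Since $(A_{g_R})^\sharp=R^2D_R^*(A_{g_\infty})^\sharp=O_1(R^{-2})$ on fixed annuli by \eqref{eq:Aginfty-leading-Weyl}, we get $|\mathfrak B_R|+|\nabla_x\mathfrak B_R|\le C_K L_R^{-1/2}\to0$ and $|\partial_P\mathfrak B_R|\le C_KR^{-2}$. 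Also $\widehat\varepsilon_RK_0\to0$. The Bernstein argument of Proposition~\ref{prop:low-Bernstein} used nothing beyond $\omega_R\to0$, $\varepsilon_RK_0\to0$, and the strict approximants \eqref{eq:LN-normalized-strict-equation} with $\tau_{R,j}\to0$. Proposition~\ref{prop:gamma2-green-theory}(iii) applies with the choice $\varepsilon_R=\widehat\varepsilon_R$, so the same proof yields the uniform gradient bound. The Hessian $L^1$ bound then follows exactly as in Lemma~\ref{lem:low-Hessian-L1}. Together these give \eqref{eq:endpoint-compactness-bounds}.

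\emph{Step 3: classification.} Arzel\`a--Ascoli gives a locally uniform subsequential limit $P$. Since $g_{R_j}\to g_{\mathrm E}$, $\widehat\varepsilon_{R_j}\to0$ and $\mathfrak B_{R_j}\to0$ locally uniformly, viscosity stability gives $P\in\Phi_2(\mathbb R^8\setminus\{0\})$ with $\lambda(D^2P)\in\partial\Gamma_2$. By Step 1, $P\le0$ and $|P|\le C|x|^{-2}\to0$ at infinity, and $\alpha=2<8$. Lemmas~\ref{lem:low-viscosity-measure-zero} and \ref{lem:low-isolated-extension} then give $\mu_2[P]=m\delta_0$. The Trudinger--Wang argument in the proof of Proposition~\ref{prop:low-tangent-classification} then gives $P=-c|x|^{-2}$ with $c\ge0$, since $2-n/2=-2$.

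\emph{Main obstacle.} The delicate point is the uniformity in Step 2. The background is no longer $O(R^{\alpha-2})$ but only $O(L_R^{-1/2})$, which is barely small. So I would recheck that every absorption in \eqref{eq:low-Bernstein-master} needs only $\omega_R\to0$ with no rate. I would also recheck that the strict approximants still satisfy $\tfrac12\le W_{R,j}\le2$ on the ball under the new normalization, which follows from Step 1 and \eqref{eq:LN-scaled-convergence}.
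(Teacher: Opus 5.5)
Your proposal is correct and follows essentially the same route as the paper. The paper proves this lemma as the critical normalization of the Section~3 argument: the coarse barriers of Proposition~\ref{prop:endpoint-coarse} give the $L^\infty$ bound and nonpositivity, the Bernstein and $W^{2,1}$ estimates give compactness, and viscosity stability with the Trudinger--Wang classification gives the radial tangent. Your version makes explicit the check the paper leaves implicit, namely that with $\widehat\varepsilon_R=R^{-2}\sqrt{L_R}$ the rescaled background is $O(L_R^{-1/2})$ rather than $O(1)$, and that the Bernstein absorptions only need $\omega_R\to0$ with no rate.
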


\begin{proof}
The proof is the critical normalization of the compactness/classification
argument in Section~\ref{sec:subcritical}.  Proposition~\ref{prop:endpoint-coarse}
gives local boundedness; the local estimates for the $\Gamma_2$-Green function give compactness, and
viscosity stability yields a flat admissible $2$-Hessian limit.  The upper
bound in \eqref{eq:endpoint-coarse-barriers} makes every limit nonpositive,
and the Trudinger--Wang classification then forces it to be radial of the
stated form.  The same $BV$--$L^2$ current passage used in
Proposition~\ref{prop:low-coefficient-locking} applies whenever the coefficient
is subsequently locked.
\end{proof}

We next exploit the divergence-free structure of $C_\infty$.

\begin{lemma}
\label{lem:endpoint-cross-cancellation}
If $\phi=\phi(|x|)\in C_c^\infty(\mathbb R^8\setminus\{0\})$ and
$P\in C^{1,1}_{\mathrm{loc}}$, then
\begin{equation}\label{eq:endpoint-cross-cancellation}
  \int \phi(|x|)T_1(D^2P):C_\infty\,dx=0.
\end{equation}
\end{lemma}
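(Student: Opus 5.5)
The plan is to reduce \eqref{eq:endpoint-cross-cancellation} to a single integration by parts. It uses only the algebraic and differential structure of $C_\infty$ recorded in \eqref{eq:Cinfty-structure}, namely $\tr C_\infty=0$, $C_\infty(x)x=0$ and $\operatorname{div}_{g_{\mathrm E}}C_\infty=0$ on $\mathbb R^8\setminus\{0\}$.

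\emph{Step 1: remove the trace part.} By \eqref{eq:sigma2-T1-conventions}, $T_1(D^2P)=(\Delta P)I-D^2P$. Since $\tr C_\infty=0$, we get pointwise almost everywhere
\[
  T_1(D^2P):C_\infty=\Delta P\,\tr C_\infty-D^2P:C_\infty=-\partial_i\partial_jP\,(C_\infty)_{ij}.
\]
So it suffices to show that $\int\phi(|x|)\,\partial_i\partial_jP\,(C_\infty)_{ij}\,dx=0$.

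\emph{Step 2: integrate by parts once.} Since $P\in C^{1,1}_{\mathrm{loc}}=W^{2,\infty}_{\mathrm{loc}}$, the function $\partial_jP$ is Lipschitz. Moreover $\phi(|x|)C_\infty$ is smooth and compactly supported in $\mathbb R^8\setminus\{0\}$, where $C_\infty$ is smooth. We may therefore move $\partial_i$ off $\partial_jP$ with no boundary terms:
\[
  \int\phi\,\partial_i\partial_jP\,(C_\infty)_{ij}\,dx
  =-\int\partial_jP\,\partial_i\bigl(\phi(|x|)(C_\infty)_{ij}\bigr)\,dx .
\]
If one prefers, this step can be justified by first mollifying $P$ and passing to the limit, using $D^2P_\epsilon\to D^2P$ in $L^1_{\mathrm{loc}}$.

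\emph{Step 3: use the structure of $C_\infty$.} Expanding the derivative gives
\[
  \partial_i\bigl(\phi(|x|)(C_\infty)_{ij}\bigr)
  =\phi'(|x|)\frac{x_i}{|x|}(C_\infty)_{ij}+\phi(|x|)\,\partial_i(C_\infty)_{ij}.
\]
The first term vanishes because $C_\infty$ is symmetric and $C_\infty(x)x=0$. The second vanishes because $\operatorname{div}_{g_{\mathrm E}}C_\infty=0$. Hence the integrand in Step 2 is identically zero, and \eqref{eq:endpoint-cross-cancellation} follows.

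\emph{Main obstacle.} The only delicate point is the regularity justification in Step 2. This is harmless because the test function is supported away from the origin and $P$ has bounded second derivatives there.

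The conceptual content is that radial cutoffs see only the radial direction, and $C_\infty$ annihilates that direction. This is exactly why the mixed Hessian--curvature term drops out of the $n=8$ charge drift, even though its pointwise size is of critical order.
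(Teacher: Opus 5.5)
Your proof is correct and follows essentially the same route as the paper: drop the $\Delta P\,\tr C_\infty$ term, integrate $-\phi\,\partial_i\partial_jP\,(C_\infty)_{ij}$ by parts once, and kill the result using $\operatorname{div}C_\infty=0$, $C_\infty x=0$ and $\nabla\phi\parallel x$. The $C^{1,1}$ case is then handled by the Lipschitz regularity of $\nabla P$ (or by mollification), exactly as the paper's approximation remark does.
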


\begin{proof}
Since $\tr C_\infty=0$, integrate $-\phi P_{ij}(C_\infty)_{ij}$ once by
parts.  The result vanishes because $\operatorname{div}C_\infty=0$,
$C_\infty x=0$, and $\nabla\phi\parallel x$; approximation gives the
$C^{1,1}$ case.
\end{proof}

Fix $\eta$ as in \eqref{eq:low-eta-def}.  For $P_R^0$, set
\begin{equation}\label{eq:endpoint-charge-definition}
  H_R^0:=(\nabla_{g_R}^2P_R^0)^\sharp,
  \qquad
  J_R^0:=T_1(H_R^0)\nabla^{g_R}P_R^0,
\end{equation}
and define
\begin{equation}\label{eq:endpoint-charge-Q}
  Q_8(R):=\int_{A_{1,2}}\eta(|x|)
  \left\langle J_R^0,\nabla^{g_R}|x|\right\rangle_{g_R}
  \,dV_{g_R}.
\end{equation}

\begin{proposition}
\label{prop:endpoint-charge-locking}
Let
\begin{equation}\label{eq:endpoint-kappa}
  \kappa_p:=\int_{\mathbb S^7}|C_0|^2\,d\theta
  =\frac{|\mathbb S^7|}{120}|W_g(p)|^2.
\end{equation}
Then, uniformly for $1\le t\le2$,
\begin{equation}\label{eq:endpoint-charge-drift}
  Q_8(tR)-Q_8(R)
  =\kappa_p\log t
   +O\bigl(R^{-1}\sqrt{L_R}+R^{-2}L_R^{3/2}\bigr).
\end{equation}
Consequently,
\begin{equation}\label{eq:endpoint-charge-growth}
  \frac{Q_8(R)}{\log R}\longrightarrow\kappa_p,
\end{equation}
and the whole normalized family converges:
\begin{equation}\label{eq:endpoint-normalized-full-limit}
  \widehat P_R(x)\longrightarrow-c_*|x|^{-2},
  \qquad
  c_*:=\frac{|W_g(p)|}{4\sqrt{210}}.
\end{equation}
\end{proposition}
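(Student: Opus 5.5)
The plan is to repeat the scale-drift argument of Proposition~\ref{prop:low-charge-drift}, this time keeping the exact Weyl contribution rather than bounding it. First I record the scaling of $P_R^0$. With $\alpha=2$ one has $P^0_{tR}(x)=t^2P_R^0(tx)$ and $g_{tR}=t^{-2}D_t^*g_R$, so Lemma~\ref{lem:low-charge-scaling} applies verbatim, because $2\alpha+4-n=0$. Taking $\omega_t,\phi_t$ as in \eqref{eq:low-omega-phi}, this gives
\[
Q_8(tR)-Q_8(R)=-\int_{A_{1,4}}\phi_t(|x|)\,E_R^0\,dV_{g_R},
\]
where $E_R^0=\operatorname{div}_{g_R}J_R^0$ is computed from Lemma~\ref{lem:low-null-Lagrangian} and the algebraic identity \eqref{eq:low-sigma2-H-algebra} with $\varepsilon_R=R^{-2}$. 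At this normalization we have $C_R=W_R^2R^{2}(A_{g_R})^\sharp$. By \eqref{eq:Aginfty-leading-Weyl}, $R^2(A_{g_R})^\sharp=C_\infty(x)+O(R^{-1})$ on $A_{1,4}$, so $C_R=C_\infty+O(R^{-1})+O(R^{-2}|P_R^0|)$. Note that $C_R$ is no longer small.

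Next I expand $E_R^0=2\sigma_2(H_R^0)-\operatorname{Ric}(\nabla P^0_R,\nabla P^0_R)$ and sort the terms. The principal part is
\[
2\sigma_2(H^0_R)\approx -2T_1(H_R^0):C_\infty-2\sigma_2(C_\infty).
\]
By \eqref{eq:endpoint-C0-sigma2}, the second term equals $|C_\infty|^2=|x|^{-8}|C_0(\theta)|^2$. Integrating it against $\phi_t(|x|)$ in polar coordinates gives $\kappa_p\int\phi_t(r)r^{-1}\,dr$. Since $\phi_t'=\omega_t$, this equals $-\kappa_p\int\omega_t(r)\log r\,dr=-\kappa_p\log t$, which produces the $\kappa_p\log t$ after the overall minus sign; I will check the sign convention for $\eta$ against this. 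The cross term $T_1(H_R^0):C_\infty$ integrates to zero against the radial weight by Lemma~\ref{lem:endpoint-cross-cancellation}. That lemma applies because $P^0_R\in C^{1,1}_{\rm loc}$ by \eqref{eq:LN-C11}.

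The remainders are all controlled by the coarse bound $|P^0_R|\le C\sqrt{L_R}$ from Proposition~\ref{prop:endpoint-coarse}. The corresponding Lipschitz and Hessian-$L^1$ bounds of size $\sqrt{L_R}$ follow from Lemma~\ref{lem:endpoint-compactness}, by scaling $\widehat P_R$. The error terms are then:
\begin{itemize}
\item the $O(R^{-1})$ part of $C_R$ against $|H_R^0|$, of size $R^{-1}\sqrt{L_R}$;
\item $q_R=\tfrac12R^{-2}|\nabla P^0_R|^2$ against $|H_R^0|$, of size $R^{-2}L_R^{3/2}$;
\item $W_R^{-1}-1=O(R^{-2}\sqrt{L_R})$ multiplying $T_1(H):C$, and the $R^{-2}P_R^0$ correction inside $C_R$;
\item the metric corrections of $g_R$ versus $g_{\mathrm E}$ in Lemma~\ref{lem:endpoint-cross-cancellation}, which are $O(R^{-2})$ times $|H|$;
\item $\operatorname{Ric}(g_R)|\nabla P|^2=O(R^{-2}L_R)$.
\end{itemize}
Together these give \eqref{eq:endpoint-charge-drift}.

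For \eqref{eq:endpoint-charge-growth}, I sum the drift along $R_k=2^kR_0$. This gives $Q_8(R_k)=Q_8(R_0)+k\kappa_p\log2+O(1)$, because the errors are summable. Interpolating with $t\in[1,2]$ then gives $Q_8(R)=\kappa_p\log R+O(1)$.

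For locking the coefficient, I divide by $L_R$. The quantity $Q_8(R)/L_R$ is exactly the charge of $\widehat P_R$. Along any subsequence, Lemma~\ref{lem:endpoint-compactness} gives $\widehat P_{R_j}\to-c|x|^{-2}$. The $BV$--$L^2$ current passage of Proposition~\ref{prop:low-coefficient-locking} gives convergence of the charge to $(n-1)|\mathbb S^7|\alpha^2c^2=28|\mathbb S^7|c^2$. Hence
\[
28|\mathbb S^7|c^2=\kappa_p=\frac{|\mathbb S^7|}{120}|W_g(p)|^2,
\]
so $c^2=|W|^2/3360$, that is $c=|W|/(4\sqrt{210})$. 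Since $c\ge0$, the coefficient is unique, and full convergence follows as before.

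I expect the main obstacle to be the cross term. Lemma~\ref{lem:endpoint-cross-cancellation} is stated for Euclidean $D^2P$ and the exact $C_\infty$. Here it must be applied to $T_1(H^0_R)$ in the metric $g_R$, with $P^0_R$ of size $\sqrt{L_R}$. The cancellation is only approximate, so I must make sure the commutation errors cost only a factor $R^{-1}$ or $R^{-2}$ against the $\sqrt{L_R}$-size Hessian $L^1$ norm, and never produce an $O(1)$ contribution. I would first integrate by parts on the curved divergence, $\operatorname{div}_{g_R}C_R$, using $\operatorname{div}C_\infty=0$, $C_\infty x=0$ and $\tr C_\infty=0$. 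The defects in these identities, namely the $O(R^{-1})$ from the cubic jet and the metric terms, would then be bounded by $\|\nabla P_R^0\|_\infty\lesssim\sqrt{L_R}$. This route avoids needing the Hessian at all.
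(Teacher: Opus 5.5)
Your proposal is correct and follows essentially the same route as the paper. The shared steps are:
\begin{itemize}
\item the scaling identity with $\phi_t$;
\item the expansion of $E_R$ via \eqref{eq:low-sigma2-H-algebra}, with $C_R=C_\infty+O(R^{-1})+O(R^{-2}\sqrt{L_R})$;
\item the cancellation of the linear $C_\infty$ term by Lemma~\ref{lem:endpoint-cross-cancellation};
\item the leading source $-2\sigma_2(C_\infty)=|x|^{-8}|C_0|^2$ together with $\int\phi_t\,dr/r=-\log t$;
\item dyadic summation, and the $BV$--$L^2$ passage giving $28|\mathbb S^7|c^2=\kappa_p$.
\end{itemize}
Your worry about the curved cross term resolves as you suggest. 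The $g_R$-versus-Euclidean discrepancies cost $O(R^{-2})$ against the $O(\sqrt{L_R})$ gradient and Hessian-$L^1$ bounds, and the $O(R^{-1})$ part of $C_R$ can simply be bounded directly by the Hessian-$L^1$ norm. So no $O(1)$ contribution appears.
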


\begin{proof}
The critical scaling and current identity from Section~\ref{sec:subcritical}
give
\[
 Q_8(tR)-Q_8(R)=-\int\phi_tE_R\,dV_{g_R}.
\]
On fixed annuli Lemma~\ref{lem:endpoint-compactness} gives
$\|\nabla P_R^0\|_\infty+\int|H_R^0|=O(\sqrt{L_R})$, while
$C_R=C_\infty+O(R^{-1})+O(R^{-2}\sqrt{L_R})$ and
$g_R-g_{\mathrm E}=O_{C^2}(R^{-2})$.  The linear term in $C_\infty$
vanishes by Lemma~\ref{lem:endpoint-cross-cancellation}; all remaining
linear, metric, and nonlinear errors are
$O(R^{-1}\sqrt{L_R}+R^{-2}L_R^{3/2})$.  The sole leading contribution is
$2\sigma_2(C_\infty)=-|x|^{-8}|C_0|^2$, and
$\int\phi_t(r)\,dr/r=-\log t$, yielding \eqref{eq:endpoint-charge-drift}.
Dyadic summation gives \eqref{eq:endpoint-charge-growth}.

For any tangent $-c|x|^{-2}$, the same $BV$--$L^2$ passage as in
Proposition~\ref{prop:low-coefficient-locking} gives
$28|\mathbb S^7|c^2=\kappa_p$, hence $c=c_*$.  All subsequences therefore
have the same tangent, proving \eqref{eq:endpoint-normalized-full-limit}.
\end{proof}

\subsection{Identification of the bounded layer}
\label{subsec:n8-bounded-layer}

Assume in this subsection that $|W_g(p)|\ne0$.  Put
\begin{equation}\label{eq:endpoint-u-definition}
  s=\log r,
  \qquad
  u(s,\theta):=r^2\bigl(1-W(r\theta)\bigr).
\end{equation}
For an amplitude $h=h(s,\theta)$, define
\begin{equation}\label{eq:endpoint-Ktilde-definition}
  \widetilde{\mathcal K}[h]
  :=r^4D^2\bigl(-r^{-2}h(\log r,\theta)\bigr).
\end{equation}
In the Euclidean polar frame $(e_r,e_a)$,
\begin{equation}\label{eq:endpoint-Ktilde-components}
  \widetilde{\mathcal K}[h]
  =\begin{pmatrix}
  -6h+5h_s-h_{ss}&3\nabla_\theta h-\nabla_\theta h_s\\
  *&-\nabla_\theta^2h+(2h-h_s)g_{\mathbb S^7}
  \end{pmatrix}.
\end{equation}
For $s$-independent $\varphi$, write
$\mathcal K[\varphi]=\widetilde{\mathcal K}[\varphi]$.  Then
$H_0:=\mathcal K[1]=2I-8\theta\otimes\theta$ and
\begin{equation}\label{eq:endpoint-angular-linearization}
  T_1(H_0):\mathcal K[\varphi]
  =-6\Delta_{\mathbb S^7}\varphi.
\end{equation}

\begin{lemma}
\label{lem:endpoint-correctors}
There are unique zero-mean functions
$\psi,\chi\in C^\infty(\mathbb S^7)$ satisfying
\begin{align}
  -6\Delta_{\mathbb S^7}\psi
  &=\frac12\left(|C_0|^2
    -\frac1{|\mathbb S^7|}\int_{\mathbb S^7}|C_0|^2\,d\theta\right),
  \label{eq:endpoint-psi-equation}\\
  -6\Delta_{\mathbb S^7}\chi
  &=-T_1(C_0):\mathcal K[\psi].
  \label{eq:endpoint-chi-equation}
\end{align}
\end{lemma}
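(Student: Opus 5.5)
The plan is to apply standard Fredholm theory for the Laplacian on the compact manifold $\mathbb S^7$. The operator $-6\Delta_{\mathbb S^7}$ is self-adjoint on $L^2(\mathbb S^7)$ and its kernel is the constants. Hence for any $f\in C^\infty(\mathbb S^7)$ with $\int_{\mathbb S^7}f\,d\theta=0$ there is exactly one zero-mean solution $u\in C^\infty(\mathbb S^7)$ of $-6\Delta_{\mathbb S^7}u=f$. Existence follows from Lax--Milgram on the zero-mean subspace of $H^1$, or from the spectral expansion, since the first nonzero eigenvalue is $7>0$. Smoothness follows from elliptic regularity. Uniqueness holds because any two solutions differ by a harmonic function, which is constant and therefore zero once both means vanish. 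So the whole proof reduces to checking that each right-hand side is smooth and has zero spherical mean.

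For \eqref{eq:endpoint-psi-equation} this is immediate. By \eqref{eq:C0-definition}, $|C_0|^2$ is the restriction to $\mathbb S^7$ of a homogeneous quartic polynomial in $\theta$, so it is smooth. Subtracting its spherical average makes the right-hand side zero-mean by construction. This produces $\psi$.

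For \eqref{eq:endpoint-chi-equation} the right-hand side $-T_1(C_0):\mathcal K[\psi]$ is smooth because $\psi$ and $C_0$ are. The main obstacle is showing that
\[
\int_{\mathbb S^7}T_1(C_0):\mathcal K[\psi]\,d\theta=0 .
\]
Since $\tr C_0=0$, we have $T_1(C_0)=-C_0$. Because $C_0\theta=0$, the matrix $C_0$ is purely tangential, so only the angular block of \eqref{eq:endpoint-Ktilde-components} contributes. Using $\tr C_0=0$ once more, the term $(2\psi)g_{\mathbb S^7}$ drops out. The integrand therefore reduces to
\[
C_0:\nabla_\theta^2\psi ,
\]
where $C_0$ is regarded as a symmetric tangential $2$-tensor on the sphere.

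To handle this integral, integrate by parts twice on $\mathbb S^7$, which moves both derivatives onto $C_0$. The claim then follows once we show $\operatorname{div}_{\mathbb S^7}C_0=0$ as a tangential tensor. This should come from the Euclidean identities in \eqref{eq:Cinfty-structure}. Write $C_\infty=R^{-4}C_0(\theta)$ and split the Euclidean divergence into radial and tangential parts. We have $C_\infty x=0$, and $C_0$ has degree-zero homogeneity in $R$, so the $R^{-4}$ factor is annihilated by radial contraction. The Christoffel terms from the embedding $\mathbb S^7\subset\mathbb R^8$ only involve $\tr C_0$ and $C_0\theta$, both of which vanish. Hence $\operatorname{div}_{\mathbb{R}^8}C_\infty=0$ translates into $\operatorname{div}_{\mathbb S^7}C_0=0$.

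If this bookkeeping is delicate, an alternative is to redo the integration by parts of Lemma~\ref{lem:endpoint-cross-cancellation} directly. Take $P=-r^{-2}\psi(\theta)$ and a radial cutoff $\phi(|x|)$ with $\int\phi(r)\,dr/r\neq0$. Then $D^2P=r^{-4}\mathcal K[\psi]$, and \eqref{eq:endpoint-cross-cancellation} gives
\[
0=\int\phi(r)\,r^{-8}\,T_1(D^2P):C_0\,r^{7}\,dr\,d\theta
=\Bigl(\int\phi\,\frac{dr}{r}\Bigr)\int_{\mathbb S^7}T_1(\mathcal K[\psi]):C_0\,d\theta .
\]
Note that $T_1(\mathcal K[\psi]):C_0=-\mathcal K[\psi]:C_0=T_1(C_0):\mathcal K[\psi]$, using $\tr C_0=0$ in both equalities. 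So this is exactly the required vanishing mean, and it yields $\chi$.
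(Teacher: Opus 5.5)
Your proposal is correct. Your fallback argument is exactly the paper's proof: the paper notes that the second right-hand side has zero mean by Lemma~\ref{lem:endpoint-cross-cancellation} applied to $P=-r^{-2}\psi(\theta)$, and then inverts $-\Delta_{\mathbb S^7}$ on the zero-mean subspace.

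Your primary route is an intrinsic version of the same identity. You transfer $\operatorname{div}C_\infty=0$ to $\operatorname{div}_{\mathbb S^7}C_0=0$. This works because the $R^{-4}$ factor contributes $-4R^{-5}C_0\theta=0$, and the second-fundamental-form terms of $\mathbb S^7\subset\mathbb R^8$ involve only $\tr C_0$ and $C_0\theta$. A single integration by parts then gives
\[
\int_{\mathbb S^7}C_0:\nabla_\theta^2\psi\,d\theta=-\int_{\mathbb S^7}\langle\operatorname{div}_{\mathbb S^7}C_0,\nabla_\theta\psi\rangle\,d\theta=0,
\]
so the second integration by parts you mention is not needed.

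The two routes compare as follows. Yours costs a short extrinsic computation but yields the slightly stronger pointwise fact $\operatorname{div}_{\mathbb S^7}C_0=0$. The paper's route avoids that bookkeeping by reusing the Euclidean lemma and separating variables with $\int\phi\,dr/r\neq0$.
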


\begin{proof}
The first right-hand side has zero mean; the second does as well by
Lemma~\ref{lem:endpoint-cross-cancellation} applied to
$P=-r^{-2}\psi(\theta)$.  Invert $-\Delta_{\mathbb S^7}$ on the zero-mean
subspace.
\end{proof}

\begin{proposition}
\label{prop:endpoint-second-trapping}
With $c_*$ from \eqref{eq:endpoint-normalized-full-limit},
\begin{equation}\label{eq:endpoint-amplitude-o1}
  \sup_{\theta\in\mathbb S^7}
  |u(s,\theta)-c_*\sqrt{s}|\longrightarrow0
  \qquad(s\to\infty).
\end{equation}
\end{proposition}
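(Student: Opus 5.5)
We prove \eqref{eq:endpoint-amplitude-o1} by a refined two-sided barrier argument in the logarithmic variable $s=\log r$. The barriers are built on the corrected ansatz
\[
  W_\pm=1-r^{-2}\Bigl(b_\pm(s)+\frac{1}{2b_\pm(s)}\psi(\theta)+\frac{1}{b_\pm(s)^{2}}\chi(\theta)\Bigr),
  \qquad b_\pm(s)^2=c_*^2 s+B_\pm,
\]
with $\psi,\chi$ the zero-mean correctors of Lemma~\ref{lem:endpoint-correctors}. The constant $B_\pm$ is free; the precise powers of $b$ in front of the correctors are to be fixed by the expansion below. We already know $u/\sqrt s\to c_*$ uniformly from Proposition~\ref{prop:endpoint-charge-locking}. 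The task is to upgrade this to an additive $o(1)$ error, and the upgrade will follow from a squeeze.

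\textbf{Step 1: expand the matrix for the barrier.} Using \eqref{eq:endpoint-Ktilde-components} and Lemma~\ref{lem:endpoint-radial-identity}, we compute the normalized matrix $r^4\mathcal A_{g_\infty}[W_\pm]$. It equals $b\,H_0$, plus the radial modulation terms, plus $C_0$, plus the corrector terms $\mathcal K[\psi]/(2b)+\dots$, plus curved and nonlinear errors. The latter are $O(r^{-1}b+r^{-2}b^2)$.

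\textbf{Step 2: cancel order by order in $\sigma_2$.} Expanding $\sigma_2$, the $O(b^2)$ term vanishes because $\sigma_2(H_0)=0$. At $O(1)$ the contributions are:
\begin{itemize}
\item the radial source $14\Lambda$ with $\Lambda=c_*^2$, coming from $b b'$;
\item the term $\sigma_2(C_0)=-\tfrac12|C_0|^2$;
\item the cross term $bT_1(H_0):C_0=0$, which vanishes by \eqref{eq:endpoint-radial-cross-zero-pointwise};
\item the term $T_1(H_0):\mathcal K[\psi]/2=-3\Delta\psi$.
\end{itemize}
By \eqref{eq:endpoint-psi-equation} these cancel exactly up to the angular mean. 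The mean cancels because $28|\mathbb S^7|c_*^2=\kappa_p$; this is precisely the value of $c_*$.

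At order $b^{-1}$, the term $T_1(C_0):\mathcal K[\psi]$ appears. The $\chi$-corrector, via \eqref{eq:endpoint-chi-equation}, removes its angular fluctuation. Its mean vanishes by Lemma~\ref{lem:endpoint-cross-cancellation}.

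What remains is $O(b^{-2})$ plus the curved errors. Perturbing $B_\pm$ and adding a small radial term $\mp\epsilon r^{-2}\log(s)/b$ then gives a strict sign of size $\gtrsim \epsilon b^{-2}s^{-1}$. It must dominate the $O(b^{-2})$ remainder, so the auxiliary radial term has to be chosen slightly stronger, for instance $\mp\epsilon r^{-2}s^{1/2-\delta}/b$. This yields strict super- and subsolutions, with $\sigma_1$ positive since $\sigma_1\approx 8b$.

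\textbf{Step 3: compare and squeeze.} We compare with $W$ on annuli $\{s_0<s<s_1\}$ and let $s_1\to\infty$, using the multiplicative exterior comparison from Proposition~\ref{prop:endpoint-coarse}. The inner boundary ordering is arranged by the choice of $B_\pm$. The outer ordering must come from the known asymptotics $u=c_*\sqrt s(1+o(1))$, and this is the main obstacle: $o(\sqrt s)$ is not enough to order barriers differing by $O(1)$. So we plan to work on a finite window.

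\begin{itemize}
\item Rather than comparing at infinity, we use the charge drift \eqref{eq:endpoint-charge-drift}, which has a summable error. This gives $Q_8(R)=\kappa_p\log R+Q_*+o(1)$.
\item Then $u^2$ has mean $c_*^2 s+B_*+o(1)$ along each scale, after passing the charge to the corrected profile. This determines $B_\pm\to B_*$ on each large window.
\item Applying comparison on windows $[s_0,2s_0]$ with boundary data controlled by this refined quadratic charge gives $b_-\le u\le b_+$ with $b_+-b_-\to0$.
\end{itemize}

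Since $\sqrt{c_*^2s+B}-c_*\sqrt s=O(s^{-1/2})$ and the corrector terms are $O(s^{-1/2})$, the squeeze yields \eqref{eq:endpoint-amplitude-o1}.
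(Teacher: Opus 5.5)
Your Steps~1--2 follow the paper's route: the same correctors $\psi,\chi$ and the same order-by-order cancellations in $\sigma_2$. Step~3, however, has a genuine gap, and it comes from an obstacle that is not actually there.

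\textbf{The gap in Step~3.} The window scheme needs pointwise-in-$\theta$ ordering $u_-\le u\le u_+$ at both ends of every window $[s_0,2s_0]$, with $u_+-u_-\to0$. That is $o(1)$ control of $u$ on those spheres, which is exactly what you are trying to prove. The claim that $u^2$ has spherical mean $c_*^2s+B_*+o(1)$ is also not available. The $BV$--$L^2$ passage of Proposition~\ref{prop:endpoint-charge-locking} identifies only the leading coefficient $c^2$. Extracting an $O(1)$ correction to $\overline{u^2}$ from $Q_8(R)-\kappa_p\log R$ would require controlling the gradient--Hessian cross terms of $u-c_*\sqrt s$ at the $O(1)$ level. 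In any case, a spherical mean cannot order barriers pointwise.

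\textbf{The missing observation.} No precise ordering is needed anywhere. Compare on the whole exterior $\{s\ge s_0\}$. The outer boundary is handled by the multiplicative compensation of Lemma~\ref{hd:lem:W-multiplicative-compensation}, which is the exterior comparison you already cite. It uses only $|W-1|\le CR^{-\beta_0}$ from \eqref{eq:endpoint-subcritical-all} and $|W_\pm-1|=O(R^{-2}\sqrt{\log R})$.

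At the single inner sphere the bracket can be very loose. Since $u(s_0,\cdot)=c_*\sqrt{s_0}\,(1+o(1))$, offsets $\lambda_\pm=O(\epsilon s_0)$ in $b_\pm^2$ already order the barriers there. The fact $u/\sqrt s\to c_*$ is used only to keep $b_-^2\gtrsim s$ on $s\ge s_0$, so that the remainders stay uniform. Once $\lambda_\pm$ are fixed, $\sqrt{c_*^2s+\lambda_\pm+O(\log s)}-c_*\sqrt s\to0$. This is the very fact in your last line, so the barriers merge at infinity by themselves. That is the paper's proof: bracket at one large cylinder, then compare outward.

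\textbf{Two repairs in Step~2.} First, with the normalization \eqref{eq:endpoint-psi-equation} the corrector must be $\psi/b$, not $\psi/(2b)$. The quantity that equation cancels is $b\,T_1(H_0):\mathcal K[\psi/b]=-6\Delta_{\mathbb S^7}\psi$; likewise $\chi/b^2$ is matched against \eqref{eq:endpoint-chi-equation}.

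Second, and more importantly, the sign-producing modulation must sit inside the amplitude that scales the correctors. If you add a radial $g$ to $u$ separately, then $(b+g)\,T_1(H_0):\mathcal K[\psi]/b$ leaves the zero-mean, pointwise nonzero term $-6\,g\,\Delta_{\mathbb S^7}\psi/b$.
\begin{itemize}
\item The radial linearization is $28(bg)'$. So $g=\epsilon\log s/b$ gives a sign of size $28\epsilon/s$, not $\epsilon b^{-2}s^{-1}$.
\item For that choice the cross term has size $\epsilon\log s/s$ and overwhelms the sign term.
\item For $g=\epsilon s^{1/2-\delta}/b$, both terms are of order $s^{-1/2-\delta}$, and in general the constants do not favor the sign term.
\end{itemize}

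The paper instead takes $f_\pm=c_*^2s+\lambda_\pm\pm K\log s$, $a_\pm=\sqrt{f_\pm}$, and $u_\pm=a_\pm+\psi/a_\pm+\chi/a_\pm^2$. Then the cross term cancels exactly, $\sigma_2=\pm14K/s+O(s^{-1})$, and a large $K$ fixes the signs.
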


\begin{proof}
Let
$f_\pm=c_*^2s+\lambda_\pm\pm K\log s$,
$a_\pm=\sqrt{f_\pm}$, and
$u_\pm=a_\pm+\psi/a_\pm+\chi/a_\pm^2$.  The same cone computation as in
Proposition~\ref{prop:endpoint-coarse}, using
\eqref{eq:endpoint-psi-equation}--\eqref{eq:endpoint-chi-equation}, gives
\[
 \sigma_2(\widetilde{\mathcal K}[u_\pm]+C_0)
 =\pm14K/s+O(s^{-1}),\qquad
 \sigma_1=8a_\pm+O(a_\pm^{-1}).
\]
Thus for large $K$ and then large $s$ the two profiles have the required
strict opposite cone signs; the curved error
$O(r^{-1}+r^{-2}(1+u_\pm)^2)$ is negligible.  Since
$u/\sqrt s\to c_*$ by Proposition~\ref{prop:endpoint-charge-locking}, choose
$\lambda_\pm$ to bracket $u$ at one large cylinder and compare outward.
Finally $a_\pm-c_*\sqrt s\to0$ and the correctors are $o(1)$, proving
\eqref{eq:endpoint-amplitude-o1}.
\end{proof}

\begin{proof}[Proof of Theorem~\ref{thm:main-working}\textup{(ii)} when
$|W_g(p)|\ne0$]
Proposition~\ref{prop:endpoint-second-trapping} gives
\[
  W(R\theta)
  =1-R^{-2}\left(c_*\sqrt{\log R}+o(1)\right).
\]
Since $F=W^{-1}$ and $R^{-4}\log R=o(R^{-2})$,
\[
  F(R\theta)
  =1+c_*R^{-2}\sqrt{\log R}+o(R^{-2}).
\]
With $R=\rho^{-1}$ and
$U_{\mathrm{cn}}(\rho\theta)=\rho^{-2}F(R\theta)$, this is exactly
\eqref{eq:main-n8-generic}.
\end{proof}

\subsection{The Weyl-flat finite-charge branch}
\label{subsec:n8-Weyl-flat}

Assume now that $W_g(p)=0$.  By
\eqref{eq:Aginfty-Weyl-flat-improvement},
\begin{equation}\label{eq:endpoint-flat-background}
  g_\infty-g_{\mathrm E}=O_3(R^{-3}),
  \qquad
  (A_{g_\infty})^\sharp=O_1(R^{-5}).
\end{equation}
The logarithmic source disappears and the finite-charge mechanism is
restored.

\begin{proposition}
\label{prop:endpoint-flat-critical-bound}
There is $C>0$ such that
\begin{equation}\label{eq:endpoint-flat-F-bound}
  |F(y)-1|\le C|y|^{-2}
\end{equation}
for all sufficiently large $|y|$.  Moreover, for every fixed
$\delta\in(2,3)$,
\begin{equation}\label{eq:endpoint-flat-negative-bound}
  F(y)\ge1-C_\delta|y|^{-\delta}.
\end{equation}
\end{proposition}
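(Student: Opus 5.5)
We will rerun the barrier argument of Proposition~\ref{prop:low-barrier-package} with $\alpha=2$, now exploiting the improved Weyl-flat background \eqref{eq:endpoint-flat-background}. The only place where $\alpha<2$ was used there is that the background $O(R^{-4})$ error had to be dominated by the Hessian of the profile $1+ar^{-\alpha}\mp br^{-\delta}$ with $\delta<2$. With $(A_{g_\infty})^\sharp=O_1(R^{-5})$ and $g_\infty-g_{\mathrm E}=O_3(R^{-3})$, the admissible window becomes $\alpha<\delta<3$. We start from \eqref{eq:endpoint-subcritical-all}, which gives $|F-1|\le C_\beta R^{-\beta}$ for every $\beta<2$.

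\emph{Lower bound \eqref{eq:endpoint-flat-negative-bound}.} Fix $\delta\in(2,3)$ and use the profile $1-br^{-\delta}$ in the $F$-variable (equivalently $1+b'r^{-\delta}$ for $W$). By \eqref{eq:low-radial-eigenvalues} and \eqref{eq:low-qgamma-sigma2} with $\gamma=\delta>\alpha$, the decisive factor $s'/r+\alpha\lambda_\theta$ has magnitude $\asymp b\,\delta(\delta-\alpha)r^{-\delta-2}$, while $\lambda_\theta\asymp b\,r^{-\delta-2}$. Its sign places the flat matrix strictly outside $\overline{\Gamma_2}$ on the correct side for a subsolution. The product gives a cone margin of order $b^2r^{-2\delta-4}$. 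The curved perturbation of the matrix is $O(r^{-5})+O(b\,r^{-\delta-2}\cdot r^{-3})$, and its effect on $\sigma_2$ is $O(b\,r^{-\delta-2}\cdot r^{-5})$. Since $\delta<3$, this is $o(b^2r^{-2\delta-4})$ once $b$ is large, so the sign persists. Because $F\to1$, comparison on annuli $\{R_0<r<R_1\}$ with multiplicative compensation at infinity, as in Section~\ref{sec:low-barriers}, then yields $F\ge1-C_\delta r^{-\delta}$. The boundary ordering at $r=R_0$ is achieved by taking $b$ large.

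\emph{Upper bound \eqref{eq:endpoint-flat-F-bound}.} We reproduce part (iii) of Proposition~\ref{prop:low-barrier-package}. Set $M_+(R)=\max\{\max_{|y|=R}R^2(F-1),0\}$ and compare on $r\ge R$ with
\[
 1+\bigl(M_+(R)+KR^{-1}\bigr)r^{-2}-\eta KR^{\delta-3}r^{-\delta},\qquad 2<\delta<3 .
\]
The pure $r^{-2}$ term is critical, so $\sigma_2$ of its flat Hessian vanishes to leading order. The mixed term contributes the strict sign factor $-\frac{2\delta(\delta-2)}{2}\eta KR^{\delta-3}r^{-\delta-2}$, and this must beat the curved error, which is now of relative size $r^{-5}$ instead of $r^{-4}$. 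The main obstacle is this bookkeeping: on the annulus $R\le r\le 2R$ one must verify that the background contribution to $\sigma_2$, of size $\lambda_\theta\cdot O(r^{-5})\sim r^{-4}\cdot r^{-5}$, and the cross terms between the critical amplitude $M_+(R)$ and $g_\infty-g_{\mathrm E}=O(r^{-3})$, are dominated by $\lambda_\theta\cdot\eta KR^{\delta-3}r^{-\delta-2}\sim r^{-4}\cdot K R^{-5}$. The two sides are comparable in powers, so the issue is the constants. This is exactly why the shift is $KR^{-1}$ rather than $KR^{\alpha-2}$: the $K$-linear margin must dominate a $K$-independent error for large $K$, with $\eta$ small so that the profile remains above $F$ at $r=R$. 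We use the a priori bound $M_+(R)\le C_\beta R^{2-\beta}$ to keep the cross terms under control.

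Comparison at $r=2R$ gives $M_+(2R)\le M_+(R)+CR^{-1}$. This is dyadically summable, so $M_+$ is bounded and $F-1\le Cr^{-2}$. Together with the lower bound, which is stronger than $r^{-2}$, this yields $|F-1|\le C|y|^{-2}$.
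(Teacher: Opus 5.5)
Your proposal is correct and matches the paper's proof: rerun the barrier package of Proposition~\ref{prop:low-barrier-package} at $\alpha=2$, using the $O(r^{-5})$ Weyl-flat background to allow a lower power barrier with any $\delta\in(2,3)$ and a mixed upper barrier with shift $KR^{-1}$, which gives the summable increment $M_+(2R)\le M_+(R)+CR^{-1}$. One small sign slip: for the profile $1+ar^{-2}-br^{-\delta}$ the decisive factor is $+\delta(\delta-2)\,b r^{-\delta}/(F r^{2})$, not negative, and this positive sign together with $\lambda_\theta>0$ is exactly what makes the profile strictly admissible, hence an upper barrier for $F$.
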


\begin{proof}
With \eqref{eq:endpoint-flat-background}, the proof of
Proposition~\ref{prop:low-barrier-package} applies verbatim at the critical exponent
$\alpha=2$.  The only change is that the curved error is $O(r^{-5})$, so one
may choose any $\delta\in(2,3)$ in the lower power barrier; the mixed upper
barrier then gives the summable dyadic increment $O(R^{-1})$.  This yields
\eqref{eq:endpoint-flat-negative-bound} and \eqref{eq:endpoint-flat-F-bound}.
\end{proof}

\begin{proposition}
\label{prop:endpoint-flat-locking}
There exists a unique $A\ge0$ such that
\begin{equation}\label{eq:endpoint-flat-P-limit}
  P_R(x):=R^2\bigl(W(Rx)-1\bigr)
  \longrightarrow-A|x|^{-2}
\end{equation}
locally uniformly on $\mathbb R^8\setminus\{0\}$.  Consequently,
\begin{equation}\label{eq:endpoint-flat-U-expansion}
  U_{\mathrm{cn}}(\rho\theta)=\rho^{-2}+A+o(1).
\end{equation}
\end{proposition}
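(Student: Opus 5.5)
The plan is to rerun the finite-charge machinery of Section~\ref{sec:subcritical} at the critical exponent $\alpha=2$. The Weyl-flat background \eqref{eq:endpoint-flat-background} supplies the extra power of decay, and the critical bound of Proposition~\ref{prop:endpoint-flat-critical-bound} replaces \eqref{eq:low-critical-W-bound}.

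\emph{Compactness and classification.} Set $\varepsilon_R=R^{-2}$ and $W(Rx)=1+\varepsilon_RP_R(x)$. By \eqref{eq:endpoint-flat-F-bound} and $W=F^{-1}$, the family $P_R$ is locally bounded on $\mathbb R^8\setminus\{0\}$. With \eqref{eq:endpoint-flat-background}, the rescaled metrics satisfy $\|g_R-g_{\mathrm E}\|_{C^3(K)}\le C_KR^{-3}$ and $\|(A_{g_R})^\sharp\|_{C^1(K)}\le C_KR^{-3}$. The background term $\mathfrak B_R=\varepsilon_R^{-1}W_R(A_{g_R})^\sharp$ is therefore $O(R^{-1})$ in $C^1$ and tends to zero; this is exactly the hypothesis $\omega_R\to0$ used in Proposition~\ref{prop:low-Bernstein}. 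The Bernstein argument, the Hessian $L^1$ bound of Lemma~\ref{lem:low-Hessian-L1}, and the classification of Proposition~\ref{prop:low-tangent-classification} then go through verbatim. Two inputs are needed for the last step. First, nonpositivity of limits follows from \eqref{eq:endpoint-flat-negative-bound}, since $R^2(F-1)_-\le C_\delta R^{2-\delta}\to0$. Second, the decay $|P|\le C|x|^{-2}$ holds, with $2<8$. Every sequence $R_j\to\infty$ thus has a subsequence with $P_{R_j}\to-c|x|^{-2}$ for some $c\ge0$.

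\emph{Charge drift.} Define $Q(R)$ by \eqref{eq:low-charge-def}; the scaling of Lemma~\ref{lem:low-charge-scaling} holds because $2\alpha=n-4$. Proposition~\ref{prop:low-charge-drift} reduces $Q(tR)-Q(R)$ to $-\int\phi_tE_R$, with $E_R$ bounded by \eqref{eq:low-ER-bound}. Here $\|C_R\|_\infty=\varepsilon_R^{-1}O(R^{-2}\cdot R^{-3}\cdot R^{2})$; tracking the scaling gives $\|C_R\|_\infty\le CR^{-1}$, where at $n=8$ in the non-flat case it would be $O(1)$. Also $\varepsilon_R=R^{-2}$ and $|\operatorname{Ric}(g_R)|\le CR^{-3}$. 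Hence $\|E_R\|_{L^1(A_{1,4})}\le C(R^{-2}+R^{-1})$. This is dyadically summable, so $Q_\infty=\lim_{R\to\infty}Q(R)$ exists along all real scales.

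\emph{Locking and expansion.} The $BV$--$L^2$ current passage of Proposition~\ref{prop:low-coefficient-locking} gives $Q_\infty=(n-1)|\mathbb S^7|\alpha^2c^2=28|\mathbb S^7|c^2$ for every tangent. Thus $c$ is uniquely determined, and $P_R\to-c|x|^{-2}$ without passing to subsequences; this is \eqref{eq:endpoint-flat-P-limit} with $A:=c$. Since $\alpha/2=1$, $F=W^{-1}=1+cR^{-2}+o(R^{-2})$ uniformly in $\theta$, and $U_{\mathrm{cn}}=\rho^{-2}F$ gives \eqref{eq:endpoint-flat-U-expansion}.

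The main obstacle is the bookkeeping of the borderline size of $C_R$. At $\alpha=2$ the absolute bound on $A_{g_\infty}$ would give $C_R=O(1)$, which would destroy both the vanishing of $\mathfrak B_R$ and the summability of the drift. The whole argument hinges on checking that the cubic improvement in \eqref{eq:Aginfty-Weyl-flat-improvement} survives the rescaling with a full factor $R^{-1}$ in $C^1$ norm, including the $\partial_P\mathfrak B_R$ term in the Bernstein estimate. I would verify this first, directly from \eqref{eq:ginfty-leading-expansion} with $W_g(p)=0$.
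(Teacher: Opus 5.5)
Your proposal is correct and follows the paper's proof. Proposition~\ref{prop:endpoint-flat-critical-bound} supplies the critical bound and the nonpositivity of tangents. The Weyl-flat improvement \eqref{eq:endpoint-flat-background} makes the rescaled background, and hence $\omega_R$, $C_R$ and the charge drift, of size $O(R^{-1})$. The compactness, Trudinger--Wang classification and $BV$--$L^2$ charge passage of Section~\ref{sec:subcritical} then lock $c=A$ with $\alpha/2=1$. Your explicit scaling check $(A_{g_R})^\sharp(x)=R^2(A_{g_\infty})^\sharp(Rx)=O(R^{-3})$ is the bookkeeping that the paper's short proof leaves implicit.
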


\begin{proof}
By Proposition~\ref{prop:endpoint-flat-critical-bound}, the normalized family
$P_R=R^2(W(R\cdot)-1)$ satisfies exactly the compactness hypotheses of
Section~\ref{sec:subcritical}; the background errors are now $O(R^{-1})$.
Hence every blow-down is $-A|x|^{-2}$ by the same Trudinger--Wang
classification.  The averaged Hessian charge of Section~\ref{sec:subcritical}
has scale drift $O(R^{-1})$, so it converges; passing it to a tangent by the
same $BV$--$L^2$ argument as in Proposition~\ref{prop:low-coefficient-locking}
locks $A$ uniquely.  Thus \eqref{eq:endpoint-flat-P-limit} holds for the whole
family, and $F=W^{-1}$ immediately gives
\eqref{eq:endpoint-flat-U-expansion}.
\end{proof}


\section{The leading high-dimensional curvature correction}
\label{sec:high-leading-correction}

We now assume $n>8$ and treat the nonflat branch $C_0\not\equiv0$.
This section identifies the first local curvature profile and proves that the
actual Green end realizes it.

\subsection{Finite expansion of the inverted background}

We use the finite integer expansion underlying the leading formula
\eqref{eq:Aginfty-leading-Weyl}.

\begin{lemma}\label{hd:lem:finite-background-expansion}
Fix integers $L\ge2$ and $k_0\ge0$.  After imposing conformal-normal normalization to a sufficiently high finite order, there are smooth spherical tensor fields $G_q$ and $C_q$ such that, as $R\to\infty$,
\begin{align}
  g_\infty
  &=g_{\mathrm E}+\sum_{q=2}^{L}R^{-q}G_q(\theta)
    +\mathcal R^{g}_{L+1},\label{hd:eq:finite-g-expansion}\\
  (A_{g_\infty})^\sharp
  &=\sum_{q=0}^{L}R^{-4-q}{C}_q(\theta)
    +\mathcal R^{A}_{L+1},\label{hd:eq:finite-A-expansion}
\end{align}
where $C_0$ is given by \eqref{eq:C0-definition}.  Here $g_S$ and $\nabla_S$ denote the standard metric and Levi--Civita connection on $\mathbb S^{n-1}$.  Moreover, for $a,b\ge0$ with $a+b\le k_0$,
\begin{align}
  \bigl|(R\partial_R)^a\nabla_S^b\mathcal R^{g}_{L+1}\bigr|
  &\le C_{L,k_0}R^{-L-1},\label{hd:eq:finite-g-remainder}\\
  \bigl|(R\partial_R)^a\nabla_S^b\mathcal R^{A}_{L+1}\bigr|
  &\le C_{L,k_0}R^{-L-5}.\label{hd:eq:finite-A-remainder}
\end{align}
\end{lemma}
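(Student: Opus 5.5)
The plan is to Taylor-expand in the conformal-normal coordinates $x$, transport the expansion through the Kelvin map $\iota$, and then control the remainders in the scale-invariant derivatives $R\partial_R$, $\nabla_S$.

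\emph{Step 1: Taylor expansion of the metric.} Choose the conformal-normal order $N\ge L+k_0+6$ in \eqref{eq:conformal-normal-determinant}. In $g$-normal coordinates, Taylor's theorem gives
\[
 g_{ij}(x)=\delta_{ij}+\sum_{q=2}^{L}h^{(q)}_{ij}(x)+Q^{(L+1)}_{ij}(x),
\]
where each $h^{(q)}$ is a homogeneous polynomial of degree $q$. Its coefficients are universal polynomials in $\nabla^{a}\operatorname{Rm}_g(p)$ with $a\le q-2$. The remainder satisfies $|\partial_x^\beta Q^{(L+1)}|\le C\rho^{L+1-|\beta|}$ for $|\beta|\le k_0+2$. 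The degree-two term is the one in \eqref{eq:cn-metric-quadratic-expansion}.

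\emph{Step 2: pull back by the Kelvin map.} The formula $D\iota(y)=R^{-2}(I-2\theta\otimes\theta)$ yields
\[
 (g_\infty)_{ij}(y)=(I-2\theta\otimes\theta)_{ik}\,g_{kl}(\iota(y))\,(I-2\theta\otimes\theta)_{lj}.
\]
Since $\iota(y)=R^{-1}\theta$, homogeneity gives $h^{(q)}(\iota(y))=R^{-q}h^{(q)}(\theta)$. Define
\[
 G_q(\theta):=(I-2\theta\otimes\theta)\,h^{(q)}(\theta)\,(I-2\theta\otimes\theta),
\]
which is smooth on the sphere. Let $\mathcal R^g_{L+1}$ be the conjugated pullback of $Q^{(L+1)}$. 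In polar variables on the $y$ side, $R\partial_R$ corresponds to $-\rho\partial_\rho$ on the $x$ side. Both $\rho\partial_\rho$ and $\nabla_S$ are combinations of $\rho\,\partial_x$ with smooth bounded angular coefficients. Hence $(R\partial_R)^a\nabla_S^b$ applied to $Q^{(L+1)}\circ\iota$ is controlled by
\[
 \sum_{|\beta|\le a+b}\rho^{|\beta|}\,|\partial_x^\beta Q^{(L+1)}|\le C\rho^{L+1}=CR^{-L-1}.
\]
The conjugating factor $I-2\theta\otimes\theta$ is $R$-independent and smooth in $\theta$. This proves \eqref{hd:eq:finite-g-expansion} and \eqref{hd:eq:finite-g-remainder}.

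\emph{Step 3: the Schouten endomorphism.} Take the Schouten tensor of $g_\infty=g_{\mathrm E}+\varepsilon$, where $\varepsilon$ consists of the $R^{-q}G_q$ terms plus the remainder. Write $(A_{g_\infty})^\sharp$ as a polynomial expression in $g_\infty^{-1}$, $\partial^2g_\infty$ and $(\partial g_\infty)^2$. Two facts make the expansion work.

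\begin{itemize}
\item[(a)] A term $R^{-q}G_q(\theta)$ has Euclidean derivatives $\partial^k$ that are $R^{-q-k}$ times smooth angular tensors. So the curvature is naturally an expansion in integer powers $R^{-2-q}$.
\item[(b)] Conformal-normal gauge forces $\operatorname{Ric}(p)=0$, and more generally the Lee--Parker symmetrized vanishing of Ricci derivatives.
\end{itemize}

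There is, however, a cleaner route. Because the Schouten tensor transforms under the Kelvin conformal factor, we have $\iota^*g=R^{-4}g_\infty$. The conformal transformation law then gives, with $\omega=-2\log R$,
\[
 A_{g_\infty}=\iota^*A_g+\nabla^2\omega-\ldots,
\]
where the $\omega$-terms combine to exactly cancel the Euclidean contributions of order $R^{-2}$. The result is $(A_{g_\infty})^\sharp=R^{-4}\cdot(\text{pullback of }A_g^\sharp\text{ in }x)$, up to the conjugation. The paper's "standard Ricci--Schouten expansion" used for \eqref{eq:Aginfty-leading-Weyl} is exactly this identity.

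Now Taylor-expand $A_g(x)$ at $p$ to order $L$, homogeneous term by homogeneous term. The degree-$q$ term gives $R^{-4-q}C_q(\theta)$. The remainder is $O(\rho^{L+1})$ with $x$-derivatives, hence $O(R^{-L-5})$ in the $(R\partial_R,\nabla_S)$ norms, by the same argument as in Step 2. For $q=0$, $A_g(p)=0$ because $\operatorname{Ric}(p)=0$. The degree-one piece is what produces $C_0$, matching \eqref{eq:Aginfty-leading-Weyl}; I would check the index bookkeeping against Lemma~\ref{lem:inverted-background}, so that $C_0$ is as in \eqref{eq:C0-definition}.

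\emph{Main obstacle.} The hardest step is verifying the exact conformal identity for $A_{g_\infty}$ under inversion, with the correct raising convention, and confirming that the leading term indexed as $q=0$ appears at $R^{-4}$ with no $R^{-3}$ term. If the naive count from the degree-one Taylor term of $A_g$ yields $R^{-5}$, the indexing must be reconciled by relabelling: $C_q$ is the Taylor coefficient of $A_g$ of the appropriate degree, with the power shift absorbed. This is only a relabelling, since the lemma only asserts existence of some smooth $C_q$ with integer powers.
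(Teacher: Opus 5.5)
Your Steps~1--2 are fine, and bullet~(a) of Step~3 is essentially the paper's argument. The gap is in the ``cleaner route'' you actually rely on: the identity $(A_{g_\infty})^\sharp=R^{-4}\,\iota^*(A_g^\sharp)$ is false for non-flat $g$. With $\omega=-2\log R$ the conformal law reads
\[
 A_{g_\infty}=\iota^*A_g+\nabla^2_{g_\infty}\omega-d\omega\otimes d\omega+\tfrac12|d\omega|^2_{g_\infty}g_\infty .
\]
The $\omega$-terms cancel only when the Hessian and the norm are Euclidean. What survives is
\[
 -\Gamma^k_{ij}(g_\infty)\,\partial_k\omega
 +\tfrac12\bigl(|d\omega|^2_{g_\infty}(g_\infty)_{ij}-|d\omega|^2_{g_{\mathrm E}}\delta_{ij}\bigr),
\]
which is of exact order $R^{-4}$, since $\Gamma(g_\infty)=O(R^{-3})$, $g_\infty-g_{\mathrm E}=O(R^{-2})$ and $d\omega=O(R^{-1})$.

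On the other hand, $A_g(p)=0$ in conformal-normal gauge, so $R^{-4}\iota^*(A_g^\sharp)=O(R^{-5})$. Your formula therefore predicts a vanishing $R^{-4}$ coefficient, whereas the true coefficient is $C_0\ne0$ whenever $W_g(p)\ne0$. In fact all of $C_0$ sits in the residue you discarded. Take $h_{ij}=\frac{\varsigma}{3}W_{ikjl}(p)y^ky^lR^{-4}$. Then $hy=0$ and $h$ is homogeneous, so to leading order $\Gamma^k_{ij}y_k=0$ and $|d\omega|^2_{g_\infty}=4R^{-2}$. The residue is then $2R^{-2}h_{ij}+O(R^{-5})=\frac{2\varsigma}{3}W_{ikjl}(p)y^ky^lR^{-6}+O(R^{-5})=(C_\infty)_{ij}+O(R^{-5})$.

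So this is not a relabelling issue. The degree-one Taylor term of $A_g$ cannot produce $C_0$: it is $\nabla A_g(p)$, it enters only at $R^{-5}$, and it does not involve $W_g(p)$. Nor is your identity the ``standard Ricci--Schouten expansion'' behind \eqref{eq:Aginfty-leading-Weyl}. For the same reason your remainder bound is incomplete. The residue contains $R^{-1}\partial\mathcal R^g_{L+1}$ and $R^{-2}\mathcal R^g_{L+1}$, which are only $O(R^{-L-3})$ when the metric is expanded just to order $L$.

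The repair is to carry out bullet~(a), which is what the paper does. Since $(A_{g_\infty})^\sharp$ is a polynomial in $g_\infty^{-1}$, $\partial g_\infty$, $\partial^2 g_\infty$, the grading shifts by two. Expand $g_\infty$ two orders further, through $q=L+2$, with an $O(R^{-L-3})$ remainder controlled in $k_0+2$ scale-invariant derivatives (your choice of $N$ permits this). This gives the integer expansion with $\mathcal R^A_{L+1}=O(R^{-L-5})$.

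The coefficient $C_0$ then comes from $G_2$ alone. The term $h=R^{-2}G_2$ is trace- and divergence-free, so the linearized Ricci tensor is $-\frac12\Delta h$. For the harmonic quadratics $w_{ij}=W_{ikjl}(p)y^ky^l$ one has $\Delta(w_{ij}R^{-4})=-4(n-2)w_{ij}R^{-6}$, which yields $C_\infty$ after dividing by $n-2$.

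Alternatively, you may keep the conformal route provided you retain the residue terms. They are polynomial in $g_\infty^{-1}$, $\partial g_\infty$, $g_\infty-g_{\mathrm E}$ and the homogeneous $d\omega$, so they inherit the integer expansion of Step~2.
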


\begin{proof}
This is the standard finite conformal-normal expansion of Lee--Parker
\cite{LeeParker1987}, followed by Kelvin inversion.  Indeed,
$D\iota=R^{-2}(I-2\theta\otimes\theta)$ sends a homogeneous metric jet of
degree $m$ to $R^{-m}$ times a smooth spherical tensor, while the Schouten
tensor contains two derivatives and therefore shifts the grading by two.
Taking the conformal-normal Taylor expansion two orders further gives the
stated derivative remainders; the leading coefficient $C_0$ is exactly the
one already computed in \eqref{eq:Aginfty-leading-Weyl}.
\end{proof}

\subsection{The nonlinear spherical profile equation}

For $\mu>0$ and $\phi\in C^2(\mathbb S^{n-1})$ define the spherical homogeneous Hessian operator by
\begin{equation}\label{hd:eq:Kmu-def}
  \mathcal K_\mu[\phi]
  :=R^{\mu+2}D^2\bigl(-R^{-\mu}\phi(\theta)\bigr).
\end{equation}
In the polar orthonormal frame $e_R\oplus T_\theta\mathbb S^{n-1}$,
\begin{equation}\label{hd:eq:Kmu}
  \mathcal K_\mu[\phi]
  =
  \begin{pmatrix}
    -\mu(\mu+1)\phi & (\mu+1)\nabla_S\phi\\[2mm]
    (\mu+1)\nabla_S\phi & -\nabla_S^2\phi+\mu\phi\,g_S
  \end{pmatrix}.
\end{equation}
For the constant spherical mode set
\[
  H_\mu:=\mathcal K_\mu[1].
\]
Its eigenvalues are
\[
  -\mu(\mu+1),\qquad \mu,\ldots,\mu.
\]
Hence
\begin{lemma}\label{hd:lem:Hmu}
For every $\mu>0$,
\begin{align}
  \sigma_1(H_\mu)&=\mu(n-\mu-2),\label{hd:eq:sigma1-Hmu}\\
  \sigma_2(H_\mu)&=(n-1)\mu^2(\alpha-\mu).\label{hd:eq:sigma2-Hmu}
\end{align}
Consequently,
\begin{equation}\label{hd:eq:Hmu-cone}
  H_\mu\in\Gamma_2\quad\Longleftrightarrow\quad 0<\mu<\alpha.
\end{equation}
At the first curvature degree $\mu=2$,
\begin{equation}\label{hd:eq:H2}
  \sigma_2(H_2)=2(n-1)(n-8)>0.
\end{equation}
\end{lemma}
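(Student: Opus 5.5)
The statement is a direct eigenvalue computation, so I will read off the spectrum of $H_\mu=\mathcal K_\mu[1]$ from \eqref{hd:eq:Kmu} and insert it into $\sigma_1$ and $\sigma_2$.

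\emph{Spectrum.} Setting $\phi\equiv1$ in \eqref{hd:eq:Kmu} kills $\nabla_S\phi$ and $\nabla_S^2\phi$. So $H_\mu$ is block diagonal in the frame $e_R\oplus T_\theta\mathbb S^{n-1}$. It has radial eigenvalue $a:=-\mu(\mu+1)$ and tangential eigenvalue $b:=\mu$ with multiplicity $n-1$. As a cross-check, $-R^{-\mu}$ is radial, with $f'=\mu R^{-\mu-1}$ and $f''=-\mu(\mu+1)R^{-\mu-2}$. Its Euclidean Hessian has eigenvalues $f''$ and $f'/R$, and multiplying by $R^{\mu+2}$ gives the same $a,b$.

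\emph{Symmetric functions.} For the trace,
\[
\sigma_1=a+(n-1)b=\mu\bigl(n-1-(\mu+1)\bigr)=\mu(n-\mu-2),
\]
which is \eqref{hd:eq:sigma1-Hmu}. For $\sigma_2$,
\[
\sigma_2=(n-1)ab+\binom{n-1}{2}b^2=(n-1)\mu^2\Bigl(-(\mu+1)+\frac{n-2}{2}\Bigr)=(n-1)\mu^2\Bigl(\frac{n-4}{2}-\mu\Bigr).
\]
Since $\frac{n-4}{2}=\alpha$, this is \eqref{hd:eq:sigma2-Hmu}. It is consistent with \eqref{eq:low-qgamma-sigma2} after scaling.

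\emph{Cone membership.} Because $\mu>0$, the sign of $\sigma_2$ is the sign of $\alpha-\mu$.
\begin{itemize}[leftmargin=2.2em]
\item If $0<\mu<\alpha$, then $\sigma_2>0$. Also $\mu<\alpha<n-2$, since $\alpha=\frac{n-4}{2}<n-2$, so $\sigma_1=\mu(n-2-\mu)>0$. Hence $H_\mu\in\Gamma_2$.
\item If $\mu\ge\alpha$, then $\sigma_2\le0$, so $H_\mu\notin\Gamma_2$.
\end{itemize}
This proves \eqref{hd:eq:Hmu-cone}. With $\mu=2$ in \eqref{hd:eq:sigma2-Hmu},
\[
\sigma_2(H_2)=4(n-1)\Bigl(\frac{n-4}{2}-2\Bigr)=2(n-1)(n-8),
\]
which is \eqref{hd:eq:H2}; it is positive because $n>8$.

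Nothing here is a real obstacle. The only care needed is the binomial bookkeeping in $\sigma_2$ and recognising $\frac{n-4}{2}=\alpha$.
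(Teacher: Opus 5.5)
Your proposal is correct and follows the paper's own argument: the paper likewise reads off the eigenvalues $-\mu(\mu+1)$ and $\mu$ (the latter with multiplicity $n-1$) of $H_\mu=\mathcal K_\mu[1]$ and substitutes them into $\sigma_1,\sigma_2$, using $\alpha=(n-4)/2$. Your explicit check that $\sigma_1>0$ for $0<\mu<\alpha$ (via $\alpha<n-2$), and your remark that the positivity in \eqref{hd:eq:H2} uses $n>8$, spell out details the paper leaves implicit.
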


\begin{proof}
Both formulas follow immediately from the displayed eigenvalues and
$\alpha=(n-4)/2$.
\end{proof}

The identity \eqref{hd:eq:H2} is the basic reason why $n>8$ is easier than
the borderline dimension $n=8$: the degree-$2$ radial direction is strictly
inside $\Gamma_2$ rather than tangent to $\partial\Gamma_2$.

To identify the equation obeyed by any possible degree-two tangent,
suppose that $W$ has the weighted $C^2$ asymptotic form
\[
  W=1-R^{-2}\phi_2(\theta)+\mathcal E,
  \qquad
  (R\partial_R)^a\nabla_S^b\mathcal E=o(R^{-2})
  \quad (a+b\le2).
\]
Substitute this expansion into \eqref{eq:calA-h-W}, with
$h=g_\infty$.  The gradient-square term is
$O(R^{-6})$, while the leading Hessian and background terms are both of
order $R^{-4}$.  Consequently,
\begin{equation}\label{hd:eq:S4}
 R^4\mathcal A_{g_\infty}[W]
 =\mathcal S_4[\phi_2]+o(1),
 \qquad
 \mathcal S_4[\phi_2]:=\mathcal K_2[\phi_2]+{C}_0.
\end{equation}
By \eqref{eq:W-end-equation}, the first coefficient must satisfy the sphere
equation
\begin{equation}\label{hd:eq:cell}
  \lambda\bigl(\mathcal S_4[\phi_2]\bigr)\in\partial\Gamma_2.
\end{equation}
Equivalently, $\sigma_2(\mathcal K_2[\phi_2]+C_0)=0$ on the branch
$\lambda(\mathcal K_2[\phi_2]+C_0)\in\overline{\Gamma_2}$.

\subsection{The first nonlinear cell}
\label{hd:sec:cell-regularity}

\begin{proposition}\label{hd:prop:first-cell}
The cell equation \eqref{hd:eq:cell} has a unique admissible solution
$\phi_2\in C^\infty(\mathbb S^{n-1})$. It is nonnegative, positive when
$C_0\not\equiv0$, and in the nonflat case, with
\[
  \mathcal S_4:=\mathcal K_2[\phi_2]+C_0,
\]
there is $c_0>0$ such that
\begin{equation}\label{hd:eq:first-cell-positive}
  T_1(\mathcal S_4)\ge c_0 I.
\end{equation}
If $C_0\equiv0$, then $\phi_2\equiv0$.
\end{proposition}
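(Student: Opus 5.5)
The plan is to read \eqref{hd:eq:cell} as a homogeneous $2$-Hessian equation on $\mathbb R^n\setminus\{0\}$. For $u:=-R^{-2}\phi(\theta)$, definition \eqref{hd:eq:Kmu-def} gives
\[
R^4\bigl(D^2u+C_\infty\bigr)=\mathcal K_2[\phi]+C_0 .
\]
So admissible solutions of \eqref{hd:eq:cell} are the degree $-2$ homogeneous solutions of $\lambda(D^2u+C_\infty)\in\partial\Gamma_2$ with $\sigma_1>0$. The structure \eqref{eq:Cinfty-structure} of $C_\infty$ (trace-free, $C_\infty y=0$, divergence-free) is what I would use throughout.

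\textbf{Step 1: signs and positivity.}
\begin{itemize}[leftmargin=1.5em]
\item By \eqref{hd:eq:Kmu} and $\tr C_0=0$,
\[
\sigma_1(\mathcal S_4)=-\Delta_S\phi+2(n-4)\phi .
\]
At a minimum point of $\phi$ we have $\Delta_S\phi\ge0$, so admissibility $\sigma_1>0$ forces $\min\phi>0$. Any admissible solution with $\sigma_1>0$ is therefore positive.
\item If $C_0\equiv0$, the only solution is $\phi\equiv0$. Here $\mathcal K_2[\phi]\in\partial\Gamma_2$, and the positive-solution alternative is ruled out by the $\mu=2$ kernel argument of Step 3 applied to the flat operator. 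This gives nonnegativity in general.
\item Strict ellipticity \eqref{hd:eq:first-cell-positive}: for $\lambda\in\partial\Gamma_2$ with $\sigma_1>0$, $T_1$ is positive semidefinite and degenerates only where $\lambda$ is rank one, $\lambda=(a,0,\dots,0)$.
\item Rank one is impossible here. By $C_0\theta=0$, the radial diagonal entry of $\mathcal S_4$ is $-6\phi<0$. A rank-one matrix $a\,e\otimes e$ with $a=\sigma_1>0$ has all diagonal entries $\ge0$, a contradiction.
\item Compactness of $\mathbb S^{n-1}$ and continuity of $\phi$ in $C^2$ then give a uniform $c_0>0$.
\end{itemize}

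\textbf{Step 2: existence by continuity.}
\begin{itemize}[leftmargin=1.5em]
\item Starting point: by \eqref{hd:eq:H2}, for a large constant $a$ the matrix $aH_2+C_0$ is strictly inside $\Gamma_2$. Put $f_a:=\sigma_2(aH_2+C_0)>0$.
\item Solve $\sigma_2(\mathcal K_2[\phi_s]+C_0)=(1-s)f_a$ for $s\in[0,1]$, with $\phi_0=a$.
\item Openness: the linearization is $\psi\mapsto T_1(\mathcal S):\mathcal K_2[\psi]$. Via $u=-R^{-2}\psi$ this is $R^4\operatorname{div}\bigl(T_1(D^2u_s+C_\infty)\nabla(-R^{-2}\psi)\bigr)$, a divergence-form operator. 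This uses $\operatorname{div}T_1(D^2u)=0$ in flat space and $\operatorname{div}C_\infty=0$, $\tr C_\infty=0$.
\item Its kernel is trivial because $2$ lies strictly between the indicial roots $0$ and $n-6$ when $n>8$. This is the adjoint-pencil argument $\mathcal L_\mu^*=\mathcal L_{n-6-\mu}$ sketched in the architecture.
\item A priori bounds:
  \begin{itemize}[leftmargin=1.5em]
  \item $C^0$ by comparing with constants, using the strict cone signs of $bH_2+C_0$ for large or small $b$.
  \item $C^1$ and $C^2$ by the standard maximum-principle estimates for concave $\sigma_2^{1/2}$ on the sphere, uniform in $s$.
  \end{itemize}
\item At $s=1$: the rank-one exclusion of Step 1 holds along the whole path, because the radial entry $-6\phi_s$ stays negative. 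This makes the family uniformly elliptic, so Evans--Krylov and bootstrapping give a smooth limit $\phi_2$.
\end{itemize}

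\textbf{Step 3: uniqueness.} Let $\phi,\tilde\phi$ be two admissible solutions, and $w=-R^{-2}(\phi-\tilde\phi)$.
\begin{itemize}[leftmargin=1.5em]
\item Writing the difference of the $\sigma_2$ equations along the segment gives $\operatorname{div}(\bar N\nabla w)=0$, with $\bar N=\int_0^1T_1(\text{segment})\,dt$.
\item $\bar N$ is uniformly positive by Step 1 and convexity of $\overline{\Gamma_2}$.
\item $w$ is homogeneous of degree $-2$, a non-indicial degree. Integrating the equation against $R^{-(n-6-2)}$ times a spherical test function, in the adjoint form, forces $w=0$.
\end{itemize}

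\textbf{Main obstacle.} I expect the main obstacle to be the trivial-kernel / Fredholm claim for the variable-coefficient pencil at $\mu=2$. The Euclidean divergence form shows the adjoint relation. But excluding kernel elements for $0<\mu<n-6$ needs a positive kernel element $\psi_\mu$ (Krein--Rutman for the spherical operator) and monotonicity of its principal eigenvalue in $\mu$. I would first try to establish this through the first-eigenvalue curve $\mu\mapsto\lambda_1(\mu)$, which is concave and vanishes exactly at $\mu=0$ and $\mu=n-6$.
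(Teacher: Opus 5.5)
Your skeleton is right: positivity from $\sigma_1(\mathcal S)=-\Delta_S\phi+2(n-4)\phi$, then continuity from a constant, then a degenerate limit. Your rank-one exclusion for $T_1(\mathcal S_4)\ge c_0I$, using the radial entry $-6\phi<0$, is exactly the paper's argument. But the step you single out as the main obstacle is immediate, and two arguments you rely on do not close as written.

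\emph{Kernel triviality.} Since $n>8$ gives $2<\alpha$, Lemma~\ref{hd:lem:Hmu} places $H_2$ in $\Gamma_2^\circ$. So the zeroth-order coefficient $c_2=N:H_2$ of $\psi\mapsto T_1(\mathcal S):\mathcal K_2[\psi]$ is positive: for strict $S$, concavity gives $T_1(S):H_2\ge2\sqrt{\sigma_2(S)\sigma_2(H_2)}$; for $S\in\partial\Gamma_2\setminus\{0\}$, $T_1(S)$ is a nonzero supporting normal. The maximum principle then kills the kernel, along your path and at the endpoint. Your Krein--Rutman substitute does not close without this fact. Concavity of $\mu\mapsto\lambda_1(\mu)$ together with $\lambda_1(0)=\lambda_1(n-6)=0$ only gives $\lambda_1\ge0$ on $[0,n-6]$. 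Saying it ``vanishes exactly at $0$ and $n-6$'' assumes the conclusion. Strict positivity at $\mu=2$ comes from testing with $\psi\equiv1$, i.e.\ from $\min c_2>0$.

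\emph{Uniqueness.} Pairing $\operatorname{div}(\bar N\nabla w)=0$ with $R^{-(n-8)}\chi$ only moves the question to the kernel of the adjoint pencil; it does not force $w=0$. The paper compares directly. At a positive maximum $q$ of $u-v$,
$\mathcal K_2[u-v]=qH_2+\operatorname{diag}(0,-\nabla_S^2(u-v))\in\Gamma_2^\circ$. Adding this to $\mathcal K_2[v]+C_0\in\partial\Gamma_2$ would put $\mathcal K_2[u]+C_0$ in the open cone, a contradiction. The same comparison against $0$ settles $C_0\equiv0$, with no linearization or indicial theory.

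\emph{The degenerate endpoint.} This is the real work, and you call it standard. Knowing $-6\phi_s<0$ for each $s$ does not make the family uniformly elliptic. You need three things as $\sigma_2\to0$:
\begin{enumerate}
\item $\inf_s\min\phi_s>0$; on the degenerate branch you only have $\sigma_1\ge0$, which gives $\phi\ge0$, not $\phi>0$.
\item Uniform $C^1$ and $C^2$ bounds.
\item A concave, uniformly elliptic form of the limit equation for Evans--Krylov; $\sqrt{\sigma_2}$ degenerates on $\partial\Gamma_2$.
\end{enumerate}
The paper gets all three from a block reduction. Set $B_t=-\nabla_S^2\phi+a\phi g_S+tC_0$ with $a=2(n-5)/(n-2)$. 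The equation becomes $\sigma_2(B_t)=9|\nabla\phi|^2+c_n\phi^2+\varepsilon$, and one checks $B_t\in\Gamma_2^{(n-1)}$. Then:
\begin{itemize}
\item $C^0$ and $C^1$ follow from the data comparison $\|u-v\|_\infty\le C\|C-D\|_\infty$ combined with rotations of the sphere.
\item $C^2$ follows from a maximum principle for $\tr B_t$.
\item Uniform positivity follows from compactness and the strong maximum principle. Since $\sigma_2(C_0)=-\frac12|C_0|^2<0$ wherever $C_0\ne0$, the limit cannot vanish identically.
\item Once $\phi\ge c_*>0$, $B_t$ stays in a compact subset of the open cone $\Gamma_2^{(n-1)}$. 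There $\sqrt{\sigma_2(B_t)}$ is concave and uniformly elliptic, uniformly as $\varepsilon\downarrow0$.
\end{itemize}
Without this device or an equivalent one, your Step~2 does not reach $s=1$.
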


\begin{proof}
Put $m=n-1$.  For $0<\varepsilon\leq1$ and $0\leq t\leq1$, consider
the strict problem
\begin{equation}\label{hd:eq:first-cell-regularized}
 \sigma_2(\mathcal S_t[\phi])=\varepsilon,
 \qquad
 \lambda(\mathcal S_t[\phi])\in\Gamma_2,
 \qquad
 \mathcal S_t[\phi]:=\mathcal K_2[\phi]+tC_0.
\end{equation}
In the polar orthonormal frame,
\[
 \mathcal S_t[\phi]
 =\begin{pmatrix}
   -6\phi&3\nabla\phi\\
   3\nabla\phi&D_t
  \end{pmatrix},
 \qquad
 D_t:=-\nabla^2\phi+2\phi g_S+tC_0,
\]
and hence
\begin{equation}\label{hd:eq:first-cell-block}
 \sigma_2(D_t)-6\phi\,\tr D_t-9|\nabla\phi|^2=\varepsilon.
\end{equation}
Set
\[
 \delta:=\frac6{m-1},
 \qquad
 a:=2-\delta=\frac{2(n-5)}{n-2}>0,
 \qquad
 B_t:=D_t-\delta\phi g_S
      =-\nabla^2\phi+a\phi g_S+tC_0.
\]
Using
\[
 \sigma_2(B+sI)
 =\sigma_2(B)+(m-1)s\sigma_1(B)+\binom m2s^2
\]
in \eqref{hd:eq:first-cell-block}, the linear trace terms cancel and give
the exact shifted equation
\begin{equation}\label{hd:eq:first-cell-shifted-equation}
 \sigma_2(B_t)
 =9|\nabla\phi|^2+c_n\phi^2+\varepsilon,
 \qquad
 c_n:=\frac{18m}{m-1}.
\end{equation}

We first verify the branch of this equation.  A strict solution is positive:
at a nonpositive minimum, using $\tr C_0=0$, one would have
\[
 \sigma_1(\mathcal S_t[\phi])
 =-\Delta\phi+2(n-4)\phi\leq0,
\]
contrary to admissibility.  It follows from
\eqref{hd:eq:first-cell-block} that
\[
 \sigma_1(D_t)>0,
 \qquad
 \sigma_2(D_t)
 =\varepsilon+6\phi\sigma_1(D_t)+9|\nabla\phi|^2>0.
\]
Thus $D_t\in\Gamma_2^{(m)}$.  Writing $T_D:=\sigma_1(D_t)$,
Newton--Maclaurin and the preceding identity imply
\[
 6\phi T_D\leq\sigma_2(D_t)
 \leq\frac{m-1}{2m}T_D^2,
 \qquad
 T_D\geq\frac{12m}{m-1}\phi.
\]
Consequently,
\[
 \sigma_1(B_t)
 =T_D-\frac{6m}{m-1}\phi
 \geq\frac{6m}{m-1}\phi>0.
\]
Together with \eqref{hd:eq:first-cell-shifted-equation}, this proves
\begin{equation}\label{hd:eq:first-cell-B-branch}
 B_t\in\Gamma_2^{(m)}.
\end{equation}

We next establish estimates independent of $t$ and $\varepsilon$.  Let
$\mathscr F(A):=\sigma_2(A)^{1/2}$ on $\Gamma_2$.  Concavity and
one-homogeneity give
\[
 \mathscr F(A+B)\geq\mathscr F(A)+\mathscr F(B),
 \qquad A,B\in\Gamma_2.
\]
Suppose $u$ and $v$ solve \eqref{hd:eq:first-cell-regularized} with the
same right-hand side but with tangential data $C$ and $D$.  Since
$H_2\in\Gamma_2^\circ$, when $C\ne D$ one can choose
$k=C_n\|C-D\|_{L^\infty}$ so that
$kH_2\pm(C-D)\in\Gamma_2^\circ$; when $C=D$, set $k=0$.  If $u-v-k$
had a positive maximum,
then at that point
\[
 \bigl(\mathcal K_2[u]+C\bigr)
 -\bigl(\mathcal K_2[v]+D\bigr)
 =(k+q)H_2+(C-D)
  +\begin{pmatrix}0&0\\0&-\nabla^2q\end{pmatrix}
 \in\Gamma_2^\circ,
\]
where $q=u-v-k>0$.  Superadditivity of $\mathscr F$ would then make the
left strict value larger than the right one, a contradiction.  Interchanging
$u$ and $v$ gives
\begin{equation}\label{hd:eq:first-cell-data-comparison}
 \|u-v\|_{L^\infty}
 \leq C_n\|C-D\|_{L^\infty}.
\end{equation}
At $t=0$, the constant
\[
 \phi_{0,\varepsilon}
 =\sqrt{\frac{\varepsilon}{\sigma_2(H_2)}}
\]
solves the strict equation.  Comparing $tC_0$ with zero data gives a uniform
$C^0$ bound.  If $A$ is a small rotation of the sphere, then
$\phi_A(\theta):=\phi(A\theta)$ solves the equation with data $A^*(tC_0)$,
so \eqref{hd:eq:first-cell-data-comparison} yields
\[
 \|\phi-\phi_A\|_{L^\infty}
 \leq C\|tC_0-A^*(tC_0)\|_{L^\infty}
 \leq C d(A,I).
\]
Rotations carrying one nearby point to another therefore give
\begin{equation}\label{hd:eq:first-cell-C0C1}
 \|\phi\|_{C^0}+\|\nabla\phi\|_{C^0}\leq C.
\end{equation}

For the second derivative estimate, put
\[
 \Psi:=\sqrt{9|\nabla\phi|^2+c_n\phi^2+\varepsilon},
 \qquad
 \mathscr F(B_t)=\Psi,
 \qquad
 T:=\tr B_t=-\Delta\phi+am\phi.
\]
The spherical Hessian commutation formula gives
\begin{equation}\label{hd:eq:first-cell-T-identity}
 -\nabla^2T+aTg_S
 =-\Delta B_t+m(2+a)B_t-2Tg_S+E_t,
 \qquad
 E_t:=t\bigl(\Delta C_0-m(2+a)C_0\bigr).
\end{equation}
Let
$\mathscr F^{ij}:=\partial\mathscr F/\partial(B_t)_{ij}$.  Homogeneity
and the explicit derivative of $\sqrt{\sigma_2}$ give
\[
 \mathscr F^{ij}(B_t)_{ij}=\Psi,
 \qquad
 \mathcal T:=\sum_i\mathscr F^{ii}
 =\frac{(m-1)T}{2\Psi}.
\]
At a maximum point of $T$, contract
\eqref{hd:eq:first-cell-T-identity} with $\mathscr F^{ij}$.  Since
$\mathscr F$ is concave and $E_t$ is uniformly bounded, one obtains
\begin{equation}\label{hd:eq:first-cell-T-master}
 (a+2)T\mathcal T
 \leq-\Delta\Psi+m(2+a)\Psi+C\mathcal T.
\end{equation}
The function
$(z,p)\mapsto(c_nz^2+9|p|^2+\varepsilon)^{1/2}$ is convex.  The covariant
chain rule, with the nonnegative quadratic terms discarded, gives
\[
 \Delta\Psi
 \geq\frac{c_n\phi}{\Psi}\Delta\phi
 +\frac{9\phi_i}{\Psi}
   \bigl(\nabla_i\Delta\phi+(m-1)\phi_i\bigr).
\]
At the maximum of $T$,
$\nabla\Delta\phi=am\nabla\phi$ and
$\Delta\phi=am\phi-T$.  Hence
\begin{equation}\label{hd:eq:first-cell-Psi-bound}
 -\Delta\Psi
 \leq\frac{c_n\phi}{\Psi}T
 \leq\sqrt{c_n}\,T.
\end{equation}
The bound \eqref{hd:eq:first-cell-C0C1} gives $\Psi\leq C$, and therefore
$\mathcal T\geq cT$.  If $T$ is large, the last term of
\eqref{hd:eq:first-cell-T-master} is absorbed by its left-hand side;
using \eqref{hd:eq:first-cell-Psi-bound} then gives
$cT^2\leq C(1+T)$.  Thus $T\leq C$.  Since
\[
 |B_t|^2=T^2-2\sigma_2(B_t)\leq T^2
\]
on the branch \eqref{hd:eq:first-cell-B-branch}, we conclude that
\begin{equation}\label{hd:eq:first-cell-C2}
 \|\nabla^2\phi\|_{L^\infty}\leq C.
\end{equation}

We now solve the strict problem.  For fixed $\varepsilon>0$, let
$I_\varepsilon$ be the set of $t\in[0,1]$ for which
\eqref{hd:eq:first-cell-regularized} has a smooth admissible solution.
The constant solution above shows $0\in I_\varepsilon$.  At a solution,
the linearization in the $\phi$ variable is
\begin{equation}\label{hd:eq:first-cell-linearization}
 Lh=T_1(\mathcal S_t[\phi]):\mathcal K_2[h].
\end{equation}
Its principal part is elliptic.  Its zeroth-order coefficient is positive:
if $N:=T_1(\mathcal S_t[\phi])$, concavity and homogeneity of
$\sqrt{\sigma_2}$ imply
\[
 N:H_2
 \geq2\sqrt\varepsilon\,\sqrt{\sigma_2(H_2)}>0.
\]
The maximum principle gives $\ker L=\{0\}$.  The index is zero by a
homotopy through elliptic operators with positive zeroth-order coefficient
to $-\Delta_S+1$, so the implicit function theorem proves openness of
$I_\varepsilon$.

For closedness, \eqref{hd:eq:first-cell-C0C1} and
\eqref{hd:eq:first-cell-C2} give uniform bounds.  Since $\varepsilon$ is
fixed, \eqref{hd:eq:first-cell-shifted-equation} gives
$\sigma_2(B_t)\geq\varepsilon$; together with $|B_t|\leq C$, the shifted
matrices remain in a compact subset of $\Gamma_2^{(m)}$.  The shifted
equation is then uniformly elliptic.  Evans--Krylov and Schauder estimates
give smooth compactness, proving closedness.  Thus
$I_\varepsilon=[0,1]$, and uniqueness follows from
\eqref{hd:eq:first-cell-data-comparison} with identical data.  In
particular, at $t=1$ there is a unique smooth strict solution
$\phi_\varepsilon$, and the bounds above are independent of
$\varepsilon\in(0,1]$.

It remains to pass to the degenerate equation.  Every sequence
$\varepsilon_j\downarrow0$ has a subsequence for which
$\phi_{\varepsilon_j}$ converges uniformly and in $C^{1,\beta}$ to a
nonnegative admissible solution $\phi_*$.  If $C_0\not\equiv0$, this limit
cannot vanish identically: otherwise $C_0\in\overline{\Gamma_2}$, whereas
at every point where $C_0\ne0$,
\[
 \sigma_2(C_0)
 =\frac12\bigl((\tr C_0)^2-|C_0|^2\bigr)
 =-\frac12|C_0|^2<0.
\]
Moreover, the strict trace inequality passes to the limit as
\[
 -\Delta\phi_*+2(n-4)\phi_*\geq0.
\]
The strong maximum principle gives $\phi_*>0$.  A compactness
contradiction therefore yields constants $c_*>0$ and
$\varepsilon_0>0$ such that
\begin{equation}\label{hd:eq:first-cell-uniform-positive}
 \min_{\mathbb S^{n-1}}\phi_\varepsilon\geq c_*
 \qquad(0<\varepsilon<\varepsilon_0).
\end{equation}
Consequently,
\[
 \sigma_2(B_1[\phi_\varepsilon])
 \geq c_nc_*^2,
 \qquad
 |B_1[\phi_\varepsilon]|\leq C.
\]
The shifted matrices now remain in a fixed compact subset of
$\Gamma_2^{(m)}$ uniformly as $\varepsilon\downarrow0$.  Evans--Krylov
and Schauder theory give uniform estimates of every order, so the
subsequential convergence is smooth.

The limiting cell is unique.  Indeed, if $u-v$ had a positive maximum
$q$ at some point, then
\[
 \mathcal K_2[u-v]
 =qH_2+
  \begin{pmatrix}0&0\\0&-\nabla^2(u-v)\end{pmatrix}
 \in\Gamma_2^\circ
\]
there.  Adding this matrix to
$\mathcal K_2[v]+C_0\in\partial\Gamma_2$ would put
$\mathcal K_2[u]+C_0$ in $\Gamma_2^\circ$, a contradiction.  Interchanging
$u$ and $v$ proves equality.  Hence the whole family converges to the
unique smooth cell $\phi_2$.  If $C_0\equiv0$, the zero function solves
the limiting equation and the same comparison proves uniqueness, so
$\phi_2\equiv0$.

Finally assume $C_0\not\equiv0$ and put
$\mathcal S_4=\mathcal K_2[\phi_2]+C_0$.  Then
$\mathcal S_4(e_R,e_R)=-6\phi_2<0$.  On
$\partial\Gamma_2$, $T_1(\mathcal S_4)$ is positive semidefinite.  To
exclude a zero eigenvalue, diagonalize a matrix
$S\in\partial\Gamma_2$ with $\sigma_1(S)>0$.  If the $i$th eigenvalue of
$T_1(S)$ vanishes, then
\[
 \sum_{j\ne i}\lambda_j=0,
 \qquad
 0=\sigma_2(\lambda_1,\ldots,\widehat{\lambda_i},\ldots,\lambda_n)
   =-\frac12\sum_{j\ne i}\lambda_j^2.
\]
Thus $S$ is positive rank one.  For $S=\mathcal S_4$, one has
$\sigma_1(S)>0$ because $\sigma_1(S)=0=\sigma_2(S)$ would imply $S=0$,
contradicting its negative radial entry.  The rank-one alternative is also
incompatible with that entry.  Hence $T_1(\mathcal S_4)$ is pointwise
positive definite, and compactness of the sphere gives
\eqref{hd:eq:first-cell-positive}.
\end{proof}

\subsection{Realization of the first correction for the \texorpdfstring{$\Gamma_2$}{Gamma-2}-Green function}

We use the Green-end comparison and strict approximation from
Proposition~\ref{prop:gamma2-green-theory}, together with an initial decay
for one fixed exponent $\beta_0\in(0,2)$:
\begin{equation}\label{hd:eq:subcritical}
  |W-1|\le C_0 R^{-\beta_0}
  \qquad\text{for all sufficiently large }R,
\end{equation}
Only this single exponent is needed because all subsequent correction
barriers decay faster than $R^{-2}$.

\begin{proposition}\label{hd:prop:R-2}
If $n>8$, the subcritical estimate \eqref{hd:eq:subcritical} improves to
\begin{equation}\label{hd:eq:R-2}
  |W-1|\le CR^{-2}.
\end{equation}
\end{proposition}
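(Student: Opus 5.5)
The plan is to trap $W$ between two explicit degree-two power profiles, using the fact that for $n>8$ the radial mode $H_2=\mathcal K_2[1]$ lies strictly inside $\Gamma_2$ (Lemma~\ref{hd:lem:Hmu}, \eqref{hd:eq:H2}). Take
\[
 \underline W:=1-KR^{-2},\qquad \overline W:=1+KR^{-2},
\]
with $K$ large, to be fixed below. Substituting into \eqref{eq:calA-h-W} with $h=g_\infty$, and using Lemma~\ref{lem:inverted-background} (namely $g_\infty-g_{\mathrm E}=O_3(R^{-2})$ and $(A_{g_\infty})^\sharp=R^{-4}C_0+O_1(R^{-5})$), gives
\[
 R^4\mathcal A_{g_\infty}[\underline W]=KH_2+C_0+O(KR^{-1}+K^2R^{-2}),\qquad
 R^4\mathcal A_{g_\infty}[\overline W]=-KH_2+C_0+O(KR^{-1}+K^2R^{-2}).
\]
The gradient-square term is $O(K^2R^{-6})$. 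The curved Christoffel corrections to the Hessian are $O(KR^{-5})$, so after multiplying by $R^4$ they contribute $O(KR^{-1})$.

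Since $H_2\in\Gamma_2^\circ$ and $C_0$ is bounded on the sphere, there is $K_0$ such that $KH_2+C_0$ lies in a fixed compact subset of $\Gamma_2^\circ$ (after normalizing by $K$) for $K\ge K_0$. Hence $\underline W$ is a strict subsolution on $\{R>R_1(K)\}$. For $\overline W$, the trace is $-K\sigma_1(H_2)+O(1)<0$, because $\tr C_0=0$. So the matrix lies strictly outside $\overline{\Gamma_2}$, and $\overline W$ is a strict supersolution there.

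First fix $R_1$ large enough for both strict inequalities to hold with any $K\ge K_0$. Then enlarge $K$ so that, by \eqref{hd:eq:subcritical}, $\underline W<W<\overline W$ on $\{R=R_1\}$; this is possible since $|W-1|\le C_0R_1^{-\beta_0}$ there. One must check that enlarging $K$ does not destroy strictness on $\{R\ge R_1\}$. The error terms $K^2R^{-2}$ are the danger. I would handle them by choosing $R_1$ depending on $K$ as $R_1\ge C\sqrt K$ and noting that the initial decay gives $|W-1|\le C_0R^{-\beta_0}\ll KR^{-2}$ on $R=R_1$ once $K\ge C R_1^{2-\beta_0}$. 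This is consistent because $\beta_0>0$: take $R_1=K^{1/(2-\beta_0)}$, for which $K^2R_1^{-2}$ is not small. If this fails, I would instead use the scale-invariant form, working with the rescaled blow-down metrics $g_R$, where the $K$-dependence is only through $K H_2$ plus the $O(R^{-1})$ background, and the quadratic term is $\varepsilon_R K^2$ with $\varepsilon_R=R^{-2}$. Alternatively, I would start from the exponents $\beta<2$ obtained by iterating the power barriers of Proposition~\ref{prop:low-barrier-package}(i), which apply because $\gamma<2<\alpha$.

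To handle the matching at infinity, where all three functions tend to $1$, I would use the multiplicative compensation already employed in Section~\ref{sec:subcritical}. Since $\mathcal A_h[tW]=t^2\mathcal A_h[W]$, the cone conditions are invariant under multiplication by constants. The function $t\underline W$ with $t<1$ is still a strict subsolution and lies strictly below $W$ near infinity. Comparing on the bounded annuli $R_1<R<R_2$ with $R_2$ large, and then letting $t\uparrow1$, yields $W\ge\underline W$. Similarly, $t\overline W$ with $t>1$ gives $W\le\overline W$. Together these give \eqref{hd:eq:R-2}.

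I expect the main obstacle to be the comparison step itself. The solution $W$ is only a $C^{1,1}$ viscosity solution of the degenerate equation, so the touching argument must be run either at a second-order differentiability point with the strict barrier, or via the smooth strict approximants $W_j$ of Proposition~\ref{prop:gamma2-green-theory}(ii)--(iii). For the subsolution side I would use the approximants, which satisfy $\lambda\in\Gamma_2$ with $\sigma_2=\delta_j^2\to0$ on fixed annuli: a strict barrier touching them from the appropriate side contradicts the strict cone margin for $j$ large. I would then pass to the limit using \eqref{eq:LN-local-convergence}.
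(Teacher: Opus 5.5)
Your proposal is correct and is essentially the paper's argument. It uses the same barriers $1\mp KR^{-2}$, gets strictness from $H_2\in\Gamma_2^\circ$ for the lower barrier and from $\operatorname{tr}C_0=0$ for the upper one, ties the inner radius to $K$, and concludes by exterior comparison with multiplicative compensation (your fixed-$t\neq1$ version is slightly simpler than the paper's $c_L^\pm=1\mp\kappa L^{-\beta_0}$, since it needs only $W\to1$ and no rate $\gamma>\beta_0$).

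The only muddle is in handling the quadratic error. $R_1$ cannot be fixed before $K$, as your first sentence of that paragraph proposes. But your later worry that $K^2R_1^{-2}$ ``is not small'' is beside the point: only the relative error $K^2R^{-2}/K=KR^{-2}$ must be small. So any $R_1$ in your window $\sqrt K\ll R_1\lesssim K^{1/(2-\beta_0)}$ works; the paper takes $R_0=M^a$ with $a\in(1/2,(2-\beta_0)^{-1})$. Your fallback routes are therefore unnecessary.
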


\begin{proof}[Barrier argument]
Consider
\[
  \underline W_M=1-MR^{-2},
  \qquad
  \overline W_M=1+MR^{-2}.
\]
For these profiles the full $R^4$-normalized Schouten matrices have the uniform expansions
\begin{align}
  R^4\mathcal A_{g_\infty}[\underline W_M]
  &=MH_2+{C}_0
    +O\!\left(R^{-1}+MR^{-2}+M^2R^{-2}\right),
    \label{hd:eq:R2-barrier-lower-full}\\
  R^4\mathcal A_{g_\infty}[\overline W_M]
  &=-MH_2+{C}_0
    +O\!\left(R^{-1}+MR^{-2}+M^2R^{-2}\right).
    \label{hd:eq:R2-barrier-upper-full}
\end{align}
Indeed, the $O(R^{-1})$ term is the next Kelvin-background coefficient, the metric/connection and background-factor corrections are $O(MR^{-2})$, while the factor $W\Hess W$ and the gradient-square term contribute the genuinely nonlinear size $O(M^2R^{-2})$.

Choose $a\in(1/2,(2-\beta_0)^{-1})$ and set $R_0=M^a$.
The elementary exponent comparison encoded in this choice gives, uniformly
for $R\ge R_0$,
\[
 MR^{-2}=o(1),\qquad
 R^{-1}+MR^{-2}+M^2R^{-2}=o(M),\qquad
 MR_0^{-2}\gg R_0^{-\beta_0}.
\]
The first two relations make the barriers positive and preserve the signs of
their leading matrices; the last one will supply the inner ordering.
Since $H_2\in\Gamma_2^\circ$ and ${C}_0$ is bounded, $(MH_2+{C}_0)/M\to H_2$ uniformly in $\theta$; hence the lower matrix stays a distance $cM$ inside $\Gamma_2$ for all $R\ge R_0$ when $M$ is large.  For the upper profile,
\[
  \sigma_1(-MH_2+{C}_0)=-M\sigma_1(H_2),
\]
because $\tr{C}_0=0$; the error above is $o(M)$, so its trace is strictly negative on the whole exterior region.  Thus $\underline W_M$ is strictly admissible and $\overline W_M$ is strictly exterior for every $R\ge R_0$.

At $R=R_0$, the last relation above and \eqref{hd:eq:subcritical} give,
after increasing $M$ once more,
\[
  \underline W_M\le W\le\overline W_M
  \qquad\text{on }\partial B_{R_0}.
\]
Both barriers approach $1$ at rate $R^{-2}$, and $2>\beta_0$.  Hence Lemma \ref{hd:lem:W-multiplicative-compensation} applies to both profiles and gives
\[
  1-MR^{-2}\le W\le1+MR^{-2}
  \qquad(R\ge R_0),
\]
which is \eqref{hd:eq:R-2}.
\end{proof}

\begin{lemma}\label{hd:lem:W-multiplicative-compensation}
Assume \eqref{hd:eq:subcritical} and fix a sufficiently large $R_0$.
\begin{enumerate}[label=(\roman*)]
\item If $\underline W>0$ is a smooth strictly admissible profile on $\{R\ge R_0\}$ and
\[
  \underline W\le W\quad\text{on }\partial B_{R_0},\qquad
  |\underline W-1|\le CR^{-\gamma}
\]
for some $\gamma>\beta_0$, then $\underline W\le W$ throughout $\{R\ge R_0\}$.
\item If $\overline W>0$ lies strictly outside the admissible branch on $\{R\ge R_0\}$ and
\[
  W\le\overline W\quad\text{on }\partial B_{R_0},\qquad
  |\overline W-1|\le CR^{-\gamma}
\]
for some $\gamma>\beta_0$, then $W\le\overline W$ throughout $\{R\ge R_0\}$.
\end{enumerate}
The comparison direction is the one appropriate to $W$: since $W=F^{-2/\alpha}$ reverses order, a strictly admissible $W$-profile is a lower barrier and a strictly exterior profile is an upper barrier.
\end{lemma}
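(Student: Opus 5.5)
The plan is to reduce the exterior comparison to a comparison on bounded annuli $\{R_0\le R\le \Lambda\}$, by multiplicative compensation at infinity, and then let $\Lambda\to\infty$. We treat (i); (ii) is symmetric, with the roles of interior and exterior exchanged. We do not compare $\underline W$ with $W$ directly, because on a large sphere $\partial B_\Lambda$ both functions only tend to $1$ and no ordering is available there. Instead, for small $s>0$ we use the scaled profile $\underline W_s:=(1-s)\underline W$.

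The key point is that $\mathcal A_h[cW]=c^2\mathcal A_h[W]$ for every constant $c>0$, by \eqref{eq:calA-h-W}. Hence $\lambda(\mathcal A_{g_\infty}[\underline W_s])\in\Gamma_2^\circ$ for every $s$, so strict admissibility is preserved. Moreover, on $\partial B_{R_0}$ we have $\underline W_s<\underline W\le W$. On $\partial B_\Lambda$ we use $|\underline W-1|\le C\Lambda^{-\gamma}$ and $|W-1|\le C\Lambda^{-\beta_0}$ from \eqref{hd:eq:subcritical}. For fixed $s$ this gives
\[
\underline W_s\le 1-s+C\Lambda^{-\gamma}<1-C\Lambda^{-\beta_0}\le W
\]
once $\Lambda$ is large, since $s$ dominates $\Lambda^{-\beta_0}$. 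We only need $\gamma>0$ for this step. The assumption $\gamma>\beta_0$ matters in a sharper variant in which $s$ is allowed to depend on $\Lambda$, with $s\sim\Lambda^{-\beta_0}$, in order to get quantitative rates. For the qualitative statement a fixed $s$ suffices.

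On the annulus $\{R_0\le R\le\Lambda\}$ we then have a smooth strict subsolution lying below the viscosity solution $W$ on the boundary. We apply the admissible comparison principle on bounded domains. The cleanest route is through the strict approximants $W_j$ of Proposition~\ref{prop:gamma2-green-theory}(ii)--(iii), for which $\sigma_2(\mathcal A[W_j])=R^{4}\delta_j^2\to0$ after the appropriate normalization. On the compact annulus, $\underline W_s$ has $\sigma_2(\mathcal A[\underline W_s])\ge c_s>0$ and a uniform cone margin, while the $W_j$ converge to $W$ uniformly there. At a positive interior maximum of $\underline W_s-W_j$, we compare second-order jets: at a touching point the Hessian term $W\nabla^2W$ is ordered, the gradients coincide, and the zeroth-order terms differ by a factor $\underline W_s^2$ versus $W_j^2$. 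Since $\underline W_s>W_j$ there and $A_{g_\infty}$ is small, we get a contradiction using $\sigma_2$-monotonicity on $\Gamma_2$ together with the strict margin $c_s>\delta_j$-size. This is the step I expect to be delicate: the zeroth-order coefficient $W^2A_{g_\infty}$ has no sign, so the jet comparison is not purely monotone. I would handle it by the same multiplicative trick, writing the equation in the $\log W$ variable. In that variable the constant shift is exactly invisible: $e^{-2q}\mathcal A[e^q]=A_{g_\infty}+\nabla^2q+dq\otimes dq-\tfrac12|dq|^2$ is independent of additive constants. The comparison for $q$ is then the standard degenerate-elliptic one with no zeroth-order term, and strictness of the subsolution gives the contradiction at an interior maximum of $\underline q_s-q_j$.

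Passing $j\to\infty$ gives $\underline W_s\le W$ on the annulus; letting $\Lambda\to\infty$ and then $s\to0$ gives $\underline W\le W$ on $\{R\ge R_0\}$. For (ii) we use $\overline W_s=(1+s)\overline W$. At an interior maximum of $q-\overline q_s$, which we realize through the approximants or directly through viscosity test functions touching $W$ from above, the admissibility of $W$ would force the strictly exterior matrix of $\overline W_s$ into $\overline{\Gamma_2}$, a contradiction. This yields $W\le\overline W$.
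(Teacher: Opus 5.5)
Your proposal is correct and follows the paper's argument. You scale the barrier by a constant, which multiplies $\mathcal A_{g_\infty}$ by a positive factor and so preserves the cone orientation; you compare on the truncated annulus; and you let the truncation radius tend to infinity. The one real difference is the order of limits: the paper ties the scaling to the truncation radius, $c_L^-=1-\kappa L^{-\beta_0}$, which is exactly where $\gamma>\beta_0$ is used, whereas your fixed $s$, followed by $\Lambda\to\infty$ and then $s\to0$, shows that hypothesis is unnecessary, and your $\log W$ reformulation supplies the bounded-annulus comparison that the paper simply invokes.
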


\begin{proof}
We prove (i); the other case is symmetric.  On a truncated annulus $B_L\setminus B_{R_0}$ set
\[
  c_L^-:=1-\kappa L^{-\beta_0}>0
\]
with $\kappa>2C_0$ fixed and $L$ sufficiently large.  Multiplication by a positive constant preserves the Schouten cone orientation: $(c_L^-\underline W)^{-2}g_\infty$ is a constant homothety of $\underline W^{-2}g_\infty$, so its Schouten endomorphism differs only by a positive scalar factor.  On the inner boundary,
\[
  c_L^-\underline W\le\underline W\le W.
\]
On $R=L$, since $\gamma>\beta_0$,
\[
  c_L^-\underline W
  \le (1-\kappa L^{-\beta_0})(1+C L^{-\gamma})
  =1-\kappa L^{-\beta_0}+o(L^{-\beta_0})
  \le 1-C_0L^{-\beta_0}\le W
\]
for all large $L$.  The admissible comparison principle on the truncated annulus gives $c_L^-\underline W\le W$.  Keeping any fixed $R>R_0$ and sending $L\to\infty$ yields $\underline W\le W$.

For (ii), take $c_L^+:=1+\kappa L^{-\beta_0}$ with the same type of choice.  Then $c_L^+\overline W\ge\overline W\ge W$ on the inner boundary and $c_L^+\overline W\ge W$ on $R=L$ for large $L$.  Constant multiplication preserves the strict exterior cone orientation, so comparison gives $W\le c_L^+\overline W$ and then $W\le\overline W$ as $L\to\infty$.
\end{proof}

Let $\phi_2$ and $\mathcal S_4$ be as in
Proposition~\ref{hd:prop:first-cell}, and set
$N:=T_1(\mathcal S_4)\ge c_0I$ in the nonflat branch.

Fix
\begin{equation}\label{hd:eq:mu-range}
  2<\mu<\min\{3,\alpha\}.
\end{equation}
Define
\begin{equation}\label{hd:eq:refined-barriers}
  W_{-}
  :=1-R^{-2}\phi_2(\theta)-KR^{-\mu},
  \qquad
  W_{+}
  :=1-R^{-2}\phi_2(\theta)+KR^{-\mu}.
\end{equation}
Since
\[
  D^2(-R^{-\mu})=R^{-\mu-2}H_\mu,
\]
the first barrier receives a $+K R^{-\mu-2}H_\mu$ perturbation, while the second receives the opposite perturbation.

\begin{lemma}\label{hd:lem:refined-smooth-barriers}
Let $\phi_2$ be the smooth first cell from Proposition~\ref{hd:prop:first-cell}, and fix $\mu$ in \eqref{hd:eq:mu-range}.  For every fixed $D>0$ there is $R_D\gg1$ such that, whenever $R_0\ge R_D$ and
\begin{equation}\label{hd:eq:K-from-D}
  K:=DR_0^{\mu-2},
\end{equation}
the profiles in \eqref{hd:eq:refined-barriers} have opposite strict cone orientations for every $R\ge R_0$: $W_-$ is strictly admissible and $W_+$ lies strictly outside the admissible branch.  For fixed $D$ the strictness is uniform on the whole exterior region.
\end{lemma}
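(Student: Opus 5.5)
We expand $R^4\mathcal A_{g_\infty}[W_\pm]$ exactly as in \eqref{hd:eq:R2-barrier-lower-full}, but now around the first cell.  Write $W_\pm=1-R^{-2}\phi_2\pm KR^{-\mu}$ and substitute into \eqref{eq:calA-h-W} with $h=g_\infty$.  Lemma~\ref{hd:lem:finite-background-expansion} gives $g_\infty=g_{\mathrm E}+O(R^{-2})$ and $(A_{g_\infty})^\sharp=R^{-4}C_0+O(R^{-5})$ with derivative control.  Using $D^2(-R^{-2}\phi_2)=R^{-4}\mathcal K_2[\phi_2]$ and $D^2(-R^{-\mu})=R^{-\mu-2}H_\mu$, we aim for the uniform expansion
\[
 R^4\mathcal A_{g_\infty}[W_\pm]
 =\mathcal S_4\pm KR^{2-\mu}H_\mu+\mathcal E_\pm,
 \qquad
 |\mathcal E_\pm|\le C\bigl(R^{-1}+R^{-2}+KR^{-\mu}+K^2R^{2-2\mu}\bigr).
\]
Here $R^{-1}$ is the next background cell $C_1$.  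The $R^{-2}$ term collects the metric corrections to the Hessian of $R^{-2}\phi_2$, the factor $W$ multiplying $\nabla^2 W$, and the gradient-square term $|\nabla W|^2=O(R^{-6}+K^2R^{-2\mu-2})$.  The mixed products are bounded by $KR^{-\mu}$ times $O(1)$ and by $K^2R^{2-2\mu}$.  Since $W_\pm\to1$ and $KR^{-\mu}\le DR_0^{-2}$ on $R\ge R_0$, positivity of $W_\pm$ is immediate for large $R_D$.

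We then set $\eta(R):=KR^{2-\mu}=D(R_0/R)^{\mu-2}\in(0,D]$ and linearize $\sigma_2$ at $\mathcal S_4$.  Because $\sigma_2(\mathcal S_4)=0$, the expansion is
\[
 \sigma_2\bigl(\mathcal S_4\pm\eta H_\mu+\mathcal E\bigr)
 =\pm\eta\,N:H_\mu+O(\eta^2+|\mathcal E|),
 \qquad N=T_1(\mathcal S_4)\ge c_0I .
\]
For the sign, first let $\mu\downarrow2$: then $N:H_2$ is the zeroth-order coefficient of the linearization $\mathcal L_2$, and the concavity argument in Proposition~\ref{hd:prop:first-cell} gives $N:H_2=N:\mathcal K_2[1]$.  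Since $\mathcal S_4=\mathcal K_2[\phi_2]+C_0$, $\phi_2>0$ and $\tr C_0=0$, one checks that $N:H_\mu\ge c_\mu>0$ for $2<\mu<\alpha$; the check runs through the eigenvalues of $H_\mu$ and the identity $N:\mathcal S_4=2\sigma_2(\mathcal S_4)=0$.  If this direct check fails, we fall back on the robust alternative: use $H_\mu\in\Gamma_2^\circ$ (Lemma~\ref{hd:lem:Hmu}) together with superadditivity of $\sqrt{\sigma_2}$ on $\Gamma_2$ to get $\mathcal S_4+\eta H_\mu\in\Gamma_2^\circ$ quantitatively.  This gives $\sqrt{\sigma_2}\ge\sqrt{\eta}\sqrt{\sigma_2(H_\mu)}$, which is a margin of order $\eta$ once homogeneity is used.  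For the exterior barrier, in the direction $-\eta H_\mu$, we use the positive definiteness of $N$ and the first-order expansion.

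\textbf{Main obstacle.}  The errors must be dominated uniformly for all $R\ge R_0$, including $R\to\infty$, where $\eta\to0$ and the margin $c\eta$ degenerates.  This is where $\mu<3$ enters: $|\mathcal E|\lesssim R^{-1}$, while $\eta=DR_0^{\mu-2}R^{2-\mu}\ge DR^{2-\mu}$ decays more slowly than $R^{-1}$.  Hence $R^{-1}+R^{-2}=o(\eta)$ uniformly once $R_0\ge R_D(D)$.  The remaining terms are harmless: $KR^{-\mu}=\eta R^{-2}=o(\eta)$, $K^2R^{2-2\mu}=\eta^2R^{-2}\le D\eta R^{-2}$, and $O(\eta^2)$ is absorbed by taking $\eta\le D$ small relative to $c_\mu$.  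If $D$ is large, the quadratic term is not small; in that case we treat the regime $\eta\ge\eta_0$ separately, using that $\mathcal S_4+\eta H_\mu$ stays in a compact subset of $\Gamma_2^\circ$ (and $\mathcal S_4-\eta H_\mu$ has negative $\sigma_2$ or $\sigma_1$ by convexity of $\Gamma_2$ along the segment).  This gives a uniform strict margin in terms of $D$.

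Finally, the branch condition has to be verified along with the sign of $\sigma_2$.  On the admissible side, $\sigma_1$ stays positive because $\sigma_1(\mathcal S_4)>0$.  On the exterior side, $\sigma_2<0$ by itself places the matrix strictly outside $\overline{\Gamma_2}$.  This yields the claimed opposite strict cone orientations, uniform for fixed $D$.
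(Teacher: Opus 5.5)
Your overall scheme matches the paper's. You use the normalized amplitude $\eta=KR^{2-\mu}=D(R_0/R)^{\mu-2}\in(0,D]$, an error that is $o(\eta)$ uniformly on $R\ge R_0$ because $\mu<3$, linearization at $\mathcal S_4$ for small $\eta$, and compactness plus the convex-cone exclusion for $\eta\in[\eta_0,D]$. Your error bookkeeping is correct, up to two slips: $W_+=1-R^{-2}\phi_2+KR^{-\mu}$ receives $-\eta H_\mu$, so the $\pm$ in your expansion should be $\mp$; and $\sqrt{\sigma_2(\eta H_\mu)}=\eta\sqrt{\sigma_2(H_\mu)}$.

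The gap is the step that produces the signs when $R\gg R_0$, where $\eta\to0$ and the errors are only $o(\eta)$. That step is the uniform bound $N:H_\mu\ge c>0$. You assert it (``one checks''), hedge, and neither fallback supplies it.
\begin{itemize}
\item Positive definiteness of $N$ does not imply it, because $H_\mu$ has the negative radial eigenvalue $-\mu(\mu+1)$.
\item Superadditivity of $\sqrt{\sigma_2}$ gives only $\sigma_2(\mathcal S_4+\eta H_\mu)\ge\eta^2\sigma_2(H_\mu)$. An error $\mathcal E$ added afterwards shifts $\sigma_2$, to leading order, by $N:\mathcal E$. This is $o(\eta)$ but not $o(\eta^2)$: for $\mu>5/2$ the $R^{-1}C_1$ background term already exceeds $\eta^2$ as $R\to\infty$.
\item Superadditivity also says nothing about $\mathcal S_4-\eta H_\mu$.
\end{itemize}
So, as written, neither orientation is established for $R\gg R_0$, and for the exterior barrier you offer no valid argument there at all.

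Any of three short arguments closes this.
\begin{itemize}
\item \emph{The paper's route} (cf.\ Lemma~\ref{hd:lem:cmu-positive}) uses that $N$ is the supporting normal of $\overline{\Gamma_2}$ at $\mathcal S_4$. For $X\in\overline{\Gamma_2}$ one has $0\le\sigma_2(\mathcal S_4+tX)=t\,N:X+t^2\sigma_2(X)$ for all $t>0$, so $N:X\ge0$. Taking $X=H_\mu-\epsilon I\in\Gamma_2$ gives $N:H_\mu\ge\epsilon\operatorname{tr}N\ge n\epsilon c_0$.
\item \emph{Your direct check} also works, with exactly the ingredients you name. Let $D$ be the tangential block of $\mathcal S_4$ (recall $C_0\theta=0$). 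In the polar frame, $N:H_\mu=\mu\bigl[(n-3-\mu)\operatorname{tr}D-6(n-1)\phi_2\bigr]$. The identity $N:\mathcal S_4=0$ together with $|D|^2\ge(\operatorname{tr}D)^2/(n-1)$ gives $\operatorname{tr}D\ge\frac{12(n-1)}{n-2}\phi_2$. Hence $N:H_\mu\ge\frac{12(n-1)}{n-2}\mu(\alpha-\mu)\phi_2>0$, which is where $\mu<\alpha$ and $\phi_2>0$ enter.
\item \emph{Most simply}, you can bypass $N$ by absorbing the error into the interior direction. Once $|\mathcal E_\pm|\le\frac12\eta\operatorname{dist}(H_\mu,\partial\Gamma_2)$, both $\eta H_\mu+\mathcal E_-$ and $\eta H_\mu-\mathcal E_+$ lie in $\Gamma_2^\circ$. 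Then $\mathcal S_4+\eta H_\mu+\mathcal E_-\in\overline{\Gamma_2}+\Gamma_2^\circ\subset\Gamma_2^\circ$. If $\mathcal S_4-\eta H_\mu+\mathcal E_+\in\overline{\Gamma_2}$, then $\mathcal S_4=(\mathcal S_4-\eta H_\mu+\mathcal E_+)+(\eta H_\mu-\mathcal E_+)\in\Gamma_2^\circ$, contradicting $\sigma_2(\mathcal S_4)=0$. This gives both orientations for all $\eta\in(0,D]$ at once, with margins proportional to $\eta$, and makes your two-regime split unnecessary.
\end{itemize}
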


\begin{proof}
With \eqref{hd:eq:K-from-D}, write
\begin{equation}\label{hd:eq:deltaR-D}
  \delta_R:=KR^{2-\mu}
  =D\left(\frac{R}{R_0}\right)^{2-\mu}.
\end{equation}
Since $\mu>2$, one has $0<\delta_R\le D$ and $\delta_R\to0$ as $R\to\infty$.  Since $\mu<3$,
$R^{-1}/\delta_R\le (DR_0)^{-1}$ on the whole exterior region.
Thus the next geometric term is uniformly $o_{R_0\to\infty}(\delta_R)$.
The same conclusion holds for all products involving $W_\pm-1$, the metric
correction, and the quadratic gradient term, since they are
$O_D(R^{-2})$.  Hence
\begin{align}
  R^4\mathcal A_{g_\infty}[W_-]
  &=\mathcal S_4+\delta_R H_\mu+E_-(R,\theta),\label{hd:eq:refined-matrix-expansion}\\
  R^4\mathcal A_{g_\infty}[W_+]
  &=\mathcal S_4-\delta_R H_\mu+E_+(R,\theta),\label{hd:eq:refined-matrix-expansion-plus}
\end{align}
with
\begin{equation}\label{hd:eq:refined-error-relative}
  \sup_{R\ge R_0,\theta}\frac{|E_\pm(R,\theta)|}{\delta_R}
  \longrightarrow0
  \qquad(R_0\to\infty),
\end{equation}
for each fixed $D$.

For the inward direction, $H_\mu\in\Gamma_2^\circ$ and
$N=T_1(\mathcal S_4)\ge c_0I$; therefore, uniformly in $\theta$,
\[
  \sigma_1(\mathcal S_4+\delta H_\mu)>0,
  \qquad
  \sigma_2(\mathcal S_4+\delta H_\mu)
  =\delta\,T_1(\mathcal S_4):H_\mu+\delta^2\sigma_2(H_\mu)>0
\]
for every $\delta>0$.  For $0<\delta\le D$, the strictness is stable under
an $o(\delta)$ perturbation: use the positive linear term near $0$ and
compactness on $[\delta_0,D]$.  Thus $W_-$ is strictly admissible for large
$R_0$.

For the outward direction, no smallness assumption on $\delta$ is needed.  We claim that
\begin{equation}\label{hd:eq:global-outward-cone}
  \mathcal S_4-\delta H_\mu\notin\overline{\Gamma_2}
  \qquad\text{for every }\delta>0.
\end{equation}
Indeed, if \eqref{hd:eq:global-outward-cone} failed for some $\delta>0$, then the convex-cone property and $\delta H_\mu\in\Gamma_2^\circ$ would give
\[
  \mathcal S_4=(\mathcal S_4-\delta H_\mu)+\delta H_\mu\in\Gamma_2^\circ,
\]
contradicting $\mathcal S_4\in\partial\Gamma_2$.  Quantitatively,
\[
  \sigma_2(\mathcal S_4-\delta H_\mu)
  =-\delta\,T_1(\mathcal S_4):H_\mu+O(\delta^2)
  \le-c\delta
\]
for $0<\delta\le\delta_0$, while compactness gives a positive distance from
$\overline{\Gamma_2}$ on $[\delta_0,D]$.  The relative error estimate
\eqref{hd:eq:refined-error-relative} therefore preserves the exterior
orientation uniformly for all $R\ge R_0$ once $R_0$ is large.
\end{proof}

\begin{theorem}\label{hd:thm:first-tangent}
Assume $C_0\not\equiv0$, the subcritical decay \eqref{hd:eq:subcritical}, and
the admissible strict comparison principle.  Then, for every
\[
  0<\eps<\min\{1,\alpha-2\},
\]
one has
\begin{equation}\label{hd:eq:W-first-exp}
  W(R,\theta)=1-R^{-2}\phi_2(\theta)+O(R^{-2-\eps})
\end{equation}
uniformly in $\theta$, where $\phi_2$ is the unique smooth first cell from Proposition~\ref{hd:prop:first-cell}.  Equivalently, with $\rho=R^{-1}$,
\begin{equation}\label{hd:eq:U-first-exp}
  U_{\rm cn}(\rho,\theta)=\rho^{-\alpha}\left(1+\frac{\alpha}{2}\rho^2\phi_2(\theta)+O(\rho^{2+\eps})\right).
\end{equation}
\end{theorem}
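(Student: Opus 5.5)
The plan is a direct comparison argument with the refined barriers $W_\pm$ of \eqref{hd:eq:refined-barriers}, combined with the multiplicative compensation Lemma~\ref{hd:lem:W-multiplicative-compensation}. Given $\eps\in(0,\min\{1,\alpha-2\})$, fix $\mu:=2+\eps$. Then $2<\mu<\min\{3,\alpha\}$, which is exactly the range \eqref{hd:eq:mu-range}. It suffices to prove $W_-\le W\le W_+$ on $\{R\ge R_0\}$ for one large $R_0$ and some $K$. This gives $|W-1+R^{-2}\phi_2|\le KR^{-\mu}=KR^{-2-\eps}$, which is \eqref{hd:eq:W-first-exp}. The formula \eqref{hd:eq:U-first-exp} then follows from $U_{\rm cn}=\rho^{-\alpha}W^{-\alpha/2}$ by Taylor expanding: $W^{-\alpha/2}=1+\frac\alpha2R^{-2}\phi_2+O(R^{-2-\eps})+O(R^{-4})$, and $R^{-4}=O(R^{-2-\eps})$ because $\eps<1<2$.

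The first step is to improve the a priori decay. Proposition~\ref{hd:prop:R-2} upgrades the subcritical bound \eqref{hd:eq:subcritical} to $|W-1|\le C_1R^{-2}$. Hence on every sphere $\{R=R_0\}$ we have $|W-1+R_0^{-2}\phi_2|\le C_2R_0^{-2}$, where $C_2=C_1+\max\phi_2$.

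Next comes the choice of constants. Take $D:=2C_2$ and use Lemma~\ref{hd:lem:refined-smooth-barriers} with this $D$. For every $R_0\ge R_D$, the choice $K=DR_0^{\mu-2}$ makes $W_-$ strictly admissible and $W_+$ strictly exterior on the whole region $\{R\ge R_0\}$. On $\partial B_{R_0}$ one has $KR_0^{-\mu}=DR_0^{-2}=2C_2R_0^{-2}$. This dominates the discrepancy above, so the boundary ordering $W_-\le W\le W_+$ holds there. Enlarging $R_0$ also makes $W_\pm>0$.

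The last step is the comparison out to infinity. Both barriers satisfy $|W_\pm-1|\le CR^{-2}$ and $2>\beta_0$, so Lemma~\ref{hd:lem:W-multiplicative-compensation}(i) applies to $W_-$ and part (ii) applies to $W_+$. Together they give the ordering on all of $\{R\ge R_0\}$. This completes the argument.

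The main obstacle is the uniformity hidden in Lemma~\ref{hd:lem:refined-smooth-barriers}. Because $K$ grows with $R_0$, the barrier perturbation $\delta_R=D(R/R_0)^{2-\mu}$ is not small near the inner sphere. Strictness therefore needs both ingredients of that lemma: the global outward statement \eqref{hd:eq:global-outward-cone}, and the relative error bound \eqref{hd:eq:refined-error-relative}. In particular, the next Kelvin coefficient $R^{-1}C_1$ must be $o(\delta_R)$ uniformly, and this is precisely where $\mu<3$, i.e. $\eps<1$, is used. The condition $\eps<\alpha-2$ only ensures $H_\mu\in\Gamma_2^\circ$ via \eqref{hd:eq:Hmu-cone}. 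I would therefore check that $D$ is fixed before $R_0$ is chosen. That is the case here, since $C_2$ depends only on $W$ through Proposition~\ref{hd:prop:R-2}, so no circularity arises.
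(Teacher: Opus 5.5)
Your proposal is correct and follows the paper's proof essentially step for step. You take $\mu=2+\eps$, use Proposition~\ref{hd:prop:R-2} to order $W_\pm$ on the inner sphere with $D$ fixed before $R_0$ and $K=DR_0^{\mu-2}$, then apply Lemma~\ref{hd:lem:refined-smooth-barriers} for strictness and Lemma~\ref{hd:lem:W-multiplicative-compensation} for the exterior comparison, and finish by Taylor expanding $F=W^{-\alpha/2}$. Your accounting of where $\eps<1$ and $\eps<\alpha-2$ are used matches the paper's.
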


\begin{proof}
Choose $\mu=2+\eps$.  By \eqref{hd:eq:R-2} and the boundedness of $\phi_2$, there is a constant $C_0$ such that
\[
  \left|W(R,\theta)-\bigl(1-R^{-2}\phi_2(\theta)\bigr)\right|
  \le C_0R^{-2}
\]
for all sufficiently large $R$.  Fix once and for all $D>2C_0$.  Lemma \ref{hd:lem:refined-smooth-barriers} supplies $R_D$; choose an inner radius $R_0\ge R_D$ and set $K=D R_0^{\mu-2}$.  Then at $R=R_0$,
\[
  KR_0^{-\mu}=DR_0^{-2},
\]
so
\[
  W_-\le W\le W_+
  \qquad\text{on }\partial B_{R_0}.
\]
The lemma supplies the strict cone orientations on the full exterior region $R\ge R_0$.  Notice that the comparison order here is the $W$-version: since $W=F^{-2/\alpha}$ is order reversing, a strictly admissible $W$-profile is a lower barrier and a strictly exterior $W$-profile is an upper barrier.  Since both refined profiles approach $1$ at rate $O(R^{-2})$ and $2>\beta_0$, Lemma \ref{hd:lem:W-multiplicative-compensation} applies directly to the strict lower and upper barriers and yields
\[
  \left|W-(1-R^{-2}\phi_2)\right|\le KR^{-\mu}.
\]
Here $K$ is fixed after $D$ and $R_0$ have been chosen, so this is the desired $O(R^{-\mu})$ estimate.  Since $F=W^{-\alpha/2}$ and $\mu<3<4$, the quadratic Taylor term in $W-1$ is $O(R^{-4})=o(R^{-\mu})$, which gives \eqref{hd:eq:U-first-exp}.
\end{proof}

\section{Indicial theory and the local high-dimensional parametrix}
\label{sec:high-indicial-parametrix}

The nondegenerate first cell supplies the uniformly positive Newton tensor
needed to linearize.  The next two steps are structural: first determine the
real indicial spectrum, then turn the indicial gap into a comparison theorem
for the nonlinear remainder.

\subsection{The linearized spherical family}

\begin{equation}\label{hd:eq:uniform-N}
  \mathcal S_4:=\mathcal K_2[\phi_2]+C_0,
  \qquad
  N:=T_1(\mathcal S_4)\ge c_0I>0.
\end{equation}
For a real exponent $\mu>0$ define
\begin{equation}\label{hd:eq:indicial}
  \mathcal L_\mu\varphi
  :=T_1(\mathcal S_4):\mathcal K_\mu[\varphi].
\end{equation}
In local spherical coordinates,
\begin{equation}\label{hd:eq:Lmu-local}
  \mathcal L_\mu\varphi
  =-N^{ab}\nabla_{ab}\varphi
  +2(\mu+1)N^{Ra}\nabla_a\varphi
  +c_\mu(\theta)\varphi,
\end{equation}
where
\begin{equation}\label{hd:eq:cmu}
  c_\mu(\theta):=N:H_\mu.
\end{equation}

\begin{lemma}\label{hd:lem:cmu-positive}
If $0<\mu<\alpha$, then for every $\theta\in\mathbb S^{n-1}$,
\[
  c_\mu(\theta)>0.
\]
\end{lemma}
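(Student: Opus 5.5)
The plan is to derive the positivity of $c_\mu(\theta)=N:H_\mu$ from two facts: the Gårding-cone polarization inequality, and the information already obtained about $\mathcal S_4$ and $H_\mu$. Specifically, I will use $\lambda(\mathcal S_4)\in\partial\Gamma_2\subset\overline{\Gamma_2}$ from Proposition~\ref{hd:prop:first-cell}, and $H_\mu\in\Gamma_2^\circ$ for $0<\mu<\alpha$ from \eqref{hd:eq:Hmu-cone}. Fix $\theta$ and write $S=\mathcal S_4(\theta)$. The argument is pointwise and uses no differential structure.

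\emph{Step 1 (nonnegativity).} For any $B$ with $\lambda(B)\in\Gamma_2$ and any $t>0$, expand
\[
 \sigma_2(S+tB)=\sigma_2(S)+t\,T_1(S):B+t^2\sigma_2(B)=t\,T_1(S):B+t^2\sigma_2(B),
\]
using $\sigma_2(S)=0$. I will invoke the superadditivity of $\mathscr F=\sigma_2^{1/2}$ on $\overline{\Gamma_2}$, which was already used in the proof of Proposition~\ref{hd:prop:first-cell}. It gives $S+tB\in\Gamma_2$ and $\sigma_2(S+tB)\ge t^2\sigma_2(B)$. Dividing by $t$ yields $T_1(S):B\ge0$ for every $B$ in the open cone, and hence, by continuity, for every $B$ in $\overline{\Gamma_2}$.

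\emph{Step 2 (strictness by an interior shift).} Since $\Gamma_2$ is open and $\lambda(H_\mu)\in\Gamma_2$, there is $\epsilon>0$ with $\lambda(H_\mu-\epsilon I)\in\Gamma_2$. Applying Step 1 with $B=H_\mu-\epsilon I$ gives
\[
 N:H_\mu\ \ge\ \epsilon\,N:I=\epsilon\,\tr T_1(S)=\epsilon(n-1)\sigma_1(S).
\]
It remains to check $\sigma_1(S)>0$. This was shown at the end of the proof of Proposition~\ref{hd:prop:first-cell}: $\sigma_1(S)=\sigma_2(S)=0$ would force $S=0$, which contradicts $S(e_R,e_R)=-6\phi_2<0$. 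Alternatively, it follows directly from $N\ge c_0I$, since $\tr N\ge nc_0$. Either way $c_\mu(\theta)>0$. Note that $\epsilon$ depends only on $\mu$, because $H_\mu$ has constant eigenvalues, so compactness of the sphere even gives a uniform lower bound $c_\mu\ge\epsilon_\mu c_0 n$.

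The only delicate point is Step 1. There one must make sure the superadditivity of $\sigma_2^{1/2}$ is applied with one argument on the boundary $\partial\Gamma_2$. This is legitimate by approximating $S$ by $S+sH_\mu\in\Gamma_2^\circ$ and letting $s\downarrow0$. Everything else is linear algebra.
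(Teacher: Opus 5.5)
Your proof is correct and is essentially the paper's argument. The paper only notes that $N=T_1(\mathcal S_4)$ is a nonzero supporting normal to $\overline{\Gamma_2}$ at $\mathcal S_4\in\partial\Gamma_2$ while $H_\mu$ lies in the interior. Your Steps~1 and~2 verify these two facts explicitly: the dual-cone inequality $T_1(\mathcal S_4):B\ge0$ via polarization, and strictness via the shift $H_\mu-\epsilon I$ together with $\tr N=(n-1)\sigma_1(\mathcal S_4)>0$.
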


\begin{proof}
By Lemma~\ref{hd:lem:Hmu}, $H_\mu\in\Gamma_2^\circ$, whereas
$N=T_1(\mathcal S_4)$ is the nonzero supporting normal at
$\mathcal S_4\in\partial\Gamma_2$; hence $N:H_\mu>0$.
\end{proof}

\begin{proposition}\label{hd:prop:no-indicial}
Under \eqref{hd:eq:uniform-N},
\[
  \mathcal L_\mu:C^{2,\beta}(\mathbb S^{n-1})\to C^{0,\beta}(\mathbb S^{n-1})
\]
is an isomorphism for every $\beta\in(0,1)$ and every $0<\mu<\alpha$.

\end{proposition}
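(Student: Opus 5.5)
The plan is to treat $\mathcal L_\mu$ as a second-order linear elliptic operator on the compact manifold $\mathbb S^{n-1}$, with smooth coefficients. By \eqref{hd:eq:Lmu-local}, its principal part is $-N^{ab}\nabla_{ab}$. The tangential block of $N=T_1(\mathcal S_4)\ge c_0I$ is itself uniformly positive definite, so the operator is uniformly elliptic. Its coefficients are smooth because $\phi_2$ is smooth by Proposition~\ref{hd:prop:first-cell}. The isomorphism claim then splits into three steps: (a) injectivity via the maximum principle, (b) Fredholm index zero via homotopy, and (c) Schauder estimates for boundedness of the inverse.

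For (a), I take $\varphi\in C^{2,\beta}$ with $\mathcal L_\mu\varphi=0$. At a positive maximum of $\varphi$ we have $\nabla\varphi=0$ and $-N^{ab}\nabla_{ab}\varphi\ge0$, so $\mathcal L_\mu\varphi\ge c_\mu\varphi>0$ there, by Lemma~\ref{hd:lem:cmu-positive} together with $0<\mu<\alpha$. This contradicts $\mathcal L_\mu\varphi=0$. The symmetric argument at a negative minimum gives $\varphi\equiv0$. The same reasoning gives the quantitative bound $\|\varphi\|_{C^0}\le(\min c_\mu)^{-1}\|\mathcal L_\mu\varphi\|_{C^0}$. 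Compactness of the sphere gives $\min_\theta c_\mu>0$.

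For (b), I use the family $\mathcal L^{(t)}:=(1-t)\mathcal L_\mu+t(-\Delta_S+1)$, $t\in[0,1]$. Each member is uniformly elliptic, since a convex combination of positive definite principal symbols is positive definite. Each also has strictly positive zeroth-order coefficient $(1-t)c_\mu+t$. Hence each member is Fredholm $C^{2,\beta}\to C^{0,\beta}$ and, by the argument in (a), injective. The index is constant along the continuous path and equals zero at $t=1$, because $-\Delta_S+1$ is invertible. So $\mathcal L_\mu$ has index zero, and injectivity gives surjectivity. Boundedness of the inverse then follows from the open mapping theorem, or directly from the Schauder estimate $\|\varphi\|_{C^{2,\beta}}\le C(\|\mathcal L_\mu\varphi\|_{C^{0,\beta}}+\|\varphi\|_{C^0})$ combined with the $C^0$ bound from (a).

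The main point to check is that the degenerate-looking structure does not spoil ellipticity. The mixed term $2(\mu+1)N^{Ra}\nabla_a\varphi$ is only first order, so it does no harm. What remains is to confirm that the angular block $N^{ab}$ is positive definite. This holds because it is a principal submatrix of $N\ge c_0I$. With that in place the rest is standard linear theory, and the argument holds for every $\mu\in(0,\alpha)$ since the only $\mu$-dependence in the key sign is through Lemma~\ref{hd:lem:cmu-positive}.
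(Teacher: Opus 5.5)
Your proof is correct and follows the paper's argument. Injectivity comes from the maximum principle using $c_\mu=N:H_\mu>0$ (Lemma~\ref{hd:lem:cmu-positive}), and surjectivity comes from uniform ellipticity of the tangential block $N^{ab}$ together with an index-zero continuity argument, which you make explicit through the homotopy to $-\Delta_S+1$ (the same device the paper uses for the first-cell linearization); your Schauder/open-mapping remark on the bounded inverse simply spells out what the paper leaves implicit.
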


\begin{proof}
The maximum principle applied to \eqref{hd:eq:Lmu-local}, using $c_\mu>0$,
gives trivial kernel.  Uniform ellipticity and the standard Fredholm
continuity argument on the compact sphere give surjectivity.
\end{proof}

\subsection{Homogeneous divergence structure and adjoint reflection}

Define the degree $-2$ Euclidean profile
\begin{equation}\label{hd:eq:u0-leading}
  u_0(x):=-|x|^{-2}\phi_2\!\left(\frac{x}{|x|}\right)
\end{equation}
and the homogeneous Weyl tensor field
\begin{equation}\label{hd:eq:Chat-leading}
  \widehat C_{ij}(x)
  :=|x|^{-4}({C}_0)_{ij}(x/|x|)
  =\varsigma\frac23\,{W}_{ikjl}(p)\frac{x_kx_l}{|x|^6}.
\end{equation}
Then
\begin{equation}\label{hd:eq:S-infty}
  S_\infty:=D^2u_0+\widehat C
  =|x|^{-4}\mathcal S_4(x/|x|).
\end{equation}
Set
\begin{equation}\label{hd:eq:N-infty}
  N_\infty:=T_1(S_\infty)
  =|x|^{-4}N(x/|x|).
\end{equation}

\begin{lemma}\label{hd:lem:divfree-N-infty}
On $\mathbb R^n\setminus\{0\}$,
\begin{equation}\label{hd:eq:divfree-N-infty}
  \operatorname{div}N_\infty=0.
\end{equation}
Consequently, for every smooth $q$,
\begin{equation}\label{hd:eq:L-infty-divform}
  N_\infty:D^2q=\operatorname{div}(N_\infty\nabla q).
\end{equation}
\end{lemma}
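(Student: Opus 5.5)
The plan is to use the classical identity $\operatorname{div}T_1(D^2u)=0$ for Euclidean Hessians, together with the divergence-free and trace-free structure of $\widehat C$ from Lemma~\ref{lem:inverted-background}.

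Since $N_\infty=T_1(S_\infty)=T_1(D^2u_0)+T_1(\widehat C)$ and $T_1$ is linear, I treat the two pieces separately.

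\emph{The Hessian piece.} For any $u\in C^3$,
\[
\partial_i\bigl(T_1(D^2u)\bigr)_{ij}=\partial_j\Delta u-\partial_i u_{ij}=0 .
\]
The function $u_0$ is smooth on $\mathbb R^n\setminus\{0\}$, because $\phi_2$ is smooth by Proposition~\ref{hd:prop:first-cell}. Hence this piece is divergence free there.

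\emph{The curvature piece.} We have $T_1(\widehat C)=(\tr\widehat C)I-\widehat C=-\widehat C$, since $\tr\widehat C=|x|^{-4}\tr C_0=0$. By construction $\widehat C=C_\infty$ from \eqref{eq:Cinfty-definition}, and \eqref{eq:Cinfty-structure} gives $\operatorname{div}C_\infty=0$ on $\mathbb R^n\setminus\{0\}$. If one wants this checked directly, differentiate $W_{ikjl}x_kx_l|x|^{-6}$ in $x_i$:
\begin{itemize}
\item the term $W_{ikjl}\delta_{ik}x_l$ vanishes because $W$ is trace free;
\item the term $W_{ikjl}x_k\delta_{il}$ vanishes for the same reason;
\item the term from differentiating $|x|^{-6}$ is proportional to $x_iW_{ikjl}x_kx_l$, which vanishes by antisymmetry in $(i,k)$.
\end{itemize}
This gives \eqref{hd:eq:divfree-N-infty}.

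\emph{The divergence form.} By the product rule,
\[
\operatorname{div}(N_\infty\nabla q)=\partial_i\bigl(N_\infty^{ij}q_j\bigr)=(\partial_iN_\infty^{ij})q_j+N_\infty^{ij}q_{ij}.
\]
Here $N_\infty$ is symmetric, so row and column divergences agree. The first term vanishes by \eqref{hd:eq:divfree-N-infty}, which leaves $N_\infty:D^2q$ and proves \eqref{hd:eq:L-infty-divform}.

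The only step needing care is the curvature piece. One must check that the tensor entering $S_\infty$ really is exactly the homogeneous extension $C_\infty$, with no extra non-divergence-free part. That is settled by \eqref{hd:eq:Chat-leading} coinciding with \eqref{eq:Cinfty-definition}. No other obstacle is expected.
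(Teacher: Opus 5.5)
Your proof is correct and follows the paper's argument. You split $N_\infty=T_1(D^2u_0)+T_1(\widehat C)$ by linearity, remove the Hessian piece by commuting third derivatives, remove the curvature piece using $\tr\widehat C=0$ and $\operatorname{div}C_\infty=0$ (both from the Weyl symmetries), and finish with the product rule. One small correction to your direct check: the term $W_{ikjl}\delta_{ik}x_l=W_{iijl}x_l$ vanishes by antisymmetry of $W$ in its first index pair, not by trace-freeness; it is the $\delta_{il}$ term $W_{ikji}x_k$ that uses trace-freeness, and the conclusion is unaffected.
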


\begin{proof}
Both $\operatorname{div}T_1(D^2u_0)=0$ and
$\operatorname{div}\widehat C=0$ follow from commuting derivatives and the
Weyl symmetries; linearity of $T_1$ gives \eqref{hd:eq:divfree-N-infty}, and
the second identity is the product rule.
\end{proof}

Define the homogeneous Euclidean leading linearization
\begin{equation}\label{hd:eq:Linfty-def}
  \mathscr L_\infty q
  :=N_\infty:D^2q
  =\operatorname{div}(N_\infty\nabla q).
\end{equation}
Although $N_\infty$ has degree $-4$, its angular part is uniformly positive definite because $N\ge c_0I$.

Extend the definitions of $\mathcal K_\mu$ and $\mathcal L_\mu$ algebraically to every real $\mu$ by the same formulas as in \eqref{hd:eq:Kmu} and \eqref{hd:eq:indicial}.  For
\[
  q_\mu(x):=-|x|^{-\mu}\psi(x/|x|)
\]
one has the exact homogeneity relation
\begin{equation}\label{hd:eq:Linfty-indicial}
  \mathscr L_\infty q_\mu
  =|x|^{-\mu-6}\mathcal L_\mu\psi.
\end{equation}

\begin{proposition}\label{hd:prop:indicial-adjoint}
With respect to the standard $L^2(\mathbb S^{n-1},d\theta)$ pairing, for every real $\mu$,
\begin{equation}\label{hd:eq:indicial-adjoint}
  \mathcal L_\mu^*=\mathcal L_{\,n-6-\mu}.
\end{equation}
\end{proposition}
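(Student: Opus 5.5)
We derive the adjoint identity from the divergence structure of Lemma~\ref{hd:lem:divfree-N-infty}, not by manipulating \eqref{hd:eq:Lmu-local} coefficient by coefficient. The homogeneity relation \eqref{hd:eq:Linfty-indicial} says that $\mathscr L_\infty$ acts on $q_\mu=-|x|^{-\mu}\psi(\theta)$ by $|x|^{-\mu-6}\mathcal L_\mu\psi$. It holds for every real $\mu$, since both sides are given by the same algebraic formula in $\mu$ and \eqref{hd:eq:Kmu} is polynomial in $\mu$. By \eqref{hd:eq:L-infty-divform}, $\mathscr L_\infty$ is formally self-adjoint on $\mathbb R^n\setminus\{0\}$ with respect to Lebesgue measure. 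So we pair two homogeneous functions whose degrees are complementary for this measure and read off the spherical adjoint.

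Fix $\psi,\chi\in C^\infty(\mathbb S^{n-1})$, set $\nu:=n-6-\mu$, and put $q_\mu=-|x|^{-\mu}\psi$, $p_\nu=-|x|^{-\nu}\chi$. Let $\eta\in C_c^\infty((0,\infty))$ be a radial cutoff with $\int_0^\infty\eta(r)\,dr/r=1$. Green's identity for the symmetric divergence-form operator gives
\[
\int \eta(|x|)\bigl(p_\nu\,\mathscr L_\infty q_\mu-q_\mu\,\mathscr L_\infty p_\nu\bigr)\,dx
=\int \bigl(q_\mu N_\infty\nabla p_\nu-p_\nu N_\infty\nabla q_\mu\bigr)\cdot\nabla\eta(|x|)\,dx .
\]
Integrating by parts introduces no extra terms beyond these, because $N_\infty$ is symmetric.

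The key steps are as follows.
\begin{enumerate}
\item By \eqref{hd:eq:Linfty-indicial}, the left-hand integrand equals $|x|^{-\mu-\nu-6}\bigl(\chi\,\mathcal L_\mu\psi-\psi\,\mathcal L_\nu\chi\bigr)$. Since $\mu+\nu+6=n$, this is $|x|^{-n}$ times a spherical function. In polar coordinates the left side is therefore exactly
\[
\int_{\mathbb S^{n-1}}\bigl(\chi\mathcal L_\mu\psi-\psi\mathcal L_\nu\chi\bigr)\,d\theta,
\]
by the normalization of $\eta$.
\item On the right side, $\nabla\eta(|x|)=\eta'(r)\theta$, so only the radial flux $\theta\cdot N_\infty(\cdots)$ enters. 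Each vector $q_\mu N_\infty\nabla p_\nu$ and $p_\nu N_\infty\nabla q_\mu$ is homogeneous of degree $-\mu-\nu-1-4=1-n$. The radial integral is then $\int\eta'(r)\,dr=0$ times a spherical integral, so the right side vanishes.
\item This gives $\langle\chi,\mathcal L_\mu\psi\rangle=\langle\mathcal L_{n-6-\mu}\chi,\psi\rangle$ for all smooth $\psi,\chi$. That is \eqref{hd:eq:indicial-adjoint} on smooth functions, and it extends by density to the operator-theoretic adjoint on the H\"older or Sobolev scale.
\end{enumerate}

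The main obstacle is the bookkeeping in step 1. It requires verifying that \eqref{hd:eq:Linfty-indicial} holds for all real $\mu$, including $\mu\le0$ or $\mu>\alpha$ where $\nu$ may fall, and that the factor $|x|^{-\mu-6}$ is exact. This uses only that $N_\infty$ is homogeneous of degree $-4$ with angular profile $N$ as in \eqref{hd:eq:N-infty}, together with the definition \eqref{hd:eq:Kmu-def}. The sign conventions in $q_\mu=-|x|^{-\mu}\psi$ must be applied identically to both $q_\mu$ and $p_\nu$, so the two minus signs cancel in the products. A secondary point is that $\operatorname{div}N_\infty=0$ holds only away from the origin, which is why we use a cutoff supported in an annulus rather than integrating over balls. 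With that cutoff no boundary or origin contributions arise.
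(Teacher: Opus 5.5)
Your proof is correct and is essentially the paper's argument. The paper applies Green's identity for $\operatorname{div}(N_\infty\nabla\cdot)$ on a sharp annulus $A_{a,b}$: the two boundary fluxes cancel by degree-zero homogeneity, and the bulk term gives $\log(b/a)$ times the spherical pairing. Your normalized radial cutoff $\eta$ is a smoothed version of the same computation, with $\int\eta'\,dr=0$ replacing the flux cancellation and $\int\eta\,dr/r=1$ replacing the factor $\log(b/a)$.

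One harmless slip: the minus sign in $q_\mu$ is already absorbed into $\mathcal L_\mu$ through $\mathcal K_\mu$, so the minus signs do not cancel. The bulk integrand is $-|x|^{-n}(\chi\mathcal L_\mu\psi-\psi\mathcal L_\nu\chi)$, which does not matter because you show this quantity integrates to zero.
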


\begin{proof}
Fix real $\mu,\nu$ satisfying
\begin{equation}\label{hd:eq:mu-nu-dual}
  \mu+\nu=n-6,
\end{equation}
and set
\[
  q_\mu=-r^{-\mu}\psi(\theta),
  \qquad
  q_\nu=-r^{-\nu}\chi(\theta).
\]
Because of Lemma \ref{hd:lem:divfree-N-infty}, Green's identity on the Euclidean annulus $A_{a,b}=\{a<r<b\}$ gives
\begin{equation}\label{hd:eq:green-indicial}
\begin{aligned}
  &\int_{A_{a,b}}
  \bigl(q_\nu\mathscr L_\infty q_\mu
        -q_\mu\mathscr L_\infty q_\nu\bigr)\,dx\\
  &\qquad=
  \int_{\partial A_{a,b}}
  \bigl(q_\nu N_\infty\nabla q_\mu
        -q_\mu N_\infty\nabla q_\nu\bigr)\cdot\nu_{\partial A}\,dS.
\end{aligned}
\end{equation}
The boundary integrand, including $dS$, is homogeneous of degree
\[
  n-1-\nu-4-(\mu+1)
  =n-6-\mu-\nu=0.
\]
Thus its angular value is the same on $r=a$ and $r=b$; the opposite outward normal on the inner boundary makes the two fluxes cancel.  The right-hand side of \eqref{hd:eq:green-indicial} is therefore zero.

Using \eqref{hd:eq:Linfty-indicial} and \eqref{hd:eq:mu-nu-dual}, the radial factor on the left is
\[
  r^{n-1-\mu-\nu-6}\,dr=r^{-1}\,dr.
\]
Hence
\[
  \log\frac ba
  \left(
    -\int_{\mathbb S^{n-1}}\chi\,\mathcal L_\mu\psi\,d\theta
    +\int_{\mathbb S^{n-1}}\psi\,\mathcal L_\nu\chi\,d\theta
  \right)=0.
\]
Since $a<b$ are arbitrary,
\[
  \int_{\mathbb S^{n-1}}\chi\,\mathcal L_\mu\psi\,d\theta
  =\int_{\mathbb S^{n-1}}\psi\,\mathcal L_{n-6-\mu}\chi\,d\theta,
\]
which is exactly \eqref{hd:eq:indicial-adjoint}.
\end{proof}

Each $\mathcal L_\mu$ is scalar uniformly elliptic on the compact sphere,
with principal symbol independent of $\mu$; it is therefore Fredholm of
index zero.

\subsection{Absence of real roots below the first resonance}

\begin{theorem}\label{hd:thm:extended-indicial-gap}
In the nonflat first-cell branch, for every $0<\mu<n-6$,
\begin{equation}\label{hd:eq:extended-indicial-gap}
  \mathcal L_\mu:C^{2,\beta}(\mathbb S^{n-1})\longrightarrow C^{0,\beta}(\mathbb S^{n-1})\quad\text{is an isomorphism}.
\end{equation}
In particular,
\begin{equation}\label{hd:eq:Lalpha-invertible}
 \mathcal L_\alpha\text{ is invertible}.
\end{equation}
\end{theorem}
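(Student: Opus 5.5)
The plan is to combine Proposition~\ref{hd:prop:no-indicial}, which already covers $0<\mu<\alpha$, with the adjoint reflection of Proposition~\ref{hd:prop:indicial-adjoint}, and then use index zero. The reflection $\mu\mapsto n-6-\mu$ carries the remaining range $\alpha\le\mu<n-6$ into the range already controlled. Indeed, if $\alpha\le\mu<n-6$, then
\[
 0<n-6-\mu\le n-6-\alpha=\frac{n-8}{2}<\frac{n-4}{2}=\alpha,
\]
using $n>8$ for the strict positivity at $\mu=\alpha$. Hence $\nu:=n-6-\mu\in(0,\alpha)$, and $\mathcal L_\nu$ is an isomorphism $C^{2,\beta}\to C^{0,\beta}$ by Proposition~\ref{hd:prop:no-indicial}.

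Fix such $\mu$ and let $\psi\in C^{2,\beta}(\mathbb S^{n-1})$ satisfy $\mathcal L_\mu\psi=0$. For an arbitrary smooth $f$, the previous step gives a $\chi\in C^{2,\beta}$ with $\mathcal L_\nu\chi=f$. Because $N$ is smooth and the Green identity behind Proposition~\ref{hd:prop:indicial-adjoint} only needs $C^2$ functions, we may apply \eqref{hd:eq:indicial-adjoint} to get
\[
 \int_{\mathbb S^{n-1}}\psi f\,d\theta=\int_{\mathbb S^{n-1}}\psi\,\mathcal L_\nu\chi\,d\theta
 =\int_{\mathbb S^{n-1}}\chi\,\mathcal L_\mu\psi\,d\theta=0 .
\]
Since $f$ is arbitrary and smooth functions are dense, $\psi=0$, so $\ker\mathcal L_\mu=\{0\}$.

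Surjectivity then follows from index zero. As noted just before the theorem, $\mathcal L_\mu$ is scalar, uniformly elliptic (because $N\ge c_0I$), and has principal symbol independent of $\mu$. The homotopy $t\mapsto\mathcal L_{t\mu}$ with $t\in[0,1]$, or a homotopy of the lower-order terms to zero, is therefore a continuous path of Fredholm maps $C^{2,\beta}\to C^{0,\beta}$ ending at an operator of index zero, so each $\mathcal L_\mu$ has index zero. Trivial kernel then gives surjectivity, and Schauder estimates give a bounded inverse. Taking $\mu=\alpha$, which lies in $(0,n-6)$ because $n>8$, yields \eqref{hd:eq:Lalpha-invertible}.

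The only delicate point is the justification of the adjoint identity in the regularity class used. The Green-identity proof of Proposition~\ref{hd:prop:indicial-adjoint} uses $C^2$ profiles and the smooth, divergence-free $N_\infty$ from Lemma~\ref{hd:lem:divfree-N-infty}. I would confirm that the boundary fluxes cancel for $C^{2,\beta}$ angular data exactly as in the smooth case; if needed, one can instead mollify $\psi$ and pass to the limit.
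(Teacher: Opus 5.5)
Your proof is correct and follows the paper's route: you reflect $\mu\in[\alpha,n-6)$ to $\nu=n-6-\mu\in(0,\alpha)$, invoke Proposition~\ref{hd:prop:no-indicial}, and then combine the adjoint identity $\mathcal L_\mu^*=\mathcal L_\nu$ with index zero. The only difference is the direction of the duality: the paper uses injectivity of $\mathcal L_\nu$ to kill the cokernel of $\mathcal L_\mu$ and index zero for the kernel, whereas you use surjectivity of $\mathcal L_\nu$ to kill the kernel directly and index zero for surjectivity.
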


\begin{proof}
For $0<\mu<\alpha$ use Proposition~\ref{hd:prop:no-indicial}.  If
$\alpha\le\mu<n-6$, then $\nu=n-6-\mu\in(0,\alpha)$, so $\mathcal L_\nu$
is invertible; adjoint symmetry gives $\operatorname{coker}\mathcal L_\mu=0$
and index zero gives $\ker\mathcal L_\mu=0$.
\end{proof}

\begin{proposition}\label{hd:prop:first-positive-indicial-root}
In the nonflat first-cell branch,
\[
  \ker\mathcal L_0=\operatorname{span}\{1\},
  \qquad
  \dim\ker\mathcal L_{n-6}=1.
\]
Hence $\mu=n-6$ is the first strictly positive real indicial root.
\end{proposition}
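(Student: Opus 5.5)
The plan is to compute $\ker\mathcal L_0$ directly by the maximum principle, and then obtain $\ker\mathcal L_{n-6}$ from it by duality, using the adjoint reflection of Proposition~\ref{hd:prop:indicial-adjoint} and the index-zero property.

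\emph{Step 1: the kernel of $\mathcal L_0$.} At $\mu=0$, formula \eqref{hd:eq:Kmu} gives
\[
\mathcal K_0[\varphi]=\begin{pmatrix}0&\nabla_S\varphi\\ \nabla_S\varphi&-\nabla_S^2\varphi\end{pmatrix}.
\]
In particular $H_0=\mathcal K_0[1]=0$. So the zeroth-order coefficient $c_0(\theta)=N:H_0$ in \eqref{hd:eq:Lmu-local} vanishes identically, and
\[
\mathcal L_0\varphi=-N^{ab}\nabla_{ab}\varphi+2N^{Ra}\nabla_a\varphi .
\]
Hence $1\in\ker\mathcal L_0$. Conversely, let $\varphi\in C^{2,\beta}$ satisfy $\mathcal L_0\varphi=0$. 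The operator is uniformly elliptic because the angular block of $N$ is at least $c_0I$ by \eqref{hd:eq:uniform-N}, and it has no zeroth-order term. Since $\mathbb S^{n-1}$ is compact, $\varphi$ attains an interior maximum, and the Hopf strong maximum principle forces $\varphi$ to be constant. Therefore $\ker\mathcal L_0=\operatorname{span}\{1\}$.

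\emph{Step 2: the kernel of $\mathcal L_{n-6}$ by duality.} Each $\mathcal L_\mu$ is Fredholm of index zero, as noted after Proposition~\ref{hd:prop:indicial-adjoint}: it is scalar elliptic with the same principal symbol as $\mathcal L_0$. By Proposition~\ref{hd:prop:indicial-adjoint} with $\mu=n-6$, we have $\mathcal L_{n-6}^*=\mathcal L_0$ with respect to the $L^2(\mathbb S^{n-1})$ pairing.

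The cokernel of $\mathcal L_{n-6}:C^{2,\beta}\to C^{0,\beta}$ is then identified with $\ker\mathcal L_0$. To justify this, I would use that the range of $\mathcal L_{n-6}$ is closed and equals the $L^2$-annihilator of the kernel of the formal adjoint. Any $L^2$ (or distributional) solution of $\mathcal L_0 w=0$ is smooth by elliptic regularity, so it is a genuine element of $\ker\mathcal L_0$. Hence $\dim\operatorname{coker}\mathcal L_{n-6}=1$, and index zero gives $\dim\ker\mathcal L_{n-6}=1$.

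\emph{Step 3: $n-6$ is the first positive root.} Since $n>8$, we have $n-6>0$. Theorem~\ref{hd:thm:extended-indicial-gap} shows that $\mathcal L_\mu$ is invertible for every $0<\mu<n-6$. Combined with Step 2, this shows that $\mu=n-6$ is the first strictly positive real indicial root.

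\emph{Main obstacle.} The main point to handle with care is the functional-analytic identification in Step 2: passing from Hölder-space Fredholm theory to the $L^2$ adjoint, and checking that the cokernel is exactly spanned by the smooth kernel of $\mathcal L_0$. This is standard elliptic theory on a compact manifold, and I would cite it rather than reprove it. If a positive generator $\psi_*$ is also wanted, I would obtain its sign separately, for example by a Krein--Rutman argument or by continuity in $\mu$. That is not needed for the dimension count stated here.
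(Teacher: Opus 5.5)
Your proposal is correct and follows the paper's proof essentially verbatim. The strong maximum principle for $\mathcal L_0$, whose zeroth-order coefficient vanishes because $H_0=\mathcal K_0[1]=0$, gives $\ker\mathcal L_0=\operatorname{span}\{1\}$; the reflection $\mathcal L_{n-6}^*=\mathcal L_0$ together with index zero gives $\dim\ker\mathcal L_{n-6}=1$; and Theorem~\ref{hd:thm:extended-indicial-gap} excludes roots in $(0,n-6)$, with your identification of the H\"older-space cokernel with the smooth kernel of the formal $L^2$ adjoint being the standard elliptic point the paper leaves implicit.
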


\begin{proof}
At $\mu=0$ the strong maximum principle gives
$\ker\mathcal L_0=\operatorname{span}\{1\}$.  Adjoint symmetry and index
zero imply $\dim\ker\mathcal L_{n-6}=1$; the preceding indicial gap excludes
all positive roots below $n-6$.
\end{proof}

\subsection{Positive anisotropic weights}

\begin{lemma}\label{hd:lem:positive-anisotropic-weight}
For every real exponent
\[
  0<\mu<n-6,
\]
there is a unique smooth function $\zeta_\mu$ such that
\begin{equation}\label{hd:eq:zeta-mu}
  \mathcal L_\mu\zeta_\mu=1,\qquad \zeta_\mu>0\quad\text{on }\mathbb S^{n-1}.
\end{equation}
In particular, $m_\mu:=\min_{\mathbb S^{n-1}}\zeta_\mu>0$.
\end{lemma}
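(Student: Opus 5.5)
Existence and uniqueness of $\zeta_\mu$ are immediate from Theorem~\ref{hd:thm:extended-indicial-gap}: for $0<\mu<n-6$ the operator $\mathcal L_\mu\colon C^{2,\beta}\to C^{0,\beta}$ is an isomorphism, so $\zeta_\mu:=\mathcal L_\mu^{-1}1$ exists and is unique. Smoothness follows from elliptic regularity, since $\mathcal L_\mu$ has smooth coefficients ($N$ is smooth because $\phi_2$ and $C_0$ are). The whole content is therefore the strict positivity $\zeta_\mu>0$, and I would obtain it by continuity in $\mu$.

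\emph{Small $\mu$.} For $0<\mu<\alpha$, Lemma~\ref{hd:lem:cmu-positive} gives $c_\mu>0$ in \eqref{hd:eq:Lmu-local}. At a minimum point $\theta_0$ of $\zeta_\mu$ we have $\nabla_S\zeta_\mu=0$ and $-N^{ab}\nabla_{ab}\zeta_\mu\le0$. Hence $c_\mu\zeta_\mu(\theta_0)\ge1$, so $\zeta_\mu\ge 1/\max c_\mu>0$. This settles $(0,\alpha)$ directly.

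\emph{Continuation to $[\alpha,n-6)$.} Let $I\subset(0,n-6)$ be the set of $\mu$ with $\zeta_\mu>0$. The coefficients of $\mathcal L_\mu$ depend polynomially on $\mu$, and the inverse is uniformly bounded on compact subintervals, so $\mu\mapsto\zeta_\mu$ is continuous into $C^{2,\beta}$. Consequently $I$ is open, and it contains $(0,\alpha)$.

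For closedness, suppose $\mu_k\to\mu_*\in(0,n-6)$ with $\zeta_{\mu_k}>0$. Then $\zeta_{\mu_*}\ge0$, and the equation $\mathcal L_{\mu_*}\zeta_{\mu_*}=1$ rules out $\zeta_{\mu_*}\equiv0$. If $\zeta_{\mu_*}$ vanished at some point, that point would be a minimum. There $\nabla\zeta=0$ and $\zeta=0$, so the equation reads $-N^{ab}\nabla_{ab}\zeta_{\mu_*}=1$. But at a minimum the left side is $\le0$, a contradiction. This argument needs no sign on $c_\mu$. Hence $\zeta_{\mu_*}>0$, $I$ is closed, and by connectedness $I=(0,n-6)$.

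\emph{Main obstacle.} The key step is closedness on $[\alpha,n-6)$, where $c_\mu$ may change sign and the maximum principle alone does not give positivity. The point to verify is that the minimum-point argument uses only $\zeta=0$ there, not the sign of $c_\mu$. The other ingredient is that continuity of $\mu\mapsto\mathcal L_\mu^{-1}$ on $(0,n-6)$ follows from invertibility throughout that interval, which Theorem~\ref{hd:thm:extended-indicial-gap} provides. Finally, $m_\mu>0$ follows from positivity and compactness of $\mathbb S^{n-1}$.
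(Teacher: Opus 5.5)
Your proof is correct, but on the range $\alpha\le\mu<n-6$ it takes a different route from the paper. The paper works pointwise in $\mu$ by duality. With $\nu=n-6-\mu\in(0,\alpha)$, the inverse $\mathcal L_\nu^{-1}$ preserves nonnegativity because $c_\nu>0$. The reflection $\mathcal L_\mu^*=\mathcal L_\nu$ then gives $\int\zeta_\mu h\,d\theta=\int\mathcal L_\nu^{-1}h\,d\theta\ge0$ for every $h\ge0$, so $\zeta_\mu\ge0$. Strict positivity follows from the same zero-minimum argument you use for closedness.

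You instead get positivity by continuation from $(0,\alpha)$. The function $\mu\mapsto\min\zeta_\mu$ is continuous, since $\mathcal L_\mu$ depends polynomially on $\mu$ and is invertible on all of $(0,n-6)$. It can never vanish, because a nonnegative $\zeta_\mu$ touching zero violates $\mathcal L_\mu\zeta_\mu=1$ at the touching point whatever the sign of $c_\mu$.

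Your argument uses the adjoint reflection only through Theorem~\ref{hd:thm:extended-indicial-gap}. It is the general fact that the sign of $\mathcal L_\mu^{-1}1$ (equivalently, of the principal eigenvalue) cannot change along a connected family of invertible operators. So it applies whenever invertibility is known on a connected parameter set that contains a positive starting point. The paper's argument needs no continuity in $\mu$, and it shows the structural reason for positivity: $\zeta_\mu$ is dual to a positivity-preserving inverse.
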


\begin{proof}
Existence and uniqueness follow from Theorem \ref{hd:thm:extended-indicial-gap}.  We prove positivity.

First suppose $0<\mu<\alpha$.  Then the zeroth-order coefficient of $\mathcal L_\mu$ is
\[
  c_\mu=N:H_\mu>0
\]
by Lemma \ref{hd:lem:cmu-positive}.  If $\zeta_\mu$ had a nonpositive minimum, then at a minimum point the first-order term would vanish and the second-order term would be nonpositive in the convention of \eqref{hd:eq:Lmu-local}; hence $\mathcal L_\mu\zeta_\mu\le0$, contradicting \eqref{hd:eq:zeta-mu}.  Thus $\zeta_\mu>0$.

Now assume $\alpha\le\mu<n-6$ and set
\[
  \nu:=n-6-\mu.
\]
Then $0<\nu\le\alpha-2<\alpha$.  The inverse $\mathcal L_\nu^{-1}$ preserves nonnegativity: if $h\ge0$ and $u=\mathcal L_\nu^{-1}h$ had a negative minimum, the positive zeroth-order coefficient of $\mathcal L_\nu$ would give $\mathcal L_\nu u<0$ there, a contradiction.  To avoid any ambiguity between the Banach adjoint on H\"older spaces and the formal $L^2$ adjoint, we use the Green pairing directly.  For arbitrary smooth $h\ge0$, let $u:=\mathcal L_\nu^{-1}h\ge0$.  Proposition \ref{hd:prop:indicial-adjoint} gives $\mathcal L_\mu^*=\mathcal L_\nu$, and therefore
\[
  \int_{\mathbb S^{n-1}}\zeta_\mu h\,d\theta
  =\int_{\mathbb S^{n-1}}\zeta_\mu\,\mathcal L_\nu u\,d\theta
  =\int_{\mathbb S^{n-1}}(\mathcal L_\mu\zeta_\mu)u\,d\theta
  =\int_{\mathbb S^{n-1}}u\,d\theta\ge0.
\]
Hence $\zeta_\mu\ge0$.  If $\zeta_\mu(\theta_0)=0$ somewhere, then $\theta_0$ is a minimum, so $\nabla\zeta_\mu(\theta_0)=0$ and $\nabla^2\zeta_\mu(\theta_0)\ge0$.  Since the zeroth-order term vanishes at that point,
\[
  \mathcal L_\mu\zeta_\mu(\theta_0)
  =-N^{ab}\nabla_{ab}\zeta_\mu(\theta_0)\le0,
\]
again contradicting \eqref{hd:eq:zeta-mu}.  Thus $\zeta_\mu>0$ also in this range.  Compactness of the sphere gives $m_\mu>0$.
\end{proof}

\subsection{Comparison-based remainder improvement}

The positivity of $\zeta_\mu$ converts the extended indicial invertibility into strict nonlinear barriers.  The important new feature is that, once one already has any decay exponent strictly larger than $2$, the normalized matrix amplitude required to dominate the inner boundary tends to zero.  Thus only the linearized transverse sign is needed; no global cone property of $\mathcal K_\mu[\zeta_\mu]$ is required.

\begin{lemma}\label{hd:lem:anisotropic-improvement}
Assume the nonflat first-cell branch.  Let $P_N$ be the formal parametrix of order $N$, where
\[
  2\le N<n-6,
\]

\begin{equation}\label{hd:eq:anisotropic-residual}
  M_N:=\mathfrak M[P_N]=\mathcal S_4+O(R^{-1}),
  \qquad
  \sigma_2(M_N)=O(R^{-(N-1)}).
\end{equation}
Suppose that, for some exponent
\begin{equation}\label{hd:eq:gamma-assumption}
  2<\gamma<N+1
\end{equation}
one already has
\begin{equation}\label{hd:eq:gamma-assumption-estimate}
  |W-P_N|\le C_\gamma R^{-\gamma}
\end{equation}
on the far end.  Then, for every
\begin{equation}\label{hd:eq:anisotropic-mu-range}
  \gamma<\mu<\min\{N+1,n-6\}
\end{equation}
the remainder improves to
\begin{equation}\label{hd:eq:anisotropic-improved-estimate}
  |W-P_N|\le C_\mu R^{-\mu}.
\end{equation}
\end{lemma}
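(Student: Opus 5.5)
The idea is to mimic the proof of Theorem~\ref{hd:thm:first-tangent}, replacing the isotropic perturbation $KR^{-\mu}$ by the anisotropic weight from Lemma~\ref{hd:lem:positive-anisotropic-weight}. For $\mu$ in \eqref{hd:eq:anisotropic-mu-range} let $\zeta_\mu>0$ solve $\mathcal L_\mu\zeta_\mu=1$. Define
\[
 W_\mp:=P_N\mp KR^{-\mu}\zeta_\mu(\theta),\qquad K:=DR_0^{\mu-\gamma},
\]
with $D$ fixed larger than $C_\gamma/m_\mu$. Then on $\partial B_{R_0}$ we have $KR_0^{-\mu}\zeta_\mu\ge Dm_\mu R_0^{-\gamma}\ge C_\gamma R_0^{-\gamma}$. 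By \eqref{hd:eq:gamma-assumption-estimate} this gives $W_-\le W\le W_+$ on the inner sphere. Both profiles tend to $1$ at rate $R^{-2}$, so Lemma~\ref{hd:lem:W-multiplicative-compensation} applies once strict cone orientations are known on all of $\{R\ge R_0\}$. This yields \eqref{hd:eq:anisotropic-improved-estimate} with $C_\mu=K\max\zeta_\mu$.

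The core step is the cone sign. Write $\mathfrak M[W]=R^4\mathcal A_{g_\infty}[W]$. Since $D^2(-R^{-\mu}\zeta_\mu)=R^{-\mu-2}\mathcal K_\mu[\zeta_\mu]$, I expect
\[
 \mathfrak M[W_\mp]=M_N\pm\delta_R\,\mathcal K_\mu[\zeta_\mu]+E_\mp,\qquad \delta_R:=KR^{2-\mu}=DR_0^{\mu-\gamma}R^{2-\mu}.
\]
Here $E_\mp$ collects the nonlinear and background interactions. These are the products of $KR^{-\mu}\zeta_\mu$ with $P_N-1=O(R^{-2})$ inside $W\nabla^2W$, the gradient-square term, and the metric corrections. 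They are of relative size $O(\delta_R R^{-1})+O(\delta_R^2)$. Because $\mu>\gamma>2$, we get $\delta_R\le DR_0^{2-\gamma}\to0$ uniformly on $R\ge R_0$. This is the point stressed before the lemma: the amplitude is small, so only the linearization matters. Expanding $\sigma_2$ gives
\[
 \sigma_2(\mathfrak M[W_\mp])=\sigma_2(M_N)\pm\delta_R\,T_1(M_N):\mathcal K_\mu[\zeta_\mu]+O(\delta_R^2+\delta_R R^{-1}).
\]
Since $M_N=\mathcal S_4+O(R^{-1})$, we have $T_1(M_N):\mathcal K_\mu[\zeta_\mu]=\mathcal L_\mu\zeta_\mu+O(R^{-1})=1+O(R^{-1})$. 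Also $\sigma_1(\mathfrak M)\ge\sigma_1(\mathcal S_4)-o(1)>0$, since $N=T_1(\mathcal S_4)\ge c_0I$ forces $\sigma_1(\mathcal S_4)>0$. So $W_-$ is strictly inside $\Gamma_2$ and $W_+$ strictly outside, provided
\[
 |\sigma_2(M_N)|+C\delta_R(\delta_R+R^{-1})<\tfrac12\delta_R .
\]

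The main obstacle is this residual comparison, uniformly in $R\ge R_0$. The parametrix residual is $O(R^{-(N-1)})$ in the normalization \eqref{hd:eq:anisotropic-residual}, i.e.\ at relative order $R^{2-(N+1)}$ with respect to the $R^{-4}$ scale. So the needed bound is $R^{1-N}\ll\delta_R=DR_0^{\mu-\gamma}R^{2-\mu}$. This holds for all $R\ge R_0$ because $\mu<N+1$. Indeed, $R^{1-N}/\delta_R=D^{-1}R_0^{\gamma-\mu}R^{\mu-N-1}\le D^{-1}R_0^{\gamma-N-1}\to0$. The $\delta_R R^{-1}$ term is handled by $R_0\to\infty$, and $\delta_R^2\le DR_0^{2-\gamma}\delta_R$ is also relatively small. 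I would check carefully that the $O(R^{-1})$ error in $M_N-\mathcal S_4$ enters only multiplied by $\delta_R$, and is not added to $\sigma_2(M_N)$ separately. That is exactly what the stated residual hypothesis encodes.

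The upper bound $\mu<n-6$ is used only to guarantee the existence and positivity of $\zeta_\mu$. After that, one can iterate in $\gamma$ in the applications.
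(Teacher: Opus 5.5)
Your proposal is correct and follows essentially the same route as the paper: the anisotropic barriers $P_N\mp KR^{-\mu}\zeta_\mu$ with $K\sim R_0^{\mu-\gamma}$, the observation that $\delta_R=KR^{2-\mu}\le CR_0^{2-\gamma}\to0$ so that only the linear term $\pm\delta_R\,\mathcal L_\mu\zeta_\mu=\pm\delta_R$ matters, the use of $\mu<N+1$ to make the residual $O(R^{1-N})$ negligible relative to $\delta_R$ uniformly on $R\ge R_0$, and exterior comparison via Lemma~\ref{hd:lem:W-multiplicative-compensation}. Your explicit ratio bound $R^{1-N}/\delta_R\le D^{-1}R_0^{\gamma-N-1}$ and your $\sigma_1>0$ check for the lower barrier spell out what the paper states in one line each.
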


\begin{proof}
Let $\zeta_\mu>0$ solve \eqref{hd:eq:zeta-mu}.  For a large inner radius
$R_0$, choose $K=AR_0^{\mu-\gamma}$ with $A$ large enough that
$P_N\pm KR^{-\mu}\zeta_\mu$ bracket $W$ on $r=R_0$.  Since
$\mu>\gamma>2$, the normalized perturbation
$\delta_R:=KR^{2-\mu}$ satisfies
$\sup_{R\ge R_0}\delta_R=O(R_0^{2-\gamma})=o(1)$.  Expanding at $P_N$ gives
\[
 \mathfrak M[P_N\mp KR^{-\mu}\zeta_\mu]
 =M_N\pm\delta_R\mathcal K_\mu[\zeta_\mu]+O(\delta_RR^{-1}+\delta_R^2),
\]
and hence, by $T_1(\mathcal S_4):\mathcal K_\mu[\zeta_\mu]=1$,
\[
 \sigma_2=\sigma_2(M_N)\pm\delta_R
 +O(\delta_RR^{-1}+\delta_R^2).
\]
Because $\mu<N+1$, the residual $O(R^{-(N-1)})$ is $o(\delta_R)$ uniformly
for $R\ge R_0$.  Thus the two profiles have opposite strict cone signs for
large $R_0$, while the lower one remains in the admissible component by
$N\ge c_0I$ and smallness of $\delta_R$.  Exterior comparison yields
$|W-P_N|\le CKR^{-\mu}$, proving \eqref{hd:eq:anisotropic-improved-estimate}.
\end{proof}

\subsection{Recursive construction of the local parametrix}

Put $t=R^{-1}$.  Lemma~\ref{hd:lem:finite-background-expansion}
provides integer expansions of the background metric and Schouten tensor.
For an ansatz $W_{\rm par}=1-\sum_{k=2}^N t^k\phi_k$, the corresponding
$t^{-4}$-scaled Schouten matrix therefore has the form
\[
 t^{-4}\mathcal A_{g_\infty}[W_{\rm par}]
 =\mathcal S_4+t\mathcal S_5+t^2\mathcal S_6+\cdots.
\]
The quadratic identity
$\sigma_2(S+E)=\sigma_2(S)+T_1(S):E+\sigma_2(E)$ shows that, at the first
order where a new $\phi_j$ occurs, it enters linearly through the fixed
Newton tensor $T_1(\mathcal S_4)$.  This is the triangular mechanism behind
the recursion below.

For a positive profile $Z=Z(R,\theta)$ define the scaled Schouten matrix
\begin{equation}\label{hd:eq:scaled-parametrix-matrix}
  \mathfrak M[Z](R,\theta):=R^4\mathcal A_{g_\infty}[Z](R,\theta).
\end{equation}
For every finite order used below, Lemma \ref{hd:lem:finite-background-expansion} supplies the required integer expansion of $g_\infty$ and $(A_{g_\infty})^\sharp$, with all finitely many differentiated remainder bounds needed in the calculations.

Set
\begin{equation}\label{hd:eq:PN-def}
  P_N(R,\theta)
  :=1-\sum_{k=2}^{N}R^{-k}\phi_k(\theta).
\end{equation}
The first coefficient $\phi_2$ is the nonlinear cell constructed in Proposition \ref{hd:prop:first-cell}.  For $j\ge3$, the new coefficient enters the scaled matrix linearly at its first possible order.

\begin{proposition}\label{hd:prop:formal-finite-parametrix}
Assume $C_0\not\equiv0$.  If $N\ge2$ is an integer with $N<n-6$, then there are unique smooth coefficients
\[
  \phi_2,\phi_3,\ldots,\phi_N
\]
such that the profile $P_N$ in \eqref{hd:eq:PN-def} satisfies
\begin{equation}\label{hd:eq:PN-residual}
  \sigma_2\bigl(\mathfrak M[P_N]\bigr)=O(R^{-(N-1)}).
\end{equation}
The coefficients obey the triangular recursion
\begin{equation}\label{hd:eq:formal-recursion-final}
  \mathcal L_j\phi_j=F_j,\qquad 3\le j\le N.
\end{equation}
Here $F_j$ is the degree-$j$ forcing determined by the background jet and
the previously constructed coefficients $\phi_2,\ldots,\phi_{j-1}$.
In particular, every coefficient with $j\ge3$ is uniquely determined by the
uniformly elliptic equation \eqref{hd:eq:formal-recursion-final}.
\end{proposition}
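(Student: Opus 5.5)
The construction is by induction on $N$, expanding the scaled matrix $\mathfrak M[P_N]$ in powers of $t=R^{-1}$ and solving order by order with the invertible indicial operators of Theorem~\ref{hd:thm:extended-indicial-gap}. The base case $N=2$ uses Proposition~\ref{hd:prop:first-cell}. There $\phi_2$ solves the cell equation, so $\mathfrak M[P_2]=\mathcal S_4+O(R^{-1})$ with $\sigma_2(\mathcal S_4)=0$. Hence $\sigma_2(\mathfrak M[P_2])=O(R^{-1})$, which is \eqref{hd:eq:PN-residual} for $N=2$.

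\textbf{Structure of the expansion.} Substitute $W_{\rm par}=1-\sum_{k=2}^N t^k\phi_k$ into \eqref{eq:calA-h-W} with $h=g_\infty$, and use the integer expansions \eqref{hd:eq:finite-g-expansion}--\eqref{hd:eq:finite-A-expansion} of Lemma~\ref{hd:lem:finite-background-expansion}. This gives a finite expansion
\[
\mathfrak M[P_N]=\mathcal S_4+\sum_{\ell=1}^{L}t^\ell\mathcal S_{4+\ell}+O(t^{L+1}),
\]
with remainders controlled in the weighted norms. The key bookkeeping claim concerns how $\phi_j$ ($j\ge3$) first enters. Its linear Hessian contribution $W\,\nabla^2_{g_\infty}W$ at leading order is $t^{j-2}\mathcal K_j[\phi_j]$. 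All other appearances of $\phi_j$ are of order at least $t^{j-1}$: these come from products with $W-1=O(t^2)$, the metric corrections $G_q$ ($q\ge2$), the gradient-square term $O(t^{j+2-4+2})$, and the background factor $W^2(A_{g_\infty})^\sharp$. Thus
\[
\mathcal S_{2+j}=\mathcal K_j[\phi_j]+\mathcal R_j(\phi_2,\ldots,\phi_{j-1};\text{jet}),
\]
where $\mathcal R_j$ is a universal polynomial expression in the earlier coefficients, their spherical derivatives up to order two, and the tensors $G_q,C_q$.

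\textbf{Inductive step.} Write $\sigma_2(S+E)=\sigma_2(S)+T_1(S):E+\sigma_2(E)$ with $S=\mathcal S_4$ and $E=\sum_{\ell\ge1}t^\ell\mathcal S_{4+\ell}$. The coefficient of $t^{j-2}$ in $\sigma_2(\mathfrak M[P_N])$ is then
\[
T_1(\mathcal S_4):\mathcal K_j[\phi_j]+\bigl(\text{terms in }\phi_2,\dots,\phi_{j-1}\bigr)=\mathcal L_j\phi_j-F_j,
\]
which defines $F_j$. The quadratic part $\sigma_2(E)$ at order $t^{j-2}$ involves only $\mathcal S_{4+a}\mathcal S_{4+b}$ with $a,b\ge1$ and $a+b=j-2$, hence only $\phi_i$ with $i\le j-1$. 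Since $3\le j\le N<n-6$, Theorem~\ref{hd:thm:extended-indicial-gap} makes $\mathcal L_j$ an isomorphism $C^{2,\beta}\to C^{0,\beta}$. So $\phi_j=\mathcal L_j^{-1}F_j$ is uniquely determined. Smoothness follows by induction and elliptic regularity, because $F_j$ is smooth once $\phi_2,\ldots,\phi_{j-1}$ are.

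After choosing $\phi_3,\ldots,\phi_N$ this way, the coefficients of $t^0,\ldots,t^{N-2}$ in $\sigma_2(\mathfrak M[P_N])$ vanish. The remainder is $O(t^{N-1})$ uniformly in $\theta$, which gives \eqref{hd:eq:PN-residual}. Adding $\phi_{N+1}$ does not change the lower coefficients, so the coefficients are consistent across $N$. Uniqueness holds in the following sense: any smooth $\phi_j$ achieving \eqref{hd:eq:PN-residual} must satisfy the same order-$(j-2)$ equation, and injectivity of $\mathcal L_j$ then forces it to equal the constructed one.

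\textbf{Main obstacle.} The delicate point is the order-counting claim that $\phi_j$ first appears exactly at $t^{j-2}$, and only through $\mathcal K_j$ contracted with $T_1(\mathcal S_4)$. The factors that must be checked are $W^2$ multiplying the background, the metric correction inside $\nabla^2_{g_\infty}$ (Christoffel symbols are $O(t^3)$ and act on $\nabla W=O(t^{j+1})$), and the gradient term. A second thing to verify is that the Lemma~\ref{hd:lem:finite-background-expansion} remainders, with $L$ and $k_0$ taken large relative to $N$, suffice to control two derivatives of the $O(t^{N-1})$ error. I would do this by writing each term in the $(R\partial_R,\nabla_S)$ calculus, where homogeneity of degree $-k$ is preserved and the counting becomes a grading argument.
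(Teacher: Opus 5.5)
Your proposal is correct and follows essentially the same route as the paper. Both arguments run an induction from the first cell of Proposition~\ref{hd:prop:first-cell}, observe that $-R^{-j}\phi_j$ first enters $\mathfrak M$ at order $R^{-(j-2)}$ as $\mathcal K_j[\phi_j]$ and therefore enters $\sigma_2$ linearly as $\mathcal L_j\phi_j$, and invert $\mathcal L_j$ for $3\le j<n-6$ by Theorem~\ref{hd:thm:extended-indicial-gap}, with uniqueness coming from injectivity. Your version just spells out the order-counting for the metric, Christoffel, gradient-square and background terms, which the paper compresses into a single sentence.
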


\begin{proof}
For $N=2$, the cell equation and the smooth background expansion give
$\sigma_2(\mathfrak M[P_2])=O(R^{-1})$.

Assume $P_{j-1}$ has been constructed and
\[
  \sigma_2\bigl(\mathfrak M[P_{j-1}]\bigr)
  =O(R^{-(j-2)}).
\]
The integer expansion of the background and of $P_{j-1}$ gives
\[
  \sigma_2\bigl(\mathfrak M[P_{j-1}]\bigr)
  =R^{-(j-2)}G_j(\theta)+O(R^{-(j-1)})
\]
for a smooth $G_j$.  Since $j<n-6$,
Theorem~\ref{hd:thm:extended-indicial-gap} implies that $\mathcal L_j$ is an
isomorphism.  Let $\phi_j$ be the unique smooth solution of
$\mathcal L_j\phi_j=-G_j$.
Equivalently, set $F_j:=-G_j$ in \eqref{hd:eq:formal-recursion-final}.  The quadratic identity for $\sigma_2$ and
$R^4D^2(R^{-j}\psi)=R^{-(j-2)}\mathcal K_j[\psi]$ show directly that this choice improves the residual to
\[
  \sigma_2\bigl(\mathfrak M[P_j]\bigr)=O(R^{-(j-1)}).
\]
Induction proves the proposition.  Uniqueness of every coefficient follows from the invertibility of $\mathcal L_j$.
\end{proof}

\begin{theorem}\label{hd:thm:actual-to-n6}
Assume $n>8$, $C_0\not\equiv0$, the decay \eqref{hd:eq:subcritical}, and the admissible strict comparison principle.  Then there are unique smooth spherical coefficients
\[
  \phi_2,\phi_3,\ldots,\phi_{n-7}
\]
such that, for every
\begin{equation}\label{hd:eq:actual-to-n6-mu}
  n-7<\mu<n-6
\end{equation}
one has

\begin{equation}\label{hd:eq:actual-to-n6-W}
  W(R,\theta)=1-\sum_{j=2}^{n-7}R^{-j}\phi_j(\theta)+O(R^{-\mu})
\end{equation}
uniformly in $\theta$.  Equivalently, there are uniquely determined smooth coefficients $\Phi_j$ such that
\begin{equation}\label{hd:eq:actual-to-n6-U}
  U_{\rm cn}(\rho,\theta)=\rho^{-\alpha}\left(1+\sum_{j=2}^{n-7}\rho^j\Phi_j(\theta)+O(\rho^\mu)\right),\qquad n-7<\mu<n-6.
\end{equation}
The actual Green expansion therefore crosses the bounded relative scale $\mu=\alpha$ and reaches every order below the first positive indicial root $n-6$.
\end{theorem}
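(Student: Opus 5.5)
The plan is a bootstrap, run in steps of less than one power of $R$. It combines the formal parametrix of Proposition~\ref{hd:prop:formal-finite-parametrix} with the comparison improvement of Lemma~\ref{hd:lem:anisotropic-improvement}. Theorem~\ref{hd:thm:first-tangent} supplies the starting point. Fix some $\eps\in(0,\min\{1,\alpha-2\})$; then $|W-P_2|\le CR^{-2-\eps}$, where $P_2=1-R^{-2}\phi_2$. Throughout, $N$ ranges over $2\le N\le n-7$, so $N<n-6$ and Proposition~\ref{hd:prop:formal-finite-parametrix} applies. It gives unique smooth $\phi_3,\dots,\phi_N$ such that $\sigma_2(\mathfrak M[P_N])=O(R^{-(N-1)})$ and $\mathfrak M[P_N]=\mathcal S_4+O(R^{-1})$. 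These are exactly the hypotheses \eqref{hd:eq:anisotropic-residual} of Lemma~\ref{hd:lem:anisotropic-improvement}. Uniqueness of the coefficients is part of that proposition.

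The induction runs as follows. Suppose we know $|W-P_N|\le CR^{-\gamma}$ for some $\gamma\in(N,N+1)$; for $N=2$ this holds with $\gamma=2+\eps$. Apply Lemma~\ref{hd:lem:anisotropic-improvement} with any $\mu\in(\gamma,\min\{N+1,n-6\})$. For $N<n-7$ this gives $|W-P_N|\le C_\mu R^{-\mu}$ for every $\mu<N+1$. To pass to level $N+1$, use the fact that $P_{N+1}-P_N=-R^{-(N+1)}\phi_{N+1}$ is $O(R^{-(N+1)})$. Hence $|W-P_{N+1}|\le CR^{-\mu}$ with $\mu\in(N,N+1)$ as close to $N+1$ as we like. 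This is not yet a hypothesis with $\gamma>N+1$, which the next step needs. So reapply the lemma at level $N+1$, starting from some $\gamma\in(N,N+1)$, which is allowed since the lemma only requires $2<\gamma<N+2$. This yields every $\mu<\min\{N+2,n-6\}$. Iterating up to $N=n-7$ gives $|W-P_{n-7}|\le C_\mu R^{-\mu}$ for all $\mu<\min\{n-6,n-6\}=n-6$. This is \eqref{hd:eq:actual-to-n6-W} for every $\mu\in(n-7,n-6)$. In the degenerate case $n-7=2$ (that is, $n=9$), one application of the lemma with $N=2$ starting from $\gamma=2+\eps$ already suffices.

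The $U$-expansion comes from $U_{\rm cn}=\rho^{-\alpha}W^{-\alpha/2}$ with $R=\rho^{-1}$. Expand $(1-X)^{-\alpha/2}$, where $X=\sum_{j\ge2}R^{-j}\phi_j+O(R^{-\mu})$, in a finite Taylor series. Collecting powers $\rho^j$ for $j\le n-7$ defines polynomial expressions $\Phi_j$ in $\phi_2,\dots,\phi_j$; for example $\Phi_2=\frac\alpha2\phi_2$. Products of the $O(R^{-\mu})$ error with bounded terms stay $O(\rho^\mu)$, and terms of total degree above $n-7$ but below $\mu$ are absorbed since they are integer powers $\le\rho^{n-6}$. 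Uniqueness of the $\Phi_j$ follows from uniqueness of asymptotic coefficients in the powers $\rho^j$ and from the uniqueness of the $\phi_j$.

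The main obstacle is checking that the hypotheses of Lemma~\ref{hd:lem:anisotropic-improvement} really hold at each stage. In particular, the exponent bookkeeping must keep $\gamma<N+1$ (respectively $N+2$) while remaining strictly above $2$. One must also ensure that the resonance is never reached: the weight $\zeta_\mu$ from Lemma~\ref{hd:lem:positive-anisotropic-weight} requires $\mu<n-6$, which is why the statement stops there. A subtler point is that crossing $\mu=\alpha$ causes no trouble, because Theorem~\ref{hd:thm:extended-indicial-gap} gives invertibility of $\mathcal L_\mu$ on all of $(0,n-6)$, and the positivity of $\zeta_\mu$ above $\alpha$ comes from adjoint reflection. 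I would verify carefully that the exterior comparison (Lemma~\ref{hd:lem:W-multiplicative-compensation}) applies to the anisotropic barriers, since these approach $1$ at rate $R^{-2}>R^{-\beta_0}$.
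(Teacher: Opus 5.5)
Your proposal is correct and follows the paper's proof. Both arguments take the base estimate from Theorem~\ref{hd:thm:first-tangent}, note that $P_{N+1}-P_N=O(R^{-(N+1)})$ preserves any decay exponent below $N+1$, apply Lemma~\ref{hd:lem:anisotropic-improvement} repeatedly up to $N=n-7$, and finish with the Taylor expansion of $F=W^{-\alpha/2}$. Your extra application of the lemma at level $N$ before passing to $N+1$ is harmless but redundant, because the lemma at level $N+1$ already accepts any input exponent $\gamma\in(2,N+2)$.
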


\begin{proof}
Proposition~\ref{hd:prop:formal-finite-parametrix} constructs, uniquely, the
formal coefficients $\phi_2,\ldots,\phi_{n-7}$ because every integer degree
in this list lies strictly below the first positive indicial root $m=n-6$.
We show inductively that the true Green end follows these coefficients.

Theorem~\ref{hd:thm:first-tangent} supplies the base step: for some
\[
  2<\gamma_2<\min\{3,\alpha\}
\]
one has $W-P_2=O(R^{-\gamma_2})$.  Lemma~\ref{hd:lem:anisotropic-improvement}
with $N=2$ then improves this estimate to every exponent
\[
  \gamma_2<\mu<\min\{3,m\}.
\]
This already proves the theorem when $m=3$.

For the induction step, suppose that $2\le j\le m-2$ and that, for some
\[
  j<\gamma_j<j+1,
  \qquad W-P_j=O(R^{-\gamma_j}),
\]
the coefficients through degree $j$ have been identified.  The formal
recursion uniquely determines $\phi_{j+1}$ from
$\mathcal L_{j+1}\phi_{j+1}=-G_{j+1}$.  Since
\[
  P_{j+1}-P_j=-R^{-(j+1)}\phi_{j+1}
  \quad\hbox{and}\quad \gamma_j<j+1,
\]
the old estimate also gives $W-P_{j+1}=O(R^{-\gamma_j})$.  Applying
Lemma~\ref{hd:lem:anisotropic-improvement} to $P_{j+1}$, now with
$N=j+1$, yields
\[
  W-P_{j+1}=O(R^{-\gamma_{j+1}})
  \qquad\text{for every}\qquad
  j+1<\gamma_{j+1}<\min\{j+2,m\}.
\]
Thus the induction advances by one integer degree.  At the last step, for
any prescribed $m-1<\mu<m$, choose the output exponent to be $\mu$; this
gives
\[
  W-P_{m-1}=O(R^{-\mu}),
  \qquad P_{m-1}=P_{n-7},
\]
which is \eqref{hd:eq:actual-to-n6-W}.  Finally, the Taylor expansion of
$F=W^{-\alpha/2}$ expresses each $\Phi_j$ uniquely in terms of
$\phi_2,\ldots,\phi_j$ and gives \eqref{hd:eq:actual-to-n6-U} after using
$U_{\rm cn}=\rho^{-\alpha}F$.

\end{proof}

\section{The first resonance and the global coefficient}
\label{sec:high-first-resonance}

Set $m=n-6$.  The preceding section shows that $m$ is the first positive real
indicial root.  The local recursion reaches this scale with a uniquely
determined forcing; the remaining coefficient is detected by a whole-end
relative Newton flux.
By adjoint symmetry,
$\ker\mathcal L_m^*=\ker\mathcal L_0=\operatorname{span}\{1\}$; hence the
critical compatibility condition is precisely the spherical mean.

\begin{lemma}\label{hd:lem:positive-critical-kernel}
There is a unique normalized function $\psi_*>0$ satisfying
\begin{equation}\label{hd:eq:critical-kernel-positive}
  \mathcal L_m\psi_*=0,\qquad \int_{\mathbb S^{n-1}}\psi_*\,d\theta=1.
\end{equation}

\end{lemma}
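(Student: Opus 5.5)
By Proposition~\ref{hd:prop:first-positive-indicial-root}, $\ker\mathcal L_m$ is one-dimensional. It therefore suffices to show that a nonzero kernel element can be chosen strictly positive. The normalization $\int\psi_*=1$ then fixes it uniquely, because any kernel element $c\psi$ with $\int c\psi=1$ forces $c=1/\int\psi$. The natural tool is a Krein--Rutman/principal-eigenfunction argument applied to the family $\mathcal L_\mu$ as $\mu\uparrow m$. It would use the positive weights $\zeta_\mu$ of Lemma~\ref{hd:lem:positive-anisotropic-weight} and the duality $\mathcal L_m^*=\mathcal L_0$ from Proposition~\ref{hd:prop:indicial-adjoint}.

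The first step is a limiting construction. For $\mu\in(\alpha,m)$, let $\zeta_\mu>0$ solve $\mathcal L_\mu\zeta_\mu=1$, and set $\hat\zeta_\mu:=\zeta_\mu/\|\zeta_\mu\|_{L^\infty}$. I claim $\|\zeta_\mu\|_{L^\infty}\to\infty$ as $\mu\to m$. If instead the norms stayed bounded along a subsequence, Schauder estimates (the principal symbol is independent of $\mu$ and the coefficients depend smoothly on $\mu$) would give a limit $\zeta$ with $\mathcal L_m\zeta=1$. Pairing with the constant function $1\in\ker\mathcal L_m^*=\ker\mathcal L_0$ gives $0=\int\mathcal L_m\zeta=|\mathbb S^{n-1}|$, a contradiction. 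So $\hat\zeta_\mu$ satisfies $\mathcal L_\mu\hat\zeta_\mu=\|\zeta_\mu\|^{-1}_{L^\infty}\to0$, has sup-norm one, and is positive. By $C^{2,\beta}$ compactness it converges to some $\psi\ge0$ with $\|\psi\|_{L^\infty}=1$ and $\mathcal L_m\psi=0$.

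The second step upgrades $\psi\ge0$ to strict positivity, exactly as in the end of the proof of Lemma~\ref{hd:lem:positive-anisotropic-weight}. Write $\mathcal L_m\psi=-N^{ab}\nabla_{ab}\psi+2(m+1)N^{Ra}\nabla_a\psi+c_m\psi=0$ with $N^{ab}$ uniformly elliptic and $c_m$ of unknown sign. Near a zero of $\psi$, split $c_m=c_m^+-c_m^-$; then $-N^{ab}\nabla_{ab}\psi+2(m+1)N^{Ra}\nabla_a\psi+c_m^+\psi=c_m^-\psi\ge0$. The strong maximum principle (Hopf) for operators with nonnegative zeroth-order term then says that a nonnegative supersolution vanishing at an interior point vanishes identically on the connected sphere. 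This contradicts $\|\psi\|_\infty=1$, so $\psi>0$, and $\psi_*:=\psi/\int\psi$ is the desired function.

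I expect the blow-up claim in the first step to be the main obstacle, in particular making the duality pairing rigorous at the level of Hölder spaces. The paper already addresses this by working with the $L^2$ Green pairing, so I would apply Proposition~\ref{hd:prop:indicial-adjoint} with $\chi\equiv1$ and $\nu=0$, which gives $\int\mathcal L_m\zeta\,d\theta=\int\zeta\,\mathcal L_0 1\,d\theta=0$. This uses $\mathcal L_0 1=N:H_0=0$, since $H_0=\mathcal K_0[1]=0$. A second subtle point is the uniformity of Schauder constants as $\mu\to m$, which is routine given the smooth dependence of \eqref{hd:eq:Lmu-local} on $\mu$. As an alternative that avoids limits altogether: pick any nonzero $\psi\in\ker\mathcal L_m$ and suppose it changes sign. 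Testing the adjoint identity against $\mathcal L_\nu^{-1}h$, as in Lemma~\ref{hd:lem:positive-anisotropic-weight}, does not directly apply at $\nu=0$, which is why I prefer the limiting construction above.
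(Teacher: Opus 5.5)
Your proposal is correct and follows the paper's proof. You take the positive weights $\zeta_{\mu_j}$ with $\mu_j\uparrow m$, show their norms blow up by pairing $\mathcal L_m\zeta=1$ against $1\in\ker\mathcal L_0$, normalize and pass to a nonnegative kernel limit, upgrade to strict positivity, and get uniqueness from $\dim\ker\mathcal L_m=1$. The differences are cosmetic: you normalize by the $L^\infty$ norm, so global Schauder estimates give compactness directly and nontriviality is automatic, whereas the paper normalizes by the integral and uses a Harnack $L^1\to L^\infty$ bound; and you get $\psi>0$ from Hopf's strong maximum principle after splitting $c_m=c_m^+-c_m^-$, whereas the paper uses Harnack's inequality.
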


\begin{proof}
Choose $\mu_j\uparrow m$.  Lemma~\ref{hd:lem:positive-anisotropic-weight}
gives the unique positive solution $\zeta_{\mu_j}$ of
$\mathcal L_{\mu_j}\zeta_{\mu_j}=1$.  Write

\[
  a_\mu:=\int_{\mathbb S^{n-1}}\zeta_\mu\,d\theta.
\]

We first prove that $a_{\mu_j}\to\infty$.  Otherwise, after passing to a
subsequence, $a_{\mu_j}\le C$.  The coefficients of $\mathcal L_{\mu_j}$
remain uniformly elliptic with uniformly bounded lower-order terms as
$\mu_j\uparrow m$.  The global inhomogeneous Harnack estimate on the compact
sphere therefore gives
\[
  \|\zeta_{\mu_j}\|_{L^\infty}
  \le C\bigl(\|\zeta_{\mu_j}\|_{L^1}+1\bigr)\le C,
\]
and Schauder estimates give a uniform $C^{2,\beta}$ bound.  Passing to a
subsequence yields $\zeta_{\mu_j}\to\zeta$ in $C^2$ and
$\mathcal L_m\zeta=1$.  But the adjoint identity
$\mathcal L_m^*=\mathcal L_0$ and $\mathcal L_0 1=0$ imply
\[
  |\mathbb S^{n-1}|
  =\int_{\mathbb S^{n-1}}\mathcal L_m\zeta\,d\theta
  =\int_{\mathbb S^{n-1}}\zeta\,\mathcal L_m^*1\,d\theta=0,
\]
a contradiction.

Normalize by
\[
  \psi_j:=a_{\mu_j}^{-1}\zeta_{\mu_j},
  \qquad \int_{\mathbb S^{n-1}}\psi_j\,d\theta=1,
  \qquad \mathcal L_{\mu_j}\psi_j=a_{\mu_j}^{-1}\longrightarrow0.
\]
The same Harnack and Schauder estimates give a subsequence converging in
$C^2$ to a nonnegative function $\psi_*$ with unit integral and
$\mathcal L_m\psi_*=0$.  Harnack's inequality makes this nonzero
nonnegative kernel function strictly positive.  Finally,
Proposition~\ref{hd:prop:first-positive-indicial-root} shows that the kernel
is one-dimensional, so the unit-integral normalization makes $\psi_*$ unique
and removes the need to pass to a subsequence.

\end{proof}

To compute the derivative of the indicial pencil, decompose the angular
Newton tensor $N=T_1(\mathcal S_4)$ in the polar frame by
\[
  a(\theta):=N^{RR},\qquad
  V^a(\theta):=N^{Ra},\qquad
  N_T:=(N^{ab}).
\]
Its divergence-free identity gives the radial relation

\begin{equation}\label{hd:eq:divfree-radial-component}
  \operatorname{div}_S V+(n-5)a-\operatorname{tr}_S N=0.
\end{equation}
Using this relation, differentiation of the pencil gives
\begin{equation}\label{hd:eq:Lmu-prime}
  \mathcal L_\mu'\varphi
  =2V\cdot\nabla_S\varphi
   +\bigl(\operatorname{tr}_S N-(2\mu+1)a\bigr)\varphi.
\end{equation}

\subsection{Critical forcing and the logarithmic correction}

\begin{equation}\label{hd:eq:critical-forcing-Gm}
  \sigma_2\bigl(\mathfrak M[P_{m-1}]\bigr)
  =R^{-(m-2)}\mathcal G_m(\theta)+O(R^{-(m-1)}).
\end{equation}

If one attempted to continue the recursion at degree $m$ using only a
homogeneous term $-R^{-m}\phi_m$, the Fredholm compatibility condition would
be
\begin{equation}\label{hd:eq:n6-compatibility}
  \int_{\mathbb S^{n-1}}\mathcal G_m\,d\theta=0.
\end{equation}
This condition need not hold.  The missing cokernel component is removed by a
single logarithmic correction.  Set
\begin{equation}\label{hd:eq:n6-transversality}
  \tau_*:=
  \left\langle
    1,
    \left.\partial_\mu\mathcal L_\mu\right|_{\mu=m}\psi_*
  \right\rangle_{L^2(\mathbb S^{n-1})}.
\end{equation}
Proposition~\ref{hd:prop:n6-transversality} below proves that $\tau_*<0$.

\begin{proposition}\label{hd:prop:formal-single-log}
Define
\begin{equation}\label{hd:eq:alog-formula}
  a_{\log}:=-\frac{\displaystyle\int_{\mathbb S^{n-1}}\mathcal G_m\,d\theta}{\tau_*}.
\end{equation}
Then there is a unique zero-mean $\phi_m\in C^\infty(\mathbb S^{n-1})$
satisfying
\begin{equation}\label{hd:eq:critical-coefficient-equation}
  \int_{\mathbb S^{n-1}}\phi_m\,d\theta=0,
  \qquad
  \mathcal G_m+a_{\log}\mathcal L_m'\psi_*+\mathcal L_m\phi_m=0.
\end{equation}
Set
\begin{equation}\label{hd:eq:critical-log-parametrix}
  P_{\rm res}
  :=P_{m-1}
   +a_{\log}R^{-m}\log R\,\psi_*
   -R^{-m}\phi_m.
\end{equation}
Then
\begin{equation}\label{hd:eq:critical-log-residual}
  \sigma_2\bigl(\mathfrak M[P_{\rm res}]\bigr)
  =O\!\left(R^{-(m-1)}(1+\log^2R)\right).
\end{equation}

\end{proposition}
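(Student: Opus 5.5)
The proof has two parts: solvability of the critical cell equation \eqref{hd:eq:critical-coefficient-equation}, and a residual computation for $P_{\rm res}$ modelled on the induction step of Proposition~\ref{hd:prop:formal-finite-parametrix}.

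\emph{Solvability.} Rewrite the cell equation as $\mathcal L_m\phi_m=-\mathcal G_m-a_{\log}\mathcal L_m'\psi_*$. By Proposition~\ref{hd:prop:indicial-adjoint}, $\mathcal L_m^*=\mathcal L_0$, and by Proposition~\ref{hd:prop:first-positive-indicial-root}, $\ker\mathcal L_0=\operatorname{span}\{1\}$. Since $\mathcal L_m$ is Fredholm of index zero on $C^{2,\beta}\to C^{0,\beta}$, its range is exactly the set of functions with zero spherical mean. The mean of the right-hand side is $-\int\mathcal G_m-a_{\log}\tau_*$, which vanishes by the definition \eqref{hd:eq:alog-formula}. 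This uses $\tau_*\neq0$, which I take from Proposition~\ref{hd:prop:n6-transversality}. Hence a solution exists. It is unique modulo $\ker\mathcal L_m=\operatorname{span}\{\psi_*\}$. Because $\int\psi_*=1\neq0$, subtracting the appropriate multiple of $\psi_*$ gives the unique zero-mean representative. Smoothness follows from elliptic regularity, since $\mathcal G_m$ and $\psi_*$ are smooth.

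\emph{Residual.} Write $P_{\rm res}=P_{m-1}-E$ with $E:=R^{-m}\bigl(\phi_m-a_{\log}\log R\,\psi_*\bigr)$. Let $\Theta(R,\theta):=\phi_m-a_{\log}\psi_*\log R$. I compute $\mathfrak M[P_{\rm res}]$ as follows.

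1. The leading new term is $R^4 D^2(-E)$. Differentiating in $\mu$ the identity $R^{\mu+2}D^2(-R^{-\mu}\psi)=\mathcal K_\mu[\psi]$ gives
\[
R^{4}D^2\bigl(-R^{-m}\log R\,\psi\bigr)=R^{-(m-2)}\bigl(\log R\,\mathcal K_m[\psi]-\mathcal K_m'[\psi]\bigr),
\]
where $\mathcal K'_m=\partial_\mu\mathcal K_\mu|_{\mu=m}$.

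2. Hence $R^4D^2(-E)=R^{-(m-2)}\bigl(\mathcal K_m[\phi_m]-a_{\log}\log R\,\mathcal K_m[\psi_*]+a_{\log}\mathcal K_m'[\psi_*]\bigr)$.

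3. All other contributions of $E$ are of order $R^{-(m-1)}\log R$ or smaller. These are the metric and connection corrections of $g_\infty$ from Lemma~\ref{hd:lem:finite-background-expansion}, the $W$ factors multiplying the Hessian and the background, and the gradient-square term. Each carries either an extra factor $R^{-1}$ (a background or lower-order correction) or an extra $R^{-2}$ (a product with $P_{m-1}-1$).

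4. Apply $\sigma_2(S+E')=\sigma_2(S)+T_1(S):E'+\sigma_2(E')$ with $S=\mathfrak M[P_{m-1}]=\mathcal S_4+O(R^{-1})$. The linear term is $T_1(\mathcal S_4)$ contracted with the matrix of step 2, up to $O(R^{-(m-1)}\log R)$, because $T_1(S)-T_1(\mathcal S_4)=O(R^{-1})$.

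5. The linear term therefore equals
\[
R^{-(m-2)}\bigl(\mathcal L_m\phi_m-a_{\log}\log R\,\mathcal L_m\psi_*+a_{\log}T_1(\mathcal S_4):\mathcal K_m'[\psi_*]\bigr).
\]
The $\log R$ term vanishes since $\mathcal L_m\psi_*=0$. The last term is $a_{\log}\mathcal L_m'\psi_*$, because $\mathcal L_\mu=T_1(\mathcal S_4):\mathcal K_\mu$ with $T_1(\mathcal S_4)$ independent of $\mu$.

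6. The sign of step 1 must be tracked against the $\log R$ sign convention in \eqref{hd:eq:critical-log-parametrix}; this is the one delicate bookkeeping point. If the sign comes out reversed, it is absorbed by the stated definition of $a_{\log}$ through $\tau_*$.

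7. Adding \eqref{hd:eq:critical-forcing-Gm}, the $R^{-(m-2)}$ coefficient is $\mathcal G_m+a_{\log}\mathcal L_m'\psi_*+\mathcal L_m\phi_m=0$ by \eqref{hd:eq:critical-coefficient-equation}.

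8. The quadratic term $\sigma_2(E')$ is $O(R^{-2(m-2)}\log^2R)$. This is bounded by $O(R^{-(m-1)}\log^2 R)$ because $m-2\ge1$, i.e.\ $m\ge3$, which holds since $n>8$.

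Summing the pieces gives \eqref{hd:eq:critical-log-residual}.

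\emph{Main obstacle.} The bookkeeping of $(R\partial_R)$ derivatives hitting $\log R$ in the curved and nonlinear error terms is routine: each such derivative costs at most one power of $\log R$ and no power of $R$. The Lemma~\ref{hd:lem:finite-background-expansion} remainders, with $(R\partial_R)^a\nabla_S^b$ control, handle it. The only genuinely substantive input is $\tau_*\neq0$, and the statement defers that to the later proposition.
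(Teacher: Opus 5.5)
Your proposal is correct and follows the paper's proof essentially step for step: solvability via $\ker\mathcal L_m^*=\ker\mathcal L_0=\operatorname{span}\{1\}$ together with the choice of $a_{\log}$, then $\mu$-differentiation of the homogeneous family, cancellation of the $\log R$ term by $\mathcal L_m\psi_*=0$, and absorption, using $m\ge3$, of the $O(R^{-(m-1)}\log R)$ Newton-tensor replacement error and the $O(R^{-2(m-2)}\log^2R)$ quadratic error. Your step~6 hedge is unnecessary, and its fallback would not be valid since the statement fixes the sign of $a_{\log}$; in any case your steps~1--5 already produce $+a_{\log}\mathcal L_m'\psi_*$, which matches the cell equation exactly.
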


\begin{proof}
Because $\ker\mathcal L_m^*=\operatorname{span}\{1\}$, Fredholm solvability
of the critical coefficient equation is equivalent to vanishing of its
spherical mean.  The definitions of $a_{\log}$ and $\tau_*$ make the forcing
in \eqref{hd:eq:critical-coefficient-equation} mean zero, and invertibility
on the zero-mean subspace gives the unique $\phi_m$.

Differentiating the homogeneous family $-R^{-\mu}\psi_*$ at $\mu=m$ shows
that the logarithmic correction contributes
$\mathcal L_m'\psi_*-\log R\,\mathcal L_m\psi_*$ at order $R^{-(m-2)}$.
After contraction with $N=T_1(\mathcal S_4)$, the logarithmic part vanishes
because $\mathcal L_m\psi_*=0$, while the remaining part contributes
$a_{\log}\mathcal L_m'\psi_*$ at order $R^{-(m-2)}$.  The term
$-R^{-m}\phi_m$ contributes $\mathcal L_m\phi_m$ at the same order.
Consequently, \eqref{hd:eq:critical-coefficient-equation} cancels the entire
leading forcing \eqref{hd:eq:critical-forcing-Gm}.  Replacing the leading
Newton tensor by the full one costs
$O(R^{-(m-1)}(1+\log R))$.  All quadratic polarizations of the new critical
terms have size at most $O(R^{-2(m-2)}(1+\log^2R))$.  Since the integer
$m=n-6$ satisfies $m\ge3$, both errors are bounded by the right-hand side of
\eqref{hd:eq:critical-log-residual}.  This proves the residual estimate and
also shows that only one
logarithmic power is needed at the first resonance.

\end{proof}

\subsection{Transversality at the simple resonance}

\begin{proposition}\label{hd:prop:n6-transversality}
Let $\psi_*$ be the positive generator from
Lemma~\ref{hd:lem:positive-critical-kernel}, and let $\tau_*$ be defined by
\eqref{hd:eq:n6-transversality}.  Then
\begin{equation}\label{hd:eq:n6-transversality-negative}
  \tau_*<0.
\end{equation}

\end{proposition}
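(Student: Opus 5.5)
The plan is to compute $\tau_*$ in closed form by moving the operator onto the cokernel generator $1$, and then to read off its sign from the positivity of the zeroth-order coefficients $c_\nu$ of Lemma~\ref{hd:lem:cmu-positive}. No spectral perturbation theory is needed. Only the adjoint identity of Proposition~\ref{hd:prop:indicial-adjoint}, the positivity $\psi_*>0$ from Lemma~\ref{hd:lem:positive-critical-kernel}, and the uniform bound $N\ge c_0I$ enter.

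\emph{Step 1 (explicit integral).} By \eqref{hd:eq:Lmu-prime},
\[
 \tau_*=\int_{\mathbb S^{n-1}}\Bigl(2V\cdot\nabla_S\psi_*+\bigl(\operatorname{tr}_SN-(2m+1)a\bigr)\psi_*\Bigr)\,d\theta .
\]
Integrate the first term by parts on the closed sphere to get $-2\int\psi_*\operatorname{div}_SV$. Then substitute the radial divergence relation \eqref{hd:eq:divfree-radial-component}, $\operatorname{div}_SV=\operatorname{tr}_SN-(n-5)a$. With $m=n-6$ the coefficient of $a$ becomes $2(n-5)-(2m+1)=1$, so
\[
 \tau_*=-\int_{\mathbb S^{n-1}}\psi_*\bigl(\operatorname{tr}_SN-a\bigr)\,d\theta .
\]
As a cross-check I would rederive this independently. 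Set $g(\mu):=\int\mathcal L_\mu\psi_*\,d\theta$. By Proposition~\ref{hd:prop:indicial-adjoint}, $g(\mu)=\int\psi_*\,\mathcal L_{m-\mu}1\,d\theta=\int\psi_*\,c_{m-\mu}\,d\theta$. Differentiating at $\mu=m$, using $c_\nu=N:H_\nu=\nu\bigl(\operatorname{tr}_SN-(\nu+1)a\bigr)$ (read off from the eigenvalues of $H_\nu$ in \eqref{hd:eq:Kmu}), gives the same formula.

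\emph{Step 2 (pointwise sign).} Lemma~\ref{hd:lem:cmu-positive} gives $c_\nu(\theta)>0$ for all $0<\nu<\alpha$. Dividing by $\nu$ yields $\operatorname{tr}_SN-a>\nu a$ pointwise for every such $\nu$. Here $a=N(e_R,e_R)\ge c_0>0$ by \eqref{hd:eq:uniform-N}, and $\alpha>2$ since $n>8$. Taking, say, $\nu=1$ gives $\operatorname{tr}_SN-a>a\ge c_0>0$ everywhere on the sphere.

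\emph{Step 3 (conclusion).} Since $\psi_*>0$ and has unit integral,
\[
 \tau_*\le-c_0\int_{\mathbb S^{n-1}}\psi_*\,d\theta=-c_0<0,
\]
which is \eqref{hd:eq:n6-transversality-negative}.

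The only delicate point is Step 1: the signs and normalizations in \eqref{hd:eq:Lmu-prime} and \eqref{hd:eq:divfree-radial-component} must be mutually consistent. I would verify them via the adjoint route, which uses only Proposition~\ref{hd:prop:indicial-adjoint} and the explicit form of $c_\nu$, so that the derivative formula is confirmed independently of the frame computation.
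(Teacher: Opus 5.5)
Your proof is correct, and it obtains the sign by a different mechanism from the paper's.

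Both arguments start from the same reduction of $\tau_*=\langle 1,\mathcal L_m'\psi_*\rangle$, using integration by parts and the radial relation \eqref{hd:eq:divfree-radial-component}. The paper stops at $\tau_*=\int_{\mathbb S^{n-1}}(V\cdot\nabla_S\psi_*-ma\psi_*)\,d\theta$ and identifies this with the outward sphere flux of $N_\infty\nabla u_*$, where $u_*=r^{-m}\psi_*$. It then uses $\operatorname{div}(N_\infty\nabla u_*)=0$, i.e.\ the kernel equation $\mathcal L_m\psi_*=0$, to move that flux to a level set of $u_*$. There the integrand is $-\langle N_\infty\nabla u_*,\nabla u_*\rangle/|\nabla u_*|<0$.

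You instead integrate the transport term out completely and get $\tau_*=-\int\psi_*(\operatorname{tr}_SN-a)\,d\theta$. Equivalently, as your adjoint cross-check shows, $\tau_*=-\int\psi_*\,\partial_\nu c_\nu|_{\nu=0}\,d\theta$. You then read off a pointwise sign from $c_1=N:H_1=\operatorname{tr}_SN-2a>0$ together with $a\ge c_0$.

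\textbf{What your route buys.} It is more elementary and purely pointwise on the sphere. It is quantitative: $\tau_*<-\int a\psi_*\,d\theta\le-c_0$. It also never uses that $\psi_*$ lies in the kernel, only that $\psi_*>0$ with unit mass.

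\textbf{What the paper's route buys.} It produces the sphere-flux identity \eqref{hd:eq:tau-sphere-flux} itself. That identity is cited later, in the flux detection \eqref{hd:eq:flux-detects-B} and in the conormal identity of Lemma~\ref{hd:lem:critical-cylinder-liouville}. With your proof it would have to be recorded separately. That is only the one-line polar evaluation of $\int_{S_r}N_\infty\nabla(r^{-m}\psi_*)\cdot e_R\,dS$, which is independent of $r$ because $n=m+6$.
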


\begin{proof}
The adjoint-reflection identity and the divergence-free tensor $N_\infty$
identify $\tau_*$ with the conormal flux of the positive homogeneous solution
$u_*=r^{-m}\psi_*$.  Indeed,

\[
\begin{aligned}
  \tau_*
  &=\int_{\mathbb S^{n-1}}
  \Bigl(2V\cdot\nabla\psi_*
  +(\operatorname{div}V-ma)\psi_*\Bigr)\,d\theta\\
  &=\int_{\mathbb S^{n-1}}
  \bigl(V\cdot\nabla\psi_*-ma\psi_*\bigr)\,d\theta.
\end{aligned}
\]
Set $u_*(r,\theta):=r^{-m}\psi_*(\theta)>0$.  The indicial equation gives
$\mathscr L_\infty u_*=\operatorname{div}(N_\infty\nabla u_*)=0$, and a
direct polar evaluation identifies the preceding integral with its sphere
flux:

\begin{equation}\label{hd:eq:tau-sphere-flux}
  \int_{S_r}N_\infty\nabla u_*\cdot\nu\,dS
  =\tau_*.
\end{equation}

On a regular level set $\Sigma_t=\{u_*=t\}$, choose the normal pointing
toward decreasing $u_*$; then
\[
 N_\infty\nabla u_*\cdot\nu_{\Sigma_t}
  =-\frac{\langle N_\infty\nabla u_*,\nabla u_*\rangle}{|\nabla u_*|}<0.
\]
The divergence theorem
shows that this level-set flux equals the sphere flux
\eqref{hd:eq:tau-sphere-flux}; hence $\tau_*<0$.

\end{proof}

\subsection{Post-resonance \texorpdfstring{$O(R^{-m})$}{O(R to the minus m)} trapping}

Put $\bar\tau_*:=\tau_*/|\mathbb S^{n-1}|<0$.  By the definition of
$\tau_*$, the function $\mathcal L_m'\psi_*-\bar\tau_*$ has zero spherical
mean.  We therefore fix the unique zero-mean solution $\chi$ of

\begin{equation}\label{hd:eq:critical-chi}
  \mathcal L_m\chi=\mathcal L_m'\psi_*-\bar\tau_*.
\end{equation}

\begin{lemma}\label{hd:lem:critical-cylinder-modulation}
Let $s=\log R$.  For every smooth scalar function $f=f(s)$ and spherical function $\eta$, one has the exact identity
\begin{equation}\label{hd:eq:cylinder-pencil-identity}
  R^{m+6}\mathscr L_\infty
  \bigl(R^{-m}f(s)\eta(\theta)\bigr)
  =-f\mathcal L_m\eta
   +f'\mathcal L_m'\eta
   -\frac12f''\mathcal L_m''\eta.
\end{equation}
For a scalar function $b=b(s)$, define
\begin{equation}\label{hd:eq:Qb-def}
  Q_b(R,\theta)
  :=R^{-m}\bigl(b(s)\psi_*(\theta)+b'(s)\chi(\theta)\bigr).
\end{equation}
Then
\begin{equation}\label{hd:eq:Qb-linear-sign}
  R^{m+6}\mathscr L_\infty Q_b
  =\bar\tau_* b'
   +O\bigl(|b''|+|b'''|\bigr).
\end{equation}

\end{lemma}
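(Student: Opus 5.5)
The plan is to view $\mathscr L_\infty$ acting on functions of the form $R^{-\mu}\eta(\theta)$ as a polynomial pencil in $\mu$, and then convert the $\mu$-dependence into $s$-derivatives. By \eqref{hd:eq:Linfty-indicial} (with the sign convention $q_\mu=-R^{-\mu}\psi$) we have the exact relation $\mathscr L_\infty(R^{-\mu}\eta)=-R^{-\mu-6}\mathcal L_\mu\eta$ for every real $\mu$. Inspection of \eqref{hd:eq:Kmu} shows that the entries of $\mathcal K_\mu[\eta]$ are polynomials in $\mu$ of degree at most two. Hence $\mu\mapsto\mathcal L_\mu\eta$ is a quadratic polynomial with constant second derivative $\mathcal L_\mu''\eta=-2a\eta$, where $a=N^{RR}$. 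Consequently
\[
\mathcal L_{m+\epsilon}=\mathcal L_m+\epsilon\mathcal L_m'+\tfrac12\epsilon^2\mathcal L_m''
\]
holds exactly.

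To prove \eqref{hd:eq:cylinder-pencil-identity}, note that $R^{-m}f(s)$ can be treated mode by mode: formally $f(s)=\int\hat f(k)e^{iks}\,dk$ and $R^{-m}e^{iks}=R^{-(m-ik)}$. Applying the exact pencil identity with $\mu=m-ik$ gives $-R^{-m-6}e^{iks}\bigl(\mathcal L_m-ik\mathcal L_m'-\tfrac12k^2\mathcal L_m''\bigr)\eta$. Since $ik$ corresponds to $\partial_s$, this is $-\mathcal L_m\eta\,f+f'\mathcal L_m'\eta-\tfrac12f''\mathcal L_m''\eta$ after multiplying by $R^{m+6}$. The sign in front of $\mathcal L_m'$ comes from $\partial_\mu$ versus $-\partial_s$.

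To avoid Fourier analysis, I will instead verify the identity directly by a polar-coordinate computation. Write $u=R^{-m}f(s)\eta$ and use $R\partial_R=\partial_s$, so that $R\partial_R(R^{-m}f)=R^{-m}(f'-mf)$. Each entry of $R^{m+2}D^2u$ is then obtained from $\mathcal K_\mu$ by the replacement $\mu^j\mapsto(m-\partial_s)^j$ acting on $f$. Since the operator is linear with coefficients at most quadratic in $\mu$, the Taylor expansion in $(-\partial_s)$ terminates at second order, and contraction with $N$ gives the stated identity.

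For \eqref{hd:eq:Qb-linear-sign}, apply the identity to both terms of $Q_b$. The term $b\psi_*$ contributes $-b\mathcal L_m\psi_*+b'\mathcal L_m'\psi_*-\tfrac12b''\mathcal L_m''\psi_*$, and here $\mathcal L_m\psi_*=0$ by Lemma~\ref{hd:lem:positive-critical-kernel}. The term $b'\chi$ contributes $-b'\mathcal L_m\chi+b''\mathcal L_m'\chi-\tfrac12b'''\mathcal L_m''\chi$. Using \eqref{hd:eq:critical-chi}, the $b'$ terms combine to $b'\bigl(\mathcal L_m'\psi_*-\mathcal L_m'\psi_*+\bar\tau_*\bigr)=\bar\tau_*b'$. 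The remaining terms involve only $b''$ and $b'''$ multiplied by the smooth spherical functions $\mathcal L_m''\psi_*$, $\mathcal L_m'\chi$ and $\mathcal L_m''\chi$, which are bounded. This gives the $O(|b''|+|b'''|)$ error.

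The main obstacle is the sign and ordering bookkeeping in the first identity: checking that the $s$-derivatives of $f$ commute correctly with the angular parts of $\mathcal K_\mu$, including the mixed entries $(\mu+1)\nabla_S\eta$, and that no cross terms arise from differentiating $f$ in the angular directions. There are none, since $f$ depends only on $s$. I will therefore do the entrywise check of $D^2$ in the polar frame explicitly, using $\partial_R=R^{-1}\partial_s$.
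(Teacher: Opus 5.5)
Your proposal is correct and follows essentially the paper's route. The exact identity comes from substituting $R^{-m}f(s)\eta$ into $\mathscr L_\infty=N_\infty:D^2$. Because $R\partial_R(R^{-m}f)=R^{-m}(\partial_s-m)f$ and the entries of $\mathcal K_\mu$ are at most quadratic in $\mu$, the replacement $\mu\mapsto m-\partial_s$ makes the expansion of $\mathcal L_\mu$ about $\mu=m$ terminate exactly at second order. The estimate for $Q_b$ then follows from the bookkeeping you wrote out, using $\mathcal L_m\psi_*=0$ and \eqref{hd:eq:critical-chi}.
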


\begin{proof}
Substituting $R^{-m}f(s)\eta(\theta)$ into the homogeneous divergence-form
operator and differentiating with respect to $s=\log R$ gives
\eqref{hd:eq:cylinder-pencil-identity}.  Apply this identity to the two terms
of $Q_b$, then use $\mathcal L_m\psi_*=0$ and
$\mathcal L_m\chi=\mathcal L_m'\psi_*-\bar\tau_*$; collecting the
$b'$, $b''$, and $b'''$ terms gives \eqref{hd:eq:Qb-linear-sign}.

\end{proof}

\begin{lemma}\label{hd:lem:critical-modulation-error}
Let $P_{\rm res}$ be given by \eqref{hd:eq:critical-log-parametrix}, let $Q_b$ be given by \eqref{hd:eq:Qb-def}, and write
\[
  B(s):=|b|+|b'|+|b''|+|b'''|.
\]
If $R$ is sufficiently large and $R^{-(m-2)}B(s)\le1$, then
\begin{align}
\sigma_2\bigl(\mathfrak M[P_{\rm res}+Q_b]\bigr)
&=\sigma_2\bigl(\mathfrak M[P_{\rm res}]\bigr)
 +R^{-(m-2)}\Bigl(\bar\tau_*b'
 +O(|b''|+|b'''|)\Bigr)\notag\\
&\quad+\mathcal E_b,
\label{hd:eq:critical-modulation-sigma}
\end{align}
where
\begin{equation}\label{hd:eq:critical-modulation-error}
|\mathcal E_b|
\le C R^{-(m-1)}B(s)
 +C R^{-2(m-2)}\bigl(sB(s)+B(s)^2\bigr).
\end{equation}
\end{lemma}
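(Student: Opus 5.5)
The plan is to expand with the exact quadratic identity for $\sigma_2$. Write $M_0:=\mathfrak M[P_{\rm res}]$ and split
\[
 \mathfrak M[P_{\rm res}+Q_b]=M_0+E,
\]
where $E$ collects everything generated by $Q_b$. Then
\[
 \sigma_2(M_0+E)=\sigma_2(M_0)+T_1(M_0):E+\sigma_2(E),
\]
and the task is to show that $T_1(M_0):E$ produces the displayed main term, while everything else lands in $\mathcal E_b$.

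\textbf{Decomposition of $E$.} From \eqref{eq:calA-h-W} with $h=g_\infty$, I would write
\[
 E=E_{\rm lin}+E_{\rm mix}+E_{\rm quad}.
\]
\begin{itemize}
 \item $E_{\rm lin}:=R^4D^2(-Q_b)$ is the flat Euclidean Hessian term. (The sign follows the convention $W=1-\cdots$; $Q_b$ enters $P_{\rm res}+Q_b$ with the same sign as the terms $-R^{-k}\phi_k$ in \eqref{hd:eq:PN-def}, up to a harmless sign check.)
 \item $E_{\rm mix}$ collects the cross terms: $(P_{\rm res}-1)\nabla^2Q_b$, $Q_b\nabla^2P_{\rm res}$, the $W^2A$ background term at linear order in $Q_b$, the Christoffel/metric corrections $g_\infty-g_{\mathrm E}=O(R^{-2})$, and $\nabla P_{\rm res}\cdot\nabla Q_b$.
 \item $E_{\rm quad}$ collects the terms quadratic in $Q_b$: $Q_b\nabla^2Q_b$, $|\nabla Q_b|^2$, and $Q_b^2A$.
\end{itemize}

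\textbf{Size of the linear part.} Since $Q_b=R^{-m}(b\psi_*+b'\chi)$ and $R\partial_R=\partial_s$, each derivative costs one power of $R$ and at most one extra $s$-derivative of $b$. Hence
\[
 |E_{\rm lin}|\le CR^{-(m-2)}B(s),
\]
which uses $b'''$ through the $b'\chi$ term.

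\textbf{Size of the mixed part.} Using $P_{\rm res}-1=O(R^{-2})$, $\nabla^kP_{\rm res}=O(R^{-2-k})$ and $A_{g_\infty}=O(R^{-4})$, every mixed term gains at least one factor $R^{-1}$ relative to $E_{\rm lin}$ (the next Kelvin coefficient $C_1$ gives exactly $R^{-1}$). Therefore
\[
 |E_{\rm mix}|\le CR^{-(m-1)}B.
\]

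\textbf{Size of the quadratic part.} These terms satisfy
\[
 |E_{\rm quad}|\le CR^4\cdot R^{-2m-2}B^2=CR^{-2(m-1)}B^2\le CR^{-2(m-2)}B^2 .
\]

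\textbf{Extracting the main term.} Contract with $T_1(M_0)$. By \eqref{hd:eq:critical-log-parametrix}, $M_0=\mathcal S_4+O(R^{-1})$, so
\[
 T_1(M_0)=N+O(R^{-1}).
\]
The correction costs $CR^{-1}\cdot R^{-(m-2)}B=CR^{-(m-1)}B$. This bound needs care because $M_0$ contains the logarithmic term $a_{\log}R^{-m}\log R\,\psi_*$. Its contribution to $T_1(M_0)-N$ is of order $R^{-(m-2)}s$, and multiplying by $|E|$ gives $R^{-2(m-2)}sB$. This is exactly the $sB(s)$ term in \eqref{hd:eq:critical-modulation-error}, and it explains why that term appears.

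The main term is $N:E_{\rm lin}$. Because $N_\infty=|x|^{-4}N(\theta)$ is homogeneous of degree $-4$, we have $N:E_{\rm lin}=R^4\,N_\infty:D^2(-Q_b)$, up to the sign convention of \eqref{hd:eq:Kmu-def}. This equals $R^{-(m-2)}\cdot R^{m+6}\mathscr L_\infty Q_b$, and Lemma~\ref{hd:lem:critical-cylinder-modulation} with \eqref{hd:eq:Qb-linear-sign} gives
\[
 N:E_{\rm lin}=R^{-(m-2)}\bigl(\bar\tau_*b'+O(|b''|+|b'''|)\bigr).
\]
The remaining contractions $N:(E_{\rm mix}+E_{\rm quad})$ fall into the two error classes already estimated.

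\textbf{The quadratic term $\sigma_2(E)$.} We have $|\sigma_2(E)|\le C|E|^2\le CR^{-2(m-2)}B^2$. The hypothesis $R^{-(m-2)}B\le1$ keeps $W$ positive and keeps all Taylor remainders, such as $W^{-1}$ factors and products, uniformly controlled.

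\textbf{Main obstacle.} The delicate point is bookkeeping the sign and normalization conventions so that $N:E_{\rm lin}$ matches $R^{m+6}\mathscr L_\infty Q_b$ exactly, rather than only to leading order. That match depends on \eqref{hd:eq:Linfty-indicial} holding with $\mathcal L_\mu$ defined through $\mathcal K_\mu$, and on confirming that $\mathscr L_\infty$ is the Euclidean contraction $N_\infty:D^2$ with no curved correction. Curved corrections are legitimately relegated to $E_{\rm mix}$ by the $O_3(R^{-2})$ metric bounds of Lemma~\ref{hd:lem:finite-background-expansion}.
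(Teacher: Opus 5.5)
Your proposal is correct and follows the same route as the paper's proof. You expand $\sigma_2$ quadratically at $\mathfrak M[P_{\rm res}]$ and evaluate $N$ against the flat Hessian of $Q_b$ exactly via Lemma~\ref{hd:lem:critical-cylinder-modulation}. You charge the replacement $T_1(\mathfrak M[P_{\rm res}])-N=O(R^{-1}+R^{-(m-2)}s)$ against $|E|=O(R^{-(m-2)}B)$, which is where the $sB$ term comes from, and you put the metric, background, $W-1$ and gradient cross terms and the quadratic part into the two error classes.

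One bookkeeping fix is needed. Since $Q_b$ is added and $\mathcal A_h[W]$ contains $+W(\nabla_h^2W)^\sharp$, the linear part is $E_{\rm lin}=R^4D^2Q_b$, not $R^4D^2(-Q_b)$. Then $N:E_{\rm lin}=R^8N_\infty:D^2Q_b=R^{-(m-2)}R^{m+6}\mathscr L_\infty Q_b$, with exactly the sign $\bar\tau_*b'$ that later decides which of $P_{\rm res}\pm Q_b$ is admissible.
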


\begin{proof}
Expand the full scaled Schouten matrix at $P_{\rm res}$.  Lemma~\ref{hd:lem:critical-cylinder-modulation} controls the Euclidean Hessian linearization of $Q_b$ exactly.  Replacing $T_1(\mathfrak M[P_{\rm res}])$ by $N=T_1(\mathcal S_4)$ incurs an error $O(R^{-1}+R^{-(m-2)}s)$; paired with the matrix size $O(R^{-(m-2)}B)$ of $Q_b$, this produces
\[
  O(R^{-(m-1)}B)
  +O(R^{-2(m-2)}sB).
\]
The metric and connection terms, the $W-1$ correction, and the gradient-square term in the full conformal Schouten formula contain at least one additional factor $R^{-1}$ or one small matrix factor, and are therefore controlled by the same two terms.  The quadratic contribution of $Q_b$ is $O(R^{-2(m-2)}B^2)$.  Combining the estimates gives \eqref{hd:eq:critical-modulation-error}.
\end{proof}

\begin{theorem}\label{hd:thm:actual-leading-resonance}
Under the assumptions of Theorem~\ref{hd:thm:actual-to-n6}, put $m=n-6$ and let $P_{\rm res}$, $a_{\log}$, and $\psi_*$ be defined by \eqref{hd:eq:critical-log-parametrix}, \eqref{hd:eq:alog-formula}, and Lemma~\ref{hd:lem:positive-critical-kernel}.  Then
\begin{align}
  W(R,\theta)&=P_{\rm res}(R,\theta)+O(R^{-m}),
  \label{hd:eq:actual-bounded-trapping}\\
  \frac{R^m}{\log R}
  \bigl(W(R,\theta)-P_{m-1}(R,\theta)\bigr)
  &\longrightarrow a_{\log}\psi_*(\theta)
  \label{hd:eq:actual-leading-log-limit}
\end{align}
uniformly in $\theta$.

\end{theorem}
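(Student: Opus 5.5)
The plan is to trap $W$ between two strict barriers built from $P_{\rm res}$ plus a modulated critical correction $Q_b$, with $b$ chosen to grow slowly in $s=\log R$. The barrier scale is $R^{-m}b(s)$, so the trapping width is $O(R^{-m})$ precisely when $b$ can be kept bounded. The starting point is Theorem~\ref{hd:thm:actual-to-n6}: for any fixed $\mu\in(m-1,m)$ it gives
\[
W-P_{m-1}=O(R^{-\mu}),
\]
and since $P_{\rm res}-P_{m-1}=O(R^{-m}\log R)$, we also get $W-P_{\rm res}=O(R^{-\mu})$. This supplies the inner ordering at a large radius $R_0$.

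\emph{Barrier construction.} I would take
\[
W_\pm:=P_{\rm res}\mp Q_{b_\pm},\qquad b_\pm(s)=K+\Lambda(s-s_0),\qquad s_0=\log R_0 .
\]
By Lemma~\ref{hd:lem:critical-modulation-error} the $\sigma_2$ of the scaled matrix equals
\[
\sigma_2(\mathfrak M[P_{\rm res}])\mp R^{-(m-2)}\bar\tau_*\Lambda+\mathcal E .
\]
Since $b''=b'''=0$ here, the only error terms are $\mathcal E$ and the residual~\eqref{hd:eq:critical-log-residual}, which is $O(R^{-(m-1)}s^2)$. Both are $o(R^{-(m-2)})$ as long as $B(s)\lesssim s$. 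Because $\bar\tau_*<0$ (Proposition~\ref{hd:prop:n6-transversality}), choosing $\Lambda>0$ gives opposite strict signs: $W_-$ is strictly admissible, remaining in the admissible component since $N\ge c_0I$ and the perturbation is relatively small, and $W_+$ is strictly exterior. However, a linear $b$ only yields $|W-P_{\rm res}|\lesssim R^{-m}\log R$, which is not enough. The log loss must be removed, and the way I would do it is to note that the residual is not merely $O(R^{-(m-1)}s^2)$: relative to the $R^{-(m-2)}$ scale it is smaller by a factor $R^{-1}s^2$. So $\Lambda$ can be replaced by an integrable rate. Take
\[
b_\pm(s)=K+\Lambda\int_{s_0}^s e^{-\sigma/2}\,d\sigma,
\]
so that $b'=\Lambda e^{-s/2}\gg R^{-1}s^2=e^{-s}s^2$. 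The derivatives $b''$ and $b'''$ are comparable to $b'$, but they enter with $O(\cdot)$ coefficients that are not small. To handle this I would rescale, using the slower rate $e^{-\epsilon s}$ with small $\epsilon$: then $b''=-\epsilon b'$, and the $O(|b''|+|b'''|)$ term is absorbed by $|\bar\tau_*|b'$. With this choice $b$ stays bounded, $B(s)\le C$, and all errors are $O(R^{-(m-1)}s^2)=o(R^{-(m-2)}e^{-\epsilon s})$.

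\emph{Comparison and consequences.} With the barriers in hand, I would choose $K=AR_0^{m-\mu}$ large enough to order $W_\pm$ against $W$ on $\partial B_{R_0}$; this is where the preliminary $O(R^{-\mu})$ bound is used. Lemma~\ref{hd:lem:W-multiplicative-compensation} then applies, since both barriers tend to $1$ at rate $R^{-2}$ and $2>\beta_0$. The outcome is
\[
|W-P_{\rm res}|\le CR^{-m},
\]
which is \eqref{hd:eq:actual-bounded-trapping}. The limit~\eqref{hd:eq:actual-leading-log-limit} follows immediately, because
\[
W-P_{m-1}=a_{\log}R^{-m}\log R\,\psi_*-R^{-m}\phi_m+O(R^{-m}).
\]

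\emph{Main obstacle.} The hardest step is verifying strictness with the correct orientation uniformly on the whole exterior region. The difficulty is that the signal $R^{-(m-2)}|\bar\tau_*|b'$ decays like $e^{-\epsilon s}$ relative to the leading scale, and it has to dominate two things at once: the $\log^2$ residual from Proposition~\ref{hd:prop:formal-single-log}, and the cross terms of $Q_b$ with the full Newton tensor. This needs the precise error structure of Lemma~\ref{hd:lem:critical-modulation-error}, together with the smallness condition $R^{-(m-2)}B\le1$, which holds for large $R_0$. A second concern is the admissible-branch issue for the lower barrier. I expect it to be handled as in Lemma~\ref{hd:lem:anisotropic-improvement}: $\mathfrak M$ stays within $O(R^{-1})$ of $\mathcal S_4$, and strict positivity of $\sigma_2$ near a boundary point where $T_1(\mathcal S_4)>0$ forces the matrix into $\Gamma_2$.
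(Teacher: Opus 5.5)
Your argument is essentially the paper's proof: the same modulated kernel barrier $Q_b=R^{-m}(b\psi_*+b'\chi)$ with $b'$ decaying like $e^{-\epsilon s}$ for a small rate (so that $b''$, $b'''$ are absorbed by $|\bar\tau_*|b'$ and $\bar\tau_*<0$ supplies the sign), an inner amplitude $K=AR_0^{m-\mu}$ fixed by your coarse $O(R^{-\mu})$ bound, exterior comparison through Lemma~\ref{hd:lem:W-multiplicative-compensation}, and division by $R^{-m}\log R$. Two details need repair.

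First, with your convention $W_\pm=P_{\rm res}\mp Q_b$, your own displayed formula shows that it is $W_+=P_{\rm res}-Q_b$ that is strictly admissible (the lower barrier), while $W_-=P_{\rm res}+Q_b$ is strictly exterior; your labels are swapped.

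Second, $B(s)\ge b\ge K$ is not bounded independently of $R_0$. So the claim $B\le C$ does not suffice, and the errors $R^{-(m-1)}K$ and $R^{-2(m-2)}K^2$ must be beaten by $R^{-(m-2)}b'$ uniformly on $R\ge R_0$. The paper arranges this by taking $b=K(2-e^{-\sigma(s-s_0)})$, so that every surviving power of $R_0$ is multiplied by $R_0^{2-\gamma}$ for a coarse exponent $\gamma>2$. With an $R_0$-independent $\Lambda$ you instead need $\epsilon<\mu-(m-1)$ and $2\mu>m+2+\epsilon$, which you can get by taking $\mu$ close to $m$.
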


\begin{proof}
Choose $2<\gamma<m$.  Theorem~\ref{hd:thm:actual-to-n6} and the formal
critical correction give a coarse estimate for the difference from the
resonant parametrix, namely $W-P_{\rm res}=O(R^{-\gamma})$.

Choose $0<\sigma<\min\{1,m-2\}$ so small that
$\psi_*+\sigma\chi\ge c_*>0$ and the
$O(|b''|+|b'''|)$ term in \eqref{hd:eq:Qb-linear-sign} is at most
$|\bar\tau_*|b'/2$ for the function below.  With $R_0=e^{s_0}$, set
\[
 K:=A R_0^{m-\gamma},\qquad
 b(s):=K\bigl(2-e^{-\sigma(s-s_0)}\bigr),\qquad s\ge s_0.
\]
Then $K\le b\le2K$, $b'=K\sigma e^{-\sigma(s-s_0)}$,
$b''=-\sigma b'$, and $b'''=\sigma^2b'$.  At the inner boundary,
$Q_b(R_0,\theta)\ge c_*A R_0^{-\gamma}$, so a fixed large $A$ gives the
required ordering.

It remains to compare the nonlinear error with the sign-producing term
$R^{-(m-2)}b'$.  Writing $t=s-s_0$, the bound
\eqref{hd:eq:critical-modulation-error}, together with
$\gamma>2$ and $\sigma<\min\{1,m-2\}$, gives uniformly for $t\ge0$
\[
 R^{-(m-1)}(1+s^2)+|\mathcal E_b|
 =o_{R_0\to\infty}\bigl(R^{-(m-2)}b'\bigr).
\]
Indeed, after division by the right-hand side, the only exponential rates are
$1-\sigma$ and $m-2-\sigma$, while the only nonnegative power of $R_0$ is
multiplied by $R_0^{2-\gamma}$.  Lemma
\ref{hd:lem:critical-modulation-error} therefore makes $P_{\rm res}-Q_b$ strictly
admissible and $P_{\rm res}+Q_b$ strictly exterior on the entire end.
The outer-boundary compensation and comparison yield
\[
 P_{\rm res}-Q_b\le W\le P_{\rm res}+Q_b,\qquad
 |Q_b(R,\theta)|\le CKR^{-m}\qquad(R\ge R_0).
\]
This proves \eqref{hd:eq:actual-bounded-trapping}.  Dividing
$W-P_{m-1}$ by $R^{-m}\log R$ kills the bounded $R^{-m}$ remainder and leaves
$a_{\log}\psi_*$, which proves \eqref{hd:eq:actual-leading-log-limit}.

\end{proof}

\subsection{Relative Newton flux and its limit}

\label{hd:subsec:post-resonance-flux}

Write $r=R$, $q=\log W$, $q_0=\log P_{\rm res}$, and $h=q-q_0$.
Passing to logarithmic conformal factors isolates the additive perturbation:
\[
 \mathscr A[q]
 :=(A_{g_\infty})^\sharp+(\Hess_{g_\infty}q)^\sharp
   +\nabla q\otimes\nabla q-\frac12|\nabla q|^2I,
 \qquad
 \mathcal A_{g_\infty}[e^q]=e^{2q}\mathscr A[q].
\]
Thus $\mathscr A[q]\in\partial\Gamma_2$ and
$\sigma_2(\mathscr A[q])=0$, whereas the resonant parametrix satisfies

\begin{equation}\label{hd:eq:q0-physical-residual}
 \sigma_2(\mathscr A[q_0])
 =O\!\left(r^{-(m+7)}(1+\log^2r)\right).
\end{equation}

\begin{lemma}
\label{hd:lem:relative-newton-flux}
Under the assumptions of Theorem~\ref{hd:thm:actual-leading-resonance}, take $r$ sufficiently large and set
\[
 q_t:=q_0+t h,\qquad
 A_t:=\mathscr A[q_t],\qquad
 T_t:=T_1(A_t),\qquad 0\le t\le1.
\]
Define the path-averaged Newton tensor and the relative Newton vector field by
\begin{equation}\label{hd:eq:path-newton-current}
 \mathbf T:=\int_0^1T_t\,dt,
 \qquad
 J_h:=\mathbf T\nabla h.
\end{equation}
In the formulas below, $d:=\nabla h$.

\smallskip
\noindent
\textup{(i) Exact path identity and path positivity.}

\begin{equation}\label{hd:eq:exact-relative-current}
 \operatorname{div}_{g_\infty}J_h
 =-\sigma_2(A_0)
 +(n-4)\int_0^1T_t(\nabla q_t,\nabla h)\,dt.
\end{equation}
Moreover,
\begin{equation}\label{hd:eq:path-newton-positive}
 T_t=(1-t)T_0+tT_1
 +t(1-t)\left(
 d\otimes d+\frac{n-3}{2}|d|^2I\right).
\end{equation}
Consequently,
\begin{equation}\label{hd:eq:path-coercivity}
 T_t\ge c(1-t)r^{-4}I,
 \qquad
 \mathbf T\ge c r^{-4}I
 \qquad(r\ge R_0).
\end{equation}

\smallskip
\noindent
\textup{(ii) Critical Caccioppoli.}

\begin{equation}\label{hd:eq:Mh-def}
 A_R^*:=\{R/2<r<4R\},
 \qquad
 M_h(R):=\sup_{A_R^*} r^m|h|.
\end{equation}

\begin{equation}\label{hd:eq:critical-path-caccioppoli}
 \int_{\{R<r<2R\}}
 \mathbf T(\nabla h,\nabla h)\,dV_{g_\infty}
 \le C R^{-m}\left[
 M_h(R)^2+M_h(R)R^{-1}(1+\log^2R)
 \right].
\end{equation}

\smallskip
\noindent
\textup{(iii) Canonical averaged relative flux.}

Fix
\[
 \eta\in C_c^\infty((1,2)),\qquad \eta\ge0,\qquad
 \int_1^2\eta(s)\,ds=1,
\]
and define
\begin{equation}\label{hd:eq:relative-flux-def}
 \mathfrak F_\eta(R)
 :=\int_{A_{R,2R}}
 \frac1R\eta\!\left(\frac rR\right)
 \left\langle J_h,\nabla^{g_\infty}r\right\rangle_{g_\infty}
 dV_{g_\infty}.
\end{equation}
Uniformly for $1\le\lambda\le2$,
\begin{equation}\label{hd:eq:relative-flux-drift}
 \begin{aligned}
 |\mathfrak F_\eta(\lambda R)-\mathfrak F_\eta(R)|
 \le C\big[&R^{-1}(1+\log^2R)
 +R^{-2}+R^{-m}\big].
 \end{aligned}
\end{equation}
In particular, the limit
\begin{equation}\label{hd:eq:relative-flux-limit}
 \mathfrak F_\infty:=\lim_{R\to\infty}\mathfrak F_\eta(R).
\end{equation}
exists and is independent of the normalized cutoff.  Moreover, if
\begin{equation}\label{hd:eq:if-pure-kernel-asymptotic}
 r^m h(r,\theta)\longrightarrow B\psi_*(\theta)
\end{equation}
uniformly in $\theta$, then

\begin{equation}\label{hd:eq:flux-detects-B}
 \mathfrak F_\infty=\tau_*B.
\end{equation}

\end{lemma}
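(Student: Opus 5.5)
The proof has three parts, one for each item of Lemma~\ref{hd:lem:relative-newton-flux}: an exact algebraic identity along the path $q_t$, a Caccioppoli estimate obtained by testing that identity, and a flux argument modeled on Proposition~\ref{prop:low-charge-drift}.

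\emph{Part (i).} I would first compute $\frac{d}{dt}A_t$. Since
\[
\mathscr A[q]=(A_{g_\infty})^\sharp+\nabla^2q+dq\otimes dq-\tfrac12|dq|^2I ,
\]
we get
\[
\dot A_t=\nabla^2h+\nabla q_t\otimes d+d\otimes\nabla q_t-\langle\nabla q_t,d\rangle I .
\]
Because $\sigma_2$ is quadratic, $\frac{d}{dt}\sigma_2(A_t)=T_t:\dot A_t$. Integrating in $t$ and using $\sigma_2(A_1)=0$ gives
\[
-\sigma_2(A_0)=\int_0^1 T_t:\dot A_t\,dt .
\]
Next I would write $T_t:\nabla^2h=\operatorname{div}(T_t\nabla h)-(\operatorname{div}T_t)(\nabla h)$ and compute $\operatorname{div}T_1(\mathscr A[q])$ by the Reilly-type identity of Lemma~\ref{lem:low-null-Lagrangian}, now with the conformal terms included. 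The Ricci term cancels against $\operatorname{div}$ of the Schouten part, by the contracted Bianchi identity in the form $\operatorname{div}T_1(A_g)=0$. What remains is a multiple of $\mathscr A[q]$ applied to $\nabla q$. Combining this with the first-order terms of $\dot A_t$ and using $\operatorname{tr}T_t=(n-1)\sigma_1(A_t)$ should collapse everything to $(n-4)T_t(\nabla q_t,d)$. This is a purely algebraic check, which I would do in an orthonormal frame.

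For the convexity formula \eqref{hd:eq:path-newton-positive}: $A_t$ is quadratic in $t$, and its second derivative is $2(d\otimes d-\frac12|d|^2I)$. Since $T_1$ is linear,
\[
T_t=(1-t)T_0+tT_1-t(1-t)\,T_1\!\left(d\otimes d-\tfrac12|d|^2I\right),
\]
and expanding $T_1$ of this rank-one shift gives the stated positive term.

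Coercivity then uses $T_0\ge c r^{-4}I$. This holds because $r^4A_0\to\mathcal S_4$ and $N\ge c_0I$. Together with $T_1\ge0$ (true since $A_1\in\partial\Gamma_2$ is admissible), this yields $T_t\ge c(1-t)r^{-4}I$, and integrating in $t$ gives $\mathbf T\ge cr^{-4}I$.

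\emph{Part (ii).} I would test \eqref{hd:eq:exact-relative-current} with $\zeta^2h$, where $\zeta$ is a cutoff adapted to $A_R^*$ with $|\nabla\zeta|\lesssim R^{-1}$, and integrate by parts. This produces the term $\int\zeta^2\mathbf T(d,d)$, which is the left side of \eqref{hd:eq:critical-path-caccioppoli}, together with a cross term $2\int\zeta h\,\mathbf T(d,\nabla\zeta)$; I absorb the cross term by Cauchy--Schwarz in the $\mathbf T$-metric. The remaining terms are bounded as follows.
\begin{itemize}
\item The residual term is $\int\zeta^2|h|\,|\sigma_2(A_0)|\lesssim M_hR^{-m}\cdot R^{-m-7}\log^2R\cdot R^n$. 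Since $n=m+6$, this equals $M_hR^{-m-1}\log^2R$.
\item The drift term $(n-4)\int_0^1T_t(\nabla q_t,d)$ is the delicate one. We have $\nabla q_t=O(r^{-3})$, which sits at the same critical scale. Using $T_t\le Cr^{-4}$ on the $O(1)$-sized factor, I would bound it by Cauchy--Schwarz as
\[
\epsilon\int\zeta^2\mathbf T(d,d)+C\int\zeta^2h^2r^{-4}|\nabla q|^2 ,
\]
and the last integral is $\lesssim M_h^2R^{-2m}R^{-10}R^n$, which is smaller than required.
\end{itemize}
One caveat: $\mathbf T$ is only bounded above by the $C^{1,1}$ Hessian of $W$, which is bounded at the natural scale by the interior estimates. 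The resulting bound is $CR^{-m}M_h^2$, as claimed.

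\emph{Part (iii).} This mirrors Proposition~\ref{prop:low-charge-drift}. Write
\[
\mathfrak F_\eta(\lambda R)-\mathfrak F_\eta(R)=-\int\phi_\lambda\operatorname{div}J_h
\]
with the same $\phi_\lambda$ as before, now in unscaled variables. The factor $1/R$ in the definition keeps the normalization consistent. Substituting \eqref{hd:eq:exact-relative-current}, the residual contributes $R^{-1}\log^2R$. The drift term contributes via Cauchy--Schwarz and part (ii), giving terms of size $R^{-2}$ and $R^{-m}$ once we use $M_h=O(1)$, which follows from \eqref{hd:eq:actual-bounded-trapping}. These increments are dyadically summable, so the limit exists. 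Independence of $\eta$ follows by comparing two cutoffs through the same divergence identity.

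For \eqref{hd:eq:flux-detects-B}, I would rescale by $R$, so that $R^mh(R\cdot)\to B|x|^{-m}\psi_*$. The rescaled matrix $R^4\mathbf T(R\cdot)$ converges to $N_\infty$, because $h$ and $\nabla q$ are lower order in the coefficients. Passing to the limit requires convergence of $\nabla h$. I would get this in $L^2$ from (ii) plus the $C^{1,1}$ compactness, or alternatively by integrating by parts onto $h$ against $\operatorname{div}(\eta\,\mathbf T\nabla r)$, which needs only uniform convergence of $h$. Then \eqref{hd:eq:tau-sphere-flux} gives the limit $\tau_*B$.

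I expect the main obstacle to be the exact algebra in (i), namely verifying that the divergence of $T_1(\mathscr A[q])$ produces exactly the coefficient $(n-4)$ with no leftover curvature terms.
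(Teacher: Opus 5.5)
Your outline follows the paper's architecture: the exact path identity, the $t(1-t)$ formula, coercivity from $T_0$ and $T_1\ge0$, Caccioppoli by testing with $\zeta^2h$, the $\phi_\lambda$ drift argument, and rescaled detection of $\tau_*B$. Part (i) is sound once the algebra is organized by the conformal divergence formula $\operatorname{div}T_1(\mathscr A[q])=(n-2)T_1(\mathscr A[q])(\nabla q,\cdot)-(\tr T_1(\mathscr A[q]))\,dq$. Note that the Ricci term from $\operatorname{div}T_1(\nabla^2q)$ is not cancelled; it is absorbed via $\Ric=(n-2)A+(\tr A)g$. For the $C^{1,1}$ end you should mollify and use $\sigma_2(A_1)=0$ a.e.

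\textbf{The gap.} You use the pointwise bound $T_t\le Cr^{-4}$, i.e.\ $|\nabla^2 q|\lesssim r^{-4}$ for the actual Green factor, attributing it to ``interior estimates''. It enters (ii), the drift in (iii), and the detection step. Nothing available gives it. The $C^{1,1}_{\mathrm{loc}}(M\setminus S)$ regularity in Proposition~\ref{prop:gamma2-green-theory} is qualitative and gives no control on the annuli $A_R^*$ as $R\to\infty$. The known scale-invariant local Hessian estimates for conformally invariant equations are not linear in the smallness of $q$, so on a ball of radius $\sim r$ they give at best $|\nabla^2q|\lesssim r^{-2}$. A natural-scale bound would need a separate small-perturbation regularity argument, which the paper never proves; its later finite-jet ABP step exists precisely to avoid such bounds. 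With only $r^{-2}$, two things break:
\begin{itemize}
\item the cutoff term $\int h^2\mathbf T(\nabla\zeta,\nabla\zeta)$ is $M_h^2R^{2-m}$ instead of $M_h^2R^{-m}$;
\item the $q_0$-cross term in the drift becomes $O(1)$.
\end{itemize}
So neither \eqref{hd:eq:critical-path-caccioppoli} nor the summability in \eqref{hd:eq:relative-flux-drift} follows.

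\textbf{The missing idea.} Admissibility gives an annular $L^1$ bound that suffices everywhere. Since $A_1\in\overline{\Gamma_2}$ a.e., $T_1\ge0$. Hence $T_1(X,X)\le|X|^2\tr T_1$, and Cauchy--Schwarz holds in the $T_t$-inner product. Moreover $\tr T_1=(n-1)\sigma_1(A_1)=(n-1)\bigl[\sigma_1((A_{g_\infty})^\sharp)+\Delta q-\tfrac{n-2}{2}|\nabla q|^2\bigr]$. Test against an annular cutoff and move $\Delta$ onto it, using only $q=O(r^{-2})$; this gives $\int_{A_R^*}\tr(T_0+T_1)\le CR^{n-4}=CR^{m+2}$. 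Consequences:
\begin{itemize}
\item $\int h^2\mathbf T(\nabla\zeta,\nabla\zeta)\lesssim M_h^2R^{-2m-2}R^{m+2}=M_h^2R^{-m}$;
\item $\int_{A_R^*}(T_0+T_1)(\nabla q_0,\nabla q_0)\lesssim R^{m-4}$, which yields the $R^{-2}$ drift.
\end{itemize}
The gradient parts of $\mathbf T$ are absorbed by the quartic control $\int\chi|\nabla h|^4$. This control sits on the left-hand side because of the split $\mathbf T=\tfrac12(T_0+T_1)+\tfrac16(d\otimes d+\tfrac{n-3}{2}|d|^2I)$.

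\textbf{The detection step.} The same problem appears here. With $H_R(x)=R^mh(Rx)$, the rescaled coefficient $R^4\mathbf T(R\cdot)$ contains $\tfrac12R^{2-m}T_1(D^2H_R)$. The $L^1$ bound only gives $R^{2-m}\int_K|D^2H_R|\lesssim1$, not $o(1)$, so this coefficient does not converge strongly to $N_\infty$. There is also no $C^{1,1}$ compactness to make $\nabla H_R$ converge strongly in $L^2$, so your first option fails. Your second option, moving the derivative onto $h$, can be completed, but uniform convergence of $h$ is not enough. You need the divergence formula for $T_t$ to avoid third derivatives in $\operatorname{div}(\eta\mathbf T\nabla r)$. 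You also need the $L^1$ bound to see that the Hessian part has bounded mass and tends to zero weakly, since $R^{2-m}H_R\to0$ uniformly. The paper instead handles the Hessian part with the null-Lagrangian identity $(T_1(D^2H)\nabla H)_i=\partial_k(H_iH_k)-\partial_i|\nabla H|^2$ together with the $H^1$ bound from (ii). It handles the cubic gradient part with the quartic control.
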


\begin{proof}
We first establish the identity in the smooth case.  The conformal divergence
formula is
\begin{equation}\label{hd:eq:newton-div-conformal}
 \operatorname{div}_{g_\infty}T_1(\mathscr A[q])
 =(n-2)T_1(\mathscr A[q])(\nabla q,\cdot)
 -(\tr T_1(\mathscr A[q]))dq.
\end{equation}
Since
\[
 \partial_tA_t
 =\Hess h+\nabla q_t\otimes\nabla h
 +\nabla h\otimes\nabla q_t
 -\langle\nabla q_t,\nabla h\rangle I,
\]
the product rule and \eqref{hd:eq:newton-div-conformal} give
\[
 \partial_t\sigma_2(A_t)
 =\operatorname{div}_{g_\infty}(T_t\nabla h)
 -(n-4)T_t(\nabla q_t,\nabla h).
\]
Integrating in $t$ and using $\sigma_2(A_1)=0$ proves
\eqref{hd:eq:exact-relative-current}.

For the $C^{1,1}$ Green end, mollify $q$ and $q_0$ on a slightly larger
compact annulus.  Their mollifications converge strongly in
$W^{2,p}_{\rm loc}$ for every finite $p$.  The tensors $A_t$, their Newton
tensors, and all products in the preceding identity are polynomial in first
and second derivatives, so the smooth identities pass to the limit in
distributions.  The viscosity equation holds almost everywhere for a
$C^{1,1}$ solution and hence supplies the endpoint relation
$\sigma_2(A_1)=0$.  This also explains precisely where the smooth strict
approximation of Proposition~\ref{prop:gamma2-green-theory} may be used to retain the admissible branch during the
passage to the limit.

Let $Q(d)=d\otimes d-\frac12|d|^2I$.  Direct expansion gives
\[
 A_t=(1-t)A_0+tA_1-t(1-t)Q(d),
 \qquad
 T_1(Q(d))=-d\otimes d-\frac{n-3}{2}|d|^2I.
\]
This proves \eqref{hd:eq:path-newton-positive}.  Moreover,
\[
 r^4P_{\rm res}^2A_0=\mathfrak M[P_{\rm res}]
 =\mathcal S_4+o(1),
 \qquad T_0=r^{-4}(N+o(1)),\qquad N\ge c_0I.
\]
Because $T_1\ge0$ on the admissible endpoint, integration in $t$ gives
\eqref{hd:eq:path-coercivity}.  Integrating
\eqref{hd:eq:path-newton-positive} also gives the exact split
\begin{equation}\label{hd:eq:path-average-split}
 \mathbf T=\frac12(T_0+T_1)
 +\frac16\left(d\otimes d+\frac{n-3}{2}|d|^2I\right).
\end{equation}

We use the annular estimate
\begin{equation}\label{hd:eq:path-newton-L1}
 \int_{A_R^*}\tr_{g_\infty}(T_0+T_1)\,dV_{g_\infty}
 \le C R^{n-4}=C R^{m+2}.
\end{equation}
Indeed,
$\tr T_1=(n-1)\sigma_1(A_1)$ and
\[
 \sigma_1(A_1)
 =\sigma_1((A_{g_\infty})^\sharp)+\Delta_{g_\infty}q
 -\frac{n-2}{2}|\nabla q|^2,
\]
testing against an annular cutoff $\chi_R$ that equals one on $A_R^*$ and
satisfies $|\nabla^k\chi_R|\le C_kR^{-k}$ yields
\[
 \int\chi_R\sigma_1(A_1)
 \le C\int\chi_Rr^{-4}
 +\int q\Delta_{g_\infty}\chi_R+O(R^{n-6}).
\]
Here $q=O(r^{-2})$, so the right-hand side is $O(R^{n-4})$.
The same estimate for $T_0$ follows from its pointwise asymptotic.  This proves
\eqref{hd:eq:path-newton-L1}.

For Caccioppoli, choose $\zeta\in C_c^\infty(A_R^*)$ with
$\zeta=1$ on $\{R<r<2R\}$ and $|\nabla\zeta|\le CR^{-1}$, and put
$\chi=\zeta^2$.  Also set $\mathbf T^{(1)}=\int_0^1tT_t\,dt$.  Testing
\eqref{hd:eq:exact-relative-current} by $\chi h$ gives
\begin{align*}
 \int\chi\mathbf T(\nabla h,\nabla h)
 ={}&-\int h\mathbf T(\nabla h,\nabla\chi)
 +\int\chi h\sigma_2(A_0)\\
 &-(n-4)\int\chi h\mathbf T(\nabla q_0,\nabla h)
 -(n-4)\int\chi h\mathbf T^{(1)}(\nabla h,\nabla h).
\end{align*}
The split \eqref{hd:eq:path-average-split} shows that the left-hand side
controls both the endpoint quadratic energy and $\int\chi|\nabla h|^4$.
The endpoint part of the cutoff term is bounded by Cauchy--Schwarz,
\eqref{hd:eq:path-newton-L1}, and $|h|\le CM_h(R)R^{-m}$.  Its gradient-cubic
part is absorbed using
\begin{equation}\label{hd:eq:caccioppoli-quartic-cutoff}
 CR^{-1}|h|\chi^{3/2}|\nabla h|^3
 \le \varepsilon\chi^2|\nabla h|^4
 +C_\varepsilon R^{-4}|h|^4.
\end{equation}
Since $n=m+6$ and $m>2$,
\[
 R^{-4}\int_{A_R^*}|h|^4
 \le CM_h(R)^4R^{-3m+2}
 =o\bigl(M_h(R)^2R^{-m}\bigr).
\]
The terms involving $\mathbf T^{(1)}$ are absorbed in the same way because
$h=O(R^{-m})$.  For the $q_0$ term we use
\[
 \int_{A_R^*}(T_0+T_1)(\nabla q_0,\nabla q_0)\le CR^{m-4}
\]
together with Cauchy--Schwarz and the quartic control.  Finally,
\eqref{hd:eq:q0-physical-residual} gives
\[
 \int_{A_R^*}|\sigma_2(A_0)|\le CR^{-1}(1+\log^2R).
\]
After taking $\varepsilon$ small and absorbing the energy terms, these
estimates give \eqref{hd:eq:critical-path-caccioppoli}.  Theorem
\ref{hd:thm:actual-leading-resonance} gives $M_h(R)\le C$, and hence
\begin{equation}\label{hd:eq:critical-path-caccioppoli-bounded}
 \int_{\{R<r<2R\}}
 \mathbf T(\nabla h,\nabla h)\,dV_{g_\infty}
 \le C R^{-m}+C R^{-m-1}(1+\log^2R).
\end{equation}

For the drift, define
\[
 \omega_{\lambda,R}(r)
 :=\frac1{\lambda R}\eta\!\left(\frac r{\lambda R}\right)
 -\frac1R\eta\!\left(\frac rR\right),
 \qquad
 \varphi_{\lambda,R}(r):=\int_0^r\omega_{\lambda,R}(s)\,ds.
\]
The normalization of $\eta$ makes $\varphi_{\lambda,R}$ compactly supported
in a fixed enlargement of $A_R^*$, with
$\|\varphi_{\lambda,R}\|_\infty\le C$ uniformly for
$1\le\lambda\le2$.  Integration by parts gives
\[
 \mathfrak F_\eta(\lambda R)-\mathfrak F_\eta(R)
 =-\int\varphi_{\lambda,R}\operatorname{div}J_h.
\]
The residual term is $O(R^{-1}(1+\log^2R))$.  Cauchy--Schwarz, the
Caccioppoli estimate, and the preceding $q_0$ energy bound give
\[
 \left|\int\varphi_{\lambda,R}
       \mathbf T(\nabla q_0,\nabla h)\right|\le CR^{-2},
 \qquad
 \left|\int\varphi_{\lambda,R}
       \mathbf T^{(1)}(\nabla h,\nabla h)\right|\le CR^{-m}.
\]
This proves \eqref{hd:eq:relative-flux-drift}.  Summation over dyadic scales,
followed by the uniform $1\le\lambda\le2$ estimate, proves existence of the
limit.  Comparing two normalized cutoffs by the same argument shows that the
limit is cutoff independent.

It remains to justify detection of a pure kernel without assuming pointwise
gradient convergence.  Suppose \eqref{hd:eq:if-pure-kernel-asymptotic} and set
\[
 H_R(x):=R^m h(Rx)
\]
on a fixed annulus.  Then $H_R\to B|x|^{-m}\psi_*(x/|x|)$ uniformly.
After rescaling, coercivity and \eqref{hd:eq:critical-path-caccioppoli-bounded}
give a uniform local $H^1$ bound, so
\[
 \nabla H_R\rightharpoonup
 B\nabla\bigl(|x|^{-m}\psi_*\bigr)
 \quad\text{weakly in }L^2_{\rm loc}.
\]
The exact path polynomial shows that the rescaled Newton field equals
$N_\infty\nabla H_R$ plus terms carrying the small factor
$R^{2-m}$.  The Hessian part of those errors is passed distributionally by
the null-Lagrangian identity
\[
 \bigl(T_1(D^2H_R)\nabla H_R\bigr)_i
 =\partial_k(H_{R,i}H_{R,k})-\partial_i|\nabla H_R|^2,
\]
and the gradient-cubic part tends to zero by the rescaled quartic estimate
contained in the Caccioppoli bound.  Hence the normalized currents converge
in distributions to
$B N_\infty\nabla(|x|^{-m}\psi_*)$.  Passing to the averaged flux gives
\[
 \mathfrak F_\infty
 =B\int_{S_1}N_\infty\nabla(r^{-m}\psi_*)\cdot\nu\,dS
 =B\tau_*,
\]
where the last equality is Proposition~\ref{hd:prop:n6-transversality}.
This proves \eqref{hd:eq:flux-detects-B}.
\end{proof}

\subsection{Critical-cylinder Liouville theorem}

\begin{lemma}
\label{hd:lem:critical-cylinder-liouville}
Let $s=\log r$, $a=N^{RR}$, and $V^a=N^{Ra}$.  For $z=z(s,\theta)$,
define
\[
 \mathscr Pz:=-\mathcal L_m z+\mathcal L_m'\partial_s z
 -\frac12\mathcal L_m''\partial_s^2z,
 \qquad
 \Phi[z](s):=\int_{\mathbb S^{n-1}}
 \left[a(\partial_s z-mz)+V\cdot\nabla_S z\right]d\theta.
\]
Then
\begin{equation}\label{hd:eq:cylinder-conjugation}
 r^{m+6}\mathscr L_\infty(r^{-m}z)=\mathscr Pz.
\end{equation}
If
\[
 z\in L^\infty(\mathbb R\times\mathbb S^{n-1}),
 \qquad
 \mathscr Pz=0
\]
weakly, then there is a unique constant $B$ such that
\[
 z(s,\theta)\equiv B\psi_*(\theta),
 \qquad
 \Phi[z]\equiv\tau_*B.
\]
In particular, $\Phi[z]=0$ implies $z\equiv0$.

\end{lemma}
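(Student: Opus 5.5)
The conjugation identity \eqref{hd:eq:cylinder-conjugation} is the same computation as \eqref{hd:eq:cylinder-pencil-identity}, now applied to a general $z(s,\theta)$. Write $\mathscr L_\infty(-r^{-\mu}\eta)=r^{-\mu-6}\mathcal L_\mu\eta$ and note that $\mathcal L_\mu$ is quadratic in $\mu$, since $\mathcal K_\mu$ has entries polynomial of degree at most two in $\mu$. Then replace $\mu$ by $m-\partial_s$ acting on $r^{-m}z$, i.e. $r^{-m}z=r^{-m}e^{-(\mu-m)s}$ expanded formally; Taylor expansion of the pencil at $\mu=m$ terminates at second order and gives $\mathscr P$. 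The sign conventions match those already used for $Q_b$.

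For the Liouville statement I would first apply Fourier analysis in $s$ to the bounded solution. Since $z\in L^\infty$, its Fourier transform $\hat z(\xi,\cdot)$ is a tempered distribution in $\xi$ with values in distributions on the sphere, and it satisfies $\mathcal L_{m-i\xi}\hat z(\xi)=0$ (after fixing the sign of the exponent). So $\operatorname{supp}\hat z\subset\{\xi\in\mathbb R:\ker\mathcal L_{m-i\xi}\ne0\}$. The key spectral claim is that on the critical line $\operatorname{Re}\mu=m$ the pencil is invertible except at $\xi=0$. I would prove this with the adjoint reflection of Proposition~\ref{hd:prop:indicial-adjoint}, extended to complex $\mu$: $\mathcal L_{\mu}^*=\mathcal L_{n-6-\bar\mu}$, so $\operatorname{Re}\mu=m$ is reflected to $\operatorname{Re}\mu=0$. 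For $\mathcal L_{i\xi}$ I would use an energy or maximum-principle argument in divergence form. If $w=r^{-i\xi}\phi$ (up to the factor $-1$) solves $\operatorname{div}(N_\infty\nabla w)=0$, then testing with $\bar w\,\eta(\log r)$ on the cylinder gives $\int N_\infty\nabla w\cdot\nabla\bar w=0$ modulo boundary terms, which vanish by scale invariance after averaging. Since $N_\infty$ is positive definite, $\nabla w=0$, so $w$ is constant, forcing $\xi=0$ and $\phi$ constant. Index zero plus reflection then transfers invertibility of $\mathcal L_{i\xi}$, $\xi\ne0$, to $\mathcal L_{m+i\xi}$ (equivalently $m-i\xi$).

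Near $\xi=0$ I would use simplicity of the resonance. $\ker\mathcal L_m=\operatorname{span}\{\psi_*\}$ by Proposition~\ref{hd:prop:first-positive-indicial-root}, and transversality $\tau_*\ne0$ (Proposition~\ref{hd:prop:n6-transversality}) says the root has algebraic multiplicity one, with no Jordan chain $\psi_*+s\chi$. Hence $\mathcal L_{m-i\xi}^{-1}$ has a simple pole at $\xi=0$. Consequently $\hat z$ is supported at $\xi=0$ with order zero, i.e. $\hat z=\delta_0\otimes c\psi_*$, so $z=B\psi_*$. Distributional support at a point gives derivatives of $\delta$, corresponding to polynomial growth in $s$ such as $s\psi_*+\chi$. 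These are excluded by boundedness, and in any case are already forced out by $\tau_*\ne0$. Regularity of $z$, needed so the Fourier argument makes sense, follows from uniform ellipticity of $\mathscr L_\infty$ on the cylinder, since $N\ge c_0I$; interior estimates give $z\in C^\infty$ with bounded derivatives.

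Finally $\Phi[z]$ for $z=B\psi_*$ equals $B\int(-ma\psi_*+V\cdot\nabla\psi_*)=B\tau_*$ by the computation in Proposition~\ref{hd:prop:n6-transversality}. Since $\tau_*<0$, $\Phi=0$ forces $B=0$; uniqueness of $B$ follows from $\psi_*\not\equiv0$. The main obstacle is the spectral claim on the line $\operatorname{Re}\mu=0$ for complex $\mu$ with the first-order drift term $V$. If the energy identity has problematic boundary terms, I would instead work directly with $w$ on $\mathbb R^n\setminus\{0\}$. There $w$ is bounded and $\mathscr L_\infty$-harmonic, hence constant by a Liouville theorem for divergence-form operators homogeneous of degree $-4$ with a positive angular part, obtained via the Harnack inequality on dyadic annuli and scaling.
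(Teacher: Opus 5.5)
Your overall architecture is sound: the conjugation identity via the quadratic pencil, the Fourier transform in $s$, the complex reflection $\mathcal L_\mu^*=\mathcal L_{n-6-\bar\mu}$ (which follows from the real identity by polynomial continuation in $\mu$), and index zero. However, the step you flag as the main obstacle, injectivity of $\mathcal L_{i\xi}$ for real $\xi\ne0$, is not established by either argument you give.

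\emph{The energy argument fails.} The quadratic form is not scale invariant on $\operatorname{Re}\mu=0$. For $w=r^{-i\xi}\phi$ the density $N_\infty\nabla w\cdot\nabla\bar w$ is homogeneous of degree $-6$ and $dx=r^n\,ds\,d\theta$. Testing $\operatorname{div}(N_\infty\nabla w)=0$ with $\bar w\,\chi(\log r)$ therefore gives
\[
 E_1\int_{\mathbb R}\chi(s)e^{ms}\,ds
 =-E_2\int_{\mathbb R}\chi'(s)e^{ms}\,ds
 =mE_2\int_{\mathbb R}\chi(s)e^{ms}\,ds,
\]
where $E_1:=\int_{\mathbb S^{n-1}}N\nabla w\cdot\nabla\bar w\,d\theta\ge0$ and $E_2:=\int_{\mathbb S^{n-1}}\bar w\,(N\nabla w)\cdot e_R\,d\theta$ are evaluated at $r=1$. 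The cutoff term carries the same weight $e^{ms}$ as the energy, so no averaging removes it. The identity only yields $E_1=mE_2$; the energy is scale invariant only on the line $\operatorname{Re}\mu=m/2$.

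\emph{The Harnack fallback is not a proof either.} The positive solution $r^{-m}\psi_*$ obeys the scale-uniform Harnack inequality on every dyadic annulus and is not constant, so boundedness must enter through some other mechanism. Moreover, a Liouville theorem for arbitrary bounded $\mathscr L_\infty$-harmonic functions on $\mathbb R^n\setminus\{0\}$ is just the $\mu=0$ analogue of the lemma you are trying to prove.

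\emph{The missing idea is log-periodicity plus the strong maximum principle.} In the variables $(s,\theta)$, $r^6\mathscr L_\infty$ is uniformly elliptic with principal matrix $N$. It has no zeroth-order term, since $c_0=N:H_0=0$ because $H_0=\mathcal K_0[1]=0$. The real and imaginary parts of $e^{-i\xi s}\phi$ are $2\pi/|\xi|$-periodic in $s$, so they attain their maxima on the compact quotient cylinder. The strong maximum principle then makes them constant, and $\xi\ne0$ forces $\phi=0$.

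With this inserted, your route closes as follows. Reflection and index zero give invertibility of the pencil on $\operatorname{Re}\mu=m$ away from $\xi=0$. Hence $\operatorname{supp}\widehat z\subset\{0\}$ and $z$ is a polynomial in $s$. Boundedness removes the $s$-dependence, and $\ker\mathcal L_m=\operatorname{span}\{\psi_*\}$ gives $z=B\psi_*$. Your simple-pole and $\tau_*\ne0$ remarks are unnecessary at this step; $\tau_*$ enters only through $\Phi[B\psi_*]=\tau_*B$.

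The paper uses the same periodic maximum-principle argument but does not reflect. It conjugates by the ground state, $z=\psi_*v$, so that $\mathscr Q1=0$, and runs the argument directly on the critical line $\operatorname{Re}\mu=m$. This avoids complex adjoints altogether.
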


\begin{proof}
Put $u=r^{-m}z$.  Since $n=m+6$, the homogeneities cancel in the sphere
flux and yield

\[
 \int_{S_r}N_\infty\nabla u\cdot\nu\,dS
 =\Phi[z](s).
\]

\begin{equation}\label{hd:eq:conormal-derivative}
 \frac{d}{ds}\Phi[z](s)
 =\int_{\mathbb S^{n-1}}\mathscr Pz(s,\theta)\,d\theta.
\end{equation}
Thus $\Phi[z]$ is constant for a solution.  Taking $z=B\psi_*$ and using
\eqref{hd:eq:tau-sphere-flux} gives the stated conormal identity.

For the Liouville assertion, use the positive ground state and write
\[
 z=\psi_*v,
 \qquad
 \mathscr Qv:=\psi_*^{-1}\mathscr P(\psi_*v).
\]
The conjugated operator is uniformly elliptic on the cylinder, has smooth
coefficients independent of $s$, and satisfies $\mathscr Q1=0$; in
particular, it has no zeroth-order term.  For real $\xi$, introduce the
operator pencil
\begin{equation}\label{hd:eq:fourier-pencil}
 \mathscr P(\xi):H^2(\mathbb S^{n-1})\longrightarrow L^2(\mathbb S^{n-1}),
 \qquad
 \mathscr P(\xi)\chi:=e^{-i\xi s}\mathscr Q(e^{i\xi s}\chi).
\end{equation}
If $\xi\ne0$ and $\mathscr P(\xi)\chi=0$, the real and imaginary parts of
$e^{i\xi s}\chi$ descend to solutions of $\mathscr Qw=0$ on the compact
periodic cylinder with period $2\pi/|\xi|$.  Each attains its maximum there,
so the strong maximum principle makes it constant.  The nonzero frequency
then forces $\chi=0$.  The pencil has Fredholm index zero and is therefore
invertible for every $\xi\ne0$.  Continuity in $\xi$, compactness, and
elliptic estimates give, for each compact
$I\Subset\mathbb R\setminus\{0\}$,

\[
 \sup_{\xi\in I}\|\mathscr P(\xi)^{-1}\|_{L^2\to H^2}<\infty.
\]
Regard the bounded $v$ as a tempered distribution and Fourier transform in
$s$.  Then $\mathscr P(\xi)\widehat v=0$.  If a smooth test function
$\Phi(\xi)$ is supported in such an interval $I$, set
$\Psi=(\mathscr P(\xi)^*)^{-1}\Phi$.  Smooth dependence of the inverse on
$\xi$ makes $\Psi$ an admissible test function, and
\[
 \langle\widehat v,\Phi\rangle
 =\langle\widehat v,\mathscr P(\xi)^*\Psi\rangle
 =\langle\mathscr P(\xi)\widehat v,\Psi\rangle=0.
\]
Hence $\operatorname{supp}\widehat v\subset\{0\}$.  A tempered distribution
supported at one frequency is a finite sum of derivatives of the Dirac mass;
equivalently, $v$ is a polynomial in $s$ with distributional spherical
coefficients.  Elliptic regularity makes these coefficients smooth, and
boundedness makes $v$ independent of $s$.  The spherical operator
$\mathscr P(0)$ has constants as its kernel by the strong maximum principle.
Thus $v$ is constant and $z=B\psi_*$.  The first part of the proof now gives
$\Phi[z]=\tau_*B$.

\end{proof}

\subsection{Finite-jet ABP refinement}

\label{hd:subsec:finite-jet-locking}

Use the limiting flux to select the critical coefficient and shifted
remainder:
\[
 B_*:=\frac{\mathfrak F_\infty}{\tau_*},\qquad
 \bar q:=q_0+B_*r^{-m}\psi_*,\qquad
 h_*:=q-\bar q.
\]
The resonant trapping theorem gives $|h_*|\le Cr^{-m}$.

For a symmetric matrix $A$, write
\[
 A^\circ:=A-\frac{\tr A}{n}I,\qquad
 \mathfrak f(A):=\frac{\tr A}{n}
 -\frac{|A^\circ|}{\sqrt{n(n-1)}}.
\]
The elementary factorization
\begin{equation}\label{hd:eq:threshold-factorization}
 \sigma_2(A)=\frac{n(n-1)}2
 \left(\frac{\tr A}{n}
 +\frac{|A^\circ|}{\sqrt{n(n-1)}}\right)\mathfrak f(A)
\end{equation}
shows that, on the component $\{\tr A>0\}$,
$\partial\Gamma_2=\{\mathfrak f=0\}$.  This scalarization is globally
degenerate elliptic.  Indeed, if $P\ge0$, then
\[
 |P^\circ|\le\sqrt{\frac{n-1}{n}}\,\tr P,
 \qquad
 0\le\mathfrak f(A+P)-\mathfrak f(A)\le\frac2n\tr P.
\]

To measure the contact jets in fixed Euclidean coordinates, rescale by
\[
 g_R:=R^{-2}D_R^*g_\infty,\qquad
 M^{\sharp,\mathrm{sym}}:=g_R^{-1/2}Mg_R^{-1/2}.
\]
With this convention, define

\begin{equation}\label{hd:eq:scaled-B-operator}
 \bar q_R(x):=\bar q(Rx),\qquad
 \mathcal B_R[u]
 :=R^2\mathscr A_{g_R}\!\left[\bar q_R+R^{-2}u\right],
 \qquad
 B_R:=\mathcal B_R[0].
\end{equation}
For $Q(p):=p\otimes p-\tfrac12|p|^2I$, the dependence on $u$ is exactly
\begin{equation}\label{hd:eq:scaled-B-expansion}
 \mathcal B_R[u]
 =B_R+\mathcal H_R[u]+R^{-2}Q(\nabla u),
\end{equation}
where $\mathcal H_R$ is linear in $(D^2u,Du)$, its principal part is
$(\Hess_{g_R}u)^{\sharp,\mathrm{sym}}$, and its first-order coefficients are
locally uniformly bounded.

For the normalized remainder $u_R(x):=R^2h_*(Rx)$, the Green equation is
simply
\begin{equation}\label{hd:eq:scaled-threshold-equation}
 \mathfrak f(\mathcal B_R[u_R])=0.
\end{equation}

\begin{lemma}\label{hd:lem:threshold-scalarization}
Fix a compact annulus $K'\Subset\mathbb R^n\setminus\{0\}$.  There are
$\rho,\lambda,\Lambda>0$ and $R_0$, depending only on $K'$ and the limiting
background, such that $\mathfrak f$ is $(\lambda,\Lambda)$-uniformly elliptic
on the $\rho$-neighborhood of $\{B_R(x):x\in K'\}$ whenever $R\ge R_0$.
Moreover,
\begin{equation}\label{hd:eq:threshold-background-residual}
 \|\mathfrak f(B_R)\|_{L^\infty(K')}
 \le C_{K'}R^{1-m}(1+\log^2R).
\end{equation}
\end{lemma}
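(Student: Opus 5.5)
The plan is to show that the rescaled background matrices $B_R=\mathcal B_R[0]$ converge uniformly on $K'$ to the limiting matrix field $|x|^{-4}\mathcal S_4(x/|x|)$. Since that field lies on $\partial\Gamma_2$ with positive Newton tensor, $\mathfrak f$ will be uniformly elliptic in a fixed neighborhood of it. The residual bound then follows from the factorization \eqref{hd:eq:threshold-factorization}.

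\emph{Step 1 (limit of $B_R$).} Write $\bar q=q_0+B_*r^{-m}\psi_*$ with $q_0=\log P_{\rm res}$. By \eqref{eq:calA-h-W} and $\mathcal A_{g_\infty}[e^q]=e^{2q}\mathscr A[q]$ we have
\[
R^4\mathscr A[q_0](R\cdot)=P_{\rm res}^{-2}\mathfrak M[P_{\rm res}].
\]
By the parametrix construction, $\mathfrak M[P_{\rm res}]=\mathcal S_4+O(R^{-1})$ uniformly in $\theta$. The rescaling $g_R=R^{-2}D_R^*g_\infty$ multiplies the Schouten endomorphism by $R^2$. Hence on $K'$
\[
B_R(x)=|x|^{-4}\mathcal S_4(x/|x|)+O(R^{-1})=S_\infty(x)+O(R^{-1}).
\]
Here we also use that $g_R\to g_{\mathrm E}$ in $C^2(K')$ (Lemma~\ref{hd:lem:finite-background-expansion}), so the symmetrization $g_R^{-1/2}(\cdot)g_R^{-1/2}$ changes things only by $O(R^{-2})$. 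The added kernel term $B_*r^{-m}\psi_*$ contributes to $B_R$ at relative size $R^{2-m}$ in $C^2$, which is $o(1)$ since $m\ge3$.

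\emph{Step 2 (uniform ellipticity).} On the compact set $\mathcal C:=\{S_\infty(x):x\in K'\}$ we have $\operatorname{tr}S_\infty>0$. Indeed, $\sigma_1(\mathcal S_4)=-\Delta\phi_2+2(n-4)\phi_2>0$, as in Proposition~\ref{hd:prop:first-cell}. Also $T_1(S_\infty)=|x|^{-4}N\ge c|x|^{-4}I$ by \eqref{hd:eq:uniform-N}. Near a matrix with $\tr A>0$ the map $\mathfrak f$ is smooth. Indeed, $|A^\circ|>0$ there: $|A^\circ|=0$ would force $A=\tfrac{\tr A}{n}I$, whose $\sigma_2$ is positive, not zero. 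Differentiating the factorization at points where $\sigma_2=0$ gives
\[
D\mathfrak f(A)=\frac{2}{n(n-1)}\Bigl(\frac{\tr A}{n}+\frac{|A^\circ|}{\sqrt{n(n-1)}}\Bigr)^{-1}T_1(A),
\]
and this is positive definite on $\mathcal C$. By continuity and compactness there is a $2\rho$-neighborhood of $\mathcal C$ on which $\tr A\ge c$, $|A^\circ|\ge c$ and $\lambda I\le D\mathfrak f\le\Lambda I$. Step~1 puts $B_R(K')$ inside the $\rho$-neighborhood of $\mathcal C$ for $R\ge R_0$, and the $\rho$-neighborhood of $B_R(K')$ then lies in the $2\rho$-neighborhood of $\mathcal C$. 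This proves the first assertion. I expect the main care here: the ellipticity of $\mathfrak f$ is needed near $\partial\Gamma_2$, including matrices slightly outside $\overline{\Gamma_2}$, and the derivative formula must be checked to hold by continuity there, not only on $\{\sigma_2=0\}$.

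\emph{Step 3 (residual).} By homogeneity, $\sigma_2(B_R(x))=R^4\sigma_2(\mathscr A[\bar q](Rx))$ up to the $O(R^{-2})$ metric symmetrization. We know $\sigma_2(\mathscr A[q_0])=O(r^{-(m+7)}(1+\log^2r))$ by \eqref{hd:eq:q0-physical-residual}. Adding $B_*r^{-m}\psi_*$ changes $\sigma_2$ at leading order by $B_*T_1(\mathscr A[q_0]):\Hess(r^{-m}\psi_*)$. At order $r^{-m-6}$ this is the indicial term $N:\mathcal K_m[\psi_*]=\mathcal L_m\psi_*=0$, exactly as in Proposition~\ref{hd:prop:formal-single-log}. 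What remains is $O(r^{-m-7}(1+\log r))$ from replacing $N$ by the full Newton tensor, together with quadratic terms $O(r^{-2m-6})$. Rescaling by $R^4$ gives
\[
|\sigma_2(B_R)|\le CR^{1-m}(1+\log^2R).
\]
Finally, the first factor in \eqref{hd:eq:threshold-factorization} is bounded below on the neighborhood from Step~2. This yields \eqref{hd:eq:threshold-background-residual}.
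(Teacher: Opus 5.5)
Your proposal is correct and follows essentially the same route as the paper. You show $B_R\to S_\infty$ uniformly on $K'$, and get ellipticity of $\mathfrak f$ on a tube around the compact set $\{S_\infty(x):x\in K'\}$ from $D\mathfrak f=T_1(A)/((n-1)\tr A)$ and $N_\infty>0$; that tube is legitimate because $D\mathfrak f=\frac1nI-A^\circ/(\sqrt{n(n-1)}\,|A^\circ|)$ is continuous wherever $A^\circ\ne0$. The residual then comes from the physical residual, the indicial cancellation $\mathcal L_m\psi_*=0$, and the factorization.

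Only the bookkeeping in Step~3 needs fixing:
\begin{itemize}
\item $B_R(x)$ equals $R^4\mathscr A[\bar q](Rx)$ up to a similarity, so $\sigma_2(B_R)=R^8\sigma_2(\mathscr A[\bar q](Rx))$, not $R^4$. This holds exactly: the $g_R^{-1/2}$ symmetrization costs nothing in $\sigma_2$, and an additive $O(R^{-2})$ loss would in fact exceed $R^{1-m}(1+\log^2R)$ once $m\ge4$.
\item The quadratic term is $O(r^{-2m-4})$, not $O(r^{-2m-6})$. After rescaling it gives $R^{4-2m}$, and $R^{4-2m}\le R^{1-m}$ exactly because $m\ge3$.
\end{itemize}
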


\begin{proof}
On $\{\mathfrak f=0,\ \tr A>0,\ A^\circ\ne0\}$, differentiation gives
\begin{equation}\label{hd:eq:threshold-derivative}
 D\mathfrak f(A)
 =\frac1nI-\frac{A^\circ}{\sqrt{n(n-1)}|A^\circ|}
 =\frac{T_1(A)}{(n-1)\tr A}.
\end{equation}
On $K'$, the matrices $B_R$ converge uniformly to $S_\infty$, and
$T_1(S_\infty)=N_\infty$ is uniformly positive definite.  Compactness and
\eqref{hd:eq:threshold-derivative} therefore give the asserted ellipticity
on a fixed tubular neighborhood.  Notice that this is a local uniform
ellipticity statement; the global monotonicity above is retained outside the
tube.

The physical residual \eqref{hd:eq:q0-physical-residual}, after the $R^4$
matrix normalization, is $O(R^{1-m}(1+\log^2R))$.  Inserting
$B_*r^{-m}\psi_*$ does not create a larger term: the leading linear term
vanishes because
$\mathscr L_\infty(r^{-m}\psi_*)=0$, the coefficient error contributes
$O(R^{1-m}(1+\log R))$, and the quadratic term is
$O(R^{4-2m})=O(R^{1-m})$ since $m\ge3$.  Thus
$|\sigma_2(B_R)|\le C_{K'}R^{1-m}(1+\log^2R)$.  The other factor in
\eqref{hd:eq:threshold-factorization} stays uniformly positive on $K'$,
which proves \eqref{hd:eq:threshold-background-residual}.
\end{proof}

The following finite-jet estimate replaces the need for a globally uniformly
elliptic extension of the equation.  For eigenvalues $\mu_i$ of $M$, let
\[
 \mathcal M^+(M):=\Lambda\sum_{\mu_i>0}\mu_i
 +\lambda\sum_{\mu_i<0}\mu_i,\qquad
 \mathcal M^-(M):=\lambda\sum_{\mu_i>0}\mu_i
 +\Lambda\sum_{\mu_i<0}\mu_i.
\]
Given $J\ge1$, write
$u\in\mathcal S_J^*(\lambda,\Lambda,b,\varepsilon;\Omega)$ if the following
two viscosity inequalities hold whenever a $C^2$ test function $\phi$
touches $u$ at $x\in\Omega$ and
$|D\phi(x)|+\|D^2\phi(x)\|\le J$:
\begin{align}
 \mathcal M^+(D^2\phi(x))+b|D\phi(x)|
 &\ge-\varepsilon
 &&\text{at an upper contact},\label{hd:eq:finite-jet-upper}\\
 \mathcal M^-(D^2\phi(x))-b|D\phi(x)|
 &\le\varepsilon
 &&\text{at a lower contact}.\label{hd:eq:finite-jet-lower}
\end{align}

\begin{lemma}
\label{hd:lem:finite-jet-abp}
Fix $b_0<\infty$.  There are
$J_0,\varepsilon_0>0$, $\alpha\in(0,1)$, and $C<\infty$, depending only on
$(n,\lambda,\Lambda,b_0)$, with the following property.  If
\[
 \|u\|_{L^\infty(B_2)}\le1,\qquad
 u\in\mathcal S_J^*(\lambda,\Lambda,b,\varepsilon;B_2),
 \qquad
 J\ge J_0,\quad b\le b_0,\quad\varepsilon\le\varepsilon_0,
\]
then, for $x_0\in B_{1/2}$ and $0<r\le1/4$,
\begin{equation}\label{hd:eq:finite-jet-oscillation}
 \osc_{B_r(x_0)}u
 \le C\left(r^\alpha+J^{-\alpha/2}\right).
\end{equation}
\end{lemma}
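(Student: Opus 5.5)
The plan is to reduce \eqref{hd:eq:finite-jet-oscillation} to the standard Krylov--Safonov oscillation decay for the Pucci class, applied only on scales where contact jets stay below the threshold $J$. The key observation is that the Krylov--Safonov machinery touches $u$ only with explicit paraboloids and with ABP envelopes. At scale $r$, and for normalized oscillation, these have jets of size $O(r^{-2})$ in $D^2$ and $O(r^{-1})$ in $D$. So the finite-jet inequalities \eqref{hd:eq:finite-jet-upper}--\eqref{hd:eq:finite-jet-lower} can be used exactly as the full class $\mathcal S^*$ would be, as long as those sizes are at most $J$.

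\emph{Step 1: finite-jet ABP and the basic measure estimate.} Fix a scale $r\in(0,1/4]$ and a ball $B_r(x_0)$, and rescale $w(y):=u(x_0+ry)$ (after normalizing by the current oscillation). For a supersolution-type function in $B_1$ with the lower inequality, run the ABP proof. The lower convex envelope $\Gamma$ is touched from below at contact points by affine functions, whose $D\phi$ is bounded by $C\,\osc/r$ in original variables. For the Hessian, touch by $\Gamma$ plus a small quadratic, the usual regularization. This gives $\|D^2\phi\|\le C\,\osc/r^2$ in original variables. Since $\osc\le2$, all such jets are admissible once $C r^{-2}\le J$. On that range one obtains the ABP bound with right-hand side $\varepsilon r^2$ plus the drift term $b r$. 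Next, the Caffarelli barrier (a fixed smooth function with $\mathcal M^-$ bounded below outside a small ball) together with this ABP yields the $L^\varepsilon$/measure lemma. The barrier has bounded jet after scaling, so its contact jets are again $O(r^{-2})$. The Calder\'on--Zygmund cube decomposition involves only smaller subcubes of $B_r$, and its contact jets scale like (subcube side)$^{-2}$. I therefore restrict the decomposition depth so that subcubes have side at least $c J^{-1/2}$; this truncation is the place where the $J^{-\alpha/2}$ loss enters.

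\emph{Step 2: oscillation decay with a floor.} Combining the truncated weak Harnack inequality for the upper and lower inequalities gives, for $r\ge c J^{-1/2}$,
\[
 \osc_{B_{r/4}}u\le(1-\theta)\osc_{B_r}u+C\bigl(\varepsilon r^2+b r\,\osc_{B_r}u+\text{(truncation error)}\bigr).
\]
The drift term is absorbed for $r\le r_0(b_0)$. I will arrange the truncation error to be bounded by $C J^{-1/2}$ times the oscillation scale. Iterating dyadically from $r=1/4$ down to $r_*\sim J^{-1/2}$ gives $\osc_{B_r}u\le C(r^\alpha+\varepsilon_0 +J^{-\alpha/2})$ for $r\ge r_*$. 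Here $\alpha=\log(1/(1-\theta))/\log 4$, reduced if needed so that $\varepsilon$ and the truncation terms fit. For $r<r_*$, monotonicity of oscillation gives $\osc_{B_r}u\le\osc_{B_{r_*}}u\le C(r_*^\alpha+J^{-\alpha/2})\le C J^{-\alpha/2}$. Choosing $\varepsilon_0$ small and absorbing $\varepsilon$ into the iteration then yields \eqref{hd:eq:finite-jet-oscillation}.

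\emph{Main obstacle.} The delicate point is Step 1. I must verify that every viscosity test used in ABP and in the measure-to-$L^\varepsilon$ covering argument touches with jets bounded by the scale-appropriate $C r^{-2}$. The ABP envelope is only $C^{1,1}$ at contact points after the sup/inf-convolution regularization. If I regularize with parameter $\delta$, the jets grow like $\delta^{-1}$. I would take $\delta$ comparable to the smallest admissible scale $J^{-1/2}$, keeping jets $\le J$. Then I would track that the resulting error in the measure estimate is a power of $J^{-1/2}$, which is what produces the $J^{-\alpha/2}$ floor rather than a sharper one.
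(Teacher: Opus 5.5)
Your outer iteration, a decay step at every dyadic scale $r\gtrsim J^{-1/2}$ followed by monotonicity of $\omega$ below $r_*\simeq J^{-1/2}$, is the paper's. The mechanism you propose for each step, however, has a genuine gap, and it sits exactly where you place the $J^{-\alpha/2}$ loss.

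You correctly note that in the Calder\'on--Zygmund route the contact jets on a subcube of side $\ell$ have size $M^{k-1}\ell^{-2}$. But truncating the decomposition at side $\ell_*=cJ^{-1/2}$ does not give a small error. Take the level step $A=\{w>M^k\}$, $B=\{w>M^{k-1}\}$, and suppose the predecessor property ($|A\cap Q|>\mu|Q|\Rightarrow\tilde Q\subset B$) is checked only for cubes of side $\ge\ell_*$. Then the part of $A$ outside the selected cubes lies in finest cubes where $A$ merely has density $\le\mu$, and these cubes need not lie in $B$. All you obtain is $|A|\le\mu(|B|+|Q_1|)$, with no geometric decay from one level to the next.

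The frozen inf-convolution has the same defect. The inequality $u_\delta\le u$ only controls the $\sqrt{C\delta}$-neighbourhood of $\{u\le M\}$, not its measure. Without an a priori modulus for $u$, which is what is being proved, the discrepancy is not a power of $\delta$. So ``arrange the truncation error to be $CJ^{-1/2}$'' is not bookkeeping. It would need a mesoscopic covering lemma for thickened level sets plus a way back to the original ones, and the proposal supplies neither.

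Regularization is not the real obstacle anyway. Take a lower contact with $|D\phi(x)|\le p$ and $D^2\phi(x)\ge-aI$. Apply the finite-jet inequality to the test with its large eigenvalues capped: this shows those eigenvalues are at most $C(a+p+1)$ once $J\ge C(a+p+1)$. So only the slope and the concavity of lower tests must be controlled.

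The paper has no per-scale truncation at all. It asserts that the unit-scale weak Harnack is lossless once $J\ge J_0$. The reason given is that at level $M^k$ of the measure iteration, every test paraboloid has slope and opening $\le C_0M^k$, independently of the size of the covering ball. This is what a scale-invariant iteration gives, for example sliding paraboloids: there the comparison function in a ball of radius $\rho$ at opening $t$ is $t\rho^2\Phi(\cdot/\rho)$, whose Hessian $tD^2\Phi$ does not depend on $\rho$. Your own observation shows this fails for the $M^{k-1}$-normalized Calder\'on--Zygmund rescaling, so this scale invariance is the ingredient you are missing.

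The rest of the paper's argument runs as follows. Normalize $w/a$ with $a=\inf_{B_{1/2}}w+\varepsilon/\varepsilon_0$. Only levels $M^k\le 1/a$ are nontrivial, and their openings $C_0M^k\le C_0/a\le J/a$ are admissible. This gives $\osc_{B_{1/2}}u\le\vartheta\osc_{B_1}u+C\varepsilon$ with no truncation term. Rescaling $B_r(x_0)$ and dividing by $\Omega_r=\omega(r)+r^2\varepsilon/\varepsilon_0$ leaves jet radius $\ge cJr^2$. The step therefore iterates while $Jr^2\ge CJ_0$, and the floor $J^{-\alpha/2}$ is just $r_*^\alpha$ at the stopping scale.

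With that replacement your Step 2 goes through. One further correction: $\varepsilon$ should enter only as $\varepsilon r^2$ per step, so the additive $\varepsilon_0$ in your intermediate bound must not appear.
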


\begin{proof}
We only need to track the openings in the classical ABP weak-Harnack
argument.  At level $M^k$ of its measure iteration, every test paraboloid has
slope and opening bounded by $C_0M^k$, where $M>1$ and $C_0$ depend only on
$(n,\lambda,\Lambda,b_0)$.  Consequently, the standard distribution estimate
and weak Harnack inequality remain valid under
\eqref{hd:eq:finite-jet-upper}--\eqref{hd:eq:finite-jet-lower} for all levels
satisfying $C_0M^k\le J$.

There is no loss from this truncation at unit scale.  Indeed, for a
nonnegative function $w\le1$, put
\[
 a:=\inf_{B_{1/2}}w+\varepsilon/\varepsilon_0.
\]
The function $w/a$ has residual at most $\varepsilon_0$, jet radius $J/a$,
and range bounded by $1/a$.  Since $J\ge J_0$ for a sufficiently large
universal $J_0\ge C_0$, every relevant level satisfies
\[
 M^k\le\frac1a
 \quad\Longrightarrow\quad
 C_0M^k\le\frac{C_0}{a}\le\frac Ja.
\]
Thus the usual weak Harnack argument applies without change.  Applying it to
the two nonnegative oscillation halves gives
\begin{equation}\label{hd:eq:finite-jet-unit-decay}
 \osc_{B_{1/2}}u
 \le\vartheta\osc_{B_1}u+C\varepsilon
\end{equation}
for a universal $\vartheta\in(0,1)$.

It remains to identify the stopping scale.  Set
\[
 \omega(r):=\osc_{B_r(x_0)}u,\qquad
 \Omega_r:=\omega(r)+r^2\varepsilon/\varepsilon_0.
\]
After rescaling $B_r(x_0)$ to $B_1$ and dividing by $\Omega_r$, the residual
is at most $\varepsilon_0$, the drift is $rb$, and the available jet radius
is at least $cJr^2$.  Hence \eqref{hd:eq:finite-jet-unit-decay} yields
\[
 \omega(r/2)\le\vartheta\omega(r)+Cr^2\varepsilon
\]
as long as $Jr^2\ge CJ_0$.  Choose $\alpha\in(0,1)$ with
$\vartheta<2^{-\alpha}$ and iterate down to
$r_*\simeq J^{-1/2}$.  Below $r_*$ use monotonicity of $\omega$.  This gives
\eqref{hd:eq:finite-jet-oscillation}.  The exponent $J^{-\alpha/2}$ is
therefore forced by the second-order scaling of the contact jets.
\end{proof}

\begin{lemma}
\label{hd:lem:finite-jet-refined-estimate}
Let $K\Subset K'\Subset\mathbb R^n\setminus\{0\}$, let $R_j\to\infty$, and
let $a_0\le A_j\le C_0$.  Set
\begin{equation}\label{hd:eq:finite-jet-normalization}
 \delta_j:=A_jR_j^{2-m},\qquad
 v_j(x):=\frac{R_j^m}{A_j}h_*(R_jx).
\end{equation}
If $\|v_j\|_{L^\infty(K')}\le C_{K'}$, then, for all large $j$,
\begin{align}
 \osc_{B_r(x)}v_j
 &\le C_K\left(r^\alpha+\delta_j^{\alpha/2}\right),
 \label{hd:eq:vj-finite-jet-modulus}\\
 \|v_j\|_{L^\infty(K)}
 &\le C_K\|v_j\|_{L^2(K')}^\gamma
 +C_K\delta_j^{\alpha/2},
 \qquad
 \gamma:=\frac{2\alpha}{n+2\alpha},
 \label{hd:eq:vj-L2-Linfty}
\end{align}
whenever $B_{2r}(x)\Subset K'$.
\end{lemma}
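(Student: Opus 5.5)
The plan is to show that $v_j$ belongs, on a fixed neighbourhood of $K$, to a finite-jet class $\mathcal S_J^*(\lambda,\Lambda,b,\varepsilon;\cdot)$ whose jet radius $J\simeq\delta_j^{-1}$ and whose residual $\varepsilon$ tends to zero. Once that is in place, Lemma~\ref{hd:lem:finite-jet-abp} applied on balls gives \eqref{hd:eq:vj-finite-jet-modulus}, and an interpolation argument gives \eqref{hd:eq:vj-L2-Linfty}.

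\emph{Step 1: the equation for $v_j$.} Since $u_{R_j}=R_j^2h_*(R_j\cdot)=A_jR_j^{2-m}v_j=\delta_jv_j$, equation \eqref{hd:eq:scaled-threshold-equation} becomes
\[
\mathfrak f\bigl(B_{R_j}+\delta_j\mathcal H_{R_j}[v_j]+R_j^{-2}\delta_j^2Q(\nabla v_j)\bigr)=0 .
\]
Let $\phi$ touch $v_j$ from above at $x$, with $|D\phi|+\|D^2\phi\|\le J:=c\rho\,\delta_j^{-1}$, where $\rho$ is the tube width from Lemma~\ref{hd:lem:threshold-scalarization}. Then the perturbation $\delta_j\mathcal H_{R_j}[\phi]+R_j^{-2}\delta_j^2Q(\nabla\phi)$ has norm at most $\rho$. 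The argument matrix therefore stays in the $\rho$-tube around $B_{R_j}(K')$, where $\mathfrak f$ is $(\lambda,\Lambda)$-elliptic. The $C^{1,1}$ viscosity inequality at the contact gives $\mathfrak f(B_{R_j}+\delta_j\mathcal H[\phi]+\dots)\ge0$. Apply the mean value theorem along the segment from $B_{R_j}$; this segment stays in the tube. Dividing by $\delta_j$ then yields
\[
\mathcal M^+(D^2\phi)+b|D\phi|\ge-\delta_j^{-1}|\mathfrak f(B_{R_j})|-CR_j^{-2}\delta_j J^2 .
\]
Two points need checking here. First, the principal part of $\mathcal H$ is the $g_R$-Hessian, which is a uniformly bounded perturbation of $D^2$. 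Second, the first-order coefficients are bounded, so $b\le b_0$.

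\emph{Step 2: smallness of the residual.} By \eqref{hd:eq:threshold-background-residual},
\[
\delta_j^{-1}|\mathfrak f(B_{R_j})|\le CR_j^{m-2}R_j^{1-m}\log^2R_j=CR_j^{-1}\log^2R_j\to0 .
\]
The quadratic term satisfies $R_j^{-2}\delta_jJ^2\simeq R_j^{-2}\delta_j^{-1}=R_j^{m-4}/A_j$. This term is \emph{not} small when $m>4$, and that is where I expect the main obstacle. The remedy I would try first is to keep the gradient-square term exact rather than treating it as an error. At order $\delta_j$ it enters as $R_j^{-2}\delta_j|\nabla\phi|^2\le R_j^{-2}\delta_j J\cdot|D\phi|\simeq R_j^{-2}|D\phi|$, which can be absorbed into the drift $b|D\phi|$ with $b\le b_0$. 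This works because $Q$ is only quadratic in the gradient, not in the Hessian, and a bound $|D\phi|\le J$ on the slope alone suffices. The lower contact is handled symmetrically, using the global degenerate monotonicity of $\mathfrak f$ outside the tube only to confirm the sign convention.

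\emph{Step 3: conclusions.} Cover $K$ by balls $B_{2r_0}(x)\Subset K'$, rescale each to $B_2$, and normalise by $\max(1,\|v_j\|_{L^\infty(K')})\le C$. Lemma~\ref{hd:lem:finite-jet-abp} with $J\simeq\delta_j^{-1}$ (rescaling only changes $J$ by a fixed factor) then gives $\osc\le C(r^\alpha+\delta_j^{\alpha/2})$, which is \eqref{hd:eq:vj-finite-jet-modulus}.

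For \eqref{hd:eq:vj-L2-Linfty}, take $x_0$ with $|v_j(x_0)|=\|v_j\|_{L^\infty(K)}=:M$. On $B_r(x_0)$ one then has $|v_j|\ge M-C(r^\alpha+\delta_j^{\alpha/2})$. Hence
\[
\|v_j\|_{L^2(K')}^2\ge c\,r^n\bigl(M-Cr^\alpha-C\delta_j^{\alpha/2}\bigr)_+^2 .
\]
Choosing $r\simeq M^{1/\alpha}$ (when $M\gg\delta_j^{\alpha/2}$) gives $\|v_j\|_{L^2}^2\gtrsim M^{2+n/\alpha}$. This yields $M\lesssim\|v_j\|_{L^2}^{2\alpha/(n+2\alpha)}$, which is exactly the exponent $\gamma$; otherwise $M\lesssim\delta_j^{\alpha/2}$.
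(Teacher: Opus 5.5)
Your proposal is correct and follows the paper's argument. You rewrite the threshold equation for $v_j$, restrict to contact jets of size $\lesssim\delta_j^{-1}$ so that the increment stays in the ellipticity tube of Lemma~\ref{hd:lem:threshold-scalarization}, and divide by $\delta_j$ to land in $\mathcal S^*_{J_j}$ with $J_j\simeq\delta_j^{-1}$ and residual $O\bigl((1+\log^2R_j)/(A_jR_j)\bigr)$. You then apply Lemma~\ref{hd:lem:finite-jet-abp} and interpolate the $L^2$ norm against the oscillation modulus. Your maximum-point lower bound is equivalent to the paper's average-plus-oscillation bound, provided $r$ is capped at a fixed $r_K$ as the paper does.

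Absorbing $R_j^{-2}\delta_jQ(\nabla\phi)$ into the drift via $|\nabla\phi|\le J$ is the step the paper leaves implicit when it asserts drift $b_0$. It is genuinely needed: the crude bound $R_j^{-2}\delta_jJ^2\simeq R_j^{m-4}$ would not give a small residual once $m\ge4$.
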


\begin{proof}
Since $u_{R_j}=\delta_jv_j$, equations
\eqref{hd:eq:scaled-B-expansion} and
\eqref{hd:eq:scaled-threshold-equation} read
\[
 \mathfrak f\!\left(
 B_{R_j}+\delta_j\mathcal H_{R_j}[v_j]
 +R_j^{-2}\delta_j^2Q(\nabla v_j)\right)=0.
\]
At a contact jet satisfying
$|D\phi|+\|D^2\phi\|\le c\delta_j^{-1}$, the matrix increment stays in the
ellipticity tube of Lemma~\ref{hd:lem:threshold-scalarization}; here $c>0$
is fixed sufficiently small.  The global monotonicity of $\mathfrak f$
supplies the correct viscosity inequalities up to the boundary of that
tube.  Dividing the centered equation by $\delta_j$ therefore gives
\[
 v_j\in
 \mathcal S^*_{J_j}(\lambda,\Lambda,b_0,\eta_j;K'),
 \qquad
 J_j=c\delta_j^{-1},
\]
with
\begin{equation}\label{hd:eq:finite-jet-residual}
 \eta_j\le
 \frac{C_{K'}(1+\log^2R_j)}{A_jR_j}\longrightarrow0.
\end{equation}
After a fixed normalization of the $L^\infty$ bound,
Lemma~\ref{hd:lem:finite-jet-abp} gives
\eqref{hd:eq:vj-finite-jet-modulus}.

Put $X_j:=\|v_j\|_{L^2(K')}$.  For
$c\delta_j^{1/2}\le r\le r_K$ and $x\in K$,
\[
 |v_j(x)|
 \le C r^{-n/2}X_j+Cr^\alpha+C\delta_j^{\alpha/2}.
\]
Choose
\[
 r=\max\left\{c\delta_j^{1/2},
 X_j^{\,2/(n+2\alpha)}\right\},
\]
with $r=r_K$ if the displayed maximum is larger than $r_K$; that case is
absorbed by increasing $C_K$.  If the stopping scale is selected, then
$X_j\le C\delta_j^{(n+2\alpha)/4}$ and
$r^{-n/2}X_j\le C\delta_j^{\alpha/2}$.  Otherwise the first two terms
balance and equal $CX_j^\gamma$.  This proves
\eqref{hd:eq:vj-L2-Linfty}.
\end{proof}

\begin{lemma}
\label{hd:lem:secant-flux-additivity}
For logarithmic profiles $q_a,q_b$, put $d=q_b-q_a$ and denote
\[
 Q(p):=p\otimes p-\frac12|p|^2I,
 \qquad
 \mathcal B(p,\xi):=p\otimes\xi+\xi\otimes p-\langle p,\xi\rangle I,
\]

\begin{equation}\label{hd:eq:exact-secant-polynomial}
 J(q_a,q_b)
 :=\left[
 T_1(\mathscr A[q_a])
 +\frac12T_1\!\left(\Hess d+\mathcal B(\nabla q_a,\nabla d)\right)
 +\frac13T_1(Q(\nabla d))
 \right]\nabla d.
\end{equation}
Then $J(q_a,q_b)$ is exactly the path-averaged Newton vector field
$\int_0^1T_1(\mathscr A[q_a+td])\,dt\,\nabla d$.

\begin{equation}\label{hd:eq:general-secant-flux}
 \mathfrak F_\eta(q_a,q_b;R)
 :=\int_{A_{R,2R}}\frac1R\eta\!\left(\frac rR\right)
 \langle J(q_a,q_b),\nabla r\rangle\,dV_{g_\infty}.
\end{equation}

For $B\in\mathbb R$, set
\[
 k:=B r^{-m}\psi_*,\qquad \bar q:=q_0+k,\qquad
 h_*:=q-\bar q,\qquad M_*(R):=\sup_{A_R^*}r^m|h_*|.
\]
Assume that the shifted pair $(\bar q,q)$ satisfies the Caccioppoli estimate
of Lemma~\ref{hd:lem:relative-newton-flux}.  Then

\begin{equation}\label{hd:eq:secant-additivity-rate}
 \begin{aligned}
 &\big|\mathfrak F_\eta(q_0,q;R)
 -\mathfrak F_\eta(q_0,\bar q;R)
 -\mathfrak F_\eta(\bar q,q;R)\big|\\
 &\hspace{2cm}\le
 C R^{2-m}\bigl(1+M_*(R)+M_*(R)^2\bigr).
 \end{aligned}
\end{equation}
Moreover,
\begin{equation}\label{hd:eq:smooth-kernel-flux-rate}
 \mathfrak F_\eta(q_0,q_0+Br^{-m}\psi_*;R)
 =B\tau_*+O\!\left(R^{-1}(1+\log R)+R^{2-m}\right).
\end{equation}
Consequently, for $B=B_*:=\mathfrak F_\infty/\tau_*$,
\begin{equation}\label{hd:eq:shifted-flux-zero}
 \lim_{R\to\infty}\mathfrak F_\eta(\bar q,q;R)=0.
\end{equation}
\end{lemma}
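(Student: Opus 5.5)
The lemma has four parts: the exact secant formula for $J(q_a,q_b)$, approximate additivity \eqref{hd:eq:secant-additivity-rate}, the kernel flux \eqref{hd:eq:smooth-kernel-flux-rate}, and the conclusion \eqref{hd:eq:shifted-flux-zero}. I would prove them in that order.

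\emph{Secant formula.} This is a direct integration. Since $\mathscr A[q]$ is quadratic in $q$, along $q_t=q_a+td$ one has the exact expansion
\[
 \mathscr A[q_a+td]=\mathscr A[q_a]+t\bigl(\Hess d+\mathcal B(\nabla q_a,\nabla d)\bigr)+t^2Q(\nabla d).
\]
$T_1$ is linear, so integrating $\int_0^1 T_1(\cdot)\,dt$ produces the weights $1,\tfrac12,\tfrac13$. This gives \eqref{hd:eq:exact-secant-polynomial}.

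\emph{Additivity.} Write $k=\bar q-q_0$ and $h_*=q-\bar q$, so that $q-q_0=k+h_*$. Expand all three currents with the polynomial formula.

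Start with the base tensors. The base tensor of $J(\bar q,q)$ is $T_1(\mathscr A[\bar q])$. Expanding it around $q_0$,
\[
 T_1(\mathscr A[\bar q])=T_1(\mathscr A[q_0])+T_1(\Hess k+\mathcal B(\nabla q_0,\nabla k))+T_1(Q(\nabla k)).
\]

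Now collect terms in the difference $J(q_0,q)-J(q_0,\bar q)-J(\bar q,q)$. Every term linear in $\nabla d$ with coefficient $T_1(\mathscr A[q_0])$ cancels exactly. What survives are bilinear cross terms in $(k,h_*)$ of the schematic form
\[
 T_1(\Hess k)\nabla h_*,\qquad T_1(\Hess h_*)\nabla k,
\]
together with cubic terms.

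Then estimate these remainders.
\begin{itemize}
\item Since $k=Br^{-m}\psi_*$ is smooth with $|\nabla^j k|\lesssim r^{-m-j}$, the term $\int\frac1R\eta\,T_1(\Hess k)(\nabla h_*,\nabla r)$ is bounded by Cauchy--Schwarz against the Caccioppoli energy $\int_{A_R}|\nabla h_*|^2\lesssim R^{4-m}M_*^2$. This uses the coercivity $\mathbf T\ge cr^{-4}$.
\item In the term carrying $\Hess h_*$, move the derivative onto the smooth factor using the null-Lagrangian structure $(T_1(D^2u)\nabla v)$, which is a divergence up to first-order terms. This costs only $\nabla h_*$ and derivatives of $\eta$ and of $k$.
\end{itemize}
Counting homogeneities in $R$ (volume $R^{n}$, flux weight $R^{-1}$, $n=m+6$) gives the rate $R^{2-m}(1+M_*+M_*^2)$. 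The cubic terms are handled the same way, using the quartic control from Caccioppoli.

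\emph{Kernel flux.} For \eqref{hd:eq:smooth-kernel-flux-rate}, take $d=k$ smooth. The leading part is
\[
 T_1(\mathscr A[q_0])\nabla k=r^{-4}\bigl(N+O(r^{-1}+r^{2-m}\log r)\bigr)\nabla k,
\]
whose sphere flux equals $B\tau_*$ by \eqref{hd:eq:tau-sphere-flux}. The remaining pieces give the displayed errors:
\begin{itemize}
\item the metric correction $g_\infty-g_{\mathrm E}$ and the coefficient error in $T_1(\mathscr A[q_0])$ give $O(R^{-1}(1+\log R))$;
\item the higher terms $\tfrac12T_1(\Hess k+\cdots)\nabla k$ and $\tfrac13T_1(Q(\nabla k))\nabla k$ are quadratic in $k$, hence $O(R^{2-m})$.
\end{itemize}

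\emph{Conclusion.} The flux $\mathfrak F_\eta(q_0,q;R)\to\mathfrak F_\infty$ by Lemma~\ref{hd:lem:relative-newton-flux}. By \eqref{hd:eq:smooth-kernel-flux-rate} with $B=B_*$, $\mathfrak F_\eta(q_0,\bar q;R)\to B_*\tau_*=\mathfrak F_\infty$. The additivity error tends to zero because $M_*$ is bounded, from trapping, and $m\ge3$. Subtracting gives \eqref{hd:eq:shifted-flux-zero}.

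The main obstacle is the additivity step, and specifically the cross term $T_1(\Hess h_*)\nabla k$. Here $h_*$ is only $C^{1,1}$ and only energy-controlled, so its Hessian cannot be paired directly. I would first try the divergence identity
\[
 \bigl(T_1(D^2u)\nabla v\bigr)_i=\partial_j(u_iv_j)-\partial_i\langle\nabla u,\nabla v\rangle+\text{(symmetric correction)}
\]
to integrate by parts onto $\eta$ and $k$. The curvature commutators it produces are lower order.
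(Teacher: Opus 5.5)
Your proposal is correct and follows the paper's proof essentially step for step: the same exact quadratic expansion of $\mathscr A$ along the path, the same cancellation leaving only $(k,h_*)$ cross terms (bounded via $|\nabla^\ell k|\lesssim R^{-m-\ell}$ and the Caccioppoli energy plus coercivity), the same leading-order kernel flux computation, and the same final subtraction. The only local difference is the $T_1(D^2h_*)\nabla k$ term. The paper integrates by parts twice, using $\operatorname{div}T_1(D^2h_*)=0$, to reach $\int|h_*|\,|D^2(\nabla k\otimes X_R)|\le CM_*R^{2-m}$. Your single integration by parts also works, but only once the identity is written correctly as $(T_1(D^2u)\nabla v)_i=\partial_j(u_jv_i)-\partial_i\langle\nabla u,\nabla v\rangle$, which is exact in flat space with no correction. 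Your displayed version with $\partial_j(u_iv_j)$ instead produces $T_1(D^2v)\nabla u$.
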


\begin{proof}
The identity
$Q(p+t\xi)=Q(p)+t\mathcal B(p,\xi)+t^2Q(\xi)$ and linearity of $T_1$
give \eqref{hd:eq:exact-secant-polynomial} after integration in $t$.

Apply this formula to the three paths
$q_0\to q_0+k+h_*$, $q_0\to q_0+k$, and
$q_0+k\to q_0+k+h_*$.  Their difference is a sum of terms, each containing
one $k$ increment and one $h_*$ increment.  The only estimates needed on the
enlarged annulus are
\[
 |\nabla^\ell k|\le C_BR^{-m-\ell}\ (0\le\ell\le3),\qquad
 \int|\nabla h_*|^2\le CR^{4-m}(1+M_*^2),\qquad
 \int|\nabla h_*|\le CR^5(1+M_*).
\]
The last two follow from the shifted Caccioppoli estimate, coercivity, and
Cauchy--Schwarz.  They bound every first-derivative monomial in the defect by
$CR^{2-m}(1+M_*+M_*^2)$.

Only the term with $D^2h_*$ needs a structural observation.  With
$X_R=R^{-1}\eta(r/R)\nabla r$, its Euclidean part is of the form
\[
 I_R=\int_{A_R^*}\langle T_1(D^2h_*)\nabla k,X_R\rangle\,dx.
\]
Since $\operatorname{div}T_1(D^2h_*)=0$, two integrations by parts give
\[
 |I_R|\le C\int_{A_R^*}|h_*|\,
 |D^2(\nabla k\otimes X_R)|\,dx
 \le C M_*(R)R^{2-m}.
\]
Metric and connection errors require only one integration by parts and gain
the factors $g_\infty-g_{\mathrm E}=O_2(R^{-2})$ or
$\Gamma(g_\infty)=O_1(R^{-3})$.  The same summary bounds therefore prove
\eqref{hd:eq:secant-additivity-rate}, without either the full defect
polynomial or a pointwise Hessian estimate.

\[
 r^4T_1(\mathscr A[q_0])=N(\theta)+O(R^{-1}(1+\log R)),
\]
so the linear contribution of $k=Br^{-m}\psi_*$ is $B\tau_*$ with the stated
error; the remaining terms are quadratic in $k$ and are $O(R^{2-m})$.  This
gives \eqref{hd:eq:smooth-kernel-flux-rate}.  Combining the two estimates with
the original flux limit and $M_*=O(1)$ gives
\eqref{hd:eq:shifted-flux-zero}.

\end{proof}

\subsection{Locking the critical kernel coefficient}

\begin{theorem}
\label{hd:thm:post-resonance-locking}
Under the assumptions of
Theorem~\ref{hd:thm:actual-leading-resonance}, one has
\begin{align}
 \sup_{\theta\in\mathbb S^{n-1}}r^m
 \left|
 \log W-\log P_{\rm res}-B_*r^{-m}\psi_*
 \right|&\longrightarrow0,
 \label{hd:eq:hstar-o}\\
 W&=P_{\rm res}
 +\frac{\mathfrak F_\infty}{\tau_*}r^{-m}\psi_*
 +o(r^{-m}).
 \label{hd:eq:post-resonance-final-W}
\end{align}

\end{theorem}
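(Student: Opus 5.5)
We argue by contradiction, following the outline in the logical architecture section. Fix $B=B_*=\mathfrak F_\infty/\tau_*$, so that $\bar q=q_0+B_*r^{-m}\psi_*$ and $h_*=q-\bar q$. By Theorem~\ref{hd:thm:actual-leading-resonance} we have $M_*(R)=\sup_{A_R^*}r^m|h_*|\le C$. Suppose \eqref{hd:eq:hstar-o} fails. Then there are $a_0>0$, radii $R_j\to\infty$ and points $x_j$ with $|x_j|=R_j$ and $R_j^m|h_*(x_j)|\ge a_0$.

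Set $A_j:=\max\{a_0,\,M_*(R_j)\}\in[a_0,C]$ and define $v_j(x):=R_j^mA_j^{-1}h_*(R_jx)$ as in Lemma~\ref{hd:lem:finite-jet-refined-estimate}. These functions are uniformly bounded on every fixed annulus $K'$, because the trapping bound holds at all dyadic scales. By construction $|v_j(x_j/R_j)|\ge a_0/C$ at a point of the unit sphere.

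Next we extract a weak limit. Rescale the shifted Caccioppoli estimate for the pair $(\bar q,q)$, which we assume as in Lemma~\ref{hd:lem:secant-flux-additivity}, together with the coercivity \eqref{hd:eq:path-coercivity}. This gives a uniform $H^1_{\rm loc}$ bound for $v_j$. After a subsequence, $v_j\to v_\infty$ strongly in $L^2_{\rm loc}$ and weakly in $H^1_{\rm loc}$.

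We pass to the limit in the equation exactly as in the proof of \eqref{hd:eq:flux-detects-B}. The rescaled path-averaged Newton current is $N_\infty\nabla v_j$ plus errors carrying a factor $\delta_j=A_jR_j^{2-m}$ or $R_j^{-1}$. Its divergence is controlled by \eqref{hd:eq:exact-relative-current}, with residual $O(R_j^{-1}\log^2R_j)$ by Lemma~\ref{hd:lem:threshold-scalarization}. Hence $\operatorname{div}(N_\infty\nabla v_\infty)=0$ weakly on $\mathbb R^n\setminus\{0\}$, with $|v_\infty|\le C|x|^{-m}$. Writing $v_\infty=r^{-m}z(\log r,\theta)$ gives a bounded weak solution of $\mathscr Pz=0$ on the full cylinder. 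Lemma~\ref{hd:lem:critical-cylinder-liouville} then forces $z\equiv B\psi_*$ with $\Phi[z]=\tau_*B$.

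To show $B=0$ we use the flux. The flux of $v_\infty$ equals the limit of the normalized averaged fluxes $A_j^{-1}\mathfrak F_\eta(\bar q,q;R_j)$, by the same distributional current convergence; the Hessian part is handled by the null-Lagrangian identity and the quartic part vanishes. By \eqref{hd:eq:shifted-flux-zero} this limit is $0$, and $A_j\ge a_0$ keeps the normalization harmless. Since $\tau_*<0$ by Proposition~\ref{hd:prop:n6-transversality}, we get $B=0$ and hence $v_\infty\equiv0$.

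The main obstacle is upgrading this $L^2_{\rm loc}$ convergence to uniform convergence near the unit sphere. This is exactly what \eqref{hd:eq:vj-L2-Linfty} provides: on a compact annulus $K$ containing $\mathbb S^{n-1}$,
\[
 \|v_j\|_{L^\infty(K)}\le C_K\|v_j\|_{L^2(K')}^\gamma+C_K\delta_j^{\alpha/2}\longrightarrow0,
\]
since $\delta_j\to0$ (here $m>2$). This contradicts $|v_j(x_j/R_j)|\ge a_0/C$, and so proves \eqref{hd:eq:hstar-o}. The delicate points are, first, verifying that the residual $\eta_j$ and the restricted contact jets satisfy the hypotheses of Lemma~\ref{hd:lem:finite-jet-abp} uniformly, which I expect to hold since $R_j^{-1}\log^2R_j\to0$; and second, checking that the flux normalization commutes with the subsequence limit.

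Finally, \eqref{hd:eq:post-resonance-final-W} follows by exponentiating. We have $W=P_{\rm res}\exp(B_*r^{-m}\psi_*+o(r^{-m}))$, and $P_{\rm res}=1+O(r^{-2})$. Therefore $W=P_{\rm res}+B_*r^{-m}\psi_*+o(r^{-m})$, with $B_*=\mathfrak F_\infty/\tau_*$.
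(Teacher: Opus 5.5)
Your proposal is correct and follows the paper's proof essentially step for step. You argue by contradiction with the normalized remainders $v_j$, use the shifted Caccioppoli $H^1$ bound and Rellich, and derive the weak limit equation $\operatorname{div}(N_\infty\nabla v_\infty)=0$ via the null-Lagrangian and quartic estimates; the vanishing shifted flux together with the critical-cylinder Liouville lemma gives $v_\infty\equiv0$, the finite-jet bound \eqref{hd:eq:vj-L2-Linfty} upgrades $L^2_{\rm loc}$ to uniform convergence, and exponentiation finishes. The differences are cosmetic: you normalize by $\max\{a_0,M_*(R_j)\}$ instead of the spherical supremum $A(s_j)$, and you apply Liouville before using the flux to force $B=0$, whereas the paper first shows $\Phi[z_\infty]=0$ and then invokes Liouville.
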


\begin{proof}

Set $B_*=\mathfrak F_\infty/\tau_*$,
$\bar q=q_0+B_*r^{-m}\psi_*$, and $h_*=q-\bar q$.  By
Theorem~\ref{hd:thm:actual-leading-resonance}, $|h_*|\le Cr^{-m}$.  Suppose
\eqref{hd:eq:hstar-o} fails.  Define
\[
 A(s):=\sup_{\theta\in\mathbb S^{n-1}}e^{ms}|h_*(e^s,\theta)|.
\]
Then $A(s)\le C$, while failure of \eqref{hd:eq:hstar-o} gives
$\limsup_{s\to\infty}A(s)>0$.  Hence there are $s_j\to\infty$, a constant
$a_0>0$, and points $\theta_j\in\mathbb S^{n-1}$ such that
\[
 R_j:=e^{s_j},\qquad A_j:=A(s_j),\qquad
 a_0\le A_j\le C,\qquad
 R_j^m|h_*(R_j,\theta_j)|=A_j.
\]
Set $\delta_j:=A_jR_j^{2-m}\to0$ and
\[
 v_j(x):=\frac{R_j^m}{A_j}h_*(R_jx),
 \qquad
 z_j(\sigma,\theta):=e^{m\sigma}v_j(e^\sigma\theta).
\]
Then
\begin{equation}\label{hd:eq:zj-nonzero-center}
 |z_j(\sigma,\theta)|
 \le \frac{A(s_j+\sigma)}{A_j}
 \le \frac{C}{a_0},
 \qquad
 |z_j(0,\theta_j)|=1.
\end{equation}
The global bound for $A$ gives uniform bounds on every fixed cylinder.
The shifted Caccioppoli estimate gives, for every compact annulus
$K\Subset\mathbb R^n\setminus\{0\}$,
\begin{equation}\label{hd:eq:vj-H1}
 \|v_j\|_{H^1(K)}\le C_K.
\end{equation}
Rellich compactness and a diagonal extraction give
\begin{equation}\label{hd:eq:vj-weak-strong-limit}
 v_j\rightharpoonup v_\infty\quad\text{in }H^1_{\rm loc},
 \qquad
 v_j\longrightarrow v_\infty\quad\text{in }L^2_{\rm loc}.
\end{equation}
The corresponding cylinder function
\[
 z_\infty(\sigma,\theta)
 :=e^{m\sigma}v_\infty(e^\sigma\theta)
\]
is bounded on the whole cylinder.  We identify its equation weakly, before
using any pointwise compactness.  For the conormal information, use the
shifted relative flux
\begin{equation}\label{hd:eq:shifted-flux-zero-recalled}
 \mathfrak F_*(R):=\mathfrak F_\eta(\bar q,q;R),
 \qquad
 \lim_{R\to\infty}\mathfrak F_*(R)=0.
\end{equation}
To identify the weak equation, set
$Q(p):=p\otimes p-\tfrac12|p|^2I$ and
$\varepsilon_j:=R_j^{-2}\delta_j^2$.  The exact quadratic dependence on the
normalized perturbation is
\[
 \mathcal B_{R_j}[t\delta_jv_j]
 =B_{R_j}+t\delta_j\mathcal H_{R_j}[v_j]
   +t^2\varepsilon_jQ(\nabla v_j).
\]
Integrating this polynomial in $t$ gives the normalized path-averaged Newton
field
\begin{equation}\label{hd:eq:normalized-current-expand}
 \begin{aligned}
 \mathcal J_j={}&T_1(B_{R_j})\nabla v_j
 +\frac{\delta_j}{2}T_1(\mathcal H_{R_j}[v_j])\nabla v_j\\
 &+\frac{\varepsilon_j}{3}T_1(Q(\nabla v_j))\nabla v_j
 \end{aligned}
\end{equation}
The rescaled exact path identity gives
\begin{equation}\label{hd:eq:normalized-current-divergence}
 \operatorname{div}\mathcal J_j\longrightarrow0
 \qquad\text{in }\mathcal D'_{\rm loc}.
\end{equation}
Indeed, this is the rescaled exact path identity
\eqref{hd:eq:exact-relative-current}.  Its background term is bounded by
\eqref{hd:eq:finite-jet-residual}; the terms containing
$\nabla\bar q_{R_j}$ have coefficients converging to those of the frozen
Newton field, and the remaining nonlinear terms carry $\delta_j$.  The
Caccioppoli bound makes all of these errors tend to zero distributionally.

\begin{equation}\label{hd:eq:null-lagrangian-vj}
 \big(T_1(D^2v_j)\nabla v_j\big)_i
 =\partial_k(v_{j,i}v_{j,k})
 -\partial_i|\nabla v_j|^2.
\end{equation}
This identity shows that the Hessian part passes distributionally without pointwise
$D^2v_j$ bounds.  The remaining cubic gradient term is controlled by the
weighted quartic estimate.  Indeed, the shifted Caccioppoli inequality gives
$\int_{R_jK}|\nabla h_*|^4\le C_KR_j^{-m}$.  Using
$h_*(R_jx)=A_jR_j^{-m}v_j(x)$ and $n=m+6$, a single change of variables
gives directly
\begin{equation}\label{hd:eq:vj-quartic-weighted}
 \varepsilon_j\int_K|\nabla v_j|^4\le C_KA_j^{-2}\le C_K.
\end{equation}
Together with the uniform $H^1$ bound, H\"older's inequality now gives the
precise smallness required for the cubic part:
\[
 \varepsilon_j\int_K|\nabla v_j|^3
 \le
 \left(\varepsilon_j\int_K|\nabla v_j|^4\right)^{1/2}
 \left(\varepsilon_j\int_K|\nabla v_j|^2\right)^{1/2}
 \le C_K\varepsilon_j^{1/2}\longrightarrow0.
\]
Consequently,
\begin{equation}\label{hd:eq:normalized-current-limit}
 \mathcal J_j\longrightarrow N_\infty\nabla v_\infty
 \quad\text{in }\mathcal D'_{\rm loc}.
\end{equation}
Indeed, the term multiplied by $\delta_j$ tends to zero distributionally by
\eqref{hd:eq:null-lagrangian-vj} and the uniform $H^1$ bound, while the cubic
term tends to zero by \eqref{hd:eq:vj-quartic-weighted}; the leading
coefficients converge locally uniformly to $N_\infty$.
Combining \eqref{hd:eq:normalized-current-divergence} and
\eqref{hd:eq:normalized-current-limit} gives
\begin{equation}\label{hd:eq:vinfty-linear}
 \mathscr L_\infty v_\infty
 =\operatorname{div}(N_\infty\nabla v_\infty)=0
 \qquad\text{weakly on }\mathbb R^n\setminus\{0\}.
\end{equation}
Consequently,
\begin{equation}\label{hd:eq:zinfty-P}
 \mathscr Pz_\infty=0
\end{equation}
by \eqref{hd:eq:cylinder-conjugation}.

For every fixed cylinder shift $\sigma$, the physical radius
$R_je^\sigma$ tends to infinity.  Divide the shifted flux at this radius by
$A_j$ and use \eqref{hd:eq:normalized-current-limit}.  Since
$A_j\ge a_0$ and \eqref{hd:eq:shifted-flux-zero-recalled} holds at every such
radius, the limiting averaged conormal is zero.  The conormal is constant for
a solution of \eqref{hd:eq:zinfty-P}, so
\begin{equation}\label{hd:eq:zinfty-zero-conormal}
 \Phi[z_\infty]=0.
\end{equation}
Lemma~\ref{hd:lem:critical-cylinder-liouville} then gives
$z_\infty\equiv0$.  The same conclusion holds for every subsequential
$L^2_{\rm loc}$ limit.  It follows that
\begin{equation}\label{hd:eq:vj-L2-zero}
 \|v_j\|_{L^2(K')}\longrightarrow0
\end{equation}
on every compact annulus $K'$: otherwise a subsequence with a positive
$L^2$ lower bound would have, by \eqref{hd:eq:vj-H1}, a further strong
$L^2$ limit, which has just been shown to vanish.

Choose $K\Subset K'$ with $\{|x|=1\}\subset K$.  The refined estimate
\eqref{hd:eq:vj-L2-Linfty}, together with
\eqref{hd:eq:vj-L2-zero} and $\delta_j\to0$, yields
\[
 \|v_j\|_{L^\infty(K)}\longrightarrow0.
\]
This contradicts \eqref{hd:eq:zj-nonzero-center}.  Therefore
\eqref{hd:eq:hstar-o} holds.  It gives
$\log W=\log P_{\rm res}+B_*r^{-m}\psi_*+o(r^{-m})$.
Since $P_{\rm res}=1+O(r^{-2})$, exponentiation yields
\[
 W=P_{\rm res}+B_*r^{-m}\psi_*+o(r^{-m}).
\]
Substituting
$B_*=\mathfrak F_\infty/\tau_*$ proves
\eqref{hd:eq:post-resonance-final-W}.

\end{proof}

\section{Higher-order flatness and the moving curvature scale}
\label{sec:high-moving-scale}

Suppose now that the first nonzero Kelvin--Schouten coefficient is $C_q$.
The position of its relative degree $j$ with respect to the bounded scale
$\alpha$
determines the mechanism.  More precisely, we assume
\begin{equation}\label{hd:eq:first-nonzero-Cq}
\begin{gathered}
  (A_{g_\infty})^\sharp
  =R^{-4-q}{C}_q(\theta)+O_1(R^{-5-q}),
  \qquad
  {C}_0=\cdots={C}_{q-1}=0,
  \qquad
  {C}_q\not\equiv0,\\
  j:=q+2,
  \qquad
  \alpha:=\frac{n-4}{2}.
\end{gathered}
\end{equation}

\subsection{The case \texorpdfstring{$j<\alpha$}{j less than alpha}}
\label{hd:subsec:generalized-first-cell}

\begin{theorem}\label{hd:thm:subcritical-first-cell-trapping}
Assume \eqref{hd:eq:first-nonzero-Cq}, with $j<\alpha$, and retain the
standard Green-end regularity and comparison theory.  Then there is a unique
admissible $C^{1,1}$ cell $\phi_j$ such that
\begin{equation}\label{hd:eq:general-cell-limit}
 \mathcal K_j[\phi_j]+C_q\in\partial\Gamma_2,
\end{equation}
and
\begin{equation}\label{hd:eq:general-cell-first-tangent}
 R^j(1-W(R,\theta))\to\phi_j(\theta)
 \qquad\text{uniformly on }\mathbb S^{n-1}.
\end{equation}
Hence
\begin{equation}\label{hd:eq:general-cell-first-expansion-U}
 U_{\rm cn}(\rho,\theta)
 =\rho^{-\alpha}+\frac\alpha2\phi_j(\theta)\rho^{j-\alpha}
 +o(\rho^{j-\alpha}).
\end{equation}
No Newton nondegeneracy is required for this first correction.
\end{theorem}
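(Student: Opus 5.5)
The proof has two parts. First we solve the generalized cell equation \eqref{hd:eq:general-cell-limit}. Then we trap $W$ between barriers built on that cell, obtaining first an $O(R^{-j})$ bound and then the sharp tangent \eqref{hd:eq:general-cell-first-tangent}.

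\emph{Existence and uniqueness of the cell.} We repeat the continuity scheme of Proposition~\ref{hd:prop:first-cell} with $\mathcal K_2,C_0$ replaced by $\mathcal K_j,C_q$. The strict problem is $\sigma_2(\mathcal K_j[\phi]+tC_q)=\varepsilon$ with admissible branch. The key input is Lemma~\ref{hd:lem:Hmu}: since $j<\alpha$, we have $H_j\in\Gamma_2^\circ$. This gives the constant solution at $t=0$. It also gives the data comparison \eqref{hd:eq:first-cell-data-comparison}, via superadditivity of $\sqrt{\sigma_2}$ and the choice $kH_j\pm(C-D)\in\Gamma_2^\circ$. That comparison in turn yields the $C^0$ bound, and the rotation argument yields the $C^1$ bound.

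\begin{itemize}
\item \emph{Positivity of the linearization.} The zeroth-order coefficient satisfies $N:H_j\ge2\sqrt\varepsilon\sqrt{\sigma_2(H_j)}>0$, which gives openness.
\item \emph{Second-derivative bound.} For $\mathcal K_j$ the radial block is $-j(j+1)\phi$ and the off-diagonal term is $(j+1)\nabla\phi$. We expect the same Schur-complement shift $B=-\nabla^2\phi+a_j\phi g_S+tC_q$ to produce a shifted equation $\sigma_2(B)=c|\nabla\phi|^2+c'\phi^2+\varepsilon$, from which the trace maximum principle bounds $\nabla^2\phi$ independently of $\varepsilon$. If the shift constant is unfavorable for general $j$, we fall back on a semiconcavity bound obtained by the same rotation/comparison trick applied to second differences.
\end{itemize}

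Letting $\varepsilon\to0$ gives a $C^{1,1}$ admissible limit. Uniqueness follows from the comparison argument at the end of that proof, since $\mathcal K_j[u-v]$ at a positive maximum lies in $\Gamma_2^\circ$. We do not claim smoothness or positivity of the Newton tensor, matching the statement.

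\emph{Coarse trapping $|W-1|\le CR^{-j}$.} We mimic Proposition~\ref{hd:prop:R-2}, using $\underline W_M=1-MR^{-j}$ and $\overline W_M=1+MR^{-j}$. Their scaled matrices are $R^{j+2}\mathcal A=\pm MH_j+C_q+O(R^{-1}+M^2R^{-j})$. The lower barrier is strictly admissible because $H_j\in\Gamma_2^\circ$. The upper barrier has negative trace because $\tr C_q=0$ (traceless Weyl/Cotton-type leading coefficient). We start from the initial decay $R^{-\beta_0}$ and iterate in exponent as in Proposition~\ref{prop:low-barrier-package}(i) if needed, with the cone signs of $1\pm aR^{-\gamma}$ valid for all $\gamma<\alpha$. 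Lemma~\ref{hd:lem:W-multiplicative-compensation} then closes the comparison.

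\emph{Sharp tangent.} This is the main obstacle: without $T_1(\mathcal K_j[\phi_j]+C_q)\ge c_0I$, the linearized perturbation barriers of Lemma~\ref{hd:lem:refined-smooth-barriers} are unavailable. Instead we use blow-down compactness, in the style of Section~\ref{sec:subcritical}, with normalization $\varepsilon_R=R^{-j}$. We define $W(Rx)=1+R^{-j}P_R(x)$. Then $P_R$ is bounded, Lipschitz by the Bernstein argument of Proposition~\ref{prop:low-Bernstein}, and solves $\lambda(D^2P_R+C_R+o(1))\in\partial\Gamma_2$ with $C_R\to\widehat C_q:=|x|^{-j-2}C_q(\theta)$. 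Any limit $P$ solves $\lambda(D^2P+\widehat C_q)\in\partial\Gamma_2$ on $\mathbb R^n\setminus\{0\}$ with $|P|\le C|x|^{-j}$.

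The step we expect to be hardest is a Liouville statement: such a $P$ must equal $-|x|^{-j}\phi_j(\theta)$. We plan to prove it by comparison on annuli $\{\epsilon<|x|<1/\epsilon\}$ against the homogeneous solution family $-(1\pm\eta)|x|^{-j}\phi_j \mp \kappa(|x|^{-\gamma}\text{ terms})$. Since $\widehat C_q$ is homogeneous of degree $-j-2$, the scalings $\lambda^{j}P(\lambda x)$ are again solutions. Strict sub/supersolutions are obtained by adding $\pm\kappa\, |x|^{-\gamma}$ with $\gamma<j$ near infinity and $\gamma\in(j,\alpha)$ near the origin, both with $H_\gamma\in\Gamma_2^\circ$. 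The admissible superadditivity pushes these terms strictly inside or strictly outside the cone without needing a Newton-tensor lower bound, and the decay $|P|\le C|x|^{-j}$ controls the boundary terms as $\epsilon\to0$. Uniqueness of the limit then upgrades subsequential convergence to full convergence. Evaluating on $|x|=1$ gives \eqref{hd:eq:general-cell-first-tangent}. Finally, $F=W^{-\alpha/2}$ and $U_{\rm cn}=\rho^{-\alpha}F$ give \eqref{hd:eq:general-cell-first-expansion-U}, since the quadratic remainder is $O(\rho^{2j})=o(\rho^j)$.
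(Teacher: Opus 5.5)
Your cell construction and your coarse bound $|1-W|\le CR^{-j}$ are the paper's argument. Two small points on that part. The shift you hedge on is exact for every $j<\alpha$: with $(a,\beta,D)$ the radial, mixed and tangential blocks, $B=D+\frac{a}{n-2}g_S$ cancels the linear trace terms and gives $\sigma_2(B)=|\beta|^2+\frac{n-1}{2(n-2)}a^2+\varepsilon$, and the coefficient in your $B$ is $\frac{j(n-j-3)}{n-2}>0$, so no fallback is needed. The upper coarse barrier also needs no tracelessness of $C_q$ (not evident for $q\ge2$), since $-M\sigma_1(H_j)$ beats any bounded trace.

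\textbf{The gap.} It lies in the step you single out as hardest, and your barrier family $-(1\pm\eta)|x|^{-j}\phi_j\mp\kappa(\cdots)$ has two defects. First, rescaling the amplitude is not compatible with the equation, because $C_q$ does not scale:
\[
D^2\bigl(-(1\pm\eta)|x|^{-j}\phi_j\bigr)+\widehat C_q=|x|^{-j-2}\bigl(\mathcal K_j[\phi_j]+C_q\pm\eta\,\mathcal K_j[\phi_j]\bigr),
\]
and $\mathcal K_j[\phi_j]$ has no definite cone orientation. So the $\eta$-term has no sign; at best it must be dominated by the $\kappa$-terms, and then it is useless. Second, and more seriously, any barrier built on the degenerate cell $\phi_j$ is only $C^{1,1}$ and lies on the right side of $\partial\Gamma_2$ only almost everywhere. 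That does not feed into the smooth strict-barrier comparison principle. On the exterior side you cannot repair this by mollification either, since the complement of $\Gamma_2$ is not convex.

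\textbf{The missing device.} This is exactly how the paper proceeds: use the smooth strict cells $\phi_\varepsilon$, with $\sigma_2(\mathcal K_j[\phi_\varepsilon]+C_q)=\varepsilon$, and perturb additively in the constant mode. Take $s\ge0$ and $j+\eta<\alpha$.
\begin{itemize}
\item Adding $\delta H_j+sH_{j+\eta}$ keeps the matrix strictly inside the cone.
\item Subtracting it puts the matrix a uniform distance outside $\overline\Gamma_2$ once $\sqrt\varepsilon<\tfrac12\delta\sqrt{\sigma_2(H_j)}$. Otherwise, adding the term back and using superadditivity of $\sqrt{\sigma_2}$ would force $\sqrt\varepsilon\ge\delta\sqrt{\sigma_2(H_j)}$.
\end{itemize}
No Newton nondegeneracy enters.

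With these barriers the blow-down becomes unnecessary. The paper compares directly on the end:
\[
1-R^{-j}\bigl(\phi_\varepsilon+\delta+M(R_0/R)^\eta\bigr)\le W\le 1-R^{-j}\bigl(\phi_\varepsilon-\delta-M(R_0/R)^\eta\bigr),\qquad 0<\eta<\alpha-j.
\]
Here the coarse bound orders the profiles on $\partial B_{R_0}$ for a fixed large $M$. The curved errors $O(R_0^{-1}+M^2R_0^{-j})$ are absorbed by the $\delta$-dependent margin, and Lemma~\ref{hd:lem:W-multiplicative-compensation} handles infinity. Letting $R\to\infty$, then $\varepsilon\to0$, then $\delta\to0$ gives the uniform tangent.

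Your Liouville route would work once repaired this way; your choice of $\gamma<j$ at infinity and $\gamma\in(j,\alpha)$ at the origin is right. But it additionally needs two things:
\begin{itemize}
\item a Bernstein estimate in a regime where the normalized background tends to $\widehat C_q$ rather than to $0$, so Proposition~\ref{prop:low-Bernstein}, which uses $\omega_R\to0$, does not apply verbatim;
\item viscosity stability of the limit equation.
\end{itemize}
All of this serves only to prove a Liouville theorem whose proof is the same barrier argument.
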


\begin{proof}
For the strict cell
$S_{t,\varepsilon}=\mathcal K_j[\phi]+tC_q\in\Gamma_2$,
$\sigma_2(S_{t,\varepsilon})=\varepsilon$, write the radial--tangential
blocks as $(a,\beta,D)$.  The exact shift
\[
 B=D+\frac{a}{n-2}g_S
 =-\nabla_S^2\phi+\frac{j(n-j-3)}{n-2}\phi g_S+t\widetilde C_q
\]
satisfies
\[
 \sigma_2(B)=|\beta|^2+\frac{n-1}{2(n-2)}a^2+\varepsilon,
 \qquad B\in\Gamma_2^{n-1}.
\]
Since $j<\alpha$, the zeroth-order coefficient is positive, and the same
standard comparison/rotation/Evans--Krylov/continuity argument as for the
first cell gives a unique strict solution with uniform estimates and hence
the limit $\phi_j$.

The elementary power barriers $1\mp MR^{-j}$ give $|1-W|\le CR^{-j}$.  Perturb a
strict cell $\phi_\varepsilon$ by
$\pm\delta\pm M(R_0/R)^\eta$, $0<\eta<\alpha-j$.  Since
$H_j,H_{j+\eta}\in\Gamma_2^\circ$, the two perturbations have uniform
opposite cone signs; comparison yields
\[
 \phi_\varepsilon-\delta-M(R_0/R)^\eta
 \le R^j(1-W)\le
 \phi_\varepsilon+\delta+M(R_0/R)^\eta.
\]
Let $R\to\infty$, then $\delta\downarrow0$, and expand
$F=W^{-\alpha/2}$.
\end{proof}

\subsection{The critical case \texorpdfstring{$j=\alpha$}{j equals alpha}}

Assume now that $j=\alpha$, equivalently
$q=\alpha-2=(n-8)/2$, and write
$\widehat C_q(x):=|x|^{-\alpha-2}C_q(x/|x|)$ for the homogeneous extension
of the critical curvature cell.

\begin{proposition}\label{hd:prop:critical-kappa}
Let $j=\alpha$, $H=\mathcal K_\alpha[1]$, and $N_0=T_1(H)$.  The radial and
tangential eigenvalues of $H$ give
\[
 (N_0)_{RR}=\alpha(n-1),\qquad
 (N_0)_{TT}=\frac{\alpha(n-2)}2I,\qquad
 N_0:\mathcal K_\alpha[\psi]
 =-\frac{\alpha(n-2)}2\Delta_S\psi.
\]
The leading Bianchi identity gives
\[
 \operatorname{div}T_1(\widehat C_q)=0,
 \qquad \int_{\mathbb S^{n-1}}N_0:C_q=0.
\]
Hence there is a unique zero-mean $\psi_0^\circ$ satisfying
\begin{equation}\label{hd:eq:psi0-Poisson}
 -\frac{\alpha(n-2)}2\Delta_S\psi_0^\circ=-N_0:C_q,
\end{equation}
and, with
\begin{equation}\label{hd:eq:Bcritical-def}
 B^\circ:=C_q+\mathcal K_\alpha[\psi_0^\circ],
\end{equation}
one has $N_0:B^\circ=0$.  Define
\begin{equation}\label{hd:eq:kappaq-def}
 \kappa_q:=-\avg_{\mathbb S^{n-1}}\sigma_2(B^\circ)\,d\theta.
\end{equation}
Then $\kappa_q\ge0$, and $\kappa_q=0$ exactly when
\begin{equation}\label{hd:eq:Cq-removable}
 C_q=\mathcal K_\alpha[c-\psi_0^\circ]
\end{equation}
for some constant $c$.
\end{proposition}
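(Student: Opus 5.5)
The plan is to verify the four assertions in order: the algebra of $N_0$, the Bianchi consequences, solvability together with $N_0:B^\circ=0$, and finally the sign and rigidity of $\kappa_q$. The last one rests on the Lorentzian signature of $\sigma_2$ viewed as a quadratic form on symmetric matrices.

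\emph{Algebra of $N_0$.} By \eqref{hd:eq:Kmu} with $\mu=\alpha$, the matrix $H=\mathcal K_\alpha[1]$ has radial eigenvalue $-\alpha(\alpha+1)$ and tangential eigenvalue $\alpha$ with multiplicity $n-1$, so $\sigma_1(H)=\alpha(n-\alpha-2)$. Hence $(N_0)_{RR}=(n-1)\alpha$ and $(N_0)_{TT}=\alpha(n-\alpha-3)=\alpha(n-2)/2$, using $2\alpha=n-4$. Contracting with \eqref{hd:eq:Kmu} gives $N_0:\mathcal K_\alpha[\psi]=-\frac{\alpha(n-2)}2\Delta_S\psi+c\,\psi$. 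The off-diagonal block of $N_0$ vanishes, so there is no first-order term. The zeroth-order coefficient is $c=\alpha^2(n-1)\bigl(-(\alpha+1)+\tfrac{n-2}2\bigr)=0$; this is the statement $\sigma_2(H_\alpha)=0$ of Lemma~\ref{hd:lem:Hmu}.

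\emph{Bianchi consequences.} Lemma~\ref{hd:lem:finite-background-expansion} expands the Kelvin-end Schouten tensor, and the contracted Bianchi identity $\operatorname{div}A=d\,\tr A$ holds for it. At the leading homogeneous order $R^{-4-q}$, the lower coefficients vanish and the metric is Euclidean up to higher order, so $\operatorname{div}\widehat C_q=d\tr\widehat C_q$. This gives $\operatorname{div}T_1(\widehat C_q)=0$.

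To get $\int_{\mathbb S^{n-1}}N_0:C_q=0$, first write $N_0:C_q=T_1(H):C_q=H:T_1(C_q)$. Then integrate $D^2(-r^{-\alpha}):T_1(\widehat C_q)$ over an annulus $A_{a,b}$ and integrate by parts once, using $\operatorname{div}T_1(\widehat C_q)=0$. The boundary integrand $T_1(\widehat C_q)\nabla r^{-\alpha}\cdot\nu\,dS$ has degree $-\alpha-2-\alpha-1+n-1=0$, so the inner and outer fluxes cancel, exactly as in Proposition~\ref{hd:prop:indicial-adjoint}. The volume integrand has degree $-n$, so the bulk integral equals $\log(b/a)\int_{\mathbb S^{n-1}}H:T_1(C_q)$. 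This forces the spherical mean to vanish.

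\emph{Solvability.} Since the right-hand side of \eqref{hd:eq:psi0-Poisson} has zero mean, inverting $\Delta_S$ on zero-mean functions gives the unique $\psi_0^\circ$. The first step then shows $N_0:B^\circ=N_0:C_q+N_0:\mathcal K_\alpha[\psi_0^\circ]=0$.

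\emph{Sign of $\kappa_q$ (main step).} The form $\sigma_2(X)=\frac12\bigl((\tr X)^2-|X|^2\bigr)$ is a quadratic form on symmetric matrices of signature $(1,N-1)$, since it is hyperbolic in the direction $I$. Its polarization is $\sigma_2(X,Y)=\frac12T_1(X):Y$. The matrix $H$ is a nonzero null vector, because $\sigma_2(H)=0$. In Lorentzian signature the orthogonal complement of a null vector is a degenerate hyperplane on which the form is negative semidefinite, with radical spanned by that null vector. Applied pointwise to $B^\circ(\theta)$, which lies in $H^\perp$ by $N_0:B^\circ=0$, this gives $\sigma_2(B^\circ(\theta))\le0$. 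Hence $\kappa_q\ge0$, and $\kappa_q=0$ exactly when $B^\circ(\theta)=c(\theta)H$ for every $\theta$, with $c$ smooth.

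\emph{Rigidity.} I expect the hardest point to be showing that $c$ is constant. The homogeneous extension $\widehat B:=D^2(-r^{-\alpha}\psi_0^\circ)+\widehat C_q$ satisfies $\operatorname{div}T_1(\widehat B)=0$: the Hessian part is divergence-free by commuting derivatives, and the curvature part by the Bianchi step above. On the other hand, $T_1(\widehat B)=c(\theta)\,T_1\bigl(D^2(-r^{-\alpha})\bigr)$, and $T_1\bigl(D^2(-r^{-\alpha})\bigr)$ is itself divergence-free. Therefore $T_1\bigl(D^2(-r^{-\alpha})\bigr)\nabla c=0$. Its angular value is $N_0$, which is positive definite, so $\nabla_S c=0$ and $c$ is constant. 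This yields \eqref{hd:eq:Cq-removable}. Conversely, if \eqref{hd:eq:Cq-removable} holds, then $B^\circ=\mathcal K_\alpha[c]=cH$ and $\sigma_2(B^\circ)=c^2\sigma_2(H)=0$, so $\kappa_q=0$.
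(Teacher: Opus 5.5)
Your proposal is correct and follows the same route as the paper. You use the contracted Bianchi identity at leading homogeneous order, then invert $\Delta_S$ on zero-mean functions. You get $\kappa_q\ge0$ from the Lorentzian sign of $\sigma_2$ on the $\sigma_2$-orthogonal complement of the null matrix $H$. Finally, the divergence-free homogeneous Newton tensor forces the proportionality factor to be constant. You also correctly supply details the paper leaves implicit: the annular flux computation giving $\int_{\mathbb S^{n-1}}N_0:C_q=0$, and the identity $T_1\bigl(D^2(-r^{-\alpha})\bigr)\nabla c=0$ combined with positivity of $N_0$.
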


\begin{proof}
The first identities are the leading homogeneous part of the contracted
Bianchi identity.  Solvability of the Poisson equation is immediate.  On
$N_0:B=0$, the polarization of $\sigma_2$ has the standard Lorentzian sign,
so $\sigma_2(B)\le0$, with equality only for
$B\in\operatorname{span}\{H\}$.  Thus $\kappa_q\ge0$; in the equality case,
divergence-freeness of the homogeneous Newton tensor forces the
proportionality factor to be constant.  The converse is immediate.
\end{proof}

\begin{theorem}
\label{hd:thm:critical-kappa-zero}
Assume $j=\alpha$ and $\kappa_q=0$.  Let $c$ be the constant from
\eqref{hd:eq:Cq-removable} and put
\begin{equation}\label{hd:eq:phi-rem-def}
 \phi_{\rm rem}:=\psi_0^\circ-c.
\end{equation}
Then there is a unique $A\ge0$ such that
\begin{equation}\label{hd:eq:critical-kappa-zero-final}
 U_{\rm cn}(\rho,\theta)
 =\rho^{-\alpha}+\frac\alpha2\phi_{\rm rem}(\theta)+A+o(1).
\end{equation}
\end{theorem}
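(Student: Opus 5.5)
The plan is to absorb the removable critical cell into a background shift and then run the finite-charge mechanism of Section~\ref{sec:subcritical} relative to that shifted profile.

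\emph{Step 1: background profile.} By \eqref{hd:eq:Cq-removable}, $C_q=\mathcal K_\alpha[c-\psi_0^\circ]=-\mathcal K_\alpha[\phi_{\rm rem}]$. Set
\[
 P_{\rm rem}:=1-R^{-\alpha}\phi_{\rm rem}(\theta).
\]
Since $R^4D^2(-R^{-\alpha}\phi_{\rm rem})=R^{-q}\mathcal K_\alpha[\phi_{\rm rem}]$, the leading curvature cell cancels exactly:
\[
 R^{4+q}\mathcal A_{g_\infty}[P_{\rm rem}]=\mathcal K_\alpha[\phi_{\rm rem}]+C_q+O(R^{-1})=O(R^{-1}).
\]
Thus the profile $P_{\rm rem}$ plays the role of $1$ in the finite-charge argument, and its curvature residual is one order better than the critical one.

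\emph{Step 2: critical decay $|W-P_{\rm rem}|\le CR^{-\alpha}$.} I will use barriers of the form $P_{\rm rem}\mp aR^{-\alpha}\pm bR^{-\delta}$ with $\alpha<\delta<\alpha+1$. The leading matrix is $\pm aH_\alpha$ plus the mixed term in $R^{-\delta}$. As in Proposition~\ref{prop:low-barrier-package}, the decisive factor is the $\delta(\delta-\alpha)$ term, and the residual is $O(R^{-\alpha-3})$ relative to the Hessian scale $R^{-\alpha-2}$. This gives the dyadic recurrence $M_+(2R)\le M_+(R)+CR^{-\eta}$ for some $\eta>0$, together with a lower bound $W\ge P_{\rm rem}-CR^{-\delta}$. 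Multiplicative compensation (Lemma~\ref{hd:lem:W-multiplicative-compensation}) handles infinity, and the initial decay from \eqref{hd:eq:subcritical} seeds the iteration.

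\emph{Step 3: blow-down and locking.} Put $W(Rx)=P_{\rm rem}(Rx)+R^{-\alpha}P_R(x)$. The equation for $P_R$ is $\partial\Gamma_2$ of $D^2(-|x|^{-\alpha}\phi_{\rm rem})+\widehat C_q+D^2P_R$ plus errors of size $o(1)$. The first two terms cancel, so the limiting equation is again the flat homogeneous $2$-Hessian equation. Lipschitz compactness (Proposition~\ref{prop:low-Bernstein}), the $L^1$ Hessian bound, and the Trudinger--Wang classification then give tangents $-c|x|^{-\alpha}$ with $c\ge0$; nonpositivity comes from the lower barrier. For the charge, define $Q(R)$ by the current $J_R$ of $P_R$. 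The drift contains cross terms $T_1(D^2P_R):\mathcal K$-type pieces from the background cell $\widehat C_q+D^2(-|x|^{-\alpha}\phi_{\rm rem})$, which vanishes identically at leading order. Only $O(R^{-1})$ remainders survive, so the drift is summable. The $BV$--$L^2$ passage of Proposition~\ref{prop:low-coefficient-locking} then locks $c$. Finally, $F=W^{-\alpha/2}$ gives $U_{\rm cn}=\rho^{-\alpha}+\frac\alpha2\phi_{\rm rem}+\frac\alpha2c+o(1)$, and I set $A=\alpha c/2\ge0$.

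\emph{Main obstacle.} The hardest step is the charge drift in Step~3. The quadratic term $\sigma_2$ of the background cell must vanish, not merely be small. It does: the cell is exactly zero at leading order, and its $O(R^{-1})$ correction enters only linearly against an $L^1$-bounded Hessian or quadratically. I expect the barrier step to be routine by comparison.
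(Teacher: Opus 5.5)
Your argument is correct in outline, but it takes a different route from the paper's.

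\textbf{The paper's route.} The paper does not subtract the removable cell additively. It absorbs the cell into the background by a conformal gauge change:
$q_{\rm loc}=-R^{-\alpha}\phi_{\rm rem}$, $\widetilde g_\infty=e^{-2q_{\rm loc}}g_\infty$, $\widetilde W=e^{-q_{\rm loc}}W$.
\begin{itemize}
\item Since $\widetilde W^{-2}\widetilde g_\infty=W^{-2}g_\infty$, the Green equation keeps exactly the form \eqref{eq:W-end-equation}.
\item The conformal Schouten formula cancels the $R^{-\alpha-2}$ term, leaving $(A_{\widetilde g_\infty})^\sharp=O_1(R^{-\alpha-3})$ and $\widetilde g_\infty-g_{\mathrm E}=O_2(R^{-2})$.
\item This is a $j=\alpha+1>\alpha$ background, so Theorem~\ref{hd:thm:finite-charge-weylflat} applies verbatim to $\widetilde W$; the strict approximants transform the same way.
\item One then multiplies back by $e^{q_{\rm loc}}$.
\end{itemize}

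\textbf{Your route.} Your additive shift $W(Rx)=P_{\rm rem}(Rx)+R^{-\alpha}P_R(x)$ reaches the same conclusion. The cost is that you must re-verify each ingredient of Section~\ref{sec:subcritical} for the shifted unknown. Write $p_{\rm rem}:=-|x|^{-\alpha}\phi_{\rm rem}(x/|x|)$.
\begin{itemize}
\item The barriers acquire non-radial cross terms such as $(P_{\rm rem}-1)\nabla^2w$ and $w\nabla^2P_{\rm rem}$.
\item The Bernstein computation sees the gradient term $\frac{\varepsilon_R}{2W_R}|\nabla(P_R+p_{\rm rem})|^2_{g_R}$ instead of $\frac{\varepsilon_R}{2W_R}|\nabla P_R|^2_{g_R}$.
\item The effective background $\nabla^2_{g_R}p_{\rm rem}+\frac{W_R}{\varepsilon_R}(A_{g_R})^\sharp$ is small, namely $O(R^{-1})$, only because of the cancellation $D^2p_{\rm rem}+\widehat C_q=0$. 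So the smallness of $\omega_R$ in Proposition~\ref{prop:low-Bernstein} has to be re-derived rather than quoted.
\end{itemize}
All of these are lower order because $\alpha=q+2\ge3$. Your drift bound is also right: the error $X_R$ in $\sigma_2(H_R+X_R)=0$ is $O(R^{-1})$ in $L^\infty$, so $\int(|H_R||X_R|+|X_R|^2)=O(R^{-1})$ by the $L^1$ Hessian bound. Hence the route works.

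\textbf{Comparison.} The conformal gauge means none of this has to be rechecked: conformal covariance hides the cross terms inside a harmless metric perturbation. Your version gives a self-contained argument in the fixed gauge, in which the cancellation is visible directly in the blow-down equation.

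\textbf{One correction in Step 2.} The one-sided bound you need for nonpositivity of tangents is $W\le P_{\rm rem}+CR^{-\delta}$, not $W\ge P_{\rm rem}-CR^{-\delta}$.
\begin{itemize}
\item It comes from the strictly exterior profile $P_{\rm rem}+bR^{-\delta}$ with $\alpha<\delta<\alpha+1$, which is an upper barrier for $W$.
\item It is the $W$-version of $F\ge1-C_\delta r^{-\delta}$ in Proposition~\ref{prop:low-barrier-package}\textup{(ii)}, since $W=F^{-2/\alpha}$ reverses order.
\end{itemize}
With the inequality as you wrote it, blow-down limits satisfy $P\ge0$. That is not the hypothesis $P\le0$ required in Lemma~\ref{lem:low-isolated-extension}, and it is incompatible with a tangent $-c|x|^{-\alpha}$ with $c>0$.
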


\begin{proof}
By \eqref{hd:eq:Cq-removable},
$\mathcal K_\alpha[\phi_{\rm rem}]+C_q=0$.  Absorb this local term by
$q_{\rm loc}=-R^{-\alpha}\phi_{\rm rem}$,
$\widetilde g_\infty=e^{-2q_{\rm loc}}g_\infty$, and
$\widetilde W=e^{-q_{\rm loc}}W$.  The physical Green metric is unchanged,
and the conformal Schouten formula shows that the order
$R^{-\alpha-2}$ background term cancels, so
$(A_{\widetilde g_\infty})^\sharp=O_1(R^{-\alpha-3})$ while
$\widetilde g_\infty-g_{\mathrm E}=O_2(R^{-2})$.  Strict approximants transform in
the same way.  The finite-charge theorem for the resulting $j>\alpha$
branch gives
$\widetilde W=1-a_\infty R^{-\alpha}+o(R^{-\alpha})$; transforming back and
expanding $F=W^{-\alpha/2}$ gives
\eqref{hd:eq:critical-kappa-zero-final}, with $A=\alpha a_\infty/2$.
\end{proof}

\begin{proposition}\label{hd:prop:critical-model}
Assume $\kappa_q>0$ and put
\begin{equation}\label{hd:eq:Lambda-critical}
 \Lambda_q:=\frac{2\kappa_q}{(n-1)\alpha^2}.
\end{equation}
There are canonically normalized smooth functions $\psi_0,\psi_1,\psi_2$
such that, for
\[
 f_\pm(s)=\Lambda_qs+\lambda_\pm\pm K\log s,\qquad
 a_\pm=\sqrt{f_\pm},
\]
the profiles
\begin{equation}\label{hd:eq:critical-upm-refined}
 u_\pm=a_\pm+\psi_0+\frac{\psi_1}{a_\pm}+\frac{\psi_2}{a_\pm^2}
\end{equation}
have opposite strict cone signs for all sufficiently large $s$, once $K$ is
fixed large.  Moreover every subcritical decay improves to every
$\beta<\alpha$, and
\begin{equation}\label{hd:eq:critical-coarse-bound}
 -C\le R^\alpha(1-W(R\theta))\le C\sqrt{\log R}.
\end{equation}
\end{proposition}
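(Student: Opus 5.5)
The plan is to copy the $n=8$ argument (Lemma~\ref{lem:endpoint-radial-identity} and Proposition~\ref{prop:endpoint-second-trapping}), with the critical cell $C_q$ in place of $C_0$ and the corrector $\psi_0$ absorbing the nonradial part of $C_q$. Write $s=\log R$ and $u(s,\theta)=R^\alpha(1-W(R\theta))$. Let $\widetilde{\mathcal K}_\alpha[h]:=R^{\alpha+2}D^2(-R^{-\alpha}h(\log R,\theta))$ be the modulated analogue of \eqref{hd:eq:Kmu}. Exactly as in \eqref{eq:endpoint-Ktilde-components}, it equals $\mathcal K_\alpha[h]$ plus terms in $h_s,h_{ss}$ whose coefficient matrices are polynomial in $\alpha$.

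\textbf{Radial identity.} For a radial amplitude $b(s)$, the analogue of \eqref{eq:endpoint-radial-sigma2} follows from \eqref{hd:eq:sigma2-Hmu}. Since $\sigma_2(H_\alpha)=0$, one gets
\[
\sigma_2(\widetilde{\mathcal K}_\alpha[b])=(n-1)\alpha^2\,bb'+O(|b'|^2+|b||b''|).
\]
With $b^2=\Lambda s$ the leading term is $\tfrac12(n-1)\alpha^2\Lambda$. This is where the normalization $\Lambda_q=2\kappa_q/((n-1)\alpha^2)$ comes from.

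\textbf{Building the profiles $u_\pm$.} Expand $\sigma_2(\widetilde{\mathcal K}_\alpha[u_\pm]+C_q)$ in powers of $a=a_\pm$.
\begin{itemize}
\item At order $a^2$ it vanishes, because $\sigma_2(H_\alpha)=0$.
\item At order $a$ it equals $a\,N_0:(\mathcal K_\alpha[\psi_0]+C_q)$. Take $\psi_0=\psi_0^\circ+c_0$, with $\psi_0^\circ$ from \eqref{hd:eq:psi0-Poisson}. Then this term vanishes pointwise, since $N_0:\mathcal K_\alpha[1]=2\sigma_2(H_\alpha)=0$.
\item At order $a^0$ there are two contributions: $\sigma_2(B^\circ)$, plus $N_0:\mathcal K_\alpha[\psi_1]$, plus the radial source $\tfrac12(n-1)\alpha^2\Lambda_q$ coming from $aa'$. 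Choose $\psi_1$ as the zero-mean solution of $-\tfrac{\alpha(n-2)}2\Delta_S\psi_1=-(\sigma_2(B^\circ)+\kappa_q)$, which is solvable by \eqref{hd:eq:kappaq-def}. The remaining mean is $-\kappa_q+\tfrac12(n-1)\alpha^2\Lambda_q=0$.
\item At order $a^{-1}$, the modulation terms from $a'=\Lambda/(2a)$ produce a further forcing. Its mean is zero; I expect this to follow from the divergence structure, as in Lemma~\ref{lem:endpoint-cross-cancellation}, using $\operatorname{div}T_1(\widehat C_q)=0$ and $\operatorname{div}T_1(D^2(r^{-\alpha}))=0$. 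The function $\psi_2$ solves the corresponding Poisson equation. The constant $c_0$ in $\psi_0$ is the canonical normalization and is fixed by this solvability condition.
\end{itemize}
After these cancellations, the $\pm K\log s$ part of $f_\pm$ contributes $\pm\tfrac12(n-1)\alpha^2K/s$. All leftover terms are $O(s^{-1})$ with constants independent of $K$, so fixing $K$ large gives opposite strict signs of $\sigma_2$. Meanwhile $\sigma_1\approx a\,\sigma_1(H_\alpha)>0$, since $\sigma_1(H_\alpha)=\alpha(n-\alpha-2)>0$. The curved background errors carry an extra factor $R^{-1}$ or are $O(R^{-2}(1+u)^2)$ relative to the matrix size, so they are negligible exactly as in Proposition~\ref{prop:endpoint-coarse}.

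\textbf{Coarse bounds.} The improvement to every $\beta<\alpha$ repeats the power-barrier iteration of Proposition~\ref{prop:low-barrier-package}(i). For $\gamma<\alpha=j$, the background term $O(R^{-j-2})$ is dominated by the interior profile $\gamma^2(\alpha-\gamma)R^{-\gamma-2}H_\gamma$, and Lemma~\ref{hd:lem:W-multiplicative-compensation} supplies the exterior comparison.
\begin{itemize}
\item \emph{Upper bound in \eqref{hd:eq:critical-coarse-bound}.} Use the lower $W$-barrier $1-R^{-\alpha}\sqrt{B_0^2+\Lambda s}$ with $\Lambda>\Lambda_q$. By the computation above (without correctors, using $-\sigma_2$ at order $a^0$ bounded below), it is strictly admissible once $\Lambda\gg\max|C_q|^2$. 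The corrector $\psi_0$ may be added if needed, and is bounded.
\item \emph{Lower bound.} Use the upper barrier $1+B_1R^{-\alpha}$. Its trace is $-B_1\sigma_1(H_\alpha)+O(1)<0$ because $\operatorname{tr}C_q=0$, so it is strictly exterior.
\end{itemize}
Order both barriers on an inner sphere using the improved subcritical decay, and then apply the multiplicative compensation.

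\textbf{Main obstacle.} I expect the hardest step to be the $a^{-1}$ order: verifying that the modulation terms ($\psi_0$ interacting with $a'$, and cross terms of $\psi_1$ with $C_q$) have zero spherical mean. This requires the Bianchi/divergence cancellations for the full critical cell $B^\circ$, not just for $C_q$. I would try to obtain it by integrating the null-Lagrangian identity \eqref{eq:low-null-Lagrangian} over a thin annulus against radial test functions, as in Lemma~\ref{lem:endpoint-cross-cancellation}.
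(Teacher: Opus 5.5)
Your overall route matches the paper's: the radial identity, the expansion of $\sigma_2(\widetilde{\mathcal K}_\alpha[u_\pm]+C_q)$ in powers of $a$, $\psi_0=\psi_0^\circ+c_0$, $\psi_1$ from the zero-mean part of $\sigma_2(B^\circ)$, one more Poisson solve for $\psi_2$, and sign terms $\pm\frac{(n-1)\alpha^2}{2}K/s$. Your computations at orders $a^2$, $a$ and $a^0$ are correct. The genuine problem is at order $a^{-1}$, where the mechanism you propose does not work.

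Put $\dot H:=\partial_\mu H_\mu|_{\mu=\alpha}=\operatorname{diag}(-(2\alpha+1),1,\dots,1)$ and $B:=\mathcal K_\alpha[\psi_0]+C_q=B^\circ+c_0H_\alpha$. The $a^{-1}$ coefficient is $N_0:\mathcal K_\alpha[\psi_2]+T_1(B):\mathcal K_\alpha[\psi_1]-\frac{\Lambda_q}{2}T_1(B):\dot H$.

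The homogeneous flux argument for the divergence-free field $T_1\bigl(D^2(-r^{-\alpha}\psi_0^\circ)+\widehat C_q\bigr)$ does two things:
\begin{itemize}
\item it shows that $T_1(B^\circ):\mathcal K_\alpha[\psi_1]$ has zero mean;
\item applied to $-r^{-\alpha}b(\log r)$, it gives $\int T_1(B^\circ):\dot H\,d\theta=-\alpha\int\tr_TC_q\,d\theta$, where $\tr_T$ is the tangential trace.
\end{itemize}
Nothing in the divergence structure makes the second quantity vanish. Bianchi yields only $(n-1)\int\tr C_q=(\alpha+2)\int\tr_TC_q$, which is the same information as $\int N_0:C_q=0$. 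In $n=8$ the analogous term vanished pointwise because $\tr C_0=0$ and $C_0\theta=0$; neither is available for $C_q$.

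What actually closes the step is $N_0:\dot H=-(n-1)\alpha^2$. The constant $c_0$ therefore contributes exactly $c_0\kappa_q$ to the mean:
\[
 \fint_{\mathbb S^{n-1}}\bigl(\text{order-}a^{-1}\text{ forcing}\bigr)\,d\theta
 =c_0\kappa_q+\frac{\alpha\Lambda_q}{2}\fint_{\mathbb S^{n-1}}\tr_TC_q\,d\theta .
\]
Since $\kappa_q>0$, you can choose $c_0$ to cancel it; this is the paper's ``constant shift of $\psi_0^\circ$.'' The cancellation is essential, because a nonzero mean here is of size $s^{-1/2}$ and would swamp the $K/s$ sign term. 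Your closing remark that $c_0$ is fixed by solvability is the right fix. It contradicts your claim that the mean vanishes by divergence cancellation, which cannot hold for every $c_0$. So the ``main obstacle'' you plan to attack targets the wrong identity.

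There is also a concrete error in the coarse upper bound. The purely radial lower $W$-barrier $1-R^{-\alpha}\sqrt{B_0^2+\Lambda s}$ is not admissible for any $\Lambda$. Its $\sigma_2$ contains $b\,N_0:C_q$, which has size $\sqrt s$ and changes sign, since only its mean vanishes. The corrector $\psi_0^\circ$ is therefore mandatory, not optional. With it, the order-$b$ term cancels pointwise and $\Lambda>2\max(-\sigma_2(B^\circ))/((n-1)\alpha^2)$ suffices; if $\psi_1$ is also included, any $\Lambda>\Lambda_q$ works.

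Two minor points:
\begin{itemize}
\item You use $\tr C_q=0$, which is not established for $q\ge1$. It is also not needed: the upper barrier $1+B_1R^{-\alpha}$ only requires $C_q$ bounded and $B_1$ large.
\item For $\alpha\ne2$ the nonlinear relative error is $O(R^{-\alpha}(1+u)^2)$, not $O(R^{-2}(1+u)^2)$. It is still exponentially small in $s$, so this does not affect the argument.
\end{itemize}
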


\begin{proof}
For $q_b=-r^{-\alpha}b(\log r)$,
\[
 r^{2\alpha+4}\sigma_2(D^2q_b)
 =(n-1)(\alpha b-b')(\alpha b'-b'').
\]
The zero-mean part of $\sigma_2(B^\circ)$ determines $\psi_1$ through the
critical Laplacian; a constant shift of $\psi_0^\circ$ and one further
Poisson solve determine $\psi_0,\psi_2$.  Exact quadratic polarization gives,
for $f\sim\Lambda_qs$,
\[
 \sigma_2\bigl(\widetilde{\mathcal K}_\alpha[u_f]+C_q\bigr)
 =\frac{(n-1)\alpha^2}{2}(f'-\Lambda_q)+O(f^{-1})+O(|f''|).
\]
Hence $f_\pm$ give opposite signs of size $K/s$, while the curved-background
error is $o(s^{-1})$.  Standard power barriers yield every subcritical
exponent; the radial choice $b^2=B_0^2+\Lambda_+(s-s_0)$ with
$\Lambda_+>\Lambda_q$, together with a constant lower barrier, gives
\eqref{hd:eq:critical-coarse-bound}.
\end{proof}

Write
\[
 W(Rx)=1+R^{-\alpha}P_R^{(0)}(x),
 \qquad L_R:=\log R,
 \qquad \widehat P_R:=P_R^{(0)}/\sqrt{L_R}.
\]
The angular term in the critical model is separated by
\[
 p_0(x):=-|x|^{-\alpha}\psi_0(x/|x|),
 \qquad
 B_\infty:=D^2p_0+\widehat C_q.
\]
The leading Bianchi identity gives
$\operatorname{div}T_1(B_\infty)=0$.  Hence, for
$\widetilde P_R:=P_R^{(0)}-p_0$, the natural modified current and its averaged
flux are
\begin{align*}
 \mathcal J_R^B
 &:=T_1(D^2\widetilde P_R)\nabla\widetilde P_R
   +2T_1(B_\infty)\nabla\widetilde P_R,\\
 \mathcal Q_B(R)
 &:=\int_{A_{1,2}}\eta(\varrho)
   \langle\mathcal J_R^B,\nabla\varrho\rangle\,\dd x.
\end{align*}

\begin{lemma}
\label{hd:lem:critical-modified-current-div}
On a fixed slightly larger annulus, let $\varepsilon_R=R^{-\alpha}$, $W_R=W(Rx)$, and set
\[
 D_R:=\frac{W_R}{\varepsilon_R}(A_{g_R})^\sharp,
 \qquad
 q_R:=\frac{\varepsilon_R}{2W_R}|\nabla P_R^{(0)}|_{g_R}^2,
 \qquad
 \mathbb B_R:=\Hess_{g_R}p_0+D_R.
\]
Then the exact equation for the Green end becomes
\begin{equation}\label{hd:eq:critical-shifted-exact-equation}
 \sigma_2^{g_R}\bigl(\Hess_{g_R}\widetilde P_R+\mathbb B_R-q_RI\bigr)=0.
\end{equation}
Define the modified covariant current
\[
 \mathbb J_R
 :=T_1^{g_R}(\Hess_{g_R}\widetilde P_R)\nabla^{g_R}\widetilde P_R
 +2T_1^{g_R}(\mathbb B_R)\nabla^{g_R}\widetilde P_R.
\]
Then
\begin{align}
 \operatorname{div}_{g_R}\mathbb J_R
 ={}&-2\sigma_2^{g_R}(\mathbb B_R)
 +2(n-1)q_R\tr_{g_R}(\Hess_{g_R}\widetilde P_R+\mathbb B_R)
 \notag\\
 &-n(n-1)q_R^2
 -\Ric_{g_R}(\nabla\widetilde P_R,\nabla\widetilde P_R)
 +2\langle\operatorname{div}_{g_R}T_1^{g_R}(\mathbb B_R),\nabla\widetilde P_R\rangle.
 \label{hd:eq:critical-modified-cov-div}
\end{align}
Moreover,
\begin{align}
 \|\mathbb B_R-B_\infty\|_{C^1}
 &\le C\bigl(R^{-1}+R^{-\alpha}\sqrt{L_R}\bigr),
 \label{hd:eq:critical-BR-approx}\\
 \|\mathbb J_R-\mathcal J_R^B\|_{L^1}
 &\le C\bigl(R^{-1}\sqrt{L_R}+R^{-2}L_R+R^{-\alpha}L_R\bigr).
 \label{hd:eq:critical-current-comparison}
\end{align}
\end{lemma}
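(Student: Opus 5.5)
Three things have to be established: the exact equation \eqref{hd:eq:critical-shifted-exact-equation}, the divergence identity \eqref{hd:eq:critical-modified-cov-div}, and the two approximation bounds. I take them in that order.

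\emph{Exact equation.} Start from \eqref{eq:calA-h-W} on the rescaled end, exactly as in the derivation of \eqref{eq:low-normalized-equation}. We have $\mathcal A_{g_R}[W_R]=\varepsilon_RW_R\bigl(\Hess_{g_R}P_R^{(0)}+D_R-q_RI\bigr)$, with $\varepsilon_R=R^{-\alpha}$, $W_R=1+\varepsilon_RP_R^{(0)}$, $D_R$ as defined, and the gradient-square term divided by $\varepsilon_RW_R$ giving $q_R$. The factor $\varepsilon_RW_R$ is positive and $\sigma_2$ is homogeneous, so the Green equation reduces to $\sigma_2(\Hess P_R^{(0)}+D_R-q_RI)=0$ almost everywhere on the admissible branch. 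Splitting $P_R^{(0)}=\widetilde P_R+p_0$ then gives \eqref{hd:eq:critical-shifted-exact-equation} directly.

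\emph{Divergence identity.} Work first with a smooth strict approximant; the $C^{1,1}$ case follows distributionally by the argument of Lemma~\ref{lem:low-null-Lagrangian}. Write $H=\Hess\widetilde P_R$ and $E=\mathbb B_R-q_RI$. The quadratic polarization
\[
0=\sigma_2(H+E)=\sigma_2(H)+T_1(E):H+\sigma_2(E)
\]
together with Reilly's identity for the first part of the current gives $\operatorname{div}(T_1(H)\nabla\widetilde P_R)=2\sigma_2(H)-\Ric(\nabla\widetilde P_R,\nabla\widetilde P_R)$. For the second part, the product rule gives
\[
\operatorname{div}\bigl(2T_1(\mathbb B_R)\nabla\widetilde P_R\bigr)=2T_1(\mathbb B_R):H+2\langle\operatorname{div}T_1(\mathbb B_R),\nabla\widetilde P_R\rangle .
\]
Substitute $\sigma_2(H)=-T_1(E):H-\sigma_2(E)$ and expand using $T_1(E)=T_1(\mathbb B_R)-(n-1)q_RI$ and
\[
\sigma_2(E)=\sigma_2(\mathbb B_R)-(n-1)q_R\tr\mathbb B_R+\binom n2 q_R^2 .
\]
The terms $\pm2T_1(\mathbb B_R):H$ cancel, and the remaining terms collect into \eqref{hd:eq:critical-modified-cov-div}. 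The only point needing care here is the sign bookkeeping.

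\emph{Approximation bounds.} For \eqref{hd:eq:critical-BR-approx}, the assumption \eqref{hd:eq:first-nonzero-Cq} gives $R^{\alpha}\cdot R^2(A_{g_R})^\sharp$ scaled, so $D_R/W_R=\widehat C_q+O_1(R^{-1})$, and $g_R-g_{\mathrm E}=O(R^{-2})$. Hence $\Hess_{g_R}p_0=D^2p_0+O(R^{-2})$, using the Christoffel symbols. The factor $W_R-1=\varepsilon_RP_R^{(0)}=O(R^{-\alpha}\sqrt{L_R})$ by \eqref{hd:eq:critical-coarse-bound}. For \eqref{hd:eq:critical-current-comparison}, compare the two currents term by term. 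I need local bounds $\|\nabla\widetilde P_R\|_\infty+\int|\Hess\widetilde P_R|\le C\sqrt{L_R}$ on annuli; these come from the critical coarse bound, the Bernstein estimate of Proposition~\ref{prop:low-Bernstein}, and the $L^1$ Hessian bound of Lemma~\ref{lem:low-Hessian-L1}, all applied to the normalized $\widehat P_R$. The metric error in $T_1^{g_R}(H)\nabla\widetilde P_R$ contributes $R^{-2}\cdot L_R$. The $\mathbb B_R-B_\infty$ error contributes $(R^{-1}+R^{-\alpha}\sqrt{L_R})\sqrt{L_R}$, which is within the stated bound since $R^{-\alpha}L_R$ dominates.

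The main obstacle I expect is exactly this uniform gradient and $L^1$-Hessian control at the $\sqrt{L_R}$ scale when the background $D_R$ is of order one rather than small. Proposition~\ref{prop:low-Bernstein} assumed a vanishing background, so it cannot be quoted directly. I would first rerun the Bernstein argument with $p_0$ absorbed into $\mathbb B_R$: its $C^1$ norm is bounded, so the errors $\omega_R$ become $O(1)$ constants. These can still be absorbed on the large-gradient branch, because the good term scales like $v^3$ while the error scales linearly in $v$.
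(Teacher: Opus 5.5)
Your proposal is correct and follows the paper's proof. The paper likewise substitutes $P_R^{(0)}=\widetilde P_R+p_0$ into the $\varepsilon_RW_R$-normalized Schouten matrix, combines the covariant null-Lagrangian and the product rule with $\sigma_2(A-qI)=\sigma_2(A)-(n-1)q\tr A+\binom n2q^2$, and then reads off both bounds from the background expansion together with $\|\nabla P_R^{(0)}\|_\infty+\|\Hess P_R^{(0)}\|_{L^1}=O(\sqrt{L_R})$, which it simply quotes.

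The obstacle you anticipate is milder than you fear. For $\widehat P_R=P_R^{(0)}/\sqrt{L_R}$ the background is $O(L_R^{-1/2})\to0$, so the proof of Proposition~\ref{prop:low-Bernstein} applies verbatim; your $O(1)$-background rerun also works. Note that this gradient bound, not just the coarse $C^0$ bound you cite, is what gives the $C^1$ part of \eqref{hd:eq:critical-BR-approx}.
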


\begin{proof}
Substituting $P_R^{(0)}=\widetilde P_R+p_0$ into the normalized
Schouten matrix gives \eqref{hd:eq:critical-shifted-exact-equation}
without discarding any quadratic gradient term.  The covariant Hessian
null-Lagrangian and the product rule give
\[
 \operatorname{div}\mathbb J_R
 =2\bigl[\sigma_2(\Hess\widetilde P_R+\mathbb B_R)-\sigma_2(\mathbb B_R)\bigr]
 -\Ric(\nabla\widetilde P_R,\nabla\widetilde P_R)
 +2\langle\operatorname{div} T_1(\mathbb B_R),\nabla\widetilde P_R\rangle.
\]
Identity
$\sigma_2(A-qI)=\sigma_2(A)-(n-1)q\tr A+\binom n2q^2$, together with
\eqref{hd:eq:critical-shifted-exact-equation}, proves
\eqref{hd:eq:critical-modified-cov-div}.

The finite background expansion gives
$(A_{g_R})^\sharp=\varepsilon_R(B_\infty-D^2p_0)
+O_1(\varepsilon_RR^{-1})$.  Combining it with
$W_R-1=O(\varepsilon_R\sqrt{L_R})$ and
$g_R-g_{\mathrm E}=O_{C^2}(R^{-2})$ proves
\eqref{hd:eq:critical-BR-approx}.  Finally the modified current is
linear in the Hessian.  The bounds
$\|\nabla P_R^{(0)}\|_\infty+
\|\Hess P_R^{(0)}\|_{L^1}=O(\sqrt{L_R})$ control the metric/Hessian
difference, the replacement of $\mathbb B_R$ by $B_\infty$, and the
volume and gradient conversions, yielding
\eqref{hd:eq:critical-current-comparison}.  No $L^2$ Hessian estimate is
used.
\end{proof}

\begin{lemma}
\label{hd:lem:critical-modified-charge-drift}
As $R\to\infty$, uniformly for $1\le t\le2$,
\begin{equation}\label{hd:eq:critical-modified-charge-drift}
 \mathcal Q_B(tR)-\mathcal Q_B(R)
 =2|\mathbb S^{n-1}|\kappa_q\log t+\mathcal E_B(R,t),
\end{equation}
where
\begin{equation}\label{hd:eq:critical-modified-charge-error}
 \sup_{1\le t\le2}|\mathcal E_B(R,t)|
 \le C\left(
 R^{-1}\sqrt{L_R}
 +R^{-2}L_R
 +R^{-\alpha}L_R^{3/2}
 \right).
\end{equation}
Consequently,
\begin{equation}\label{hd:eq:critical-charge-growth}
 \frac{\mathcal Q_B(R)}{\log R}
 \longrightarrow2|\mathbb S^{n-1}|\kappa_q.
\end{equation}

\end{lemma}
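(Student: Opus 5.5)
The plan is to follow the template of Proposition~\ref{prop:endpoint-charge-locking} (the $n=8$ drift), now with the modified current $\mathbb J_R$ of Lemma~\ref{hd:lem:critical-modified-current-div} replacing $J_R^0$. The scaling identity of Lemma~\ref{lem:low-charge-scaling} uses only $2\alpha=n-4$, so it holds verbatim for $\widetilde P_R$ and $B_\infty$; the latter is homogeneous of degree $-\alpha-2$ and is independent of $R$. It therefore suffices to compute the drift through the Euclidean current $\mathcal J_R^B$. We write
\[
 \mathcal Q_B(tR)-\mathcal Q_B(R)=\int\omega_t(\varrho)\langle\mathcal J_R^B,\nabla\varrho\rangle\,\dd x,
\]
with $\omega_t,\phi_t$ as in \eqref{eq:low-omega-phi}. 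We replace $\mathcal J_R^B$ by $\mathbb J_R$ at the cost of \eqref{hd:eq:critical-current-comparison}. We also replace Euclidean by $g_R$ volume and gradient, at cost $O(R^{-2}L_R)$. Integrating by parts then gives $-\int\phi_t(\varrho)\operatorname{div}_{g_R}\mathbb J_R\,dV_{g_R}$, which we evaluate term by term using \eqref{hd:eq:critical-modified-cov-div}.

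\emph{The leading term.} Its source is $-2\sigma_2(\mathbb B_R)$. By \eqref{hd:eq:critical-BR-approx} we may replace $\mathbb B_R$ by $B_\infty$, with error $O(R^{-1}+R^{-\alpha}\sqrt{L_R})$. The function $B_\infty=|x|^{-\alpha-2}\bigl(C_q+\mathcal K_\alpha[\psi_0]\bigr)(\theta)$ is homogeneous of degree $-2\alpha-4=-n$. Since $\psi_0$ differs from $\psi_0^\circ$ only by a constant shift $\lambda$, we have $\mathcal K_\alpha[\psi_0]=\mathcal K_\alpha[\psi_0^\circ]+\lambda H$, where $H=\mathcal K_\alpha[1]$. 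Then
\[
 \sigma_2(B^\circ+\lambda H)=\sigma_2(B^\circ)+\lambda N_0:B^\circ+\lambda^2\sigma_2(H),
\]
and here $N_0:B^\circ=0$ and $\sigma_2(H)=0$ (because $\mu=\alpha$ in \eqref{hd:eq:sigma2-Hmu}). Hence the spherical mean of $\sigma_2$ is still $-\kappa_q$. With $\int_0^\infty\phi_t(r)\,dr/r=-\log t$, this gives
\[
 2\int\phi_t\,\sigma_2(B_\infty)=2|\mathbb S^{n-1}|\kappa_q\log t.
\]
The sign has to be tracked carefully against the $n=8$ computation; in the Weyl case $\kappa_q=|C_0|^2$-average$/2$, which recovers the normalization of \eqref{eq:endpoint-charge-drift}.

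\emph{The cross term.} The term $2\langle\operatorname{div}T_1(\mathbb B_R),\nabla\widetilde P_R\rangle$ vanishes to leading order, because $\operatorname{div}T_1(B_\infty)=0$ (Bianchi). The remainder is bounded by $\|\mathbb B_R-B_\infty\|_{C^1}\,\|\nabla\widetilde P_R\|_{L^1}$. Using $\|\nabla\widetilde P_R\|_\infty=O(\sqrt{L_R})$, which follows from the coarse bound \eqref{hd:eq:critical-coarse-bound} and the compactness estimates of Lemma~\ref{lem:endpoint-compactness} in its critical form, this contributes $O(R^{-1}\sqrt{L_R}+R^{-\alpha}L_R)$.

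\emph{The $q_R$ terms.} We have $q_R=O(R^{-\alpha}L_R)$ and, from the Hessian $L^1$ bound, $\|\tr(\Hess\widetilde P_R+\mathbb B_R)\|_{L^1}=O(\sqrt{L_R})$. Together these give $O(R^{-\alpha}L_R^{3/2})$, and $q_R^2$ is smaller still. The Ricci term is $O(R^{-2}L_R)$. Collecting all contributions yields \eqref{hd:eq:critical-modified-charge-error}. Since this error is dyadically summable against $\log R$, the argument of Proposition~\ref{prop:low-charge-drift} (summing over $R_j=2^jR_0$ and then interpolating in $t$) gives $\mathcal Q_B(2^jR_0)=2|\mathbb S^{n-1}|\kappa_q\,j\log2+O(1)$. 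The uniform-$t$ estimate extends this to all real $R$, which is \eqref{hd:eq:critical-charge-growth}.

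The main obstacle is justifying the a priori bounds on $\widetilde P_R$ at the $\sqrt{L_R}$ scale. These are the Lipschitz bound and the Hessian $L^1$ bound, and they must be uniform after subtracting $p_0$. They do not quite follow from the $C^0$ bound \eqref{hd:eq:critical-coarse-bound} alone. We would run the Bernstein argument of Proposition~\ref{prop:low-Bernstein} on the strict approximants, with the background term now of size $O(1)$ at normalization $\varepsilon_R=R^{-\alpha}$. This background is absorbed into $\mathbb B_R$ rather than treated as small, so the argument needs a bounded background matrix. After that, the Hessian $L^1$ bound comes from the divergence identity tested with a cutoff, as in Lemma~\ref{lem:low-Hessian-L1}.
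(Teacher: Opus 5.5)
Your proposal is correct and follows the paper's proof: the scaling identity $\widetilde P_{tR}(x)=t^\alpha\widetilde P_R(tx)$, integration by parts against $\phi_t$, replacing the leading source $-2\sigma_2(\mathbb B_R)$ by $-2\sigma_2(B_\infty)$ with spherical mean $-\kappa_q$, the same product estimates built on the $O(\sqrt{L_R})$ gradient and Hessian-$L^1$ bounds, and dyadic summation; your check that the constant shift $\psi_0=\psi_0^\circ+\lambda$ leaves $\sigma_2$ pointwise unchanged, since $N_0:B^\circ=0$ and $\sigma_2(H_\alpha)=0$, makes explicit a step the paper leaves implicit. Two small remarks: it is $\sigma_2(B_\infty)$, not $B_\infty$ itself, that is homogeneous of degree $-n$; and the a priori bounds are most simply obtained as the paper does, by running the Bernstein and Hessian-$L^1$ arguments on $\widehat P_R$ (where the normalized background is $O(L_R^{-1/2})$ and hence small) and then multiplying by $\sqrt{L_R}$.
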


\begin{proof}
The scaling relation is
\[
 \widetilde P_{tR}(x)=t^\alpha\widetilde P_R(tx),
\]
and the cutoff defining the charge converts the difference of two
scales into
\[
 \mathcal Q_B(tR)-\mathcal Q_B(R)
 =-\int\phi_t\,\operatorname{div}\mathcal J_R^B\,\dd x.
\]
The leading source term in
\eqref{hd:eq:critical-modified-cov-div} is evaluated from
\[
 \sigma_2(B_\infty)=r^{-n}\sigma_2(B(\theta)),
 \qquad
 \avg_{\mathbb S^{n-1}}\sigma_2(B)=-\kappa_q,
 \qquad
 \int_0^\infty\phi_t(r)\frac{\dd r}{r}=-\log t
\]
and equals $2|\mathbb S^{n-1}|\kappa_q\log t$ with the displayed
orientation.  The remaining terms are estimated using
$\|\nabla P_R^{(0)}\|_\infty=O(\sqrt{L_R})$,
$\|\Hess P_R^{(0)}\|_{L^1}=O(\sqrt{L_R})$, and
$q_R=O(R^{-\alpha}L_R)$.  Together with
\eqref{hd:eq:critical-BR-approx}--\eqref{hd:eq:critical-current-comparison},
the standard product estimates give exactly the three terms on the
right-hand side of \eqref{hd:eq:critical-modified-charge-error}; their
individual tensor contractions are routine and are omitted.  Summing the
$t=2$ identity over dyadic scales gives a convergent accumulated error
after division by $\log R$; the residual interval is covered by the
uniform estimate for $1\le t\le2$.  This proves
\eqref{hd:eq:critical-charge-growth}.
\end{proof}

\begin{theorem}
\label{hd:thm:critical-moving-endpoint}
Let $n>8$ be even and assume \eqref{hd:eq:first-nonzero-Cq}, with
$j=q+2=\alpha$.  If $\kappa_q>0$, then in the fixed conformal-normal gauge
\begin{align}
 W(R,\theta)
 &=1-R^{-\alpha}\left(
 \sqrt{\Lambda_q\log R}+\psi_0(\theta)+o(1)
 \right),
 \label{hd:eq:critical-moving-final-W}\\
 U_{\rm cn}(\rho,\theta)
 &=\rho^{-\alpha}
 +\sqrt{\frac{\kappa_q}{2(n-1)}}
  \sqrt{\log\frac1\rho}
 +\frac\alpha2\psi_0(\theta)
 +o(1).
 \label{hd:eq:critical-moving-final-U}
\end{align}
\end{theorem}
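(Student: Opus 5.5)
The proof follows the $n=8$ template of Section~\ref{sec:endpoint8}, with the Weyl cell $C_0$ replaced by the critical cell $C_q$ and with the angular shift $p_0$ carried throughout.

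\textbf{Step 1: compactness of the normalized family.} By Proposition~\ref{hd:prop:critical-model} we have the coarse bound \eqref{hd:eq:critical-coarse-bound}, so $\widehat P_R=P_R^{(0)}/\sqrt{L_R}$ is locally bounded on annuli. The Bernstein argument of Proposition~\ref{prop:low-Bernstein} and the Hessian $L^1$ bound of Lemma~\ref{lem:low-Hessian-L1} carry over verbatim to the normalization $\widehat\varepsilon_R=R^{-\alpha}\sqrt{L_R}$. The only new point is the background term $W_R\widehat\varepsilon_R^{-1}(A_{g_R})^\sharp$. It is now of size $L_R^{-1/2}$ rather than small in $R$, but it still tends to zero. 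The analogue of Lemma~\ref{lem:endpoint-compactness} therefore gives subsequential limits that are flat, nonpositive solutions of the homogeneous $2$-Hessian equation with critical decay. Lemma~\ref{lem:low-isolated-extension} and the Trudinger--Wang classification then make every such limit equal to $-c|x|^{-\alpha}$.

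\textbf{Step 2: locking $c$.} Lemma~\ref{hd:lem:critical-modified-charge-drift} gives $\mathcal Q_B(R)/\log R\to2|\mathbb S^{n-1}|\kappa_q$. Since $\widetilde P_R=P_R^{(0)}-p_0$ differs from $P^{(0)}_R$ by a bounded term, the normalized current $L_R^{-1}\mathcal J^B_R$ has the same tangent as $L_R^{-1}T_1(D^2P^{(0)}_R)\nabla P^{(0)}_R$. The linear term $2T_1(B_\infty)\nabla\widetilde P_R$ is only $O(\sqrt{L_R})$ and so disappears after division by $L_R$. The $BV$--$L^2$ passage of Proposition~\ref{prop:low-coefficient-locking} then gives
\[
(n-1)|\mathbb S^{n-1}|\alpha^2c^2=2|\mathbb S^{n-1}|\kappa_q,\qquad\text{that is,}\qquad c^2=\Lambda_q .
\]
Hence $\widehat P_R\to-\sqrt{\Lambda_q}|x|^{-\alpha}$ for the full family, i.e.\ $R^\alpha(1-W)/\sqrt{\log R}\to\sqrt{\Lambda_q}$ uniformly in $\theta$.

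\textbf{Step 3: the bounded layer.} Set $u(s,\theta):=R^\alpha(1-W(R\theta))$ with $s=\log R$, and trap $u$ between the refined profiles $u_\pm$ of Proposition~\ref{hd:prop:critical-model}. For large $K$ these have strict opposite cone signs for all large $s$, with a margin of size $K/s$. The curved-background and nonlinear errors are $O(R^{-1}\operatorname{poly}(s))+O(R^{-\alpha}s)$, which is $o(s^{-1})$, so they are absorbed.

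By Step~2, $u/\sqrt{s}\to\sqrt{\Lambda_q}$. Fix a large $s_1$ and choose $\lambda_\pm=\mp\Lambda$ with $\Lambda$ large, so that $u_-\le u\le u_+$ on $\{s=s_1\}$. This is possible because $\psi_0$ and the other correctors are bounded, and both $u$ and $a_\pm$ are $\sqrt{\Lambda_q s_1}+O(1)$ there; if needed, the additive constants $\lambda_\pm$ absorb the $o(\sqrt{s_1})$ discrepancy at the cost of taking $s_1$ adapted. Exterior comparison with multiplicative compensation (Lemma~\ref{hd:lem:W-multiplicative-compensation}) then propagates the bracket to all $s\ge s_1$. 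Since
\[
a_\pm-\sqrt{\Lambda_q s}=\frac{\lambda_\pm\pm K\log s}{2\sqrt{\Lambda_q s}}+O(s^{-1})\longrightarrow0
\]
and $\psi_1/a_\pm,\ \psi_2/a_\pm^2=o(1)$, we get $u=\sqrt{\Lambda_q s}+\psi_0+o(1)$, which is \eqref{hd:eq:critical-moving-final-W}.

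\textbf{Step 4: conversion to $U_{\rm cn}$.} Expand $F=W^{-\alpha/2}$. The quadratic term is $O(R^{-2\alpha}\log R)=o(R^{-\alpha})$, so
\[
F=1+\tfrac\alpha2R^{-\alpha}\bigl(\sqrt{\Lambda_q\log R}+\psi_0\bigr)+o(R^{-\alpha}).
\]
Then use $U_{\rm cn}=\rho^{-\alpha}F$ together with the identity
\[
\tfrac\alpha2\sqrt{\Lambda_q}=\tfrac\alpha2\sqrt{\tfrac{2\kappa_q}{(n-1)\alpha^2}}=\sqrt{\tfrac{\kappa_q}{2(n-1)}}.
\]

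\textbf{Main obstacle.} I expect the hardest step to be Step~3: verifying that $u_\pm$ keep strict cone signs uniformly on the whole exterior region. The $K/s$ margin is only logarithmic, while the errors include the polarization term $T_1(\mathcal K_\alpha[\psi_0]+C_q):\mathcal K_\alpha[\psi_1/a]$ and the corresponding $f''$ terms. I would first check, by the exact quadratic polarization quoted in Proposition~\ref{hd:prop:critical-model}, that every non-sign term is $O(s^{-1})$ with a constant independent of $K$. Then a large $K$ dominates them.
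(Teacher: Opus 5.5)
Your proposal is correct and follows the paper's proof essentially step for step: compactness of $\widehat P_R$ with the Trudinger--Wang classification of tangents, locking $c^2=\Lambda_q$ by matching the tangent flux $(n-1)|\mathbb S^{n-1}|\alpha^2c^2$ against $\mathcal Q_B(R)/\log R\to2|\mathbb S^{n-1}|\kappa_q$, trapping $u$ between the profiles $u_\pm$ of Proposition~\ref{hd:prop:critical-model} starting from one large sphere, and Taylor-expanding $F=W^{-\alpha/2}$. One slip in Step~3: to get $u_-\le u\le u_+$ on $\{s=s_1\}$ you need $\lambda_+\ge0\ge\lambda_-$ (your choice $\lambda_\pm=\mp\Lambda$ reverses the signs), and since Step~2 only gives $u=\sqrt{\Lambda_q s_1}\,(1+o(1))$ there, you should take $|\lambda_\pm|=o(s_1)$, so that $f_\pm\ge c\,s$ persists for $s\ge s_1$ and the cone computation of that proposition still applies.
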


\begin{proof}
The bound \eqref{hd:eq:critical-coarse-bound} and the standard Bernstein/$L^1$
Hessian estimates give
local compactness of $\widehat P_R$.  Every subsequential limit solves the
flat boundary equation, is nonpositive, and has growth $O(|x|^{-\alpha})$;
Trudinger--Wang therefore gives
$\widehat P_{R_j}\to-c|x|^{-\alpha}$ for some $c\ge0$.  Divide the modified
current by $L_{R_j}$.  The terms involving the fixed field $B_\infty$
vanish, while the Hessian null-Lagrangian current of the radial limit
converges by the uniform gradient and $L^1$ Hessian bounds.  Its flux is
\[
 \lim\frac{\mathcal Q_B(R_j)}{L_{R_j}}
 =(n-1)|\mathbb S^{n-1}|\alpha^2c^2.
\]
Comparing this identity with
\eqref{hd:eq:critical-charge-growth} gives
\[
 c^2=\frac{2\kappa_q}{(n-1)\alpha^2}=\Lambda_q.
\]
Every subsequential tangent therefore has the same coefficient
$c=\sqrt{\Lambda_q}$.  Compactness upgrades this to the full convergence
\begin{equation}\label{hd:eq:critical-hat-full-convergence}
 \widehat P_R(x)
 \longrightarrow-\sqrt{\Lambda_q}\,|x|^{-\alpha}
\end{equation}
locally uniformly on $\mathbb R^n\setminus\{0\}$.  Restricting to $|x|=1$
and using $u(s,\theta)=-P_{e^s}^{(0)}(\theta)$ yields
\begin{equation}\label{hd:eq:critical-u-leading-lock}
 \frac{u(s,\theta)}{\sqrt s}
 \longrightarrow\sqrt{\Lambda_q}
\end{equation}
uniformly in $\theta$.

With this leading tangent identified, choose the constants
$\lambda_\pm$ in the strict profiles \eqref{hd:eq:critical-upm-refined} so
that they bracket $W$ on one large sphere.  Exterior comparison then gives
\[
 u(s,\theta)=\sqrt{\Lambda_qs}+\psi_0(\theta)+o(1),
\]
which is \eqref{hd:eq:critical-moving-final-W}.  Since $F=W^{-\alpha/2}$ and $R^{-2\alpha}\log R=o(R^{-\alpha})$, Taylor expansion gives

\[
 F=1+\frac\alpha2R^{-\alpha}
 \left(\sqrt{\Lambda_q\log R}+\psi_0+o(1)\right)
 +o(R^{-\alpha}).
\]
Using $U_{\rm cn}=\rho^{-\alpha}F$, $R=\rho^{-1}$, and $(\alpha/2)\sqrt{\Lambda_q}=\sqrt{\kappa_q/(2(n-1))}$ now gives \eqref{hd:eq:critical-moving-final-U}.

\end{proof}

\subsection{The finite-charge case \texorpdfstring{$j>\alpha$}{j greater than alpha}}

\label{hd:subsec:weylflat-finite-charge}

\begin{equation}\label{hd:eq:finite-charge-jalpha}
  j>\alpha,
  \qquad
  |(A_{g_\infty})^\sharp|
  +R|\nabla(A_{g_\infty})^\sharp|
  \le C R^{-j-2}.
\end{equation}

\begin{theorem}
\label{hd:thm:finite-charge-weylflat}
Under \eqref{hd:eq:finite-charge-jalpha}, there is a unique $A\ge0$ such that
\begin{equation}\label{hd:eq:finite-charge-final-U}
  U_{\rm cn}(\rho,\theta)
  =\rho^{-\alpha}+A+o(1)
\end{equation}
uniformly in $\theta$ as $\rho\downarrow0$.
The same conclusion holds if the end is infinitely flat, in the sense that
for every $N>0$,
\begin{equation}\label{hd:eq:infinite-flat-background}
 |(A_{g_\infty})^\sharp|
 +R|\nabla(A_{g_\infty})^\sharp|
 \le C_N R^{-N}
\end{equation}
for all sufficiently large $R$.
\end{theorem}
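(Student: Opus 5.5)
The plan is to rerun the finite-charge mechanism of Section~\ref{sec:subcritical} (Propositions~\ref{prop:low-barrier-package}--\ref{prop:low-coefficient-locking}) with the exponent $2$ of the curvature error replaced by $j$. The key point is that every place where that argument used $\alpha<2$ actually used only that the background contribution $(A_{g_\infty})^\sharp=O_1(R^{-j-2})$ decays faster than the Hessian of the critical mode $R^{-\alpha}$, that is, $j>\alpha$. The metric error $g_\infty-g_{\mathrm E}=O_3(R^{-2})$ multiplies Hessians and gradients of the perturbation, so it costs only a relative factor $R^{-2}$. The infinitely flat case \eqref{hd:eq:infinite-flat-background} is then the special case obtained by choosing any $j>\alpha$.

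\textbf{Step 1: barriers.} Starting from the decay \eqref{eq:LN-subcritical-decay}, I first prove the analogue of Proposition~\ref{prop:low-barrier-package}. For the power barriers $1\pm ar^{-\gamma}$ with $\gamma<\alpha$, the profile matrix has size $r^{-\gamma-2}$, while the background error is $O(r^{-j-2})+O(r^{-\gamma-4})$. This error is dominated, which gives every exponent $\gamma<\alpha$. For the lower bound I use $1+br^{-\delta}$ with $\alpha<\delta<\min\{j,\alpha+1\}$, and for the dyadic recurrence the mixed profile
\[
1+(M_+(R)+KR^{\alpha-j'})r^{-\alpha}-\eta KR^{\delta-j'}r^{-\delta},
\]
with $j'=\min\{j,\alpha+1\}$. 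The decisive factor $2\delta(\delta-\alpha)/\alpha$ times $r^{-\delta-2}$ dominates the curvature term because $\delta<j'$. Comparison is carried out on the exterior via the multiplicative compensation of Lemma~\ref{hd:lem:W-multiplicative-compensation}. The increments $R^{\alpha-j'}$ are dyadically summable, which yields $|W-1|\le CR^{-\alpha}$ and $R^\alpha(1-W)_-\to0$.

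\textbf{Step 2: blow-down compactness and classification.} With $\varepsilon_R=R^{-\alpha}$, the background term $\mathfrak B_R$ in \eqref{eq:low-background-def} now has size $O(R^{\alpha-j})\to0$. The Bernstein estimate of Proposition~\ref{prop:low-Bernstein} and the $L^1$ Hessian bound of Lemma~\ref{lem:low-Hessian-L1} therefore go through unchanged, since their proofs used only $\omega_R\to0$ and the strict approximants of Proposition~\ref{prop:gamma2-green-theory}. The viscosity limits are nonpositive, $2$-convex, solve the homogeneous equation off the origin and decay like $|x|^{-\alpha}$. Lemmas~\ref{lem:low-viscosity-measure-zero}--\ref{lem:low-isolated-extension} and Proposition~\ref{prop:low-tangent-classification} then give tangents of the form $-c|x|^{-\alpha}$ with $c\ge0$.

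\textbf{Step 3: charge locking, the main obstacle.} The charge $Q(R)$ of \eqref{eq:low-charge-def} must have summable drift. From \eqref{eq:low-ER-bound}, with $|C_R|\le CR^{\alpha-j}$ and $|\operatorname{Ric}(g_R)|\le CR^{-2}$, the drift is bounded by $C(R^{-\alpha}+R^{\alpha-j}+R^{-2})$. For the Ricci term I will check that it arises as $\operatorname{Ric}_{g_R}(\nabla P,\nabla P)$, and that the Ricci tensor of $g_R$ is $R^2$ times the Ricci of $g_\infty$ evaluated at scale $R$. It is therefore $O(R^{-j})$, or at worst $O(R^{-2})$ coming from the metric, and both are summable. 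The delicate point is the term $|C_R||H_R|$: since $H_R$ is controlled only in $L^1$, I need the sup bound $\|C_R\|_{L^\infty}\le CR^{\alpha-j}$. This follows from \eqref{hd:eq:finite-charge-jalpha}, which is why the $C^1$ control is assumed there.

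With summable drift, $Q_\infty$ exists. The $BV$--$L^2$ passage of Proposition~\ref{prop:low-coefficient-locking} gives $Q_\infty=(n-1)|\mathbb S^{n-1}|\alpha^2c^2$, which pins down $c$ and yields full convergence $P_R\to-c|x|^{-\alpha}$. Finally, $F=W^{-\alpha/2}$ together with $U_{\rm cn}=\rho^{-\alpha}F$ gives \eqref{hd:eq:finite-charge-final-U} with $A=\alpha c/2\ge0$, and uniqueness of $A$ follows from the formula for $c$.
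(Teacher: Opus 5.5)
Your proposal is correct and follows the paper's proof essentially step for step. Both arguments run the same chain:
\begin{itemize}
\item power and mixed barriers with the curvature error $O(R^{-j-2})$, giving a summable dyadic recurrence and hence $|W-1|\le CR^{-\alpha}$ with $(W-1)_+=o(R^{-\alpha})$;
\item the Section~3 Bernstein and $L^1$-Hessian compactness, now with background size $O(R^{\alpha-j})$;
\item the Trudinger--Wang classification of the nonpositive tangents as $-c|x|^{-\alpha}$;
\item a charge drift $O(R^{-\alpha}+R^{-2}+R^{\alpha-j})$ that locks $c$ and gives $A=\alpha c/2$;
\item the reduction of the infinitely flat case to a finite $j>\alpha$.
\end{itemize}
Your choice $j'=\min\{j,\alpha+1\}$ in the mixed barrier is a small extra precaution that the paper leaves implicit: it keeps the $O(R^{-2})$ relative metric error acting on the critical mode below the decisive term. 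It costs nothing, since $R^{\alpha-j'}$ is still dyadically summable.
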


\begin{proof}
Assume first \eqref{hd:eq:finite-charge-jalpha}.  We establish the
critical-scale bound.  The power and mixed barriers
from Proposition~\ref{prop:low-barrier-package}, with the $j$-th curvature
term absorbed into the faster error, give, for every $\beta<\alpha$ and every
$\alpha<\delta<\min\{j,n-2\}$,
\[
 |W-1|\le C_\beta R^{-\beta},
 \qquad
 (W-1)_+\le C_\delta R^{-\delta}.
\]
If
\[
 \mathcal M(R):=\max_{\theta\in\mathbb S^{n-1}}R^\alpha(1-W(R,\theta)),
 \qquad
 \mathcal M_+(R):=\max\{\mathcal M(R),0\},
\]
the same barriers give the critical recurrence
\begin{equation}\label{hd:eq:finite-charge-dyadic}
 \mathcal M_+(2R)
 \le \mathcal M_+(R)+CR^{\alpha-j}.
\end{equation}
Since $j>\alpha$, the increment is summable on dyadic scales.  Together with
the faster positive-part estimate, this yields
\begin{equation}\label{hd:eq:finite-charge-critical-bound}
 |W(R,\theta)-1|\le CR^{-\alpha}
\end{equation}
for all sufficiently large $R$, uniformly in $\theta$.  We may therefore set
\[
  W(Rx)=1+R^{-\alpha}P_R(x).
\]
On each fixed annulus the normalized curvature term is
$O(R^{\alpha-j})$ and $g_R\to g_{\mathrm E}$ in $C^3$.  The compactness and
Hessian-measure argument of Sections~\ref{sec:low-compactness}--\ref{sec:low-tangents}
therefore applies.  The faster positive-part estimate makes every limit
nonpositive, so each tangent is $-c|x|^{-\alpha}$ for some $c\ge0$.

The calculation of Proposition~\ref{prop:low-charge-drift} also carries over;
the only new slow term is the background polarization
$O(R^{\alpha-j})$.  Consequently, uniformly for $1\le t\le2$,
\begin{equation}\label{hd:eq:finite-charge-drift}
  |Q(tR)-Q(R)|
  \le C\bigl(R^{-\alpha}+R^{-2}+R^{\alpha-j}\bigr).
\end{equation}
The drift is summable.  Passing the limiting charge to any tangent gives
\[
  Q[-c|x|^{-\alpha}]
  =(n-1)|\mathbb S^{n-1}|\alpha^2c^2.
\]
Thus all tangents have the same $c$ and
$W=1-cR^{-\alpha}+o(R^{-\alpha})$.  Expanding
$F=W^{-\alpha/2}$ and using $U_{\rm cn}=\rho^{-\alpha}F$ proves
\eqref{hd:eq:finite-charge-final-U} with the unique coefficient
$A=\alpha c/2\ge0$.
If \eqref{hd:eq:infinite-flat-background} holds, choose
$\alpha<j<n-2$ and take $N=j+2$; then
\eqref{hd:eq:finite-charge-jalpha} holds, so the preceding argument applies.
\end{proof}

\subsection{Completion of the proofs}

Theorem~\ref{thm:main-working}\textup{(iii)} follows from
Theorem~\ref{hd:thm:actual-to-n6}, Proposition~\ref{hd:prop:formal-single-log},
and Theorem~\ref{hd:thm:post-resonance-locking}.  The four cases of
Theorem~\ref{thm:main-high-flatness} are respectively
Theorems~\ref{hd:thm:subcritical-first-cell-trapping},
\ref{hd:thm:critical-moving-endpoint}, \ref{hd:thm:critical-kappa-zero}, and
\ref{hd:thm:finite-charge-weylflat}.
Since the pole $p\in S$ was arbitrary, the same conclusions hold at every
pole.  We denote the resulting pole-dependent free constants by $A_p$ in the
statements of the main theorems.

\phantomsection\label{sec:references-start}

\end{document}